\newcommand{\red}[1]{\textcolor{red}{#1}}
\newcommand{\blue}[1]{\textcolor{blue}{#1}}
\numberwithin{equation}{section}
\newtheorem{theorem}[equation]{Theorem}
\newtheorem{lemma}[equation]{Lemma}
\newtheorem{proposition}[equation]{Proposition}
\newtheorem{corollary}[equation]{Corollary}
\theoremstyle{definition}
\newtheorem{definition}[equation]{Definition}
\newtheorem{notation}[equation]{Notation}
\newtheorem{remark}[equation]{Remark}
\newtheorem{example}[equation]{Example}
\newcommand{\gras}[1]{{\mathbb #1}} 
\newcommand{\N}{\gras{N}}
\newcommand{\Z}{\gras{Z}} 
\newcommand{\Q}{\gras{Q}} 
\newcommand{\R}{\gras{R}} 
\newcommand{\C}{\gras{C}}  
\newcommand{\bS}{\gras{S}}
\newcommand{\bP}{\gras{P}}
\newcommand{\cO}{\mathcal{O}}
\newcommand{\cT}{\mathcal{T}}
\newcommand{\cV}{\mathcal{V}}
\newcommand{\de}{\mathbf{i}}
\newcommand{\ex}{\mathbf{e}}
\newcommand{\ic}{\mathbf{c}}
\newcommand{\mult}{\mathbf{m}}
\newcommand{\ld}{\mathbf{l}}
\newcommand{\si}{\mathbf{s}}
\newcommand{\Supp}{{\mathcal{S}}}
\newcommand{\bra}[1]{{\langle#1\rangle}}	
\def\elem(#1,#2){  \{ \frac{#1}  {\overline {\ #2\ }} \} }
\title[The valuative tree is the projective limit of Eggers-Wall trees]{The valuative tree \\ is the 
   projective limit of  Eggers-Wall trees}
\author{Evelia R. Garc\'{\i}a Barroso}
\address{Departamento de Matem\'aticas, Estad\'{\i}stica e I.O.
Secci\'on de Matem\'aticas, Universidad de La Laguna. Calle Astrof\'{\i}sico Francisco S\'anchez, 
La Laguna 38200, Tenerife, Espa\~na.}
   \email{ergarcia@ull.es}
\author{Pedro D. Gonz\'alez P\'erez} 
\address{Instituto de Ciencias Matem\'aticas (CSIC-UAM-UC3M-UCM), Dpto. de \'Algebra, Geometr\'\i a y Topolog\' \i a, Facultad de Ciencias Matem\'aticas,
Universidad Complutense de Madrid, Plaza de las Ciencias 3, Madrid 28070, Espa\~na.}
   \email{pgonzalez@mat.ucm.es}
\author{Patrick Popescu-Pampu}
   \address{Universit{\'e} Lille 1, UFR de Maths., B\^atiment M2\\
     Cit\'e Scientifique, 59655, Villeneuve d'Ascq Cedex, France.}
   \email{patrick.popescu@math.univ-lille1.fr}
\date{7 July 2018}
\subjclass[2010]{14B05 (primary), 32S25}
\keywords{Branch, Characteristic exponent, Contact, Eggers-Wall tree,  
Newton-Puiseux series, Plane curve singularities,  Semivaluation,  
Splice diagram, Rooted tree, Valuation,  Valuative tree.}
\begin{document}

\begin{abstract}
 Consider a germ $C$ of reduced curve on a smooth germ $S$ of complex analytic surface. 
    Assume that $C$ contains a smooth branch $L$. Using the Newton-Puiseux series  
    of $C$ relative to any coordinate system $(x,y)$ on $S$ such that $L$ is the $y$-axis, 
    one may define the {\em Eggers-Wall tree} 
    $\Theta_L(C)$ of $C$ relative to $L$.  Its ends are labeled by the branches of $C$ and 
    it is endowed with three natural functions measuring the characteristic exponents of the previous 
    Newton-Puiseux series, their denominators and contact orders.  
    The main objective of this paper is to embed canonically $\Theta_L(C)$ 
    into Favre and Jonsson's valuative tree $\bP(\cV)$ of 
    real-valued semivaluations of $S$ up to scalar multiplication, and to show that this 
    embedding identifies the three natural functions on $\Theta_L(C)$ as pullbacks of 
    other naturally defined functions on $\bP(\cV)$.  As a consequence, we prove an inversion 
    theorem generalizing the well-known Abhyankar-Zariski inversion theorem concerning one branch:  
    if $L'$ is a second smooth branch of $C$, then the valuative embeddings of the Eggers-Wall trees 
    $\Theta_{L'}(C)$ and  $\Theta_L(C)$ identify them canonically, 
    their associated triples of functions being easily expressible in terms of each other.  We prove also 
    that the space $\bP(\cV)$ is the projective limit of Eggers-Wall trees over all choices 
    of curves $C$. As a supplementary result, we explain how to pass from $\Theta_L(C)$ to 
    an associated splice diagram.
    
\end{abstract}

\maketitle

\tableofcontents

\newpage
\section{Introduction}
\label{sec:intro}

In their seminal 2004 book ``{\em The valuative tree}'' \cite{FJ 04}, Favre and Jonsson studied 
the space of real-valued semivaluations  $\mathcal{V}$ on a germ $S$ of smooth 
complex analytic surface. 
They proved that the projectivization $\mathbb{P}(\mathcal{V})$ 
of $\mathcal{V}$ is a compact real tree, called the \emph{valuative tree} 
of the surface singularity $S$. They gave several viewpoints on $\mathbb{P}(\mathcal{V})$: 
as a partially ordered set of normalized semivaluations, as a space of 
irreducible Weierstrass polynomials and as a universal dual graph of modifications of 
$S$. 

\medskip

The main objective of this paper is to present $\mathbb{P}(\mathcal{V})$ 
as a ``\emph{universal Eggers-Wall tree}'', 
relative to any \emph{smooth} reference branch (that is, germ of irreducible curve) $L$ on $S$.  
Namely, we show that $\mathbb{P}(\mathcal{V})$ is  
the projective limit of the \emph{Eggers-Wall trees} $\Theta_L(C)$ of the reduced germs of 
curves $C$ on $S$ which contain $L$. 

\medskip

Given such a germ $C$, let $(x,y)$ be a coordinate system verifying that $L$ is the $y$-axis. 
The tree $\Theta_L(C)$ 
is rooted at an end labeled by $L$ and its other ends are labeled by the remaining branches 
of $C$.  Consider the Newton-Puiseux series $(\eta_i(x))_i$ of these branches of $C$. 
The tree  $\Theta_L(C)$ 
has marked points corresponding to the characteristic exponents of the series 
$\eta_i(x)$ and it is endowed with three natural 
functions: the \emph{exponent} $\ex_L$,  the \emph{index} $\de_L$ and  
the \emph{contact complexity} $\ic_L$ (see Definitions \ref{def:EW} and \ref{Intcoefgen}). 
These functions determine the \textit{equisingularity class} of the 
germ $C$ with chosen branch $L$, that is, 
the oriented topological type of the triple $(S, C, L)$. In order to emphasize this property, 
we explain how to get from $\Theta_L(C)$ the minimal splice diagram 
of $C$ in the sense of Eisenbud and Neumann (see Section \ref{splicediag}).  

\medskip 

The branch $L$ may be seen as an {\em observer}, 
defining a coordinate system $(\ex_L, \de_L, \ic_L)$ on $\Theta_L(C)$. 
Analogously, an \textit{observer}  in the valuative tree $\bP(\mathcal{V})$ 
is either the special point of $S$, or a smooth branch $L$, identified with a suitable semivaluation on it. 
Each observer $R$ determines three functions on the valuative tree,   
the \emph{log-discrepancy} ${\ld}_R $,  
the \emph{self-interaction} ${\si}_R $ and the \emph{multiplicity} $\mult_R$ 
relative to $R$ (see Definitions \ref{logselfgen} 
and \ref{relmult}). 
If one identifies the valuative tree $\mathbb{P}(\mathcal{V})$ 
with the subspace of $\cV$ consisting of those semivaluations which take the value $1$  
on the ideal defining the observer $R$, then the functions $({\ld}_R,  \mult_R ,  {\si}_R )$ 
appear as restrictions of functions defined globally on the space of semivaluations. 

\medskip 

We describe an embedding of the Eggers-Wall tree $\Theta_L(C)$ 
inside the valuative tree $\bP(\mathcal{V})$.
This embedding transforms the exponent plus one $\ex_L +1$  into the log-discrepancy $\ld_L$, 
the index $\de_L$ into the multiplicity $\mult_L$ 
and the contact complexity $\ic_L$ into the self-interaction $\si_L$ (see Theorem \ref{embcurve}).
Our embedding is defined explicitly in terms of Newton-Puiseux series, and is similar to 
Berkovich's construction of seminorms on the polynomial ring $K[X]$ extending  
a given complete non-Archimedean absolute value on a field $K$,  
done by maximizing over closed balls of $K$ (see Remark \ref{Berkrem}). 
Theorem \ref{embcurve} generalizes a result of Favre and Jonsson, 
for a \emph{generic} Eggers-Wall tree relative to the  special point 
(see \cite[Prop. D1, page 223]{FJ 04}). 

\medskip

If the germ of curve $C$ is contained in another reduced germ $C'$, then we get 
a retraction from $\Theta_L(C')$ to $\Theta_L(C)$. 
These retractions provide an inverse system of continuous maps and 
we prove, as announced above,
that their projective limit is homeomorphic to the valuative tree $\bP(\mathcal{V})$ 
(see Theorem \ref{proj_lim}). 
This is the result alluded to in the title of the paper. 

\medskip

We study in which way the triple of functions $({\ld}_R, \si_R, \mult_R)$ changes when 
the observer $R$ is replaced by another one $R'$. We provide explicit formulas 
for this \emph{change of variables} in Propositions \ref{changels}, 
\red{\ref{changesm}} and \ref{changecrd}. 
As an application, we prove an \emph{inversion theorem}
which shows how to pass from the Eggers-Wall tree $\Theta_L (C)$ relative 
to a smooth branch $L$ of $C$
to the tree $\Theta_{L'} (C)$ relative to another smooth branch $L'$ of $C$. 
Our theorem means that the geometric realization of the Eggers-Wall tree, 
with the ends labeled by the branches of $C$, remains unchanged and that one only has to replace 
the triple of functions $(\ex_L, \ic_L, \de_L)$ by $(\ex_{L'}, \ic_{L'}, \de_{L'})$ (see Theorem \ref{muthm}). 
If $L$ and $L'$ are transversal, our result is a geometrization and generalization to
the case of several branches of the classical inversion theorem of Abhyankar and Zariski 
(see \cite{A 67}, \cite{Z 68}). In fact,  
Halphen \cite{H 76} and Stolz \cite{S 1879} already knew it 
in the years 1870, as explained in \cite{GBGPPP 17b}. This inversion theorem 
expresses the characteristic exponents with respect to a coordinate system $(y,x)$ 
in terms of those with respect to $(x,y)$.  
Our approach, passing by the embeddings of the Eggers-Wall trees in the space of 
valuations, provides a conceptual understanding of these results.

\medskip 

Let us describe briefly the structure of the paper. In Section 
\ref{fintrees}  we state the basic definitions and notions about 
finite trees and real trees used in the rest of the paper. 
In Section \ref{curves-EW} we introduce the definitions of 
the Eggers-Wall tree and of the exponent, index and contact complexity functions. 
In Section \ref{sec-inv} we give the statement of our inversion 
theorem for Eggers-Wall trees and we prove it using results of later sections.
In Section \ref{splicediag} we recall basic facts about splice diagrams of links 
in oriented integral homology spheres of dimension $3$ and we explain how 
to transform the Eggers-Wall tree $\Theta_L(C)$ into the minimal splice diagram 
of the link of $C$ inside the $3$-sphere. 
The spaces of valuations and semivaluations which play a relevant role in the paper   
are introduced in Section \ref{valspaces}. 
The multiplicity, the log-discrepancy and the self-interaction functions on the valuative tree  
are introduced in Section \ref{fundcoord}. 
In Section \ref{valemb}, we prove the embedding theorem of the Eggers-Wall 
tree in the valuative tree and we  deduce from it that the valuative 
tree is the projective limit of Eggers-Wall trees. 
Finally, in Section \ref{change-obs} we describe how the coordinate functions on the valuative tree 
vary when we change the observer.

\medskip

\medskip
{\bf Acknowledgements.} 
 This research was partially supported by the French grant ANR-12-JS01-0002-
125 01 SUSI and Labex CEMPI (ANR-11-LABX-0007-01), and also by the Spanish grants 
MTM2016-80659-P, MTM2016-76868-C2-1-P and SEV-2015-0554.

\section{Finite trees and $\R$-trees}       \label{fintrees}

In this section we introduce the basic vocabulary about {\em finite trees} used in the 
rest of the text. Then we define \emph{$\R$-trees}, which are more general than finite trees. 
Our main sources are \cite{FJ 04}, \cite{J 15} and \cite{N 14}, 
   even  if we do not follow exactly their terminology. We define attaching maps from 
   ambient $\R$-trees to subtrees (see Definition \ref{defatt}) 
   and we recall a criterion which allows to see a given 
   compact $\R$-tree as the projective limit of convenient families of finite subtrees, when they 
   are connected by the associated attaching maps (see Theorem \ref{R-theo}). 
   This criterion will be crucial in order 
   to prove in Section \ref{valemb} the theorem stated in the title of the paper.
\medskip

Intuitively, the finite trees are the connected finite graphs without circuits. 
As is the case also for graphs, 
the intuitive idea of tree gets incarnated in several categories: there are 
\emph{combinatorial}, \emph{(piecewise) affine} and  \emph{topological} trees, 
with or without a \emph{root}. Combinatorial trees are special types of 
abstract simplicial complexes:

     \begin{definition}  \label{combtree}
          A {\bf finite combinatorial tree} 
       $\cT$ is formed by a finite set $V(\cT)$ of {\bf vertices} 
     and a set $E(\cT)$ of subsets with two elements of $V(\cT)$, called {\bf edges},
     such that for any pair 
     of vertices, there exists a unique chain of pairwise distinct edges joining them.  
     The {\bf valency} 
     $v(P)$ of a vertex $P$  is the number of edges 
     containing it. A vertex $P$ is called a {\bf ramification point} of $\cT$ if $v(P) \geq 3$ 
     and an {\bf end vertex} (or simply an {\bf end}) if $v(P) = 1$.
   \end{definition}
   
   As a particular case of the general construction performed on any finite abstract simplicial    
   complex, each finite combinatorial tree has a unique geometric realization up to a unique 
   homeomorphism extending the identity on the set of vertices and 
   affine on the edges, which will be called a {\bf finite affine tree}.  
   If we consider an affine tree only up to homeomorphisms, 
    we get the notion of finite topological tree:
    
    \begin{definition} \label{fintree}
      A topological space homeomorphic to a finite affine tree is called a 
      {\bf finite topological tree}  or, simply, a {\bf finite tree}. The {\bf interior} 
      of a finite tree is the set of its points which are not ends. 
      A {\bf finite subtree}  of a given tree is a topological subspace 
      homeomorphic to a finite tree. 
   \end{definition}
   
   The simplest finite trees are reduced to points. Any finite tree is compact.   
   Only the ramification points and the end vertices are 
   determined by the underlying topology. One has to \emph{mark} as special 
   points the vertices of valency $2$ if one wants to remember them. 
   Therefore, we will speak in this case about  {\bf marked finite trees}, 
   in order to indicate that one gives also 
   the set of {\em vertices}, which contains, possibly in a strict way, 
   the set of ramification points and of ends. 
   By definition, a {\bf subtree} $\cT'$ of a \emph{marked} finite tree $\cT$ is 
   a finite subtree of the underlying topological space of $\cT$ such that its ends 
   are  marked points of $\cT$, and its marked points are the marked points of 
   $\cT$ belonging to $\cT'$.

    A {\bf (compact) segment} 
    in a finite tree is a connected subset which is 
   homeomorphic to a (compact) real interval. Each pair of points $P, Q  \in \cT$ 
  is the set of ends of exactly one compact 
   segment, denoted $[P,Q]= [Q,P]$. We speak also about the {\bf half-compact} and the  
   {\bf open} segments $(P,Q]$, $[P,Q)$, $(P,Q)$.

    We will often deal with sets equipped with a partial order, 
    which are usually called \textbf{posets}. 
    The next definition explains how the choice of a \emph{root}  
    for a tree endows it with a structure of poset:
    
   \begin{definition}  \label{finrtree}
      A {\bf finite rooted tree}  is a finite (affine or topological)  tree with a 
      marked vertex, called the {\bf root}. 
     In such a tree $\cT$, the ends which are 
   different from the root  are called the {\bf leaves} 
   of $\cT$. If the root is also an end, we say that $\cT$ is 
   {\bf end-rooted}. Each rooted tree with root $R$ may be 
   canonically endowed with a partial order $\preceq_R$ in the following way:
                  $$  P \preceq_R Q \: \Leftrightarrow \:   [R,P] \subseteq [R,Q].$$  
    \end{definition}

    Each finite marked rooted tree may be seen as a \emph{genealogical tree}, 
    the individuals with a common ancestor corresponding to the vertices, 
    the elementary filiations to the edges and the common ancestor to the root: 
  
\begin{definition} \label{predinit}
  Let $\cT$ be a marked finite rooted tree, with root $R$. 
  For each vertex $P$ of $\cT$ different from $R$,  its {\bf parent}
   $\mathtt{P}(P)$ is  the greatest vertex of $\cT$  
   on the segment $[RP)$. If we define $\mathtt{P}(R) =R$, 
   we get the {\bf parent map}
        $\mathtt{P} : V(\cT) \to V(\cT)$. 
 \end{definition}
      
 \medskip

   One may generalize the notion of finite rooted tree by keeping some of the properties 
   of the associated partial order relation:
   
 \begin{definition} \label{nonmetree}
    A {\bf rooted $\R$-tree}  is a poset $(\cT, \preceq)$ such that:
  \begin{enumerate}       
    \item There exists a unique smallest element $R \in \cT$ (called the {\em root}). 
       
       \item For any $P \in \mathcal{T}$, the set  $\{ Q \in \mathcal{T} \ | \  Q \preceq P \}$ 
         is isomorphic as a poset to a compact interval of $\R$ 
         (reduced to a point when $P = R$). 
       
       \item Any totally ordered convex subset of $\cT$ is isomorphic to an interval of $\R$ 
         (a subset $K$ of a poset $(P, \preceq)$ is called {\bf convex} 
         if $c\in K$ whenever 
        $a \preceq c \preceq b$ and $a,b \in K$).
       
       \item \label{critcond} Every non-empty subset $K$ of $\cT$ has an infimum, 
           denoted $\wedge_{P \in K} P$.   
    \end{enumerate}
    The rooted $\R$-tree $\cT$ is \textbf{complete} 
  if any increasing sequence has an upper bound.
\end{definition}

Every finite rooted tree $\cT$ is a complete rooted $\R$-tree, 
if one works with the partial order $\preceq_R$ defined by its root $R$.     

   \begin{remark} \label{notimpl} 
       We took Definition \ref{nonmetree} from Novacoski's paper \cite{N 14}, where 
       this notion is called instead \emph{rooted non-metric $\R$-tree}. In fact, Novacoski proved that 
       under the hypothesis that conditions (1) and (2) are both satisfied, the fourth one is equivalent 
       to the condition that any {\em two} elements have an infimum 
       (see \cite[Lemma 3.4]{N 14}). He emphasized the fact that 
       condition (4) is not implied by the previous ones, because of a possible phenomenon 
       of \emph{double point}. Glue for instance by the identity map 
       along $[0, 1)$ two copies of the segment $[0, 1]$, endowed with the usual order relation 
       on real numbers. One gets then a poset satisfying conditions (1)--(3) but not condition (4). 
       Indeed, the two images of the number $1$ do not have an infimum. This subtlety  
       was missed in the book \cite{FJ 04}, in which the previous notion 
       was defined (under the name \emph{rooted nonmetric tree}) only by the  conditions (1)--(3) 
       (see \cite[Definition 3.1]{FJ 04}). Property (4) 
       was nevertheless heavily used in the proofs of \cite{FJ 04}. 
       Happily, this does not invalidate some results of the book, 
       because Novacoski showed that the valuative trees studied by Favre and Jonsson satisfy also 
       the fourth condition (see \cite[Theorem 1.1]{N 14}).
   \end{remark}
   
\begin{figure}[h!] 
\vspace*{6mm}
\labellist \small\hair 2pt 
\pinlabel{$R$} at 110 5
\pinlabel{$P$} at 30 94
\pinlabel{$Q$} at 195 140
\pinlabel{$P \wedge Q$} at 120 40
\endlabellist 
\centering 
\includegraphics[scale=0.6]{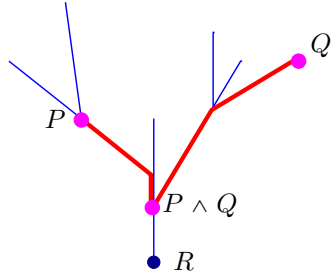} 
\caption{The infimum of two elements in a rooted tree} 
\label{fig:inftwo}
\end{figure}

   Let $\cT$ be a rooted $\R$-tree. If $P, Q$ are any two points on it and if 
   $P \wedge Q$ is their infimum (see Figure \ref{fig:inftwo}), denote 
   by $[P,Q]$ the {\bf compact segment} joining them, defined by:
     $$[P, Q] := \{ A \in \cT  \:  |   \: P \wedge Q \preceq A \preceq P \mbox{ or } 
                                                P \wedge Q \preceq A \preceq Q \}.$$ 
     Obviously, $[P,Q] $ is equal to $[Q, P]$. One defines then $[P, Q):= [P, Q] \: \setminus \:  \{Q\}$, etc. 
    
    \medskip
    
    In the same way as one speaks about affine spaces, which are vector spaces with 
    forgotten origin, we will need the notion of rooted tree with forgotten root:  
    
    \begin{definition}  \label{forgroot}
    An {\bf $\R$-tree} 
    is a rooted $\R$-tree with forgotten root. 
    That is, it is an equivalence class of structures of rooted $\R$-tree on a 
    fixed set, defining the same compact segments.    
    If $\cT$ is an $\R$-tree and $P \in \cT$ is an arbitrary point of it, a  {\bf direction} 
    at $P$ is an equivalence class of the following equivalence relation $\sim_P$ 
    on $\cT \setminus \{P\}$:
       \[Q_1 \sim_P Q_2 \: \Longleftrightarrow \: (P,  Q_1] \cap (P,  Q_2] \neq \emptyset.\]
    The {\bf weak topology} of   the $\R$-tree $\cT$ is the minimal one such 
    that all the directions at all points are open subsets of $\cT$.
    
    If $P \in \cT$, we define the partial order $\preceq_P$, as in Definition \ref{finrtree}. 
    This definition recovers  the \emph{rooted} $\R$-tree structure on the set $\cT$ with root at $P$.
        \end{definition}
      
\medskip
   
The number of directions at a point in a finite tree is equal to its valency.
    The notion of direction allows to extend to $\R$-trees $\cT$ the notion of 
    ramification point. Namely, a point $P \in \cT$ is a {\bf ramification point} 
    if there are at least three directions at $P$.  

\medskip

       \begin{remark} $\;$
       
          \begin{enumerate}
\item  Definition \ref{forgroot} is a reformulation of \cite[Definition 3.5]{FJ 04}. One may 
       define also a notion of \emph{complete} $\R$-tree as the equivalence class of 
       a complete rooted $\R$-tree. This last notion may be defined differently, emphasizing 
       the set of its compact segments (see Jonsson's \cite[Definition 2.2]{J 15}). 
      
  \item In \cite[Section 3.1.2]{FJ 04} the term \emph{tangent vector} is used instead of 
          \emph{direction}. We prefer this last term in order to emphasize the 
          analogy with the usual euclidean space, in which two points $Q_1$ and $Q_2$ 
          are said \emph{to be in the same direction} as seen from an observer $P$ if and only if the 
          segments $(P, Q_1]$ and  $(P, Q_2]$ are not disjoint. 
          
 \item Endowed with the weak topology, each $\R$-tree 
    $\cT$ is Hausdorff (see \cite[Lemma 7.2]{FJ 04}).  
    In that reference a few other tree topologies are defined and studied, but each time 
    starting from supplementary structures on the $\R$-tree, for instance metrics. 
    We will not need them in this paper. 
       \end{enumerate}      
    \end{remark}

    Let us illustrate the previous vocabulary by an example:

  \begin{example} \label{dblcomb}
     Consider the set $\cT := \R \times [0, \infty)$, endowed with the following partial order:
       $$(x_1, y_1) \preceq (x_2, y_2) \: \Longleftrightarrow \ 
                \left\{ \begin{array}{cl}
                              \mbox{ either } & x_1=0 \mbox{ and }   y_1 \leq y_2, \\
                               \mbox{ or } &    y_1 = y_2, |  x_1 |  \leq  | x_2 | \mbox{ and } 
                                                                   x_1 \cdot x_2 \geq 0.  
                          \end{array}   \right. $$
       Its structure is suggested in Figure \ref{fig:Doublecomb}.
       This partial order endows $\cT$ with a structure of rooted $\R$-tree. 
      Its root  is the point $(0,0)$. Notice that the segment $[(x_1, y_1),  (0,0)]$ of  $\cT$ is 
      the union of the segments $[(x_1, y_1),  (0,y_1)]$ and $[(0,y_1)], (0,0)]$.
      The set of ramification points is the vertical half-axis 
    $\{0\} \times [0, \infty)$. 
    At each point of it there are $4$ directions (up, down, right and left), 
    with the exception of $(0,0)$, at 
      which there are only $3$ of them (no down one). 
  \end{example}
   
 \begin{figure}[h!] 
 \vspace*{6mm}
 \centering 
 \includegraphics[scale=0.50]{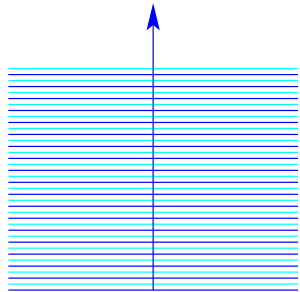} 
 \vspace*{1mm} 
 \caption{An example of $\R$-tree} 
 \label{fig:Doublecomb}
 \end{figure}

   \begin{lemma}\label{attp}
       Let $\cT$ be an $\R$-tree and let 
       $\cT'$ be a closed subtree of $\cT$, for the weak topology. 
       For any $P \in \cT$, there exists a unique point 
       $Q \in \cT'$ such that $[Q, P] \cap \cT' = \{ Q \}$. 
   \end{lemma}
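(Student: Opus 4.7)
Plan: Root $\cT$ at $P$ via Definition \ref{forgroot}, obtaining the partial order $\preceq_P$ for which $P$ is the minimum. In this rooted structure $[P,Q] = \{A \in \cT : A \preceq_P Q\}$, so the required condition $[Q,P] \cap \cT' = \{Q\}$ translates to the statement that $Q$ is a $\preceq_P$-minimal element of $\cT'$. The lemma therefore amounts to proving that $\cT'$ has a unique $\preceq_P$-minimal element.

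Existence: pick any $P_0 \in \cT'$. By axiom (3) of Definition \ref{nonmetree}, the segment $[P,P_0]$ is order-isomorphic via some parametrization $t \mapsto B_t$ to a compact interval $[0,d] \subset \R$, with $B_0 = P$ and $B_d = P_0$. Set $T := \{t \in [0,d] : B_t \in \cT'\}$; this is a non-empty subset (it contains $d$). The main technical point is to show that $T$ is closed in $[0,d]$. This reduces to the standard fact that the subspace topology induced on the compact segment $[P,P_0]$ by the weak topology of $\cT$ coincides with its natural interval topology; indeed, at an interior point $A$ of $[P,P_0]$ the two directions of $\cT$ containing $P$ and containing $P_0$ restrict on $[P,P_0]$ to the half-open sub-segments $[P,A)$ and $(A,P_0]$, and these generate the interval topology. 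Granted closedness of $T$, the minimum $t^* := \min T$ exists and $Q := B_{t^*}$ lies in $\cT'$. Moreover $[P,Q]$ corresponds to $[0,t^*]$, and $T \cap [0,t^*] = \{t^*\}$ by minimality of $t^*$, so $[P,Q] \cap \cT' = \{Q\}$.

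Uniqueness: suppose $Q_1, Q_2 \in \cT'$ both satisfy the conclusion. If they are $\preceq_P$-comparable, say $Q_1 \preceq_P Q_2$, then $Q_1 \in [P,Q_2] \cap \cT' = \{Q_2\}$, forcing $Q_1 = Q_2$. Otherwise, the meet $M := Q_1 \wedge_P Q_2$ (which exists by axiom (4)) lies strictly below both $Q_i$ and belongs to the segment $[Q_1,Q_2]$. A subtree being by convention convex (closed under taking compact segments between any two of its points), we have $[Q_1,Q_2] \subseteq \cT'$, and in particular $M \in \cT'$; but then $M \in [P,Q_1] \cap \cT'$ with $M \neq Q_1$ contradicts the defining property of $Q_1$. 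Hence $Q_1 = Q_2$. The only non-routine step in the whole argument is the closedness of $T$ in $[0,d]$, which compares the weak topology of $\cT$ with the interval topology of a compact segment; once this is granted, everything else is order-theoretic bookkeeping combined with the convexity of $\cT'$.
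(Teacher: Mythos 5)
The paper does not actually supply a proof of this lemma --- it is explicitly ``left to the reader'' --- so there is nothing to compare route-for-route; judged on its own, your argument is correct and is the natural one: reduce to the existence and uniqueness of a $\preceq_P$-minimal point of $\cT'$, get existence by intersecting $\cT'$ with one segment $[P,P_0]$ and taking the minimum of the closed set $T$, and get uniqueness from the meet $Q_1\wedge_P Q_2$ together with convexity of $\cT'$ (which, since the paper never defines ``subtree'' of an $\R$-tree, you may either take as the Favre--Jonsson convention, as you do, or deduce from connectedness of $\cT'$, since every interior point of $[Q_1,Q_2]$ separates $Q_1$ from $Q_2$ into distinct directions).

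One small point of care in the step you yourself flag as the only non-routine one. For the closedness of $T$ what you actually need is that the parametrization $[0,d]\to[P,P_0]\subset\cT$ is continuous for the weak topology, i.e.\ that \emph{every} direction of $\cT$ --- including a direction based at a point $A\notin[P,P_0]$ --- meets $[P,P_0]$ in a set open for the interval topology. The computation you sketch (directions at an interior point of the segment restrict to $[P,A)$ and $(A,P_0]$) establishes the opposite inclusion (interval topology contained in subspace topology), which is not the one used here. The missing half is easy and true: if $A\notin[P,P_0]$, then for every $B\in[P,P_0]$ the germ of $(A,B]$ at $A$ runs along $[A\wedge_P B,A)$ with $A\wedge_P B\in[P,P_0]$, so all points of the segment lie in a single direction at $A$, and that direction meets $[P,P_0]$ either in $\emptyset$ or in the whole segment; directions at points of the segment meet it in $\emptyset$, $[P,A)$ or $(A,P_0]$. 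With this supplement every direction pulls back to an open subset of $[0,d]$, $T$ is closed, and the rest of your argument goes through as written.
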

   
   This lemma, whose proof is left to the reader, 
   says simply that if we take a point in a tree, then there is a unique minimal 
   segment joining it to a given closed subtree. Note that 
   $Q = P$ if and only if $P \in \cT'$.
   
   \begin{definition} \label{defatt}
      We call the point $Q$ characterized in  Lemma \ref{attp} {\bf the attaching point  
       of $P$ on $\cT'$} 
       and we denote it $\pi_{\cT'}(P)$. The map 
       $\pi_{\cT'} : \cT \to \cT$ is the {\bf attaching map} 
       of the closed subtree $\cT'$.
   \end{definition}

   Notice that the attaching map $\pi_{\cT'} : \cT \to \cT$ is {\em a retraction} onto $\cT'$. 
    Indeed: 
    $$\pi_{\cT'} \circ \pi_{\cT'} = \pi_{\cT'}  \mbox{ and } \mathrm{im} (\pi_{\cT'}) = \cT'.$$ 
   Sometimes we consider surjective attaching maps, by replacing the target 
   $\cT$ by $\mathrm{im} (\pi_{\cT'})$. The name we chose for $\pi_{\cT'}$ 
   is motivated by the fact that we think of 
   $\pi_{\cT'}(P)$ as the point where the smallest segment of $\cT$ (for the inclusion relation) 
   joining $P$ to $\cT'$ is {\em attached} to $\cT'$. 
 In the Figure \ref{fig:attach} is represented a tree $\cT$ and, 
   with heavier lines, a closed subtree $\cT'$. 
   We have also represented two points $A,B \in \cT$ and their attaching 
  points $\pi_{\cT'}(A), \pi_{\cT'}(B)$ on $\cT'$.

      \begin{figure}[h!] 

\vspace*{6mm}
\labellist \small\hair 2pt 
\pinlabel{$A$} at 58 144
\pinlabel{$B$} at 180 144
\pinlabel{$\pi_{\cT'}(A)$} at 30 72
\pinlabel{$\pi_{\cT'}(B)$} at 153 84
\endlabellist 
\centering 
\includegraphics[scale=0.70]{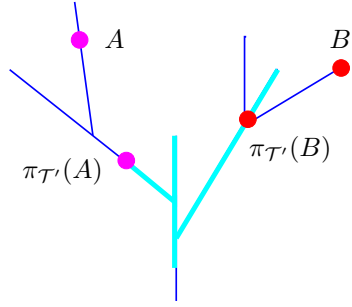} 
\caption{Attaching points on a subtree} 
\label{fig:attach}
\end{figure}

   One has the following property:
      \begin{lemma} \label{atinf}
           Let $\cT$ be an $\R$-tree. Then for any $A, B, C \in \cT$ one has: 
           \[ \pi_{[A, B]} ( C ) = \pi_{[B, C]}(A) = \pi_{[A, C]}(B). \]
           This point may also be characterized as the intersection 
           of the segments joining pairwise the points $A, B, C$. 
           If $\cT$ is rooted at $A$, then the previous point is equal to $B \wedge C$. 
      \end{lemma}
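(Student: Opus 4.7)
The plan is to reduce everything to the decomposition of the three pairwise segments around a common "median" point. Since an $\R$-tree is by Definition \ref{forgroot} the equivalence class of a rooted $\R$-tree structure which fixes the compact segments, we may pick a root to work with, and since the claim only involves compact segments, it suffices to verify it for that choice. I would root $\cT$ at $A$ and set $M := B \wedge_A C$, using condition (4) of Definition \ref{nonmetree}.

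The first step is to prove the key decompositions
\[ [A,B] = [A,M] \cup [M,B], \quad [A,C] = [A,M] \cup [M,C], \quad [B,C] = [M,B] \cup [M,C], \]
where in each case the two pieces meet only at $M$. The first two follow because $M \preceq_A B$ and $M \preceq_A C$ (so $[A,B]$ and $[A,C]$ are totally ordered convex subsets containing $M$). The third follows directly from the definition of the compact segment $[B,C]$ given after Definition \ref{nonmetree}, since $M = B \wedge_A C$ is precisely the infimum that appears there. An immediate consequence is the computation of all pairwise and triple intersections:
\[ [A,B] \cap [A,C] = [A,M], \quad [A,B] \cap [B,C] = [M,B], \quad [A,C] \cap [B,C] = [M,C], \]
and $[A,B] \cap [A,C] \cap [B,C] = \{M\}$; indeed any point on $[A,M]$ that also lies on $[M,B] \cup [M,C]$ must equal $M$, and similarly for the other pieces.

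The second step is to identify the attaching points. To show $\pi_{[A,B]}(C) = M$, by the uniqueness in Lemma \ref{attp} it suffices to verify that $M \in [A,B]$ and $[M,C] \cap [A,B] = \{M\}$. The first is clear, and the second follows from the decompositions above: $[M,C] \cap [A,M] = \{M\}$ and $[M,C] \cap [M,B] = \{M\}$. The same argument, with the roles of $A$, $B$, $C$ permuted, shows $\pi_{[A,C]}(B) = M$ and $\pi_{[B,C]}(A) = M$, establishing the equality of the three attaching points.

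Finally, the characterization as the unique element of the triple intersection $[A,B] \cap [B,C] \cap [A,C]$ was obtained in the first step, and the last sentence of the lemma is automatic from the choice $M = B \wedge_A C$, which is exactly $B \wedge C$ when $\cT$ is rooted at $A$. I do not expect a serious obstacle here; the only point requiring care is confirming that $[B,C]$ in the sense of Definition \ref{forgroot} really equals $[M,B] \cup [M,C]$ when computed via the $A$-rooted structure, which is ensured by the intrinsic nature of compact segments under change of root.
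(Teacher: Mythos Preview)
Your proof is correct and follows essentially the same approach as the paper's: both identify the median point and reduce to the finite subtree spanned by $[A,B]$, $[B,C]$, $[A,C]$. The paper simply observes that this subtree is either a star with three legs or a segment and declares the assertion clear in both cases; you carry out explicitly, via the decompositions around $M = B \wedge_A C$, what the paper leaves to the reader.
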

      
      \begin{proof}
      The constructions which allow to define the objects involved in this lemma 
      can be done inside the finite tree which is the union of the 
      segments $[A, B]$, $[B, C]$, and $[C, A]$. Generically, when no one of the three points lies 
      on the segment formed by the other two, this tree has the shape of a star with 
      three legs. Otherwise it is a segment. In any of these cases the assertion is clear. 
      \end{proof} 
      
      Let us introduce a standard name for the trees determined by three points:      

        \begin{definition}  \label{trip}
          If $A, B, C$ are three points of an $\R$-tree, then the union 
          of the segments $[A, B], [B, C], [C, A]$ is the {\bf tripod} generated 
          by them. Its {\bf center} $\langle A, B, C \rangle$ is the point 
          characterized in Lemma \ref{atinf}. 
      \end{definition}

   Notice that finite trees are compact for the weak topology. One has the following 
   characterization of the $\R$-trees which are also compact 
   when endowed with the weak topology (see \cite[Section 2.1]{J 15}):  

  \begin{theorem}  \label{R-theo}
      Let $\cT$ be an $\R$-tree. Let $(\cT_j)_{j \in J}$ be a (possibly infinite) collection 
      of finite subtrees of it. We assume that they form a projective system 
      for the inclusion partial order, 
      that is, for any $j, k \in J$, there exists $l \in J$ such that $\cT_j \subset \cT_l 
      \supset \cT_k$. When $\cT_j \subset \cT_l$, denote by $\pi_j^l: \cT_l \to \cT_j$ 
      the corresponding attaching map. Then:
        \begin{enumerate}
            \item the maps $\pi_j^l$ form a projective system of continuous maps; 
            \item their projective limit $\displaystyle{\lim_{\longleftarrow}} \: \cT_j$ is compact; 
            \item the attaching maps $\pi_j: \cT \to \cT_j$  glue into a continuous map 
               $\pi : \cT \to \displaystyle{\lim_{\longleftarrow}}  \: \cT_j$;
            \item \label{sufmany} 
                if for any two distinct points $A, B \in \cT$, there exists a tree $\cT_j$ 
                such that $\pi_j(A) \neq \pi_j(B)$, then the map $\pi$ is a homeomorphism 
                onto its image. 
            \item $\cT$ is compact if and only if $\pi$ is a homeomorphism onto 
                $\displaystyle{\lim_{\longleftarrow}} \: \cT_j$.
        \end{enumerate} 
  \end{theorem}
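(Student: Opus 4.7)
The overall plan is to prove the five assertions in order, treating (1)--(3) as formal consequences of the definitions and Lemma \ref{attp}, and concentrating the real work on the topological content of (4) and (5).

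For (1), I would first establish that for any inclusion $\cT_j \subset \cT_l$ of closed subtrees of $\cT$ one has $\pi_{\cT_j} = \pi_{\cT_j} \circ \pi_{\cT_l}$. This is immediate from the uniqueness of attaching points in Lemma \ref{attp}: the attaching point of $P$ on $\cT_j$ is also the attaching point on $\cT_j$ of any point on the segment $[P, \pi_{\cT_j}(P)]$, in particular of $\pi_{\cT_l}(P)$. Continuity of $\pi_{\cT_j}^{\cT_l}$ in the weak topology follows because for each direction $\delta$ at a point $Q \in \cT_j$, the preimage $\pi_{\cT_j}^{-1}(\delta \cup \{Q\}) \setminus \{Q\}$ is itself a union of directions at $Q$ in $\cT_l$. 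Part (2) is then Tychonoff applied to the compact Hausdorff spaces $\cT_j$ (each being a finite tree), since a projective limit of such spaces is a closed subset of the product. Part (3) follows from (1) by the universal property of the projective limit: the compatibility $\pi_j = \pi_j^l \circ \pi_l$ on $\cT$ (again by uniqueness) produces $\pi$, whose continuity is inherited coordinate by coordinate.

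For (4), injectivity of $\pi$ is precisely the separation hypothesis. To upgrade this to a homeomorphism onto the image, I would show that the weak topology on $\cT$ coincides with the topology pulled back from $\displaystyle{\lim_{\longleftarrow}} \cT_j$, i.e.\ that every direction $\delta$ at a point $P \in \cT$ is a pullback of an open set of some $\cT_j$. Fix $\delta$ and pick $Q \in \delta$; by hypothesis there exists $j$ with $\pi_j(P) \neq \pi_j(Q)$, and enlarging $\cT_j$ if necessary I may assume $P, Q \in \cT_j$. Then $\delta$ contains, and up to adding finitely many further trees equals, the preimage under $\pi_j$ of the direction at $\pi_j(P)$ in $\cT_j$ towards $\pi_j(Q)$. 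Combining injectivity with this identification of topologies gives that $\pi$ is a homeomorphism onto its image.

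For (5), the ``if'' direction is trivial since the projective limit is compact by (2). For the ``only if'' direction, assuming $\cT$ is compact, the image $\pi(\cT)$ is automatically compact, hence closed in the Hausdorff space $\displaystyle{\lim_{\longleftarrow}}\cT_j$; the separation hypothesis of (4) now follows from compactness together with Hausdorffness of $\cT$ (two distinct points have disjoint neighborhoods, and shrinking these to directions realized inside some $\cT_j$ separates them via $\pi_j$), so $\pi$ is a homeomorphism onto $\pi(\cT)$. The main remaining obstacle is surjectivity: given a compatible family $(P_j)_{j \in J} \in \displaystyle{\lim_{\longleftarrow}} \cT_j$, one must produce $P \in \cT$ with $\pi_j(P) = P_j$ for all $j$. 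I would argue that the closed sets $\pi_j^{-1}(P_j) \subset \cT$ form a filtered family of nonempty closed subsets of the compact space $\cT$ (nonempty because $P_j \in \cT_j \subset \cT$ itself lies in $\pi_j^{-1}(P_j)$, and filtered by the projective system structure combined with the compatibility $\pi_j^l(P_l) = P_j$), so by compactness their intersection is nonempty, and any point in the intersection is a preimage of the given family.

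The most delicate step is the topology-matching argument in (4): one must confirm that every basic open set of the weak topology on $\cT$, namely a direction at an arbitrary point, is cut out by finitely many conditions coming from the finite subtrees $\cT_j$. This rests on having enough subtrees $\cT_j$ to realize both the branching structure at $P$ and any chosen point in the direction, which is exactly what the separation hypothesis provides together with the filtered nature of the system.
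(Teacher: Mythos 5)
The paper does not actually prove Theorem \ref{R-theo} --- it is quoted from \cite[Section 2.1]{J 15} --- so your argument has to stand on its own. Parts (1)--(3), the ``if'' half of (5), and your surjectivity argument (the fibres $\pi_j^{-1}(P_j)$ have the finite intersection property in the compact space $\cT$) are essentially correct; in (1) you should prove directly that $\pi_{\cT_j}^{-1}(\delta)$ is a union of directions at $Q$ (the set $\pi_{\cT_j}^{-1}(\delta\cup\{Q\})\setminus\{Q\}$ you consider is strictly larger than the preimage of the subbasic open $\delta$, so its openness does not by itself give continuity --- but the same argument applies to $\pi_{\cT_j}^{-1}(\delta)$). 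The genuine problems are in (4) and in the ``only if'' half of (5).

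In (4), the family $(\cT_j)_{j\in J}$ is an arbitrary directed, separating family: nothing entitles you to ``enlarge $\cT_j$ so that $P,Q\in\cT_j$'' --- there may be no member of the family containing $P$ or $Q$ at all. Worse, the statement you reduce to, namely that a direction $\delta$ at $P$ equals (after adding finitely many trees) a finite intersection of pullbacks, is false in general: let $\cT$ be a segment $[A,B]$ with extra segments attached at points $p_n$ increasing to the end $B$, and let the family consist of all finite subtrees avoiding $B$; it is directed and separating, but every member retracts a whole punctured interval $(c,B)$, $c\neq B$, onto the single point $c$, so no finite intersection of pullbacks cuts out the direction $\cT\setminus\{B\}$ at $B$. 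What the proof of (4) actually needs is only that pullbacks give a neighbourhood \emph{basis} at each point: given $A\in\delta$, separation yields $j$ with $a:=\pi_j(A)\neq p:=\pi_j(P)$; one checks that $a,p\in[A,P]$ with $a$ on the side of $A$, and that $\pi_j^{-1}(W)$, where $W$ is the direction at $p$ in $\cT_j$ containing $a$, is contained in the direction at $p$ in $\cT$ containing $a$, hence in $\delta$, and contains $A$. In (5), the separation hypothesis cannot be derived from compactness of $\cT$: it is a hypothesis on the family, not on $\cT$ (take $\cT=[0,1]$ and the one-member family $\{\,\{0\}\,\}$: directed, $\cT$ compact, $\pi$ constant). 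Your parenthetical ``shrinking these to directions realized inside some $\cT_j$'' presupposes a richness of the family that is not given; statement (5) must be read with the hypothesis of (4) in force, and once that is assumed your compactness/FIP argument does finish the proof.
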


   This theorem shows also that compact $\R$-trees may be studied using \emph{sufficiently 
   many} (in the sense of condition (\ref{sufmany}))  of their finite subtrees. 
   
 We will use Theorem \ref{R-theo} in order to prove Theorem \ref{proj_lim}, 
      stated briefly in the title of this paper.

\section{Curve singularities and their Eggers-Wall trees } \label{curves-EW}

In this section we explain the basic notations and conventions used throughout the paper 
about reduced germs $C$ of curves on smooth surfaces.  Then we define the 
Eggers-Wall tree of such a germ relative to a smooth branch contained in it 
(see Definition \ref{def:EW}), as well as 
three natural real-valued functions defined on it, the exponent, the index and the contact 
complexity. We recall how this last function may be expressed in terms of the 
intersection numbers of the branches of $C$ (see Theorem \ref{intcomp}). 
Remark \ref{histcomm}  contains historical comments about the notion of Eggers-Wall tree.

\medskip

All over the text, $S$ denotes a smooth germ of complex algebraic or analytic surface and $O$ its 
special point. We denote by  $\mathcal{O}$  the formal local ring of $S$ at $O$ (the completion of the 
ring of germs at $O$ of holomorphic functions on $S$), by 
 $\mathcal{F}$ its field of fractions, and by  $\mathcal{M}$  its maximal ideal.  
 
A {\bf branch} on $S$ is a germ at $O$ of formal irreducible curve drawn on $S$. 
 A {\bf divisor} on $S$ is an element of the free abelian group generated by the branches on 
 $S$. A divisor is called {\bf effective} if it belongs to the free abelian monoid generated 
 by the branches. 

If $f \in \mathcal{F} \setminus \{ 0 \}$, we denote by $Z(f)$  its \textbf{divisor}. 
This divisor is effective if and only if $f \in \mathcal{O}$.
 If $D$ is an effective divisor through $O$, we denote by $\mathcal{O}(-D)$ the 
     ideal of $\mathcal{O}$ consisting of those functions which vanish along it.  
     As $S$ is smooth, this ideal is principal. Any generator of it is a {\bf defining function} of $D$.            
          The ring  $\mathcal{O}_D := \mathcal{O}/\mathcal{O}(-D) $ is  the local ring of $D$.

A \textbf{model} of $S$ is a proper birational morphism $\psi: (\Sigma, E) \to (S, O)$,
where $\Sigma$ is a \emph{smooth} surface and
the restriction $\psi_{| \Sigma \: \setminus \: E} \colon \Sigma \: \setminus \: E \to S \: \setminus \: \{ O \}$ 
is an isomorphism.  
The preimage $E= \psi^{-1} (O)$, seen as a reduced divisor on $\Sigma$, 
is the \textbf{exceptional curve} of the model $\Sigma$ (or of the morphism $\psi$). 
A point of $E$ is called an \textbf{infinitely near point} of $O$. 
By a theorem of Zariski, $\psi$ is a composition of blowing ups of points, thus
the irreducible components $(E_j)_{j \in J}$ 
of the {\bf exceptional curve}  $E$ are projective lines (see
\cite[Vol.1, Ch. IV.3.4, Thm.5]{S 94}).

\medskip

A {\bf local coordinate system} on $S$ is a pair $(x,y) \in \mathcal{O}$ 
establishing an isomorphism of $\C$-algebras,
$\cO \simeq \C[[x,y]]$, 
 where $\C[[x,y]]$ denotes the $\C$-algebra of formal power series in the variables 
 $x$ and $y$.  
 
 \medskip
 
 The $\C$-algebra $\C[[t]]$ of formal power series in a variable $t$ is endowed 
with the {\bf order} valuation $\nu_t$ which associates to every series the lowest exponent of its terms. 
This ring allows to \emph{parametrize} the branches on $S$:

\begin{definition}  \label{defparam}
   Let $C$ be a branch on $S$. A {\bf parametrization} of $C$ 
   is a germ of formal map $(\C,0) \to (S,O)$ whose image is $C$, 
   that is, algebraically speaking, a morphism $\mathcal{O} \to \C[[t]]$ 
   of $\C$-algebras whose kernel is the principal ideal $\mathcal{O}(-C)$. 
   The parametrization is called {\bf normal}
    if this map is a {\bf normalization} 
   of $C$, that is, if it is of degree one onto its image or, algebraically speaking,  
   if the associated map $\mathcal{O}_D \to \C[[t]]$ induces an 
   isomorphism at the level of fields of fractions. 
\end{definition}

 \begin{example}
   Assume that one works with local coordinates $(x,y)$. Then the branch $C= Z(y^2 - x^3)$ 
   may be parametrized by $(x= t^2, y= t^3)$ and also by $(x= t^4, y= t^6)$. Only 
   the first parametrization is normal. 
 \end{example}

Let $C$ be a \emph{reduced} germ of complex analytic 
curve at $O$, possibly having several branches $(C_i)_{i \in I}$, 
which are by definition the irreducible components of $C$. 
We think also about $C$ as an effective divisor, which allows us to write 
$C = \sum_{i \in I} C_i$. 
We write $C\subseteq D$ if $D$ is another reduced germ containing $C$. 
In such a case, $D-C$, thought as a difference of divisors,
denotes the union of the branches of $D$ which are not branches of $C$. 
We denote by $m_O(C)$ the {\bf multiplicity} of $C$ at $O$. 
If $C$ is defined by $f \in \mathcal{O}$, and if a local coordinate 
system $(x,y)$ is fixed, allowing to express $f$ as a formal 
series in $(x,y)$, then the multiplicity $m_O(C)$ is equal to 
the least total  degree of the monomials appearing in this series. 
One has  $m_O(C) = \sum_{i \in I} m_O(C_i).$

If $D_1$ and $D_2$ are two effective divisors through $O$, we denote by
 $(D_1\cdot D_2)$ 
their {\bf intersection number} at $O$ (also called intersection multiplicity). 
By definition, it is equal to $\infty$ if and only 
if the supports of $D_1$ and $D_2$ have a common branch. If 
$D_k = Z(f_k)$, for $k=1, 2$ then we have that
$(D_1 \cdot D_2) = \dim_{\C} \mathcal{O}/(f_1, f_2).$  
 If one of the two divisors $D_k$ is a branch, for instance $D_1$, then the 
 intersection multiplicity may be computed as the order $\nu_{t_1}(f_2\circ \phi_1)$ 
 in $t_1$ of the series 
 $f_2\circ \phi_1$, where $\phi_1: (\C_{t_1},0) \to (S,O)$ is a normal  
 parametrization of $D_1$ (see \cite[Proposition II.9.1]{BHPV 04})). 
 
 \begin{example}
     Assume that $D_1= Z(y^2 - x^3)$ and $D_2 = Z(y^2 - 2 x^3)$. Both are branches 
    and $(x= t_1^2, y= t_1^3)$ is  a normal parametrization of $D_1$. Therefore:
      $$(D_1 \cdot D_2) = \nu_{t_1}((t_1^3)^2 - 2 (t_1^2)^3)= \nu_{t_1}(- t_1^6) = 6.$$
  \end{example}

 Note that a pair $(x,y) \in \cO^2$ defines a local coordinate system on $S$ if and only if 
 the germs $Z(x)$ and $Z(y)$ are {\bf transversal} smooth branches, that is, if and only 
 if $(Z(x) \cdot Z(y)) =1$.

\medskip

One can study a reduced germ $C$, also called a {\bf plane curve singularity}, 
by using Newton-Puiseux series:

\begin{definition}   \label{def:NPseries}
    A {\bf Newton-Puiseux series} $\eta$ 
    in the variable $x$ is a power series of the form 
   $\psi(x^{1/n})$, where $\psi(t) \in \C[[t]]$  and   $n \in \N^*$. 
    For a fixed $n \in \N^*$, they form the ring $\C[[x^{1/n}]]$. 
    Its field of fractions is denoted $\C((x^{1/n}))$.
If $\eta \in \C[[x^{1/n}]]\:  \setminus \: \{0\}$, then its {\bf support} is the set $\Supp(\eta)$ 
    of exponents of $\eta$ with non-zero coefficient. 
  
   \noindent
  Denote by: 
   \[ \C[[ x^{1/ \N} ]]:= \bigcup_{n \in \N^*} \C[[ x^{1/n}]] \]
   the local $\C$-algebra of Newton-Puiseux series in the variable $x$. 
   The algebra  $\C[[ x^{1/ \N} ]] $  is endowed with the natural {\bf order} valuation:
    \[\nu_x :  \C[[ x^{1/ \N} ]] \longrightarrow \Q_+\cup \{ \infty \}\]
      which associates to each series $\eta = \psi(x^{1/n}) \in \C [[ x^{1/n} ]]$
       the minimum of its support.
\end{definition}

\medskip
 
 Assume that a coordinate system $(x,y)$ is fixed.
 Let $A$ be a branch on $S$ different 
 from $L =Z(x)$. Relative to the coordinate system $(x,y)$, it 
 may be defined by a Weierstrass polynomial 
 $f_A \in \C [[x]][y]$, which is monic, irreducible and of degree $d_A= (L \cdot A)$. 
 For simplicity, we mention only the dependency on $A$, not on the coordinate system $(x,y)$.

 By the Newton-Puiseux theorem, $f_A$ has $d_A$ roots inside $\C [[x^{1/ d_A}]]$. 
We denote by $\mathrm{Zer} (f_A)$ the set of these roots, which are called 
the {\bf Newton-Puiseux roots} of $A$ with respect to the coordinate system $(x,y)$.
These roots  can be obtained from a fixed one  $\eta= \psi(x^{1/ d_A})$ by 
 replacing $x^{1/ d_A}$ by $\gamma \cdot x^{1/ d_A}$, for  $\gamma$ running through  
the  $d_A$-th roots of $1$.

 \medskip

Therefore, all 
 the Newton-Puiseux roots of the branch $A$ have the same exponents. Some of those exponents 
 may be distinguished by looking at the differences of roots:

 \begin{definition}  \label{def:charexp}
     The {\bf characteristic exponents of the branch $A$ relative to $L$} are the $x$-orders 
     $\nu_x(\eta - \eta')$ 
     of the differences between distinct Newton-Puiseux roots $\eta, \eta'$ 
     of $A$ in the coordinate system $(x,y)$. 
 \end{definition}
 
 The fact that we mention only the dependency on $L$ and not on the full coordinate 
 system $(x,y)$ is explained by Proposition  \ref{smoothcv} below.
 The characteristic exponents may be read from a given Newton-Puiseux root 
 $\eta \in \C [[x^{1/ d_A}]]$ of $f_A$ by looking at the increasing sequence of exponents 
 appearing in $\eta$ and by keeping those which cannot be written as a quotient of 
 integers with the same smallest common denominator as the previous ones. In this 
 sequence, one starts from the first exponent which is not an integer. 
 
 One may find information about the history of the notion of \emph{characteristic exponent}  
 in \cite[Section 2]{GBGPPP 17a}. 
 
 \medskip 
We keep assuming that $A$ is a branch. 
 The \emph{Eggers-Wall segment of $A$ relative to $L$} is a geometrical way of encoding 
 the set of characteristic exponents, as well as the sequence of their successive 
 common denominators:

 \begin{definition} \label{def:EWirr}
     The {\bf Eggers-Wall segment} $\Theta_L(A)$ of the branch $A$ relative to $L$ 
     is a compact oriented segment endowed with the following 
     supplementary structures: 
         \begin{itemize} 
             \item  an increasing homeomorphism $\ex_{L,A} : \Theta_L(A) \to [0, \infty]$, 
                the {\bf exponent function}; 
             \item  marked points, which are by definition the points whose values by the exponent function 
                 are the characteristic exponents of $A$ relative to $L$, as well as the smallest 
                 end of $\Theta_L(A)$, labeled by $L$, and the greatest end, labeled by $A$. 
             \item an {\bf index function}  $\de_{L,A}: \Theta_L(A) \to \N$, which associates 
                 to each point $P \in \Theta_L(A)$ the index of $(\Z, +)$ in the 
                 subgroup of $(\Q, +)$ generated by $1$ and the characteristic exponents 
                 of $A$ which are strictly smaller than $\ex_{L,A}(P)$. 
          \end{itemize}
 \end{definition}
 
 The index $\de_{L, A}(P)$ may be also seen as the smallest common denominator 
 of the exponents of a Newton-Puiseux root of $f_A$ which are strictly less than 
 $\ex_{L,A}(P)$.

Let us consider now the case of a reduced curve with several branches. 
In this case, one may associate it an analog of the Eggers-Wall segment of one 
branch, its \emph{Eggers-Wall tree}. In order to construct this tree, one needs to know not only 
the characteristic exponents of its branches, but also the \emph{orders of coincidence} 
of its pairs of branches:

\begin{definition}  \label{def:ordcoin}
 If $A$ and $B$ are two distinct branches, which are also distinct from $L$, 
 then their {\bf order of coincidence} relative to $L$ is defined by:
      $$k_L(A, B):=\max  \{ \nu_x(\eta_A - \eta_B) \: | \: \eta_A  
         \in  \mathrm{Zer} (f_A),   \: \:    \eta_B  \in \mathrm{Zer} (f_B) \}  \in  \Q_+^* . $$
\end{definition}

Informally speaking, the order of coincidence is the greatest rational number $k$ for which 
one may find Newton-Puiseux roots of the two branches coinciding up to that 
number ($k$ excluded). 

Note that the order of coincidence is symmetric: $k_L(A,B) = k_L(B,A)$, similarly to 
the intersection 
number of the two branches. But, unlike the intersection number, it depends not only on the 
branches $A$ and $B$, but also on the choice of branch $L$. 
Nevertheless, the two numbers are related, 
as explained in Theorem \ref{intcomp} below.

 \begin{definition} \label{def:EW}
      Let $C$ be a reduced germ of curve on $S$. 
      Let us denote by $\mathcal{I}_C$ the set of irreducible components of $C$ 
      which are different from $L$. 
     The {\bf Eggers-Wall tree $\Theta_L(C )$ of $C$ relative to $L$} 
     is the rooted tree obtained 
     as the quotient of the disjoint union of the individual Eggers-Wall segments $\Theta_L(A)$, 
     $A \in \mathcal{I}_C$,  by the following equivalence relation. 
    If $A, B \in \mathcal{I}_C$, then the 
     gluing of $\Theta_L(A)$ with $\Theta_L(B)$ is done along the initial segments 
     $\ex_{L,A}^{-1}[0, k_L(A,B)]$ and $\ex_{L,B}^{-1}[0, k_L(A,B)]$ by:
      \[
      \ex_{L,A}^{-1}(\alpha) \sim \ex_{L,B}^{-1}(\alpha), \: \mbox{ for all } \:  
         \alpha \in [0, k_L(A,B)].
      \]
     One endows $\Theta_L(C)$ with the {\bf exponent function} 
     $\ex_L : \Theta_L(C) \to  [0, \infty]$ and the 
     {\bf index function} $\de_L: \Theta_L(C) \to \N$ obtained by 
     gluing the initial exponent 
     functions $\ex_{L, A}$ and $\de_{L, A}$ respectively, for 
     $A$ varying among the irreducible components of $C$ different from $L$. 
     If $L$ is an irreducible component of $C$, then the tree $\Theta_L(L )$ is 
     the trivial tree with  vertex set  a singleton, whose element is labelled by $L$. 
     The marked point $L \in \Theta_L (L)$ is  identified with the root of $\Theta_L(A)$
      for any $A \in \mathcal{I}_C$. 
  \end{definition}

The fact that in the previous notations $\Theta_L( C), \ex_L, \de_L$ we mentioned 
only the dependency on $L$, and not the whole coordinate system $(x,y)$, 
comes from the following fact (see \cite[Proposition 26]{GBGPPP 17b}):

\begin{proposition}  \label{smoothcv}
 The Eggers-Wall tree  $\Theta_L(C)$, 
 seen as a rooted tree endowed with the exponent function $\ex_L$ and the index  
 function $\de_L$, depends only on the pair $(C, L)$, where $L$ is defined 
 by $x=0$.
\end{proposition}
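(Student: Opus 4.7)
The statement asserts that $\Theta_L(C)$ with its functions $\ex_L, \de_L$ depends only on $(C, L)$ and not on the full coordinate system (the first coordinate being pinned down up to unit by $L = Z(x)$). Since $\Theta_L(C)$ is assembled by gluing the per-branch segments $\Theta_L(A)$ along initial sub-segments of length $k_L(A, B)$, and since $\ex_L$ and $\de_L$ restrict to the per-branch exponent and index functions, the plan is to reduce invariance to two assertions about arbitrary branches $A, B$ of $C$ distinct from $L$: (i) the characteristic exponents of $A$ relative to $L$ are unchanged, and (ii) the order of coincidence $k_L(A, B)$ is unchanged. Invariance of $\de_L$ is then automatic, since its value at a point equals the smallest common denominator of the characteristic exponents strictly preceding it.

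Any two coordinate systems $(x, y)$ and $(x', y')$ with $Z(x) = Z(x') = L$ are linked through the intermediate system $(x', y)$, so I would treat separately the $x$-change $x' = u(x, y) \cdot x$ (with $u$ a unit in $\mathcal{O}$) and the $y$-change $y' = \phi(x, y)$ (with $\partial_y \phi(0, 0) \neq 0$, required for invertibility of the Jacobian). For the $x$-change, composing with a normal parametrization $t \mapsto (t^n, \eta(t))$ of each branch yields a reparametrization $t \mapsto t'$ of the uniformizer by a formal unit; this preserves the $\nu_t$-order of every series in $t$ and hence the $\nu_x$-order of every Newton-Puiseux root, so (i) and (ii) follow at once in this case.

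The substantive step is the $y$-change. If $\eta_A(x)$ is a Newton-Puiseux root of $f_A$ in $(x, y)$, then $\eta_A'(x) := \phi(x, \eta_A(x))$ is a Newton-Puiseux root of $f_A$ in $(x, y')$. Expanding $\phi(x, y) = \sum_{k \geq 0} \phi_k(x) y^k$ and telescoping gives
\[
\phi(x, \eta_1) - \phi(x, \eta_2) \;=\; (\eta_1 - \eta_2) \cdot \sum_{k \geq 1} \phi_k(x) \bigl( \eta_1^{k-1} + \eta_1^{k-2} \eta_2 + \cdots + \eta_2^{k-1} \bigr),
\]
an identity in $\C[[x^{1/N}]]$ for a common denominator $N$. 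Since Newton-Puiseux roots of Weierstrass polynomials necessarily vanish at $x = 0$, the constant term of the second factor equals $\phi_1(0) = \partial_y \phi(0, 0) \neq 0$, so that factor is a unit. Therefore $\nu_x(\eta_1' - \eta_2') = \nu_x(\eta_1 - \eta_2)$; specializing $\eta_1, \eta_2$ to two roots of $f_A$ proves (i), while specializing them to roots of $f_A$ and $f_B$ that realize the maximum in Definition \ref{def:ordcoin} proves (ii).

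The main obstacle is relatively mild and is almost entirely formal bookkeeping: one must choose a single ring $\C[[x^{1/N}]]$ in which all the Newton-Puiseux roots under consideration and their images under $\phi$ live simultaneously, verify that the substitution $\phi(x, \eta(x))$ is well defined (which it is, because $\nu_x(\eta) > 0$), and note that the branch-dependent reparametrization arising in the $x$-change nevertheless acts as a $\nu_x$-preserving transformation on each individual branch and compatibly on pairs. Once this formal framework is in place, nothing else in the argument is conceptually delicate.
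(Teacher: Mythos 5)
Your reduction is sound, and half of your argument is solid; but note first that the paper itself contains no proof of Proposition \ref{smoothcv} to compare with --- it simply cites \cite[Proposition 26]{GBGPPP 17b} --- so your attempt must stand on its own. The reduction to invariance of (i) the characteristic exponents of each branch and (ii) the coincidence orders $k_L(A,B)$, the factorization of an arbitrary change with $Z(x)=Z(x')=L$ through the intermediate system $(x',y)$, and the treatment of the $y$-change $y'=\phi(x,y)$ are all correct: the telescoping identity together with the fact that the cofactor is a unit (constant term $\partial_y\phi(0,0)\neq 0$, since all roots of Weierstrass polynomials have positive $x$-order) gives $\nu_x(\phi(x,\eta_1)-\phi(x,\eta_2))=\nu_x(\eta_1-\eta_2)$, which is exactly the right tool. (For (ii) you should also say that $\eta\mapsto\phi(x,\eta)$ is a bijection between the two sets of Newton--Puiseux roots --- injectivity follows from the identity, surjectivity from the equal cardinalities $(L\cdot A)$ --- so that the maximum in Definition \ref{def:ordcoin} is preserved in both directions, not just bounded below.)

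The genuine gap is the $x$-change $x'=u(x,y)\,x$. What your reparametrization remark gives ``at once'' is that the order of each \emph{individual} root is preserved; but $\ex_L$, $\de_L$ and the gluing are governed by orders of \emph{differences} of roots, and under this change the difference of two new roots is not a reparametrization of the difference of the two old ones. Concretely, for a branch $A$ with $n=(L\cdot A)$ and root $\eta(t)$, $t=x^{1/n}$, one has $x'=U(t)\,t^n$ along $A$ with $U(t)=u(t^n,\eta(t))$, so the new roots are $\eta(\omega(\zeta^a s))$, $s=x'^{1/n}$, where $\omega$ inverts $t\mapsto t\,U(t)^{1/n}$ and $\zeta$ is a primitive $n$-th root of unity. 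When $u$ genuinely depends on $y$, $\omega$ does not commute with $t\mapsto\zeta t$, so the two summands in $\eta(\omega(\zeta^a s))-\eta(\omega(\zeta^b s))$ are reparametrized by \emph{different} substitutions, and concluding that the order is still $d=\nu_t(\eta(\zeta^a t)-\eta(\zeta^b t))$ requires a real estimate: e.g.\ apply your own telescoping trick to $u$ to get $U(\zeta^a t)\equiv U(\zeta^b t)\ \bmod\ t^{d}$, check that this congruence survives extracting the $n$-th root and inverting the automorphism, so that the two substitutions agree $\bmod\ t^{d+1}$ and the error term has order at least $d+\nu_t(\eta)>d$. The same issue, now with different denominators $n_A\neq n_B$ and two unrelated reparametrizations, affects the cross-branch differences computing $k_L(A,B)$: your phrase ``compatibly on pairs'' is precisely the statement that needs proof, not bookkeeping. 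Only the special case $u=u(x)$, where the substitution is common to all roots of all branches, is immediate. So either supply this estimate, or bypass it by a coordinate-free characterization (for instance recovering $k_L(A,B)$ and the characteristic exponents from intersection numbers in the spirit of Theorem \ref{intcomp}); as written, the $x$-change step does not follow ``at once,'' and it is in fact the more delicate half of the proposition.
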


When $L$ is generic with respect to $C$, the Eggers-Wall tree $\Theta_L (C)$ 
is in fact independent of it (see \cite[Theorem 4.3.8]{W 04}).

\medskip

Note that the index function $\de_L$ is constant on each segment 
$(\mathtt{P}(V) V]$  of $\Theta_L(C)$, where $\mathtt{P}$ denotes the parent map
introduced in Definition \ref{predinit}. Here $V$ denotes any 
vertex of the marked tree $\Theta(C)$ which is different from the root $L$. 
Moreover, the set of marked points is determined by the topological structure 
of $\Theta_L(C)$ and by the knowledge of the index function, as the reader 
may easily verify:

\begin{lemma} \label{encod}
    The set of marked points of the Eggers-Wall tree $\Theta_L(C)$ is the 
    union of the following sets:
      \begin{itemize}
          \item the set of ends, consisting of the root $L$ and the leaves 
               $A \in \mathcal{I}_C \setminus \{ L \}$; 
          \item the set of ramification points;
          \item the set  of points of discontinuity  of the index function. 
       \end{itemize}
\end{lemma}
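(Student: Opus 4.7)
The plan is to prove the equality of sets by verifying both inclusions, with the key observation that the index function $\de_L$ restricts on each segment $[L,A] \subseteq \Theta_L(C)$ to the single-branch index function $\de_{L,A}$, whose discontinuities are by definition located exactly at the characteristic exponents of $A$. This allows us to trade the ``discontinuity'' description in the lemma for the more structural ``characteristic exponent'' description coming from Definition \ref{def:EWirr}.

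For the inclusion $(\supseteq)$, I would argue as follows. The root $L$ and every leaf $A \in \mathcal{I}_C \setminus \{L\}$ is declared a marked point of the individual segment $\Theta_L(A)$ by Definition \ref{def:EWirr}, and these markings survive the gluing procedure of Definition \ref{def:EW}. Any topological ramification point of $\Theta_L(C)$ must be a vertex of the underlying marked finite tree: an edge, being an abstract pair of vertices realized as an open interval, cannot contain a point where three or more directions meet, so every ramification point is forced to be marked (in line with Definitions \ref{combtree} and \ref{fintree}). Finally, if $\de_L$ is discontinuous at $P$, then the restriction of $\de_L$ to some segment $[L,A]$ through $P$ is discontinuous there, which by the defining formula of $\de_{L,A}$ in Definition \ref{def:EWirr} means that $\ex_L(P)$ is a characteristic exponent of $A$, hence a marked point of $\Theta_L(A)$, hence of $\Theta_L(C)$.

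For the reverse inclusion $(\subseteq)$, one inspects the sources of marked points in the gluing construction: they are the ends and characteristic exponents of the individual segments $\Theta_L(A)$, together with the junction points required to make the combinatorial structure compatible with the topology. A marked point that is an end of some $\Theta_L(A)$ becomes either the root $L$ or a leaf $A$ in $\Theta_L(C)$. A marked point that is a characteristic exponent of some branch $A$ is a discontinuity of $\de_{L,A}$, hence of $\de_L$. Any marked point introduced purely by the gluing (i.e., not inherited from a single segment) sits at the boundary of a glued initial portion, which is exactly where different branches first split apart, so such a point is a ramification point.

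The main obstacle, and the part requiring the most care, is establishing the precise correspondence between discontinuities of $\de_L$ on $\Theta_L(C)$ and characteristic exponents of branches. One must use that $\de_{L,A}(P)$ is the smallest common denominator of the characteristic exponents of $A$ \emph{strictly} less than $\ex_{L,A}(P)$ to verify that $\de_{L,A}$ is locally constant on any half-open subsegment that avoids a characteristic exponent, and that a genuine jump occurs precisely when $\ex_{L,A}(P)$ itself is a characteristic exponent (where the new denominator enters the lcm). Combined with the fact that on the common initial portion of two segments $\Theta_L(A)$ and $\Theta_L(B)$ the functions $\de_{L,A}$ and $\de_{L,B}$ agree, this local analysis shows that discontinuities of $\de_L$ at a ramification point $P$ are detected in at least one of the directions at $P$, and always correspond to $\ex_L(P)$ being a characteristic exponent of some branch through $P$, closing the loop with Lemma \ref{encod}.
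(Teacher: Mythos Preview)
Your argument is correct. The paper does not actually supply a proof of this lemma: it is stated immediately after the remark that ``the reader may easily verify'' it, so there is no proof in the paper to compare yours against. Your two-inclusion argument, reducing the content of the lemma to the fact that the characteristic exponents of a branch $A$ are exactly the discontinuities of $\de_{L,A}$ on the segment $\Theta_L(A)$ (which follows from the ``strictly smaller'' clause in Definition~\ref{def:EWirr} together with the fact that each new characteristic exponent genuinely enlarges the denominator), is precisely the verification the authors leave implicit.
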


Any ramification point of $\Theta_L (C)$ is of the form $A \wedge_L B$ for $A, B \in \mathcal{I}_C$. 
Here, the point  $A \wedge_L B$, which has exponent equal to $k_L(A, B)$, 
  is the infimum 
  of the leaves of $\Theta_L(C)$ labeled by $A$ and $B$, 
  relative to the partial order on the set of vertices of $\Theta_L(C)$ 
  defined by the root $L$ (see Definition \ref{finrtree}). 
Note that the first set  in Lemma \ref{encod}
is disjoint from the two other ones, but that the second and the third 
one may have elements in common, as may be seen in Example \ref{EWmany}, 
in which $3$ of the $4$ ramification points are also points of discontinuity of the 
index function.

 \begin{remark} \label{encod2}
     By Lemma \ref{encod}, the Eggers-Wall tree $\Theta_L (C)$ is determined by its finite affine tree
   equipped with the exponent function and the index function (see Definition \ref{combtree}).
 \end{remark}

  \begin{example} \label{EWmany}
      Consider a plane curve singularity $C = \sum_{i=1}^5 C_i$ whose branches 
     $C_i$ are defined by the Newton-Puiseux series $\eta_i$, where:
      \[
       \eta_1 = x^2, \quad 
       \eta_2 =  x^{5/2} + x^{8/3}, \quad 
       \eta_3 = -x^{5/2} + x^{11/4}, \quad 
       \eta_4 = x^{7/2}  +  x^{17/4}, \quad 
       \eta_5 = x^{7/2} + 2 x^{17/4} + x^{14/3}.
       \]   
     We will denote simply $k$ instead of $k_L$, where $L = Z(x)$. 
      One has $k(C_1, C_2) = k(C_1, C_3) = k(C_1, C_4) = k(C_1, C_5) =2$, 
      $k(C_2, C_4) = k(C_2, C_5) = 5/2$, $k(C_2, C_3) = 8/3$, $k(C_3,C_4)=k(C_3,C_5)=5/2$, 
      $k(C_4, C_5) = 17/4$ 
    and the Eggers-Wall tree of $C$ relative to $L$ is drawn in Figure \ref{fig:EWfive}. 
    Observe that $C_3$ admits also as Newton-Puiseux series  
    $\tilde{\eta}_3 :=\eta_3(ix^{1/4})= x^{5/2}  - ix^{11/4}$ and that  
    $k(C_2, C_3) =\nu_x(\eta_2-\tilde{\eta}_3)>\nu_x(\eta_2-\eta_3)$.
 \end{example}

\begin{figure}[h!] 
\vspace*{6mm}
\labellist \small\hair 2pt 
\pinlabel{$L$} at 160 -10
\pinlabel{$C_1$} at 266 185
\pinlabel{$C_2$} at 300 315
\pinlabel{$C_3$} at 200 370
\pinlabel{$C_4$} at 110 370
\pinlabel{$C_5$} at 0 375

\pinlabel{$\mathbf{0}$} at 140 6
\pinlabel{$\mathbf{2}$} at 107 82
\pinlabel{$\mathbf{5/2}$} at 80 120
\pinlabel{$\mathbf{7/2}$} at 44 215
\pinlabel{$\mathbf{17/4}$} at 17 260
\pinlabel{$\mathbf{14/3}$} at -10 310
\pinlabel{$\mathbf{8/3}$} at 164 195
\pinlabel{$\mathbf{11/4}$} at 140 295

\pinlabel{$1$} at 150 50
\pinlabel{$1$} at 124 110
\pinlabel{$1$} at 200 125
\pinlabel{$1$} at 95 175

\pinlabel{$2$} at 132 165
\pinlabel{$2$} at 60 255
\pinlabel{$2$} at 165 245
\pinlabel{$4$} at 42 295
\pinlabel{$4$} at 80 300
\pinlabel{$4$} at 195 315
\pinlabel{$6$} at 220 245
\pinlabel{$12$} at 30 340

\endlabellist 
\centering 
\includegraphics[scale=0.50]{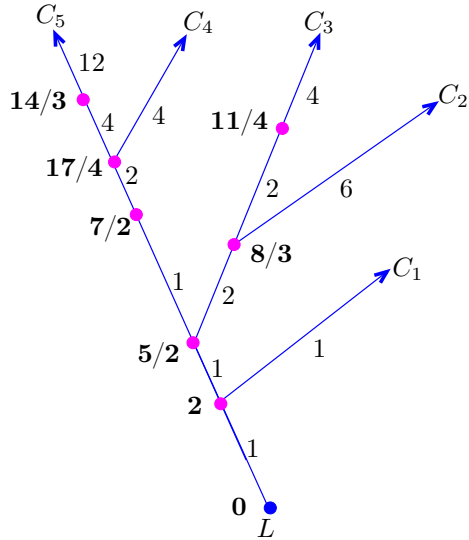} 
\vspace*{5mm} 
\caption{The Eggers-Wall tree of Example \ref{EWmany}} 
\label{fig:EWfive}
\end{figure}

\begin{remark}  \label{biggergerm}
  If one considers two reduced germs $C \subset C'$, then one has a unique 
  embedding $\Theta_L(C) \subset \Theta_L(C')$ such that the restrictions  
  to $\Theta_L(C)$ of the index and of the exponent function on $\Theta_L(C')$ 
  are equal to the corresponding functions on $\Theta_L(C)$. 
\end{remark}

\begin{remark}  \label{histcomm} $\,$
\begin{enumerate} 

\item
Eggers introduced in his 1983 paper  \cite{E 83} about the structure of polar curves 
of a possibly reducible plane curve singularity a slightly different notion of tree. 
Namely, given a reduced germ $C$, he considered only generic coordinate systems $(x,y)$, 
for which $L=Z(x)$ is transversal to all the branches of $C$. In terms 
of our notations, he rooted his tree at the minimal marked point different from the root $L$ 
of the Eggers-Wall tree. He considered only an analog of the exponent function, defined 
on the set of marked points of the tree. Eggers did not consider the index function. 
Instead, he used two colors for the edges of his tree, in order to remember for each 
branch of $C$ which marked points lying on it correspond to its characteristic exponents. 
Our notion of Eggers-Wall tree is based on Wall's 2003 paper \cite{W 03} (which 
circulated as a preprint since 2000), in which the  
functions $\ex_L, \de_L$ (with different notations) are used for computations adapted 
to the description of the polar curves of $C$. The name ``\emph{Eggers-Wall tree'' 
was introduced by the third author in \cite{PP 01}, to honor the previous works of 
Eggers and Wall.}

\item
In previous papers, versions of the notion of 
Eggers-Wall tree of $C$ with respect to the 
local coordinates $(x,y)$ were defined under the assumption that $L$ is not a component of $C$ 
(see \cite{E 83, G 96, GB 00, W 03, PP 01, PP 04, GB-GP 05, C 03, MN 05, GLM 09}). 
Allowing $L$ to be a branch of $C$ permits a very easy formulation of the 
inversion theorem for Eggers-Wall trees (see Theorem \ref{muthm}). 
Note that the third author's paper \cite{PP 04}, which presented some of the 
results of \cite{PP 01}, introduced an extension of the Eggers-Wall trees to quasi-ordinary 
power series in several variables, and applied them to the study of polar hypersurfaces 
of quasi-ordinary hypersurfaces. This study was continued by the first two authors 
in \cite{GB-GP 05}. 

\item Corral used in \cite{C 03} a version of the Eggers-Wall tree to 
describe the topology of a generic polar curve associated with a generalized curve foliation in 
$(\mathbb{C}^2,0)$, with non resonant logarithmic model.
 
\item The Eggers-Wall tree may be seen as a Galois quotient of a variant of the 
    tree constructed in 1977 by Kuo and Lu in \cite{KL 77} (see \cite[Remark 4.39]{GBGPPP 17a}, 
    as well as \cite[Section 2.5]{GLM 09}). 
    This variant is defined exactly as the Eggers-Wall tree, but using \emph{all} the Newton-Puiseux 
    roots of $C$, not only one root for each branch. Therefore, it has as many leaves 
    as the intersection number $(C \cdot L)$. A related construction was performed by Kapranov 
    in his 1993 papers \cite{K 93} and \cite{K 93bis}. He applied it to usual formal power 
    series with complex and real coefficients respectively and he called the resulting rooted 
    trees \emph{Bruhat-Tits trees}.

\end{enumerate}
\end{remark}

Let us introduce a third real-valued function ${\ic}_L$ defined on the Eggers-Wall tree.
It allows us to compute the pairwise intersection 
numbers of the branches of the given germ (see Theorem \ref{intcomp} below). 
It is determined by the knowledge 
of the exponent function ${\ex}_L$ and of the index function ${\de}_L$:

\begin{definition} \label{Intcoef}  
    Let $A$ be a branch  on $S$ with characteristic exponents 
    $\alpha_1<  \dots < \alpha_g$,  relative to the smooth germ $L$.
 We define conventionally $\alpha_0 = 0$ and $\alpha_{g+1} = \infty$.   
 Let us set $P_j =  \ex_L^{-1}(\alpha_j) $ for $j = 0, \dots, g+1$.
 We denote by ${\de}_{j} $ the value of the index function ${\de}_L$ 
 in restriction to the half-open segment  $ ( P_j , P_{j+1}]$. 
   If $P \in \Theta_L(A)$, then there exists $0 \leq l \leq g$ such that  $P\in [P_j, P_{j+1}]$.
 Then, the {\bf contact complexity}  ${\ic}_L(P)$ of the point $P$ is defined by: 
     \[ 
    {\ic}_L(P) :=   \left(
    \sum_{j =1}^{l} 
    \frac{\alpha_j - \alpha_{j-1}}
    {{\de}_{ j-1}}
    \right)  + 
    \frac{{\ex}_L(P)- \alpha_l}{{\de}_{ l}}.
     \]
\end{definition}

\begin{remark} The possibility  $\alpha_l=\alpha_0=0$ is  allowed in Definition \ref{Intcoef}. 
    The previous formula gives the same value to ${\ic}_L(P)$ when 
    ${\ex}_L(P) = \alpha_l$, if we compute it by looking at $\alpha_l$ 
    either as an element of $[ \alpha_{l-1}, \alpha_l]$ or as an element 
    of $[ \alpha_{l}, \alpha_{l+1}]$. 
\end{remark}

Note that the right-hand side of the formula defining ${\ic}_L(P)$ may be reinterpreted 
as an integral of the piecewise constant function $1/ {\de}_L$  along the segment 
$[L,P]$ of $\Theta_L(A)$, the measure being determined by the exponent 
function:

\begin{equation}  \label{intcoefint}
    {\ic}_L(P) = \int_L^{P} \frac{d \: {\ex}_L}{{\de}_L}. 
\end{equation}

\begin{remark}
Notice also that the knowledge of ${\ic}_L$ and ${\de}_L$ determines ${\ex}_L$:
\begin{equation}  \label{expfunctint}
    {\ex}_L(P) = \int_L^{P} {\de}_L \:  d \: {\ic}_L  . 
\end{equation}
Or, written in  a way which is analogous to the developed expression given 
in Definition \ref{Intcoef}, and keeping the notations of that definition:
  \begin{equation} \label{exp-ci}
  {\ex}_L(P) =   \left(\sum_{j =1}^{l} {\de}_{j-1} ({\ic}_j - {\ic}_{j-1})\right)  + 
               {\de}_l ({\ic}_L(P)- {\ic}_l),
  \end{equation}
  where ${\ic}_j := {\ic}_L(P_j)$ for every $j \in \{0, ..., g\}$. 
\end{remark}

\begin{remark}   \label{rem:integform}
   Formulae (\ref{intcoefint})  and (\ref{expfunctint}) are inspired by the 
   formulae (3.7) and (3.9) of Favre and Jonsson's book \cite{FJ 04}, 
   relating \emph{thinness} and \emph{skewness} 
   as functions on the valuative tree. See Section \ref{fundcoord} below.
\end{remark}

As the function $\de_L : \Theta_L(A) \to \N^*$ is increasing along the segment 
$\Theta_L(A)$, formulae (\ref{intcoefint})  and (\ref{expfunctint}) imply:

\begin{corollary} \label{convconc}
    Let $A$ be a branch on $S$ different from $L$.   
    The contact complexity function $\ic_L$ is an increasing homeomorphism from the Eggers-Wall 
    segment $\Theta_L(A)$ to $[0, \infty]$. Moreover, it is piecewise affine and 
    concave  in terms of the parameter $\ex_L$. Conversely, the function $\ex_L$ 
    is continuous piecewise affine and 
    convex in terms of the parameter $\ic_L$. 
\end{corollary}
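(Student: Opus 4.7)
The proof plan is to exploit the integral representations (\ref{intcoefint}) and (\ref{expfunctint}) together with the key qualitative fact that, as one moves along $\Theta_L(A)$ from $L$ towards $A$, the index function $\de_L$ is a positive, piecewise constant, \emph{non-decreasing} function of $\ex_L$ that jumps upward precisely at the characteristic exponents.

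First, I would establish the homeomorphism claim. Since $\ex_L : \Theta_L(A) \to [0,\infty]$ is by Definition \ref{def:EWirr} an increasing homeomorphism, it suffices to study $\ic_L$ as a function of the real parameter $\ex_L \in [0,\infty]$. Formula (\ref{intcoefint}) exhibits $\ic_L$ as the integral of the strictly positive, locally bounded function $1/\de_L$, so $\ic_L$ is continuous and strictly increasing in $\ex_L$. It sends $L$ (where $\ex_L = 0$) to $0$. For the value at the other end, I would observe that on the last segment $(P_g, A]$ we have $\ex_L \in (\alpha_g, \infty]$ and $\de_L \equiv \de_g \in \N^*$, so the last summand in Definition \ref{Intcoef} grows to $+\infty$ as $\ex_L(P) \to \infty$. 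Hence $\ic_L$ is a continuous bijection from the compact space $\Theta_L(A)$ to $[0,\infty]$, and by compactness it is a homeomorphism.

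Next, for the piecewise affine concavity of $\ic_L$ as a function of $\ex_L$: the right-hand side of the explicit formula in Definition \ref{Intcoef} is, on each subsegment $[P_j, P_{j+1}]$, an affine function of $\ex_L$ with slope $1/\de_j$. Because the characteristic exponents are precisely the points where $\de_L$ jumps and because these jumps are increases (the new index is a multiple of the previous one, hence strictly larger), the successive slopes $1/\de_0 > 1/\de_1 > \cdots > 1/\de_g$ are strictly decreasing. A continuous piecewise affine function with strictly decreasing slopes is concave, giving the claim.

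Finally, for the inverse function $\ex_L$ in terms of $\ic_L$: this is just the inverse of a continuous piecewise affine concave increasing homeomorphism onto $[0,\infty]$, so it is automatically continuous piecewise affine and convex, with slopes $\de_0 < \de_1 < \cdots < \de_g$ on the corresponding subsegments — this is also visible directly from formula (\ref{exp-ci}). No genuine obstacle is expected; the only point requiring care is the verification that $\ic_L$ actually reaches $+\infty$ at the end $A$, which as noted above comes from integrating a \emph{positive} constant over an unbounded interval in the $\ex_L$ parameter.
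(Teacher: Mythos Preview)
Your proposal is correct and follows essentially the same approach as the paper: the paper derives the corollary directly from the integral formulae (\ref{intcoefint}) and (\ref{expfunctint}) together with the fact that $\de_L$ is increasing along $\Theta_L(A)$, and you have simply unpacked this argument in more detail (spelling out the slopes $1/\de_j$, the surjectivity at $\infty$, and the inversion via (\ref{exp-ci})).
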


Let us consider the case of a reduced germ $C$. 
As an easy consequence of Definition \ref{Intcoef}, we get:

\begin{lemma} \label{contint} $\,$ 
  The contact complexity functions of the branches of $C$ glue 
        into a continuous strictly increasing surjection $\ic_L : \Theta_L(C) \to [0, \infty]$.
\end{lemma}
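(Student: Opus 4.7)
The plan is to split the verification into three parts: compatibility of the individual contact complexity functions on the common initial segments used to define the quotient $\Theta_L(C)$; continuity of the glued function; and the strictly-increasing/surjectivity properties. The strictly increasing and surjectivity properties will follow easily from Corollary \ref{convconc}, while the main technical point is the compatibility of the gluing.

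For the gluing, fix two branches $A, B \in \mathcal{I}_C$ with order of coincidence $k := k_L(A,B)$. Choose Newton-Puiseux roots $\eta_A, \eta_B$ of $f_A, f_B$ realizing $\nu_x(\eta_A - \eta_B) = k$. Then $\eta_A$ and $\eta_B$ have identical coefficients for all exponents strictly less than $k$, so the characteristic exponents of $A$ that are strictly less than $k$ coincide with those of $B$ that are strictly less than $k$. Consequently, writing $\alpha_0 = 0 < \alpha_1 < \cdots < \alpha_s < k$ for this common initial list, the index function $\de_L$ takes the same sequence of constant values $\de_0, \dots, \de_s$ on the half-open segments $(\ex_L^{-1}(\alpha_{j-1}), \ex_L^{-1}(\alpha_j)]$ on both $\Theta_L(A)$ and $\Theta_L(B)$; in particular it is constant, equal to $\de_s$, on the whole closed subsegment $[\ex_L^{-1}(\alpha_s), \ex_L^{-1}(k)]$ of each. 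The explicit formula of Definition \ref{Intcoef} then gives, for any $P$ in either copy with $\ex_L(P) \in [\alpha_l, \alpha_{l+1}] \cap [0, k]$, the same value
\[
\ic_L(P) = \sum_{j=1}^{l} \frac{\alpha_j - \alpha_{j-1}}{\de_{j-1}} + \frac{\ex_L(P) - \alpha_l}{\de_l},
\]
so the two restrictions of $\ic_L$ to the initial segments identified in Definition \ref{def:EW} agree pointwise. Therefore the family $\{\ic_L|_{\Theta_L(A)}\}_{A \in \mathcal{I}_C}$ passes to the quotient and yields a well-defined function $\ic_L : \Theta_L(C) \to [0, \infty]$.

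Continuity is then immediate: the quotient topology on $\Theta_L(C)$ makes a function continuous if and only if its pullback to each $\Theta_L(A)$ is continuous, and by Corollary \ref{convconc} the restriction to each Eggers-Wall segment is a homeomorphism onto $[0, \infty]$. Surjectivity follows at once, since $\mathcal{I}_C$ is non-empty (if $C = L$ the statement is trivial) and $\ic_L$ already hits every value in $[0, \infty]$ on a single segment $\Theta_L(A)$. Finally, for the strictly increasing property with respect to the partial order $\preceq_L$: given $P \prec_L Q$ in $\Theta_L(C)$, pick any leaf $A \in \mathcal{I}_C$ with $Q \preceq_L A$; then $P, Q$ both lie on the segment $[L, A] = \Theta_L(A)$, on which $\ic_L$ is strictly increasing by Corollary \ref{convconc}, giving $\ic_L(P) < \ic_L(Q)$.

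The only delicate step is the coincidence of the initial lists of characteristic exponents and of the associated index values for $A$ and $B$ on the common initial segment; this is the point where one must invoke the fact that Newton-Puiseux roots matching up to (but not including) exponent $k$ share all the denominator jumps below $k$. Everything else is a formal consequence of the definitions and of Corollary \ref{convconc}.
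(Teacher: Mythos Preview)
Your proof is correct and follows essentially the approach the paper has in mind; the paper itself does not spell out a proof but simply states that the lemma is ``an easy consequence of Definition \ref{Intcoef}'', and your argument is a faithful unpacking of that. One small remark: the compatibility of the index functions $\de_{L,A}$ and $\de_{L,B}$ on the common initial segment is in fact already built into Definition \ref{def:EW} (which asserts that the $\de_{L,A}$ glue to a function $\de_L$ on $\Theta_L(C)$), so your verification of this point, while correct, is re-proving something the paper takes as given; once $\de_L$ is accepted as well-defined on the tree, the gluing of $\ic_L$ is immediate from the integral formula (\ref{intcoefint}).
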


This allows to formulate the following definition:

\begin{definition} \label{Intcoefgen}
    Let $C$ be a reduced germ of curve on the smooth surface $S$. If $L$ is a smooth branch 
     on $S$, then  the {\bf contact complexity
     ${\ic}_L : \Theta_L(C) \to [0, \infty]$ relative to $L$}
    is the function obtained by gluing the contact complexities  
    of the individual branches of $C$. 
\end{definition}

We chose the name of this function motivated by the following theorem, which shows 
that ${\ic}_L$ may be seen as a measure of the contact between the branches of $C$. 
In equivalent formulations, this theorem goes back 
at least to Smith \cite[Section 8]{S 75}, Stolz \cite[Section 9]{S 1879} and Max Noether \cite{N 90}. 
A  proof written in current mathematical language may be found in Wall \cite[Thm. 4.1.6]{W 04}:

\begin{theorem}  \label{intcomp}
    Let $C$ be a reduced germ and $L$ a smooth branch on $S$. 
     Let $C_i$ and $C_j$ be two distinct branches of $C$.     Let $P= \langle L , C_i, C_j \rangle$ be the center of the tripod determined by $L, C_i, C_j$ 
    in the Eggers-Wall  tree $\Theta_L(C)$ (see Definition \ref{trip}). Then:
      \begin{equation} \label{f-intcomp}
      {\ic}_L(P) = \frac{(C_i \cdot C_j)}{(L \cdot C_i) \cdot (L \cdot C_j)}.
      \end{equation}
      
\end{theorem}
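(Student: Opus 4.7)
The plan is to compute $(C_i \cdot C_j)$ via a normal parametrization of $C_j$ and then to match the resulting arithmetic sum with the integral description of $\ic_L(P)$. Set $d_i := (L \cdot C_i)$ and $d_j := (L \cdot C_j)$, let $f_i \in \C[[x]][y]$ be the Weierstrass polynomial of $C_i$ relative to $(x,y)$, and fix a root $\eta_j \in \mathrm{Zer}(f_j)$ together with an associated normal parametrization $\phi_j$ of $C_j$. Using the factorization $f_i(x,y) = \prod_{\eta_i \in \mathrm{Zer}(f_i)}(y - \eta_i)$ and the classical intersection formula recalled just before Definition \ref{def:NPseries}, one obtains
\[
(C_i \cdot C_j) \;=\; \nu_t(f_i \circ \phi_j) \;=\; d_j \sum_{\eta_i \in \mathrm{Zer}(f_i)} \nu_x(\eta_i - \eta_j).
\]
Dividing by $d_i d_j$ reduces the theorem to the identity $\frac{1}{d_i}\sum_{\eta_i \in \mathrm{Zer}(f_i)} \nu_x(\eta_i - \eta_j) = \ic_L(P)$.

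Next, I would use the ultrametric character of $\nu_x$ to eliminate the reference to $C_j$. Choose $\eta_i^{(0)} \in \mathrm{Zer}(f_i)$ realizing the maximum $\nu_x(\eta_i^{(0)} - \eta_j) = k_L(C_i, C_j) = \ex_L(P)$. For any $\eta_i \in \mathrm{Zer}(f_i)$, the decomposition $\eta_i - \eta_j = (\eta_i - \eta_i^{(0)}) + (\eta_i^{(0)} - \eta_j)$ together with a case analysis of the two valuations (the equality case being controlled by the defining maximality of $k_L$) yields
\[
\nu_x(\eta_i - \eta_j) \;=\; \min\bigl(\nu_x(\eta_i - \eta_i^{(0)}),\; \ex_L(P)\bigr).
\]
The reduced problem is therefore intrinsic to $C_i$: one must show
\[
\frac{1}{d_i}\sum_{\eta_i \in \mathrm{Zer}(f_i)} \min\bigl(\nu_x(\eta_i - \eta_i^{(0)}),\; \ex_L(P)\bigr) \;=\; \ic_L(P).
\]

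Finally, I would evaluate the left-hand side by a Fubini-type rearrangement combined with the classical count of Galois conjugates of $\eta_i^{(0)}$. The elementary identity $\min(\nu, k) = \int_0^k \mathbf{1}_{\nu \geq \alpha}\, d\alpha$ gives
\[
\sum_{\eta_i \in \mathrm{Zer}(f_i)} \min\bigl(\nu_x(\eta_i - \eta_i^{(0)}),\, \ex_L(P)\bigr) \;=\; \int_0^{\ex_L(P)} \#\bigl\{\eta_i \in \mathrm{Zer}(f_i) : \nu_x(\eta_i - \eta_i^{(0)}) \geq \alpha \bigr\}\, d\alpha.
\]
Parameterizing the other roots as $\eta_i^{(\gamma)}(x^{1/d_i}) := \eta_i^{(0)}(\gamma\, x^{1/d_i})$ for $\gamma \in \mu_{d_i}$, a standard computation with the characteristic exponents $\alpha_1 < \cdots < \alpha_{g_i}$ of $C_i$ identifies the integrand with $d_i / \de_L(\alpha)$ almost everywhere, where $\de_L$ is evaluated at the point of $\Theta_L(C_i)$ with exponent $\alpha$. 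The integral formula (\ref{intcoefint}) then yields the desired equality. The main technical obstacle is the Galois-theoretic counting step: one must verify that the number of $\gamma \in \mu_{d_i}$ for which $\eta_i^{(0)}$ and $\eta_i^{(\gamma)}$ agree up to a given exponent is governed exactly by the denominators of the preceding characteristic exponents of $C_i$, which is the classical fact underlying the very definition of the index function $\de_L$.
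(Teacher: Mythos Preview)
Your proof is correct and follows the classical approach. Note that the paper does not actually prove this theorem: it attributes the result to Smith, Stolz and Noether and refers the reader to Wall \cite[Thm.~4.1.6]{W 04} for a modern proof. Your argument is essentially Wall's: compute the intersection number as a sum $\sum_{\eta_i}\nu_x(\eta_i-\eta_j)$, reduce via the ultrametric inequality to a sum of truncated characteristic-exponent differences intrinsic to $C_i$, and evaluate that sum by counting Galois conjugates coinciding with $\eta_i^{(0)}$ up to a given order. One small point worth making explicit in your write-up: the existence, for the \emph{given} $\eta_j$, of an $\eta_i^{(0)}\in\mathrm{Zer}(f_i)$ realizing $\nu_x(\eta_i^{(0)}-\eta_j)=k_L(C_i,C_j)$ uses the transitivity of the Galois action on the roots (the maximum in Definition~\ref{def:ordcoin} is taken over all pairs, not over $\mathrm{Zer}(f_i)$ alone).
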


Observe that Theorem  \ref{intcomp} also holds when $L$ coincides with $C_i$ or $C_j$ 
(using the convention that $a/\infty = 0$ for every $a \in (0, \infty)$).

\begin{remark}  In the paper \cite{P 85},  P\l oski proved a theorem which 
is equivalent to the fact that the function
\[ 
U_L (C_i, C_j) := \left\{
\begin{array}{lcl}
{\ic}_L( \langle L, C_i, C_j \rangle ) ^{-1}
& \mbox{ if }   &  C_i \ne C_j, 
\\
0  & \mbox{ if }  &  C_i = C_j,
\end{array}
\right.
\]
defines an ultrametric distance on the set of branches which are transversal to $L$. 
See \cite{GBGPPP 17b, GBGPPPR 18} for generalizations of this result to 
all normal surface singularities (in particular, it is proved there that, 
given a normal surface singularity $S$ and an arbitrary branch $L$ 
on it, the function $U_L$ is an ultrametric on the set of branches different from it 
if and only if $S$ is \emph{arborescent}, that is, the dual graphs of its good resolutions
are trees).
\end{remark}
\medskip

Note that the intersection number 
$(L \cdot C_i)$ is equal to  the maximum ${\de}_L (C_i)$ of the index function on the segment 
$[L, C_i]$. We deduce that:

\begin{corollary}  \label{intfromEW} {\bf (Tripod formula)}
   Assume that the Eggers-Wall tree $(\Theta_L(C), {\ex}_L, {\de}_L)$ of the 
   reduced germ $C = \sum_{i \in I} C_i$ relative to $L$ is known. Then the pairwise 
   intersection numbers of its branches are determined by:
     $$  (C_i \cdot C_j) = \de_L(C_i) \cdot  \de_L(C_j) \cdot  {\ic}_L(\langle L, C_i, C_j \rangle). $$
\end{corollary}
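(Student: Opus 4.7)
The plan is to derive this as an immediate algebraic rearrangement of Theorem \ref{intcomp} together with the identification $(L \cdot C_i) = \de_L(C_i)$ noted just before the statement.

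First, I would apply Theorem \ref{intcomp} directly to the tripod center $P = \langle L, C_i, C_j \rangle$, which yields
\[
{\ic}_L(\langle L, C_i, C_j \rangle) = \frac{(C_i \cdot C_j)}{(L \cdot C_i) \cdot (L \cdot C_j)}.
\]
Since this is granted as a known result, the corollary will follow once the denominator on the right is expressed in terms of data read off the Eggers-Wall tree.

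Next, I would justify the identification $(L \cdot C_i) = \de_L(C_i)$, where the right-hand side means the value of the index function at the leaf labelled by $C_i$ (equivalently, the maximum of $\de_L$ on the segment $[L, C_i]$, since by Definition \ref{def:EWirr} the index function is non-decreasing along $\Theta_L(C_i)$). By the Newton-Puiseux theorem and the discussion preceding Definition \ref{def:charexp}, the degree $d_{C_i}$ of the Weierstrass polynomial $f_{C_i}$ in the coordinate system where $L = Z(x)$ equals $(L \cdot C_i)$, and this degree is precisely the least common denominator of all exponents appearing in a Newton-Puiseux root of $f_{C_i}$. By Definition \ref{def:EWirr}, the index function at the leaf $C_i$ records precisely this denominator, so $\de_L(C_i) = d_{C_i} = (L \cdot C_i)$. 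Substituting in the displayed identity and cross-multiplying gives the stated tripod formula.

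The only minor subtlety, rather than a real obstacle, is the degenerate case in which $L$ coincides with $C_i$ or $C_j$, or in which the tripod collapses to a segment (for instance when one of $C_i, C_j$ lies on the segment joining $L$ to the other). These cases are handled by the convention $a/\infty = 0$ recorded after Theorem \ref{intcomp}, together with the observation that $\de_L(L) = 1$ and $\ex_L(L) = \ic_L(L) = 0$ at the root. Since all the substantive content already resides in Theorem \ref{intcomp}, the proof is essentially a one-line manipulation.
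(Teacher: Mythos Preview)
Your proof is correct and follows exactly the paper's approach: the paper notes (just before the corollary) that $(L \cdot C_i) = \de_L(C_i)$ and deduces the formula directly from Theorem \ref{intcomp} by rearrangement. If anything, you supply more detail on the identification $(L \cdot C_i) = \de_L(C_i)$ than the paper does.
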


The previous equality shows that the intersection number of two branches of $C$ 
is determined by the indices of the two corresponding leaves and by 
the contact complexity of the center of the tripod  
formed by the root of the tree $\Theta_L(C)$ and the two leaves.  
That is why we call it the {\em tripod formula}. 

\medskip 

Corollary \ref{intfromEW} admits an extension for semivaluations 
(see Proposition \ref{tripodform} below).

\begin{example} \label{Exintcoef}
    Consider again the curve singularity of Example \ref{EWmany}. 
    Then the contact complexities of the marked points of its 
    Eggers-Wall tree with respect to the given coordinate system 
    are as indicated in Figure \ref{fig:Intfive}. For instance, the 
    contact function of the highest point on the geodesic 
    going from $L$ to $C_5$ is computed in the following way 
    using Definition \ref{Intcoef}:
      $$  \frac{7}{2}+ \frac{1}{2}\left(\frac{17}{4} - 
               \frac{7}{2}\right) +\frac{1}{4}\left(\frac{14}{3} - \frac{17}{4}\right) =  \frac{191}{48}.$$
    Using Theorem \ref{intcomp}, we deduce that 
    $(C_1 \cdot C_2) = 12,  (C_1 \cdot C_3) = (C_1 \cdot C_4) = 8$, 
    $(C_1 \cdot C_5) = 24$, $(C_2 \cdot C_3) = 62, (C_2 \cdot C_4) = 60, 
    (C_2 \cdot C_5) = 180, (C_3 \cdot C_4) = 40$, 
    $(C_3 \cdot C_5) = 120$, $(C_4 \cdot C_5) = 186.$
\end{example}

\begin{figure}[h!] 
\vspace*{6mm}
\labellist \small\hair 2pt 
\pinlabel{$L$} at 160 -10
\pinlabel{$C_1$} at 266 185
\pinlabel{$C_2$} at 300 315
\pinlabel{$C_3$} at 200 370
\pinlabel{$C_4$} at 110 370
\pinlabel{$C_5$} at 0 375

 \pinlabel{\blue{$\mathbf{0}$}} at 140 6
\pinlabel{\blue{$\mathbf{2}$}} at 111 82
\pinlabel{\blue{$\mathbf{5/2}$}} at 80 120
\pinlabel{\blue{$\mathbf{7/2}$}} at 44 215
\pinlabel{\blue{$\mathbf{31/8}$}} at 17 260
\pinlabel{\blue{$\mathbf{191/48}$}} at -13 310
\pinlabel{\blue{$\mathbf{31/12}$}} at 166 195
\pinlabel{\blue{$\mathbf{21/8}$}} at 141 292

\endlabellist 
\centering 
\includegraphics[scale=0.55]{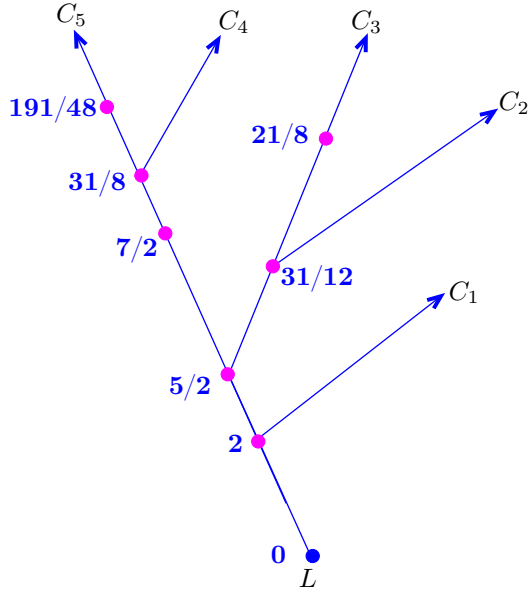} 
\caption{The values of the contact complexity  
    $\ic_L$  at the marked points of the tree of Example \ref{EWmany}} 
\label{fig:Intfive}
\end{figure}

\section{An inversion theorem for Eggers-Wall trees} \label{sec-inv}

Let $C$ be a reduced germ of formal curve on $S$ and let $L$ be a smooth branch. 
Assume that we know the Eggers-Wall tree $\Theta_L(C)$ of $C$ relative to $L$. How to pass to 
the Eggers-Wall tree of $C$ relative to another smooth branch $L'$? The answer 
is particularly simple when \emph{both $L$ and $L'$ are branches 
of $C$}. Indeed, in this case, we prove that the underlying topological space of the 
Eggers-Wall tree is unchanged: one has only to modify the exponent and index functions 
(see Theorem \ref{muthm}). 
This constitutes a geometrization and generalization to the case of several branches 
of the classical inversion theorem of Abhyankar \cite{A 67}, which can be traced back in fact 
to Halphen \cite{H 76} and Stolz \cite{S 1879} 
in the years 1870, as explained in \cite{GBGPPP 17b}.

\medskip 

Before stating our inversion theorem, we need some definitions and properties of the 
Eggers-Wall segments of smooth branches and of their attaching points on Eggers-Wall trees 
of germs not containing them, in the sense of Definition \ref{defatt}.

\begin{definition} \label{atpoint}
  Let $C$ be a reduced germ of formal curve on $S$ and let $L$ be a smooth branch. 
The  \textbf{unit subtree} $\Theta_L(C)_1$  of $\Theta_L (C)$ 
consists of its points of index $1$, equipped with the restriction of the exponent function $\ex_L$. 
The  {\bf unit point} of the tree $\Theta_L(C)$ is the 
 attaching point of a {\em generic} smooth branch through $O$. 
\end{definition}

The unit point is independent of the choice of generic smooth branch through $O$, as it may be  characterized by the following lemma:

\begin{lemma} \label{charcentre}
    The unit point  of $\Theta_L(C)$ is: 
    \begin{itemize} 
        \item  the highest end of $\Theta_L(C)_1$,  when 
             the exponent function takes only values $<1$ in restriction to $\Theta_L(C)_1$ (case 
             in which $\Theta_L(C)_1$ is a segment); 
        
        \item the unique point of $\Theta_L(C)_1$ of exponent $1$, otherwise.         
    \end{itemize}
\end{lemma}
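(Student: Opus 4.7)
The plan is to compute the attaching point of a generic smooth branch $L'$ directly from its Newton--Puiseux series, and to compare this with the structure of the unit subtree in each case. Fix local coordinates $(x,y)$ with $L = Z(x)$. A generic smooth branch $L'$ through $O$ is transversal to $L$ and is defined by $y = \zeta(x)$ with $\zeta(x) = a_1 x + a_2 x^2 + \cdots \in \C[[x]]$, $a_1 \neq 0$, where ``generic'' means that finitely many polynomial non-vanishing conditions on the $a_j$'s, depending on $C$, are satisfied. For each branch $C_i \neq L$ of $C$, denote by $\alpha_0^{(i)} := \nu_x(\eta_{C_i})$ the leading $x$-exponent of a Newton--Puiseux root of $C_i$. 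A direct case analysis on $\alpha_0^{(i)} < 1$, $\alpha_0^{(i)} = 1$ or $\alpha_0^{(i)} > 1$ shows that for generic $L'$,
\[
k_L(L', C_i) = \min\bigl(1,\, \alpha_0^{(i)}\bigr),
\]
so by Definition \ref{def:EW} the attaching point $\pi_{\Theta_L(C)}(L')$ has exponent $\beta := \min\bigl(1,\, \max_i \alpha_0^{(i)}\bigr)$ and lies on any branch $C_i$ realising this maximum. I will also need the following dictionary, which is immediate from Definition \ref{def:EWirr}: for $P \in \Theta_L(A)$, one has $P \in \Theta_L(C)_1$ iff $\ex_L(P) \leq \alpha_1^{(A)}$, where $\alpha_1^{(A)}$ denotes the first characteristic exponent of $A$ (conventionally $+\infty$ if there is none); moreover $\alpha_1^{(A)} < 1 \Leftrightarrow \alpha_0^{(A)} < 1 \Leftrightarrow A$ is tangent to $L$, because no positive integer is strictly less than $1$.

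In Case A, every branch of $C$ different from $L$ is tangent to $L$, hence $\beta = \max_i \alpha_1^{(i)} < 1$. I would show that $\Theta_L(C)_1$ has no ramification point, as follows: if $Q$ were a ramification point of $\Theta_L(C)_1$ with two upward directions reaching distinct branches $A, B$ of $C$, both directions remaining in $\Theta_L(C)_1$ would force $\alpha_1^{(A)}, \alpha_1^{(B)} > \ex_L(Q) = k_L(A, B)$. But the elementary estimate $\nu_x(\eta_A - \eta_B) \geq \min(\alpha_0^{(A)}, \alpha_0^{(B)}) = \min(\alpha_1^{(A)}, \alpha_1^{(B)})$ valid for any two Newton--Puiseux roots gives $k_L(A, B) \geq \min(\alpha_1^{(A)}, \alpha_1^{(B)})$, a contradiction. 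Hence $\Theta_L(C)_1$ is a segment; its highest end lies on any maximising branch at exponent $\max_i \alpha_1^{(i)} = \beta$, which is precisely $\pi_{\Theta_L(C)}(L')$ by the first paragraph.

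In Case B, some branch of $C$ is not tangent to $L$, so $\beta = 1$. Every branch $A$ of $C$ with $\alpha_0^{(A)} \geq 1$ satisfies $\alpha_1^{(A)} > 1$, because the first non-integer exponent of $\eta_A$ strictly exceeds $1$ in each of the subcases $\alpha_0^{(A)} = 1$, $\alpha_0^{(A)} > 1$ non-integer, and $\alpha_0^{(A)} > 1$ integer; consequently the exponent-$1$ point of $\Theta_L(A)$ lies in $\Theta_L(C)_1$, while branches tangent to $L$ contribute no exponent-$1$ point. For any two such non-tangent branches $A, B$, the bound $\nu_x(\eta_A), \nu_x(\eta_B) \geq 1$ forces $k_L(A, B) \geq 1$, so their exponent-$1$ points sit at or below the branching point of $A, B$ in $\Theta_L(C)$ and therefore coincide. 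This yields the unique exponent-$1$ point in $\Theta_L(C)_1$, which equals $\pi_{\Theta_L(C)}(L')$ by the first paragraph. The main accounting burden is the dictionary between the leading Newton--Puiseux exponents $\alpha_0^{(i)}$ (governing tangency and orders of coincidence) and the first characteristic exponents $\alpha_1^{(i)}$ (governing the index function); once this is set up, both cases reduce to direct $x$-order estimates.
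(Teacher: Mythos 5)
Your proposal is correct and takes essentially the same route as the paper's proof: one computes the orders of coincidence $k_L(L',C_i)$ of a generic smooth branch $L'$ with the branches of $C$ from the leading Newton--Puiseux exponents, and matches the resulting attaching point with the structure of the unit subtree $\Theta_L(C)_1$. The paper only streamlines the first step by choosing coordinates in which $L'=Z(y)$, so that $k_L(C_i,L')=\nu_x(\eta_i)\in(0,1]$ at once, and it leaves as ``immediate'' the case analysis (segment with top at $\max_i \alpha_1^{(i)}<1$ versus unique index-one point of exponent $1$) that you write out in detail.
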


  \begin{proof}
Consider a smooth branch $L'$ transversal both to $L$ and to the branches of $C$. Work then 
     in a coordinate system $(x,y)$ such that $L = Z(x)$ and $L'= Z(y)$. Therefore $L'$ has $0 
      \in  \C[[ x^{1/ \N} ]]$ as only Newton-Puiseux series. Our transversality hypothesis 
     implies that for any branch $A$ of $C$, its Newton-Puiseux series $\eta$ satisfy 
      $\nu_x(\eta) \in (0, 1]$. But one has that $  \nu_x(\eta) = \nu_x(\eta -0) = k_L(A, L')$. This implies 
     immediately our statements. We are in the first case if $\nu_x(\eta) <1$ for all the branches 
     of $C$ and in the second one otherwise.
  \end{proof}

 \begin{example}
      In Figure \ref{fig:unitex} are represented the unit subtree and the unit point $U$ 
      of the Eggers-Wall  tree of Figure \ref{fig:EWfive}. 
 \end{example}

 \begin{figure}[h!] 
 \vspace*{1mm}
 \labellist \small\hair 2pt 
 \pinlabel{$L$} at 160 -10
 \pinlabel{$C_1$} at 266 185
 \pinlabel{$C_2$} at 300 315
 \pinlabel{$C_3$} at 200 370
 \pinlabel{$C_4$} at 110 370
 \pinlabel{$C_5$} at 0 375

 \pinlabel{$\mathbf{1}$} at 125 45
 \pinlabel{$\mathbf{2}$} at 107 82
 \pinlabel{$\mathbf{5/2}$} at 80 120
 \pinlabel{$\mathbf{7/2}$} at 44 215
 \pinlabel{$\mathbf{17/4}$} at 17 260
 \pinlabel{$\mathbf{14/3}$} at -10 310
 \pinlabel{$\mathbf{8/3}$} at 164 195
 \pinlabel{$\mathbf{11/4}$} at 140 295

 \pinlabel{$1$} at 160 30
 \pinlabel{$1$} at 124 110
 \pinlabel{$1$} at 200 125
 \pinlabel{$1$} at 95 175
 \pinlabel{$\mathbf{U}$} at 170 60

 \pinlabel{$2$} at 132 165
 \pinlabel{$2$} at 60 255
 \pinlabel{$2$} at 165 245
 \pinlabel{$4$} at 42 295
 \pinlabel{$4$} at 80 300
 \pinlabel{$4$} at 195 315
 \pinlabel{$6$} at 220 245
 \pinlabel{$12$} at 30 340
 \endlabellist 
 \centering 
 \includegraphics[scale=0.50]{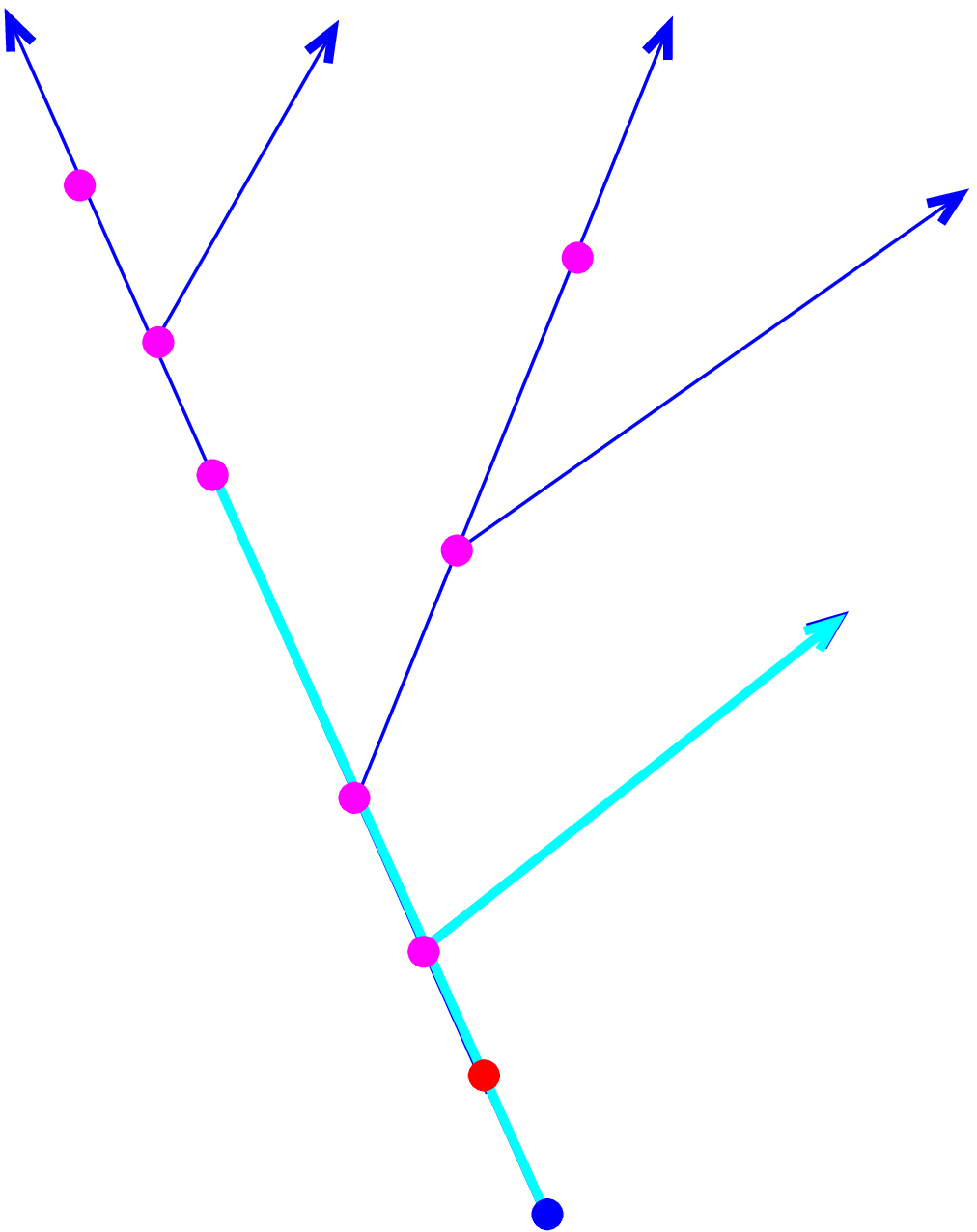} 
 \vspace*{1mm} \caption{The unit subtree (in heavier lines) 
      and the unit point of the Eggers-Wall tree  
    (labelled by $\mathbf{U}$)} 
 \label{fig:unitex} 
 \end{figure}

\begin{figure}[h!] 
\labellist \small\hair 2pt 
\pinlabel{$L$} at 40 4
\pinlabel{$\mathbf{1/n}$} at 40 80
 \pinlabel{$C$} at 40 250
 
  \pinlabel{$1$} at -10 45
  \pinlabel{$n$} at -10 160
\endlabellist 
\centering 
\includegraphics[scale=0.30]{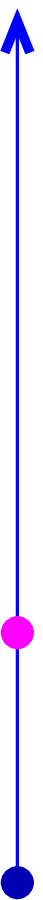} 
\caption{A smooth Eggers-Wall segment with  unit point of exponent $1/n$ (see Definition \ref{smtr}). }
\label{fig:EWsmooth}
\end{figure}

Let us introduce now special names for the Eggers-Wall segments of smooth branches 
with respect to a  smooth branch $L$:

\begin{definition}  \label{smtr} 
      Let $C$ be a branch different from $L$. 
     The Eggers-Wall segment $\Theta_L (C)$ is \textbf{simple} if it has 
     no marked points in its interior. 
    It is called \textbf{smooth}  if  
    it is simple or if it is of the form indicated in Figure \ref{fig:EWsmooth}. 
    In this last case, the integer $n \geq 2$ is equal to the intersection number $(L \cdot C)$.
\end{definition}

The fact that the smooth Eggers-Wall trees are as indicated comes from the fact 
that there exists always a coordinate system $(x,y)$ in which the smooth branch 
$C$ is defined by  $y^n - x =0$ for  $n \geq 1$, while $L = Z(x)$. 
   
\medskip

        By Remark \ref{encod2}, the Eggers-Wall tree $\Theta_L (C)$ is 
        determined by its geometric realization equipped with the exponent function
        ${\ex}_L$ and the index function ${\de}_L$. Notice also that these two functions 
        determine ${\ic}_L$.  
       The following \textit{inversion theorem} proves that these functions determine 
       also the Eggers-Wall tree $\Theta_{L'}(C)$ (recall that $\de_L(L') = (L \cdot L')$): 
       
\begin{theorem} \label{muthm}
     Let $L$ and $L'$ be two smooth branches at $O$ which are components of the 
     reduced germ $C$. Let us denote  by $U$ the unit point of $\Theta_L(C)$ 
     in the sense of Definition \ref{atpoint} and by $\pi_{[L, L']}$ the attaching map 
     of the segment $[L, L']$ in the tree $\Theta_L(C)$ in the sense of Definition \ref{defatt}. 
      Then the finite affine trees associated with  $\Theta_{L'}(C)$ 
       and $\Theta_{L}(C)$ coincide  and the functions 
                 ${\ex}_{L'}$,  ${\ic}_{L'}$, ${\de}_{L'}$ 
     are determined by: 
                 \[  
               \begin{array}{lcl}
               {\ex}_{L'} +1 & =  & \dfrac{{\ex}_L+ 1}{(L \cdot L') \cdot  ({\ic}_L \circ \pi_{[L, L']}) } , 
               \end{array}         
                   \begin{array}{lcl}
                        {\ic}_{L'} & = &  \dfrac{{\ic}_L}{((L \cdot L') \cdot ({\ic}_L \circ \pi_{[L, L']}))^2 } , 
                   \end{array}
                \]
          
                \[
                \begin{array}{lcl}
                   {\de}_{L'}  & =  & \left\{ \begin{array}{ll}
                       1,  & \mbox{  on }  [\pi_{[L, L']}(U), \:  L'] , \; \\
                         \; & \\
                        (L \cdot L') , &  \mbox{  on  } 
                                 [L , \:  \pi_{[L, L']}( U)  ), \\
                                 \; & \\
                       (L \cdot L') \cdot   ({\ic}_L \circ \pi_{[L, L']})  \cdot   {\de}_L, &   \mbox{ otherwise}.
                                                     \end{array} \right. 

               \end{array}
               \]              
         Moreover, in restriction to the segment $[ L, L']$ we have: 
    $$ (L \cdot L') \cdot {\ic}_L = \left\{ \begin{array}{lll} 
                                (L \cdot L')\cdot {\ex}_L & \mbox{ on } & [L, \:  \pi_{[L, L']}( U) ], \\ 
                               {\ex}_L+ 1 - \dfrac{1}{(L \cdot L')}  & \mbox{ on } & [\pi_{[L, L']}( U), \: L'].
                            \end{array}   \right. $$        
 \end{theorem}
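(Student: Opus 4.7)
The plan is to deduce the inversion theorem as a consequence of the valuative embedding (Theorem~\ref{embcurve}) combined with the change-of-observer formulas of Section~\ref{change-obs}. The reason for going through the valuative tree is that $\bP(\cV)$ is defined intrinsically in terms of $S$, with no reference to any coordinate system or distinguished branch, so an observer $L$ only supplies a choice of parametrization, not geometry.

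First, I will embed $\Theta_L(C)$ into $\bP(\cV)$ via Theorem~\ref{embcurve}, and similarly $\Theta_{L'}(C)$. The image in each case is the smallest closed subtree of $\bP(\cV)$ containing the curve valuations attached to the branches of $C$; since both $L$ and $L'$ are themselves branches of $C$, this image is the same in both cases. Composing the $L$-embedding with the inverse of the $L'$-embedding provides the canonical identification of the two Eggers-Wall trees as finite affine trees, with leaves labelled by the branches of $C$ (including $L$ and $L'$). This proves the first assertion of the theorem.

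Second, Theorem~\ref{embcurve} identifies, under the embedding with observer $L$, the triple of functions $({\ex}_L + 1,\, {\de}_L,\, {\ic}_L)$ on $\Theta_L(C)$ with the restriction of the triple $({\ld}_L,\, \mult_L,\, \si_L)$ on $\bP(\cV)$; analogously for $L'$. Hence the passage from the $L$-triple to the $L'$-triple on the common underlying tree amounts to the change of observer from $L$ to $L'$ in $\bP(\cV)$. Invoking Propositions~\ref{changels}, \ref{changesm} and~\ref{changecrd}, which will express $({\ld}_{L'},\, \mult_{L'},\, \si_{L'})$ in terms of $({\ld}_L,\, \mult_L,\, \si_L)$ involving the pullback by the attaching map $\pi_{[L, L']}$ and the factor $(L \cdot L') \cdot ({\ic}_L \circ \pi_{[L, L']})$, and translating back via the identifications of Theorem~\ref{embcurve}, yields the three formulas for ${\ex}_{L'}$, ${\ic}_{L'}$ and ${\de}_{L'}$. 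The three-case formula for ${\de}_{L'}$ reflects the trichotomy in Proposition~\ref{changesm} according to whether a point lies on $[L, \pi_{[L,L']}(U))$, on $[\pi_{[L,L']}(U), L']$, or off the segment $[L, L']$; here the relevance of the unit point $U$ is that its image in $\bP(\cV)$ is precisely the generic tangent semivaluation, which governs when the multiplicity relative to $L'$ equals $1$.

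Finally, for the explicit formula of $\ic_L$ on $[L, L']$: since $L'$ is a smooth branch, Definition~\ref{smtr} (illustrated in Figure~\ref{fig:EWsmooth}) shows that $\Theta_L(L')$ is either simple or has a single interior marked point, namely its unit point; and the attaching point $\pi_{[L, L']}(U)$ is exactly this unit point. Consequently $\de_L \equiv 1$ on $[L, \pi_{[L, L']}(U)]$ and $\de_L \equiv (L \cdot L')$ on $[\pi_{[L, L']}(U), L']$. Plugging these into the integral representation~(\ref{intcoefint}) and evaluating yields the two claimed affine expressions; multiplication by $(L \cdot L')$ puts them in the form stated.

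The main obstacle is the intricate three-piece formula for ${\de}_{L'}$, which requires the careful identification of the unit point with the generic tangent semivaluation and of the attaching map on $\Theta_L(C)$ with the valuative attaching map on $\bP(\cV)$ restricted to the image of the embedding; it is here that the interaction between the tree topology, the multiplicity function, and the passage from observer $L$ to observer $L'$ is subtlest. Once the change-of-observer Propositions~\ref{changels}, \ref{changesm} and~\ref{changecrd} are established in Section~\ref{change-obs}, the remainder of the argument is essentially a dictionary translation.
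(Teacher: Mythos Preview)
Your proposal is correct and follows essentially the same route as the paper: embed both Eggers-Wall trees in $\bP(\cV)$ via Theorem~\ref{embcurve}, observe that the images coincide (the convex hull of the ends labelled by the branches of $C$), compose to get the canonical homeomorphism, and then translate the change-of-observer formulae of Propositions~\ref{changels}, \ref{changesm}, \ref{changecrd} back through the dictionary of Theorem~\ref{embcurve}. One small slip: the three-case trichotomy for ${\de}_{L'}$ comes from Proposition~\ref{changecrd} (the multiplicity change), not Proposition~\ref{changesm}; and your derivation of the final formula for $\ic_L$ on $[L,L']$ via the integral~(\ref{intcoefint}) is exactly what the paper means when it says this ``follows from Definition~\ref{Intcoef}''.
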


 \begin{proof}
       We use here several results developed later in this paper. The idea is to 
       embed the Eggers-Wall tree in the space $\mathbb{P}(\mathcal{V})$ 
       of semivaluations of $S$ and 
       to use formulae about the log-discrepancy, the multiplicity and the self-interaction 
       functions defined on that space. 
       
       Denote, as usual, by $C_i$ the branches of $C$. We will use the 
       valuative embeddings $\Psi_L$ and $\Psi_{L'}$ of Definition \ref{embranch}. 

       By the topological part of Theorem \ref{embcurve}, 
       the images of both embeddings $\Psi_L$ and $\Psi_{L'}$ 
       are the convex hulls of the ends $C_i$ inside the tree $\mathbb{P}(\mathcal{V})$. 
       Therefore, $\Psi_L$ and $\Psi_{L'}$ are homeomorphisms onto those convex hulls. 
    Consequently, the map:
   \begin{equation} \label{crucmap}
        \Psi^L_{L'} : = \Psi_{L'}^{-1} \circ \Psi_L : \Theta_L(C) \to \Theta_{L'}(C)
   \end{equation}
     is a homeomorphism. By construction, it sends each end $C_i$ of $\Theta_L(C)$ 
     to the end with the same label of $\Theta_{L'}(C)$. 

In order to compare $({\ex}_L, {\de}_L, {\ic}_L)$ with $({\ex}_{L'}, {\de}_{L'}, {\ic}_{L'})$, we use  
the part of Theorem \ref{embcurve} concerning the correspondence between functions, 
as well as Propositions \ref{changels}, \ref{changecrd}, \ref{changesm}.  
The statement of our theorem, as well as the one of its Corollary \ref{cormut} are immediate 
consequences of them (the last assertion of the theorem follows from  
Definition \ref{Intcoef}). 
\end{proof}

    Let us particularize this result to the situation where $L$ and $L'$ are \emph{transversal} 
    smooth branches on $S$, that is,  $(L \cdot L') =1$. Then,  the segment $[L, L'] \subset \Theta_L (L')$ 
    is a simple Eggers-Wall segment in the sense of Definition \ref{smtr}, and the unit point  is 
    the point $U$  such that $ {\ex}_L (U) = 1$.   
      
    \begin{corollary} \label{cormut}
    Let $L$ and $L'$ be two transversal smooth branches at $O$ which are components of the 
     reduced germ $C$. We have the relations:      
      \[  
               \begin{array}{lcl}
  		{\ex}_{L'} +1 & =  & \dfrac{{\ex}_L + 1}{{\ex}_L \circ \pi_{[L, L']} }  ,
		\end{array} 
		\quad 
  \begin{array}{lcl}
        {\ic}_{L'} & = &  \dfrac{{\ic}_L}{({\ex}_L\circ \pi_{[L, L']} )^2}. 
    \end{array}
    \]
    and 
      \[
      \begin{array}{lcl}
          {\de}_{L'}  & =  & \left\{ \begin{array}{ll}
                      1,  & \mbox{ \emph{on} }  [L, L'],                   \\
                    ({\ex}_L \circ \pi_{[L, L']})    \cdot 
                    {\de}_L, &   \mbox{ \emph{otherwise}} . \\
         \end{array} \right.  
     \end{array}
     \]  
  \end{corollary}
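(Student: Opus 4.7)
The plan is to derive Corollary \ref{cormut} as a direct specialization of Theorem \ref{muthm} to the transversal case $(L \cdot L') = 1$. Since no new geometric input is required, the whole argument amounts to a careful substitution together with a simplification of $\ic_L$ on the segment $[L, L']$.

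First I would substitute $(L \cdot L') = 1$ into the two displayed identities
\[
\ex_{L'} + 1 = \frac{\ex_L + 1}{(L \cdot L') \cdot (\ic_L \circ \pi_{[L,L']})}, \qquad
\ic_{L'} = \frac{\ic_L}{((L \cdot L') \cdot (\ic_L \circ \pi_{[L,L']}))^2}
\]
of Theorem \ref{muthm}. The factor $(L \cdot L')$ disappears, so the identities reduce to the claimed formulas up to replacing $\ic_L \circ \pi_{[L,L']}$ by $\ex_L \circ \pi_{[L,L']}$.

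Second, I would justify precisely this replacement. Since the image of $\pi_{[L,L']}$ lies in $[L, L']$, it suffices to show that $\ic_L = \ex_L$ on this segment. This is where the last assertion of Theorem \ref{muthm} is used: with $(L \cdot L') = 1$, the formula
\[
(L \cdot L') \cdot \ic_L = \begin{cases} (L \cdot L') \cdot \ex_L & \text{on } [L, \pi_{[L,L']}(U)], \\ \ex_L + 1 - \dfrac{1}{(L \cdot L')} & \text{on } [\pi_{[L,L']}(U), L'] \end{cases}
\]
degenerates on both pieces to $\ic_L = \ex_L$, because $\ex_L + 1 - 1 = \ex_L$. Hence $\ic_L \circ \pi_{[L,L']} = \ex_L \circ \pi_{[L,L']}$, which delivers the two displayed identities of the corollary.

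Finally, for the index function $\de_{L'}$, I would reorganize the three cases of Theorem \ref{muthm}. On $[L, \pi_{[L,L']}(U))$ the theorem gives $\de_{L'} = (L \cdot L') = 1$, and on $[\pi_{[L,L']}(U), L']$ it already gives $\de_{L'} = 1$; combining these two adjacent pieces recovers the whole segment $[L, L']$ with constant value $1$. On the complement, the formula $\de_{L'} = (L \cdot L') \cdot (\ic_L \circ \pi_{[L,L']}) \cdot \de_L$ collapses to $\de_{L'} = (\ex_L \circ \pi_{[L,L']}) \cdot \de_L$, again by the identity $\ic_L = \ex_L$ on $[L, L']$ established above. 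The only mildly delicate bookkeeping step—and hence the only possible obstacle—is checking that the two transversal cases of the piecewise definition of $\de_{L'}$ really glue along $\pi_{[L,L']}(U)$ into a single clause over $[L, L']$; once this is verified the proof is complete.
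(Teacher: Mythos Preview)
Your proposal is correct and follows essentially the same approach as the paper: the paper states at the end of the proof of Theorem \ref{muthm} that both the theorem and Corollary \ref{cormut} are immediate consequences of the same change-of-observer propositions, so deriving the corollary by specializing Theorem \ref{muthm} to $(L\cdot L')=1$ is exactly in line with the paper's treatment. Your use of the last assertion of Theorem \ref{muthm} to collapse $\ic_L$ to $\ex_L$ on $[L,L']$ is the right way to make the specialization explicit.
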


  \begin{remark}  \label{restransv}
     By combining formula (\ref{intcoefint}) with Corollary \ref{cormut}, we see 
     that in restriction to the segment $[L, L']$, one has the following equalities 
     in the transversal case:
        $$ \ic_{L'} = \ex_{L'} = \ex_L^{-1} = \ic_L^{-1}.$$
  \end{remark}

\begin{figure}[h!] 
\vspace*{6mm}
\labellist \small\hair 2pt 
\pinlabel{$L$} at 160 -10
\pinlabel{$C_1$} at 266 185
\pinlabel{$C_2$} at 300 315
\pinlabel{$C_3$} at 200 370
\pinlabel{$C_4$} at 110 370
\pinlabel{$C_5$} at 0 375

\pinlabel{$\mathbf{0}$} at 145 5

\pinlabel{$\mathbf{2}$} at 107 72
\pinlabel{$\mathbf{5/2}$} at 80 115
\pinlabel{$\mathbf{7/2}$} at 44 210
\pinlabel{$\mathbf{17/4}$} at 17 255
\pinlabel{$\mathbf{14/3}$} at -10 305
\pinlabel{$\mathbf{8/3}$} at 164 190
\pinlabel{$\mathbf{11/4}$} at 140 290

\pinlabel{$1$} at 160 30
\pinlabel{$1$} at 145 65
\pinlabel{$1$} at 124 105
\pinlabel{$1$} at 200 120
\pinlabel{$1$} at 95 170

\pinlabel{$2$} at 132 160
\pinlabel{$2$} at 60 250
\pinlabel{$2$} at 165 240
\pinlabel{$4$} at 42 290
\pinlabel{$4$} at 80 295
\pinlabel{$4$} at 195 310
\pinlabel{$6$} at 220 240
\pinlabel{$12$} at 30 335

\pinlabel{$\mathbf{\infty}$} at 266 165
\pinlabel{$\mathbf{\infty}$} at 300 295
\pinlabel{$\mathbf{\infty}$} at 220 350
\pinlabel{$\mathbf{\infty}$} at 120 350
\pinlabel{$\mathbf{\infty}$} at -20 350

\pinlabel{$\mathbf{1}$} at 125 40
\pinlabel{$U$} at 160 50

\pinlabel{$L$} at 540 -10
\pinlabel{$C_1$} at 643 185
\pinlabel{$C_2$} at 677 315
\pinlabel{$C_3$} at 577 370
\pinlabel{$C_4$} at 487 370
\pinlabel{$C_5$} at 377 375

\pinlabel{$\mathbf{0}$} at 635 165

\pinlabel{$\mathbf{1/2}$} at 484 72
\pinlabel{$\mathbf{3/4}$} at 457 115
\pinlabel{$\mathbf{5/4}$} at 421 210
\pinlabel{$\mathbf{13/8}$} at 394 255
\pinlabel{$\mathbf{11/6}$} at 367 305
\pinlabel{$\mathbf{5/6}$} at 541 190
\pinlabel{$\mathbf{7/8}$} at 517 290

\pinlabel{$1$} at 537 30
\pinlabel{$1$} at 525 65
\pinlabel{$2$} at 501 105
\pinlabel{$1$} at 577 120
\pinlabel{$2$} at 472 170

\pinlabel{$4$} at 509 160
\pinlabel{$4$} at 440 247
\pinlabel{$4$} at 542 240
\pinlabel{$8$} at 419 290
\pinlabel{$8$} at 460 295
\pinlabel{$8$} at 572 310
\pinlabel{$12$} at 597 240
\pinlabel{$24$} at 407 335

\pinlabel{$\mathbf{\infty}$} at 520 5
\pinlabel{$\mathbf{\infty}$} at 680 295
\pinlabel{$\mathbf{\infty}$} at 600 350
\pinlabel{$\mathbf{\infty}$} at 500 350
\pinlabel{$\mathbf{\infty}$} at 360 350

\pinlabel{$\mathbf{1}$} at 502 40
\pinlabel{$U$} at 537 50

\endlabellist 
\centering 
\includegraphics[scale=0.60]{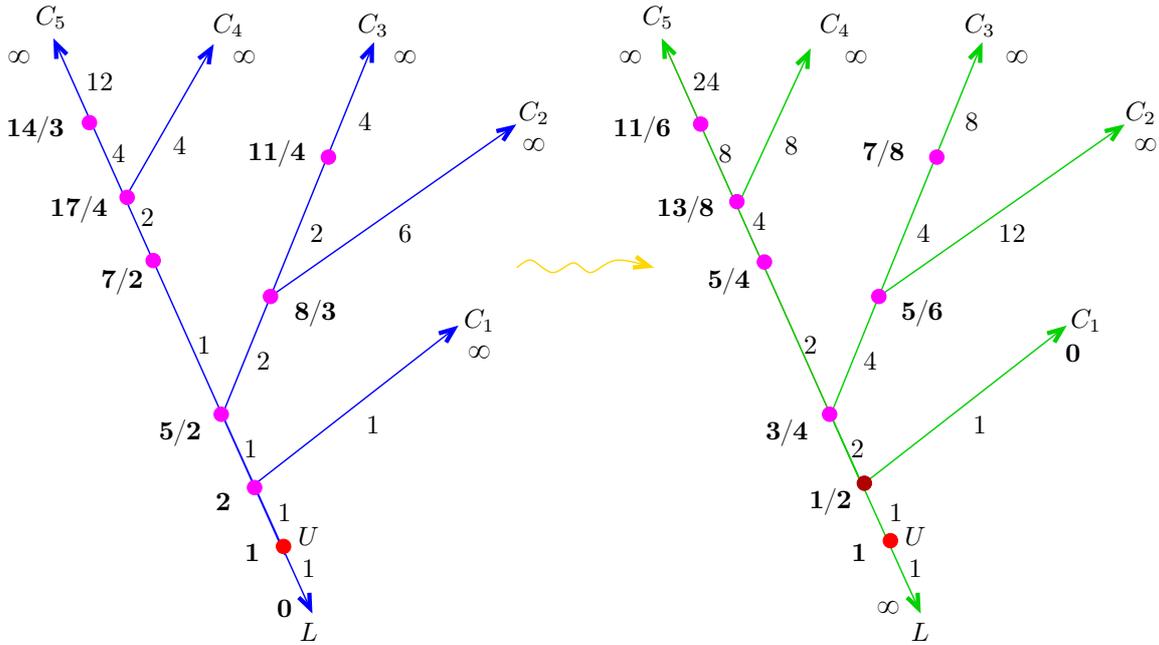} 
\vspace*{4mm}
\caption{The Eggers-Wall trees $\Theta_L (C)$ of Example \ref{Chroot} on the left,   compared with 
$\Theta_{C_1} (C)$ on the right.}
\label{fig:Chrootfive}
\end{figure}

\begin{remark}
    When applied to the case when $C$ is a branch,  
    Corollary \ref{cormut} is a reformulation in terms of Eggers-Wall trees of the 
    \emph{inversion formulae} of Abhyankar \cite{A 67} and Zariski \cite{Z 68}, 
    which express the characteristic 
   exponents with respect to a coordinate system $(y,x)$ in terms of those with 
   respect to $(x,y)$ (see \cite[Introduction]{GBGPPP 17a} for more details 
   about these formulae, and about their discovery and first proof by 
   Halphen \cite{H 76} and Stolz \cite{S 1879}). 
   One may try to prove this corollary directly from the Halphen-Stolz-Abhyankar-Zariski
   inversion formulae applied to the branches $A$ of $C$ different from $L$ and $L'$. 
   This provides the functions $\ex_{L', A}$ and $\de_{L', A}$ and $\ic_{L',A}$.
   Corollary \ref{intfromEW} allows to determine the value of $\ic_{L'}$ on 
   the points $ \langle L', A, B \rangle$,  for $A$ and $ B$ two distinct branches of $C$  
   (different from $L'$).  Formula  (\ref{exp-ci}) 
    determines the value of the exponent function $\ex_{L'}$ at the point 
    $ \langle L', A, B \rangle$, which is equal to $k_{L'} (A,B)$. 
   Then, it remains to prove that the geometric realizations of the trees 
   $\Theta_L (C)$ and $\Theta_{L'} (C)$ are isomorphic by an isomorphism 
   respecting the labellings of the ends by the branches of $C$. 
   Our approach, using the embeddings of the Eggers-Wall trees in the space of 
   valuations, provides a conceptual understanding of these combinatorial operations. 
\end{remark}

\begin{example} \label{Chroot}
    Consider again the Eggers-Wall tree of Example \ref{EWmany}. Now we assume 
    that $L$ is a component of $C$. We represent this in the left diagram of 
    Figure \ref{fig:Chrootfive} by 
     adding an arrow-head at the root. The Eggers-Wall tree $\Theta_{C_1}(C)$ 
    is represented in the right diagram of Figure \ref{fig:Chrootfive}. 
    In each one of the two diagrams, we have also 
   indicated the position of the unit point $U$ (which remains unchanged). 
   The roots may be recognized  as the only ends with vanishing exponent. 
   In our case, $L$ and $L':= C_1$ are transversal, which means that we may apply 
   Corollary \ref{cormut}. This implies that  ${\ex}_{L'} = \dfrac{1}{2}({\ex}_{L} -1)$ 
   on the union of the segments $[C_i, C_j]$, for $i, j \geq 1$, 
   since in restriction to them ${\ex}_L \circ \pi_{[L ,L']}=2$. 
\end{example}

\begin{remark}  \label{rem:inv}
When $L$ or $L'$ is not a branch of $C$, we determine the Eggers-Wall tree 
$\Theta_{L'}(C)$ from $\Theta_L(C)$ by constructing first $\Theta_L(C + L + L')$, 
by applying then Theorem \ref{muthm} to it in order to get $\Theta_{L'}(C + L + L')$, and 
by passing finally to the subtree $\Theta_{L'}(C)$. 
\end{remark}

\section{Eggers-Wall trees and splice diagrams} \label{splicediag}

In this section we recall from Eisenbud and Neumann's book \cite{EN 85} 
the topological operation of \emph{splicing} of two oriented links along 
a pair of their components inside \emph{oriented} integral homology spheres, 
as well as the associated encoding of \emph{graph links} by \emph{splice diagrams}. 
Then we particularize this construction to the links of curve singularities 
inside smooth complex surfaces and we explain 
how to pass from an Eggers-Wall diagram to a splice diagram (see Theorem \ref{EWsplice}). 
\medskip

A {\bf link} in a $3$-dimensional manifold is a closed $1$-dimensional 
submanifold. The link is called a {\bf knot} if it is moreover connected. 
The {\bf exterior} of a link is the complement of the interior of a 
compact tubular neighborhood of it in the ambient $3$-dimensional manifold.

In this section all ambient  $3$-dimensional manifolds and all the links considered inside them 
will be considered to be \emph{oriented}. For this reason, we will not mention this 
hypothesis anymore. 

\begin{definition} \label{ZHS} 
   An {\bf integral homology sphere} is a closed $3$-dimensional 
   manifold $\Sigma$ which has the same integral homology groups as 
   the $3$-dimensional sphere $\bS^3$. 
   Equivalently, it is connected and $H_1(\Sigma, \Z) =0$. 
\end{definition}

If $K_1$ and $K_2$ are two disjoint knots in an 
integral homology sphere $\Sigma$, then we denote by 
$lk_{\Sigma}(K_1, K_2) \in \Z$  their {\bf linking number}. Recall that:  
     $$lk_{\Sigma}(K_1, K_2) = lk_{\Sigma}(K_2, K_1).$$

\begin{definition}
Let $K$ be a knot inside a   
    $3$-dimensional integral homology sphere $\Sigma$. Denote by $U$ a compact 
    tubular neighborhood of $K$  and by $T$ 
    its boundary, which is a $2$-dimensional torus. A {\bf meridian} 
    of $K$ is an oriented simple closed curve $M$ on $T$ 
    which is non-trivial homologically in $T$ but becomes trivial in $U$, and 
    satisfies $lk_{M}(K, M) = 1$. 
    A {\bf longitude} of $K$ is an oriented simple closed curve $L$ on $T$ 
    which is homologous to $K$  in $U$ and 
    satisfies  $lk_{M}(K, L) = 0$. 
 \end{definition}
 
 Note that the constraint that $L$ be homologous to $K$ inside the solid torus $U$ 
 determines its orientation. The condition that $lk_{M}(K, L) = 0$ means intuitively that 
 $L$ does not spiral around $K$, seen from the global viewpoint of $M$. 
 A basic result of $3$-dimensional topology is that meridians and longitudes are 
 well-defined up to isotopy on $T$. 
 
 The following topological construction was described by Eisenbud and Neumann 
\cite[Chapter I.1]{EN 85}, inspired by previous work of Siebenmann \cite{S 79} 
and Bonahon and Siebenmann 
(by $ U^{\circ}$ we denote the \emph{interior} of the manifold with boundary $U$):

\begin{definition} \label{splicing}
    Let $\Lambda_1$ and $\Lambda_2$ be two links 
    inside the disjoint $3$-dimensional integral homology spheres $\Sigma_1$ and $\Sigma_2$ 
    respectively. 
    Let $K_j$ be a connected component of $\Lambda_j$, for each $j \in \{1, 2 \}$. 
    Denote by $U_j$ a compact tubular neighborhood of $K_j$,  
    disjoint from $\Lambda_j \:  \setminus \:  K_j$. We consider longitudes and 
    meridians of $K_j$ on the boundary $T_j$ of $U_j$. 
    The {\bf splice}  of $(\Sigma_1, \Lambda_1)$ and 
    $(\Sigma_2, \Lambda_2)$ along 
    $K_1$ and $K_2$ is the pair $(\Sigma, \Lambda)$ defined by: 
      \begin{itemize} 
          \item $\Sigma$ is the closed $3$-manifold obtained 
               from $\Sigma_1 \:  \setminus \:  U_1^{\circ}$ and $\Sigma_2 \:  \setminus \:  U_2^{\circ}$ by 
                identifying their boundaries $T_1$ and $T_2$ through a 
                diffeomorphism which permutes (oriented) meridians and longitudes. 
          \item $\Lambda$ is the link inside $\Sigma$ obtained by taking the union of the images of 
                 $\Lambda_1 \:  \setminus \:  K_1$ and $\Lambda_2 \: \setminus \: K_2$ inside $\Sigma$.
      \end{itemize}
\end{definition}

The basic result about this operation is (see \cite[Chapter I.1]{EN 85}):

\begin{proposition}
   The link $(\Sigma, \Lambda)$ is well-defined up to 
   an orientation-preserving  diffeomorphism which is unique up to 
   isotopy and $\Sigma$ is again an 
   integral homology sphere. 
\end{proposition}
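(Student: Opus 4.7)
The plan is to prove the two assertions separately: well-definedness up to ambient-isotopy, and the integral homology sphere property. Both rely on elementary 3-manifold topology combined with a Mayer--Vietoris computation.

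For the well-definedness, I would first recall that the tubular neighborhoods $U_1, U_2$ in Definition \ref{splicing} are unique up to ambient isotopy in $\Sigma_j$ fixing $K_j$ (a standard fact about tubular neighborhoods of embedded smooth submanifolds). Thus, different choices give canonically diffeomorphic pieces $\Sigma_j \setminus U_j^{\circ}$. The remaining freedom lies in the gluing diffeomorphism $\varphi : T_1 \to T_2$ that permutes oriented meridians and longitudes. By the standard identification of the mapping class group of the 2-torus with $GL(2,\Z)$ via its action on $H_1$, any such $\varphi$ is determined up to isotopy by its prescribed effect on the basis $(M_j, L_j)$. So any two admissible gluings differ only by an isotopy on $T_1$, and the resulting splice is well defined up to an orientation-preserving diffeomorphism which is unique up to isotopy.

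For the second assertion, I would use Mayer--Vietoris for the decomposition $\Sigma = (\Sigma_1 \setminus U_1^\circ) \cup (\Sigma_2 \setminus U_2^\circ)$ along the identified torus $T$. First, since $\Sigma_j$ is an integral homology sphere and $K_j$ is a knot, Alexander duality gives $H_1(\Sigma_j \setminus U_j^\circ, \Z) \simeq \Z$, generated by the meridian $M_j$; the canonical longitude $L_j$ satisfies $lk_{\Sigma_j}(K_j, L_j) = 0$, hence bounds a Seifert surface and is null-homologous in $\Sigma_j \setminus U_j^\circ$. Choosing the basis $(M, L) = (M_1, L_1) = (L_2, M_2)$ for $H_1(T) \simeq \Z^2$ (where the equalities come from the meridian/longitude swap), the Mayer--Vietoris map
\[
H_1(T, \Z) \longrightarrow H_1(\Sigma_1 \setminus U_1^\circ, \Z) \oplus H_1(\Sigma_2 \setminus U_2^\circ, \Z)
\]
sends $M$ to $(1, 0)$ and $L$ to $(0, 1)$, so it is an isomorphism. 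Exactness then yields $H_1(\Sigma, \Z) = 0$. Since both pieces are connected and the torus is connected, $\Sigma$ is connected. Being a closed, orientable, connected 3-manifold with vanishing $H_1$, Poincaré duality forces $H_2(\Sigma, \Z) = 0$ and $H_3(\Sigma, \Z) = \Z$, so $\Sigma$ is an integral homology sphere.

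The main obstacle I anticipate is bookkeeping the orientation conventions: one must verify that the diffeomorphism exchanging oriented meridians and longitudes is compatible with gluing two oriented 3-manifolds into an oriented one (i.e. that it is orientation-reversing on the common boundary torus so that the orientations of $\Sigma_1 \setminus U_1^\circ$ and $\Sigma_2 \setminus U_2^\circ$ match up across $T$). This is the point where the definition of longitude via $lk_{\Sigma_j}(K_j, L_j) = 0$ and the sign conventions for the meridian $lk_{\Sigma_j}(K_j, M_j) = 1$ interact, and it must be tracked carefully; once this is done, both uniqueness and the Mayer--Vietoris computation are routine.
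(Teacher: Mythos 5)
Your argument is correct, but note that the paper does not prove this proposition at all: it is quoted from Eisenbud--Neumann \cite[Chapter I.1]{EN 85}, so there is no internal proof to compare with. What you wrote is essentially the standard argument one finds for this fact, and it is sound: uniqueness of tubular neighborhoods up to ambient isotopy plus the identification of the mapping class group of $T^2$ with $GL(2,\Z)$ acting on $H_1$ gives well-definedness (the resulting identification being unique up to isotopy follows, as usual, from the isotopy extension theorem once the two gluing maps are known to be isotopic); and the Mayer--Vietoris computation is the right mechanism for the homology statement, using that $H_1$ of the knot exterior in a homology sphere is infinite cyclic generated by the meridian while the longitude, having zero linking number with the knot, is null-homologous in the exterior, so that swapping meridians and longitudes makes $H_1(T,\Z)\to H_1(\Sigma_1\setminus U_1^{\circ},\Z)\oplus H_1(\Sigma_2\setminus U_2^{\circ},\Z)$ an isomorphism and hence $H_1(\Sigma,\Z)=0$ (use reduced homology, or observe that $H_0(T)\to H_0(X_1)\oplus H_0(X_2)$ is injective), after which Poincar\'e duality and connectedness finish the job. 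The orientation issue you flag at the end resolves itself: the oriented meridian and longitude form a basis of $H_1(T_j,\Z)$ on which the intersection form of the boundary-oriented torus takes value $\pm 1$, and interchanging them changes the sign of this pairing, so a diffeomorphism permuting oriented meridians and longitudes is automatically orientation-reversing with respect to the boundary orientations, which is exactly the condition needed to glue the two oriented exteriors into an oriented closed $3$-manifold; making this one-line verification explicit would close the only point you left open.
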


Conversely, one may {\bf unsplice} an oriented link $(\Sigma, \Lambda)$  
inside an integral homology sphere $\Sigma$ 
by finding inside $\Sigma\:  \setminus \:  \Lambda$ an embedded $2$-torus $T$, 
then cutting $\Sigma$ along $T$ and filling the resulting two manifolds with boundary by 
solid tori in such a way as to get again integral homology spheres. Inside those 
two resulting homology spheres, one considers 
the links which are obtained from $\Lambda$ by adding central circles of the two solid 
tori used for performing the two fillings. 
Remark that the whole process is possible because the complement  
$\Sigma \:  \setminus \: T$ is disconnected,  
as a consequence of the hypothesis that $\Sigma$ is an integral homology sphere: 
otherwise, there would exist a simple closed curve intersecting transversely $T$ 
at one point, which would imply that this curve is not homologous to $0$ in $\Sigma$. 

One has the following result (see \cite[page 25]{EN 85}):

\begin{lemma} \label{gluest}
    Let $(\Sigma, \Lambda)$ be a link inside an integral 
    homology sphere and let $T$ be a $2$-torus inside $\Sigma \: \setminus \: \Lambda$. 
    Then $\Lambda$ is the result of a splicing operation along this torus, of two 
    links $(\Sigma_1, \Lambda_1)$ and $(\Sigma_2, \Lambda_2)$. 
    If $K_i$ denotes the component of $\Lambda_i$ along which this operation 
    is done, then the orientations of $K_1$ and $K_2$ are well-determined up to 
    a simultaneous reorientation. Moreover, if $\Sigma  \simeq \bS^3$, 
    then $\Sigma_1 \simeq \bS^3$, $\Sigma_2 \simeq \bS^3$ and the converse also holds.
\end{lemma}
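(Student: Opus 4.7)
The plan is to construct $(\Sigma_i, \Lambda_i)$ canonically from the data $(\Sigma, \Lambda, T)$ by cutting $\Sigma$ along $T$ and then Dehn filling the two resulting pieces along canonical slopes; the slopes are chosen so that the splicing operation of Definition \ref{splicing} applied to the resulting pair of links reconstructs $(\Sigma, \Lambda)$.

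First I would show that $T$ separates $\Sigma$. Since $\Sigma$ is an oriented integral homology sphere, $H_2(\Sigma; \Z/2) = 0$, and $T$ is orientable hence two-sided in $\Sigma$; a two-sided embedded surface in a $3$-manifold separates it if and only if its mod $2$ homology class vanishes. Write $\Sigma = M_1 \cup_T M_2$ with $\partial M_i = T$; since $\Lambda \cap T = \emptyset$ we may split $\Lambda = \Lambda_1' \sqcup \Lambda_2'$ with $\Lambda_i' \subset M_i^{\circ}$. By Poincar\'e-Lefschetz duality applied to each compact oriented $3$-manifold $M_i$ with toral boundary, the kernel of $H_1(T; \Q) \to H_1(M_i; \Q)$ is $1$-dimensional; let $\lambda_i \in H_1(T; \Z)$ be a primitive generator of this kernel, unique up to sign. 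A Mayer-Vietoris computation for $\Sigma = M_1 \cup_T M_2$, combined with $H_1(\Sigma) = 0$, shows that $\{\lambda_1, \lambda_2\}$ forms a $\Z$-basis of $H_1(T; \Z)$.

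Next I would Dehn fill each $M_i$ by gluing a solid torus $V_i$ along $T$ so that the meridian of $V_i$ is identified with $\lambda_j$ (where $j \ne i$); set $\Sigma_i := M_i \cup_T V_i$, let $K_i$ be the core of $V_i$, and $\Lambda_i := \Lambda_i' \cup K_i$. A further Mayer-Vietoris sequence yields $H_1(\Sigma_i) = 0$, so $\Sigma_i$ is an integral homology sphere. To see that splicing $(\Sigma_i, \Lambda_i)$ along $K_i$ recovers $(\Sigma, \Lambda)$, note that by construction the meridian $m_i$ of $K_i$ on $T$ equals $\lambda_j$, while the longitude $l_i$ (the unique primitive class on $T$ trivial in $H_1(M_i)$) equals $\lambda_i$; hence the splice gluing swapping meridians and longitudes becomes the identity map on the common torus $T$, reconstructing $M_1 \cup_T M_2 = \Sigma$ with link $\Lambda_1' \sqcup \Lambda_2' = \Lambda$. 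Reversing the orientation of a single $K_i$ simultaneously flips both $m_i$ and $l_i$; requiring the splice compatibility $m_i = l_j$ as oriented curves then reduces the orientation ambiguity of $(K_1, K_2)$ to a simultaneous reorientation of both knots.

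For the $\bS^3$ statement: if $\Sigma \simeq \bS^3$ then by Alexander's torus theorem the torus $T$ bounds a solid torus in $\bS^3$, so (up to relabeling) $M_1$ is a solid torus, and $\Sigma_1$ is a Dehn filling of a solid torus, hence a lens space, which is forced to be $\bS^3$ by the homology-sphere constraint. For $\Sigma_2$, observe that $\lambda_1$ is the homological longitude of the solid torus $M_1$ and corresponds on $T$ to the meridian of the core of $M_1$ viewed as a knot $K \subset \bS^3$; so filling $M_2 = \bS^3 \setminus M_1^{\circ}$ along $\lambda_1$ performs the trivial meridional Dehn surgery on $K$ and gives back $\bS^3$. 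Conversely, when both $\Sigma_i \simeq \bS^3$ one exhibits meridian disks of the $V_i$ realizing $M_1, M_2$ as solid tori inside $\Sigma$, so $\Sigma = M_1 \cup_T M_2$ is a genus-$1$ Heegaard splitting, and Waldhausen's uniqueness theorem combined with $H_1(\Sigma) = 0$ forces $\Sigma \simeq \bS^3$. I expect the main obstacle to be the Mayer-Vietoris bookkeeping identifying $\{\lambda_1, \lambda_2\}$ as an integral basis of $H_1(T; \Z)$; once this canonical pair of slopes is pinned down, the rest is routine $3$-manifold topology.
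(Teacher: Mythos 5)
Most of your construction is correct and is essentially the argument the paper leaves implicit (the paper proves nothing here beyond sketching why $T$ separates; for the lemma itself it just cites \cite[page 25]{EN 85}). In particular the step you worried about is fine: since $\Sigma$ is a homology sphere, Mayer--Vietoris gives an isomorphism $H_1(T;\Z)\to H_1(M_1;\Z)\oplus H_1(M_2;\Z)$, each summand has rank one by the half-lives-half-dies argument and is torsion free, so each $\lambda_i$ maps to a generator of the opposite summand and $\{\lambda_1,\lambda_2\}$ is indeed an integral basis. The choice of filling slopes, the observation that the longitude of $K_i$ in the homology sphere $\Sigma_i$ is precisely the class killed in the exterior $M_i$ (so that the splice gluing becomes the identity on $T$), the orientation statement, and the implication $\Sigma\simeq\bS^3\Rightarrow\Sigma_1,\Sigma_2\simeq\bS^3$ via Alexander's theorem are all sound; only a slip of words there: $\lambda_1$ is the boundary of a meridian disk of the solid torus $M_1$, i.e.\ the meridian of its core, not its ``homological longitude'', but the identification you then use is the correct one.

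The converse is where the proposal breaks down, and the gap cannot be repaired. From $\Sigma_i\simeq\bS^3$ you claim that ``meridian disks of the $V_i$ realize $M_1,M_2$ as solid tori inside $\Sigma$'', but a meridian disk of $V_i$ is not a compressing disk for $M_i$: all that $\Sigma_i\simeq\bS^3$ tells you is that $M_i$ is the exterior of the knot $K_i\subset\bS^3$, and this is a solid torus only when $K_i$ is unknotted; so the genus-one Heegaard splitting does not exist in general. In fact the unrestricted converse is false: let $\Sigma$ be the splice of $(\bS^3,K)$ with $(\bS^3,K)$, where $K$ is a trefoil, let $T$ be the splice torus and $\Lambda$ any link disjoint from $T$. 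Unsplicing along $T$ gives back $\Sigma_1\simeq\Sigma_2\simeq\bS^3$ with $K_1,K_2$ trefoils (the canonical filling of each trefoil exterior is the meridional one), yet $\Sigma\not\simeq\bS^3$: both $M_i$ have incompressible boundary because the trefoil is knotted, hence $T$ is incompressible in $\Sigma$, whereas every embedded torus in $\bS^3$ is compressible. The converse does hold under the additional hypothesis that $T$ bounds a solid torus in $\Sigma$ on one side, equivalently that some $K_i$ is unknotted in $\Sigma_i$: then $M_i$ is a solid torus whose meridian-disk boundary is $\lambda_i=m_j$, so $\Sigma$ is the meridional filling of $M_j$ and $\Sigma\simeq\Sigma_j\simeq\bS^3$. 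So the obstacle you met lies in the final clause of the statement as transcribed in the paper rather than in your technique: in the stated generality only the forward implication, which you did prove correctly, is true.
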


In the sequel we will use integral homology spheres which are Seifert 
fibred and Seifert links inside them as building blocks in 
the splicing procedure. Let us start by defining the first notion 
(see Orlik's book \cite{O 72}): 

\begin{definition}  \label{def:seif}
   A {\bf Seifert fibration} on a compact $3$-manifold is a smooth foliation by circles, 
   such that each leaf has a saturated neighborhood (that is, a neighborhood obtained as a union 
   of fibres) which is diffeomorphic by a 
   leaf-preserving diffeomorphism to the quotient of the infinite cylinder 
   $\mathbb{D}^2 \times \R$ by the diffeomorphism:
     $$ (z,t) \to (e^{2i \pi q/p} z, t+1),$$
   where:
      \begin{itemize} 
           \item  $q$ and $p$ are coprime integers, with $p \in \N^*$; 
           \item the quotient is endowed with the projection of the foliation of 
                 $\mathbb{D}^2 \times \R$ by the translates of the second factor; 
           \item the initial leaf corresponds to the image of $0 \times \R$ 
                  by this quotient map. 
       \end{itemize} 
   When $p \geq 2$, one says that the initial leaf is {\bf singular} and that $p$ is 
   its {\bf multiplicity}. A saturated neighborhood of the previous kind is called a 
   {\bf model neighborhood}. The leaves of the foliation are called its {\bf fibers}.
 \end{definition}
 
Let us recall a homological interpretation of the multiplicity $p$ 
associated to a fiber $F_0$ of a Seifert 
fibration. Consider a model neighborhood $U$ of the chosen fiber. Orient all the fibers of this model  
in a continuous manner.  If $F$ is a fiber contained inside $U$ and different from $F_0$, then 
$H_1(U, \Z) = \Z [F_0]$, where $[F_0]$ denotes the homology class of $F_0$. 
Moreover, the homology class $[F]$ of $F$ in $H_1(U, \Z)$ is equal to $p [F_0]$. Note that 
this shows that $p$ is independent on the chosen orientations of $F_0$ and of the ambient 
manifold.

In order to get also the number $q$, one has to consider a meridian disk $D$ of $U$, 
whose boundary circle intersects transversally the foliation induced on the $2$-torus $\partial U$. 
Orient $D$ such that its orientation followed by the orientation of a fiber lying in $U$ gives 
the ambient orientation. This induces an orientation on $\partial D$. 
Consider a fiber $F$ lying on $\partial U$. It intersects $\partial D$ in $p$ points. Their set may be 
cyclically ordered by the orientation of $\partial D$, which allows to identify it canonically with 
the cyclic group $\Z/p \Z$. The first return map obtained by following $F$ along 
its chosen orientation is a translation of this group by one of its elements, which is precisely 
the image of $q$ in $\Z/p \Z$. This shows that $q$ is only well-defined modulo $p$ and that 
it is changed into its opposite when one changes the ambient orientation.

Having defined Seifert fibrations, we may define \emph{Seifert links} and the more general 
notion of \emph{graph links}:  

\begin{definition} \label{Seifert link}
  A {\bf Seifert link}  is a link whose 
  exterior admits a Seifert fibration. A {\bf graph link} is a link 
  whose exterior may be cut into Seifert fibred manifolds using a finite set 
  of pairwise disjoint tori. 
\end{definition}

The structure of any graph link inside an integral homology sphere 
may be expressed using a \emph{splice diagram}. This is a special kind 
of decorated tree: 

\begin{definition} \label{defspdiag}
    A {\bf splice diagram}
    is a marked finite forest (that is, a finite disjoint union of trees) 
    whose vertices are decorated with 
    the signs $\pm$ and whose germs of edges at each {\bf internal vertex} 
    (that is, a vertex which is not an end)  
    are decorated with \emph{pairwise coprime} integers. Some of its ends are 
    distinguished as {\bf arrowhead ends}. 
\end{definition}

Each splice diagram encodes up to orientation-preserving homeomorphisms 
a unique graph link inside an 
integral homology sphere. In order to understand this, we explain 
it first in the case in which the diagram is {\bf star-shaped}, that is, 
in which it has exactly one vertex which is not an end. Then the encoding 
is based on the following proposition (see \cite[Chapter II.7]{EN 85}):

\begin{proposition}  \label{basicihs}
     Let $n \geq 2$ and $\alpha_1, \dots , \alpha_n$ be $n$ pairwise 
     coprime  non-zero integers. 
    There exists a unique Seifert fibered oriented integral homology sphere 
    $\Sigma(\alpha_1, \dots, \alpha_n)$ endowed with an oriented link 
    $\Lambda := F_1 \cup \dots \cup F_n $ consisting of oriented fibers and with a choice 
    of continuous orientation of the fibers not belonging to $\Lambda$, such that:
      \begin{itemize}
           \item the link $\Lambda$ contains all singular fibers of the Seifert fibration; 
           \item for every $i \in \{1, \dots , n\}$, the multiplicity of $F_i$ is equal to $| \alpha_i |$; 
           \item the orientation of the generic fibers is chosen compatibly with the orientation 
              of $F_i$ if and only if $\alpha_i > 0$;
           \item for every distinct $i, j \in \{1, \dots, j\}$, 
               the linking number $lk_{\Sigma(\alpha_1, \dots, \alpha_n)}(F_i, F_j)$ 
              is equal to  the product: 
                   \[ \prod_{k \in \{1, \dots, n\} \: \setminus \:  \{i, j \}} \alpha_k.  \] 
       \end{itemize}
\end{proposition}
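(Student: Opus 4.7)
The plan is to build $\Sigma(\alpha_1,\dots,\alpha_n)$ as a Seifert fibration over $\bS^2$ with $n$ prescribed singular fibers, and then to pin down the remaining invariants by imposing the homology sphere condition. First I would set $p_i := |\alpha_i|$ and $\epsilon_i := \mathrm{sign}(\alpha_i) \in \{\pm 1\}$. The pairwise coprimality of the $\alpha_i$ transfers to pairwise coprimality of the $p_i$. I would then consider the family of Seifert fibered closed oriented $3$-manifolds with unnormalized Seifert invariants $\{b;\,(p_1,q_1),\dots,(p_n,q_n)\}$, where the $q_i$ are integers (coprime to $p_i$) and $b \in \Z$ is the base Euler number. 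Such a manifold is constructed concretely by removing $n$ open fibered solid tori from a trivial circle bundle over $\bS^2$ minus $n$ disks, then regluing them via the maps dictated by $(p_i,q_i)$.

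Next I would use the standard presentation of $H_1$ of such a Seifert manifold, whose abelian group has generators $s_1,\dots,s_n$ (sections around the singular fibers) and $h$ (a generic fiber), with relations $p_i s_i + q_i h = 0$ and $s_1 + \dots + s_n + b h = 0$. A determinant computation shows that $H_1 = 0$ is equivalent to
\[
p_1\cdots p_n \left(b + \sum_{i=1}^n \frac{q_i}{p_i}\right) = \pm 1.
\]
Because the $p_i$ are pairwise coprime, the Chinese Remainder Theorem lets me solve this equation for the $q_i \pmod{p_i}$ (and then adjust $b$), and the solution is unique up to the equivalence of unnormalized Seifert invariants. This simultaneously settles existence and uniqueness, the latter via Orlik's classification of Seifert fibered homology spheres, which says that the multiset of multiplicities $\{p_i\}$ together with the overall orientation determines the manifold up to orientation-preserving diffeomorphism. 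The signs $\epsilon_i$ are then used to choose, for each $i$, whether the orientation of $F_i$ as a fiber agrees or disagrees with the ambient coherent orientation of generic fibers, which is the third bulleted condition.

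The main obstacle, and the step I would spend the most care on, is the signed linking number formula
\[
lk_{\Sigma(\alpha_1,\dots,\alpha_n)}(F_i,F_j) \;=\; \prod_{k \notin \{i,j\}} \alpha_k.
\]
To prove it I would compute in the rational homology of the complement of $F_i \cup F_j$. Using the presentation of $H_1$ above, one deduces that in $H_1(\Sigma \setminus F_i; \Q)$ the class of a meridian $m_j$ of $F_j$ is a rational multiple of the class of a meridian of $F_i$, and the linking number is by definition read from this multiple. A bookkeeping computation with the relations $p_k s_k + q_k h = 0$ and $\sum s_k + bh = 0$, combined with the homology sphere relation $p_1\cdots p_n\,e = \pm 1$, collapses the ratio to $\prod_{k\notin\{i,j\}} p_k$. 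The signs $\epsilon_i,\epsilon_j$ enter through the chosen orientation of $F_i$ and $F_j$ relative to the generic fiber, converting $p_k$ into $\alpha_k$ for $k \neq i,j$ and ensuring that the overall sign is exactly $\prod_{k\notin\{i,j\}} \epsilon_k$. Finally, symmetry of the formula under permutations of the indices matches the symmetry of linking numbers and provides a consistency check. The whole argument could alternatively be shortened by quoting the linking number formula of \cite{EN 85}, but the outline above is the direct route through Seifert invariants.
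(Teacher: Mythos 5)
The paper does not prove this proposition at all: it is quoted from Eisenbud and Neumann \cite[Chapter II.7]{EN 85}, so your proposal is best read as a reconstruction of the argument the paper delegates to that reference. Its skeleton --- unnormalized Seifert invariants $\{b;(p_1,q_1),\dots,(p_n,q_n)\}$ over $\bS^2$, the determinant criterion $p_1\cdots p_n\,(b+\sum_i q_i/p_i)=\pm 1$ for $H_1=0$, solvability and essential uniqueness of the $(q_i,b)$ from pairwise coprimality, and the homological computation of linking numbers of fibers --- is indeed the standard route through Seifert invariants and is sound as far as it goes.

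The genuine gap is the sign bookkeeping, which you dispatch with a sentence that is wrong as a mechanism. Reorienting $F_i$ and $F_j$ inside a fixed oriented manifold multiplies $lk(F_i,F_j)$ by $\epsilon_i\epsilon_j$, so ``the chosen orientation of $F_i$ and $F_j$ relative to the generic fiber'' cannot produce the factor $\prod_{k\notin\{i,j\}}\epsilon_k$, which depends only on the \emph{other} indices. In fact the signs change the underlying oriented manifold, not just the decorations: for $(\alpha_1,\alpha_2,\alpha_3)=(-2,3,5)$ the required triple $(lk(F_1,F_2),lk(F_1,F_3),lk(F_2,F_3))=(5,3,-2)$ has an odd number of negative entries, and reorienting link components always flips an even number of the three pairwise signs, so this link cannot live in $\Sigma(2,3,5)$ with its all-positive linking triple; it is realized in $-\Sigma(2,3,5)$ with $F_1$ reoriented, while $(-2,-3,5)$ is realized in $+\Sigma(2,3,5)$ with $F_3$ and the generic fibers reoriented. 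So in your construction the $\epsilon_i$ must be threaded through the choice of sign in the homology-sphere equation (equivalently the sign of the Euler number, i.e.\ the ambient orientation) and through the Seifert data, after which one verifies that the four bulleted conditions can be met simultaneously and rigidly; that case analysis is exactly what \cite{EN 85} supplies and what your outline skips. A secondary point: quoting that the multiplicities determine the manifold does not yield the uniqueness asserted here, which concerns the manifold \emph{together with} the oriented link and the coherent orientation of generic fibers; when $n=2$ or some $|\alpha_i|=1$ the manifold is $\bS^3$, whose Seifert fibrations are far from unique, so uniqueness must be argued at the level of this decorated data rather than of the manifold alone.
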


In fact, the previous definition may be extended to the situation where one of the 
integers $\alpha_i$ is $0$ (note that the coprimality condition prohibits having two 
of them vanishing simultaneously). In order to do this, one must allow still another 
kind of model neighborhood, in which the nearby fibers turn once around the central 
fiber (see \cite[Lemma 7.1]{EN 85}). The resulting manifold is still an integral homology 
sphere, but it is Seifert fibered only in the exterior of the link $\Lambda$. This 
explains the mention of such exteriors of links in Definition \ref{Seifert link}.

   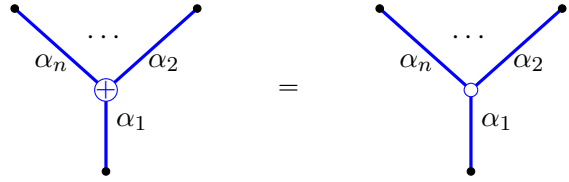
\begin{figure}
\centering
\begin{tikzpicture} [scale=0.6]
 \draw [color=blue, very thick](0,0) -- (0,-1.8) ;  
 \draw [color=blue, very thick] (0,0) -- (2,1.8);
 \draw [color=blue, very thick] (0,0) -- (-2,1.8) ;
 
 \draw (0.7, 0.6) node[right]{$\alpha_2$} ;
\draw (-0.6,0.6) node[left]{$\alpha_n$} ;
\draw (0,-0.7) node[right]{$\alpha_1$} ;
\draw (0, 0.8) node[above]{$\cdots$} ;

 \node[draw,circle,color=blue,inner sep=3pt,fill=white] at (0,0) {};
  \draw [color=blue] (0, 0) node{$+$} ;
 \node[draw,circle,inner sep=1pt,fill=black] at (2,1.8) {};
\node[draw,circle,inner sep=1pt,fill=black] at (0,-1.8) {};
\node[draw,circle,inner sep=1pt,fill=black] at (-2,1.8) {};

  \draw (4, 0) node{$=$} ;

\draw [color=blue, very thick] (8,0) -- (8,-1.8) ;  
 \draw [color=blue, very thick] (8,0) -- (10,1.8);
 \draw [color=blue, very thick] (8,0) -- (6,1.8) ;
 
 \draw (8.7, 0.6) node[right]{$\alpha_2$} ;
\draw (7.4,0.6) node[left]{$\alpha_n$} ;
\draw (8,-0.7) node[right]{$\alpha_1$} ;
\draw (8, 0.8) node[above]{$\cdots$} ;

 \node[draw,circle,color=blue,inner sep=1.8pt,fill=white] at (8,0) {};
 \node[draw,circle,inner sep=1pt,fill=black] at (10,1.8) {};
\node[draw,circle,inner sep=1pt,fill=black] at (8,-1.8) {};
\node[draw,circle,inner sep=1pt,fill=black] at (6,1.8) {};

    \end{tikzpicture}  
    \caption{The splice diagram of the oriented homology sphere 
        $\Sigma(\alpha_1, \dots, \alpha_n)$.}  
    \label{fig:Starshaped}
    \end{figure}

The Seifert fibered oriented homology sphere $\Sigma(\alpha_1, \dots, \alpha_n)$ may be 
represented by any of the two star-shaped diagrams of Figure \ref{fig:Starshaped}. The 
one on the left specifies the sign attributed to the central node, while that on the right does 
not mention any sign. This is a general rule:

\begin{remark}
  If the internal vertices of a splice diagram do not carry signs, this means 
    by convention that 
      they represent oriented Seifert-fibred homology spheres of the type 
       $\Sigma(\alpha_1, \dots,  \alpha_n)$ (see Proposition \ref{basicihs}). 
  
   If one replaces the $(+)$-sign in the diagram on the left of Figure \ref{fig:Starshaped} by 
    a $(-)$-sign, then one obtains by definition a representation of the oppositely oriented 
     manifold to $\Sigma(\alpha_1, \dots,  \alpha_n)$. Let us denote it simply by 
       $- \Sigma(\alpha_1, \dots, \alpha_n)$. 
\end{remark}

Each end of the splice diagrams of the oriented integral homology spheres 
 $\pm \Sigma(\alpha_1, \dots \alpha_n)$ represents by construction an oriented 
 knot in the corresponding manifold. Given two such knots  
 $(\epsilon_1 \Sigma(\alpha_1, \dots, \alpha_n), K_1)$ and 
 $(\epsilon_2 \Sigma(\alpha_1, \dots, \alpha_n), K_2)$ (where $\epsilon_i$ is a  sign and $K_i$ 
 is a knot corresponding to an end of the splice diagram of 
 $\epsilon_i \Sigma(\alpha_i, \dots, \alpha_n)$, then one may splice them as explained 
 in Definition \ref{splicing}. Graphically, one represents this operation by joining the corresponding 
 edges of the two diagrams. An example is shown in Figure \ref{fig:Splicestar}.

 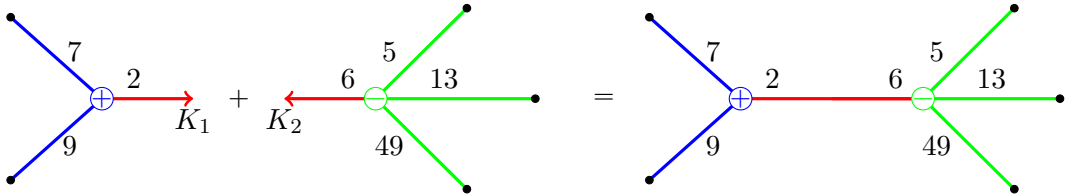
\begin{figure}
\centering
\begin{tikzpicture} [scale=0.6]
 \draw  [color=red, very thick] [->] (0,0) -- (2, 0) ;  
 \draw [color=blue, very thick] (0,0) -- (-2,1.8);
 \draw [color=blue, very thick] (0,0) -- (-2,-1.8) ;
 
 \draw (0.7, 0) node[above]{$2$} ;
\draw (-0.6,0.6) node[above]{$7$} ;
\draw (-0.7,-0.6) node[below]{$9$} ;
\draw (2, 0) node[below]{$K_1$} ;

 \node[draw,circle,color=blue,inner sep=3pt,fill=white] at (0,0) {};
      \draw [color=blue] (0, 0) node{$+$} ;
\node[draw,circle,inner sep=1pt,fill=black] at (-2,1.8) {};
\node[draw,circle,inner sep=1pt,fill=black] at (-2,-1.8) {};

   \draw (3, 0) node{$+$} ;
      
 \draw   [color=red, very thick] [->] (6,0) -- (4, 0) ;  
 \draw [color=green, very thick] (6,0) -- (8,2);
  \draw [color=green, very thick](6,0) -- (9.5, 0) ;
 \draw[color=green, very thick](6,0) -- (8,-2) ;

 \draw (5.4, 0) node[above]{$6$} ;
\draw (6.3,0.6) node[above]{$5$} ;
\draw (6.3,-0.6) node[below]{$49$} ;
\draw (7.5, 0) node[above]{$13$} ;
\draw (4, 0) node[below]{$K_2$} ;

\node[draw,circle,color=green,inner sep=3pt,fill=white] at (6,0) {};
      \draw [color=green](6, 0) node{$-$} ;
\node[draw,circle,inner sep=1pt,fill=black] at (8,2) {};
\node[draw,circle,inner sep=1pt,fill=black] at (9.5,0) {};
\node[draw,circle,inner sep=1pt,fill=black] at (8,-2) {};  
   
  \draw (11, 0) node{$=$} ;

  \draw   [color=red, very thick](14,0) -- (18, 0) ;  
 \draw [color=blue, very thick](14,0) -- (12,1.8);
 \draw [color=blue, very thick](14,0) -- (12,-1.8) ;

  \draw (14.7, 0) node[above]{$2$} ;
\draw (13.4,0.6) node[above]{$7$} ;
\draw (13.4,-0.6) node[below]{$9$} ;

 \node[draw,circle,color=blue,inner sep=3pt,fill=white] at (14,0) {};
      \draw [color=blue](14, 0) node{$+$} ;
\node[draw,circle,inner sep=1pt,fill=black] at (12,1.8) {};
\node[draw,circle,inner sep=1pt,fill=black] at (12,-1.8) {};
  
 \draw    [color=red, very thick](18,0) -- (16, 0) ;  
 \draw[color=green, very thick](18,0) -- (20,2);
  \draw[color=green, very thick](18,0) -- (21, 0) ;
 \draw[color=green, very thick](18,0) -- (20,-2) ;

 \draw (17.4, 0) node[above]{$6$} ;
\draw (18.3,0.6) node[above]{$5$} ;
\draw (18.3,-0.6) node[below]{$49$} ;
\draw (19.5, 0) node[above]{$13$} ;

\node[draw,circle,color=green,inner sep=3pt,fill=white] at (18,0) {};
      \draw [color=green](18, 0) node{$-$} ;
\node[draw,circle,inner sep=1pt,fill=black] at (20,2) {};
\node[draw,circle,inner sep=1pt,fill=black] at (21,0) {};
\node[draw,circle,inner sep=1pt,fill=black] at (20,-2) {};    

    \end{tikzpicture}  
    \caption{Splicing two star-shaped diagrams along the knots $K_1$ and $K_2$.}  
    \label{fig:Splicestar}
    \end{figure}

It is now easy to understand which integral homology sphere corresponds to a given 
  connected splice diagram. Indeed, it is enough to imagine it obtained by successive joining of simpler 
 diagrams along edges adjacent to ends. Then one performs the corresponding 
 splicing operations, taking into account the fact that the end vertices of a splice diagram represent 
 particular oriented knots in the corresponding oriented homology sphere. If one wants to 
 encode not only a manifold, but also a link inside it, then one marks some of the ends 
of the splice diagram as arrowheads.
 
 If the splice diagram is not connected, then by definition it encodes the connected 
 sum of the links corresponding to its connected components.

A given graph link in an integral homology sphere is representable by an infinite number of diagrams. 
Among them, one may define the following preferred ones (see \cite[Page 72]{EN 85}):

\begin{definition}  \label{mindiag}
    A splice diagram is called {\bf minimal} if it minimizes the number of edges 
    among the splice diagrams representing a given graph link. 
\end{definition}

A minimal splice diagram is unique for a given graph link with all fibers oriented compatibly 
outside the tori of the splice decomposition (see \cite[Corollary 8.3]{EN 85}). 
There is an algorithmic way to reduce any splice diagram to the minimal one representing 
the same link (see \cite[Theorems 8.1 and 8.2]{EN 85}).

The knowledge of a splice diagram of a graph link $\Lambda$ 
inside an oriented integral homology sphere $\Sigma$ allows to 
compute very easily the pairwise linking numbers of the 
components of $\Lambda$ (see Theorem \cite[10.1]{EN 85}):

\begin{proposition} \label{linksplice}
    Let $s(\Sigma, \Lambda)$ be a splice diagram for a graph link 
    $(\Sigma, \Lambda)$ inside an integral homology sphere $\Sigma$. 
    If $K_i, K_j$ are two distinct 
    components of $\Lambda$, then the linking number 
    $lk_{\Sigma}(K_i, K_j)$ is equal to the product of the weights 
    of the germs of edges adjacent to, but not included into the segment of 
    $s(\Sigma, \Lambda)$ which joins the arrowheads corresponding to 
    $K_i$ and $K_j$, multiplied by the product of the signs of the internal 
    vertices situated on this segment.
\end{proposition}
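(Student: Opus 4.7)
The plan is to induct on the number of internal vertices of $s(\Sigma, \Lambda)$, the base case being star-shaped diagrams. When $s(\Sigma, \Lambda)$ has a single internal vertex, $(\Sigma, \Lambda)$ is $\pm\Sigma(\alpha_1, \dots, \alpha_n)$ with components $F_1, \dots, F_n$; in the $+$ case Proposition \ref{basicihs} directly gives $lk_{\Sigma}(F_i, F_j) = \prod_{k \neq i, j} \alpha_k$, and the $-$ case follows from reversal of ambient orientation, which multiplies linking numbers by $-1$ (accounting for the sign at the unique internal vertex on the path). The germs adjacent to, but not on, the path from the $i$-th arrowhead to the $j$-th are precisely those weighted $\alpha_k$ with $k \neq i, j$, so the claimed formula matches.

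The inductive step reduces to a splicing formula for linking numbers: if $(\Sigma, \Lambda)$ is spliced from $(\Sigma_1, \Lambda_1)$ and $(\Sigma_2, \Lambda_2)$ along $K_1 \in \Lambda_1$ and $K_2 \in \Lambda_2$, then for components $K_i \subset \Lambda_1 \setminus K_1$ and $K_j \subset \Lambda_2 \setminus K_2$,
\[
 lk_{\Sigma}(K_i, K_j) \; = \; lk_{\Sigma_1}(K_i, K_1) \cdot lk_{\Sigma_2}(K_2, K_j),
\]
while $lk_{\Sigma}(K_i, K_j) = lk_{\Sigma_1}(K_i, K_j)$ when both components lie in $\Lambda_1 \setminus K_1$. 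To prove the lemma, I would build a Seifert surface for $K_i$ in $\Sigma$ as follows. Isotope a Seifert surface $S_i$ for $K_i$ in $\Sigma_1$ so that $S_i \cap U_1$ consists of $n := lk_{\Sigma_1}(K_i, K_1)$ meridian disks of $K_1$; then $\partial(S_i \setminus U_1^{\circ}) = K_i \cup n M_1$ on $T_1$. The splicing diffeomorphism identifies $M_1$ with a longitude $L_2$ of $K_2$; since $\Sigma_2$ is an integral homology sphere, $K_2$ bounds a Seifert surface $S_2$ in $\Sigma_2$, and $n$ parallel copies of $S_2 \setminus U_2^{\circ}$ can be glued on along $n L_2$ to close up the boundary. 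This produces a Seifert surface for $K_i$ in $\Sigma$ whose algebraic intersection with $K_j \subset \Sigma_2 \setminus U_2^{\circ}$ is $n \cdot lk_{\Sigma_2}(K_2, K_j)$, as desired. The second assertion of the lemma is read off similarly: when $K_j$ lies in $\Lambda_1 \setminus K_1$, only the $S_i$ part of the new Seifert surface contributes to the intersection.

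Granting the lemma, the inductive step is bookkeeping. Cut $s(\Sigma, \Lambda)$ along an edge joining two internal vertices to obtain two smaller splice diagrams $s(\Sigma_1, \Lambda_1)$ and $s(\Sigma_2, \Lambda_2)$, each with an extra arrowhead representing the knot along which the splicing is performed. For $K_i, K_j$ on opposite sides of the cut, the path in $s(\Sigma, \Lambda)$ from $K_i$ to $K_j$ decomposes into the path from $K_i$ to the new arrowhead $K_1$ in $s(\Sigma_1, \Lambda_1)$ and the path from the new arrowhead $K_2$ to $K_j$ in $s(\Sigma_2, \Lambda_2)$. The germs adjacent to, but not on, these two sub-paths are precisely the germs adjacent to, but not on, the original path (the two weights on the cut edge are excluded in both cases, because that edge lies on the original path and leads to the new arrowhead in each sub-diagram), and the signs at the internal vertices on the sub-paths combine into the signs along the original path. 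Applying the inductive hypothesis on both sides and multiplying yields the formula. The case where $K_i$ and $K_j$ lie on the same side of the cut reduces immediately to the inductive hypothesis on that side, via the second half of the lemma.

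The main obstacle is a careful treatment of orientations in the splicing lemma: the splicing diffeomorphism on $T_1 \cong T_2$ is only determined up to a simultaneous reorientation of the exchanged pair of meridians and longitudes (Lemma \ref{gluest}), and one must check that the choice which is compatible with the fiber orientations fixed in Proposition \ref{basicihs} produces linking numbers multiplying with the sign given by the product of the signs of the internal vertices on the path. A secondary subtlety arises when some $\alpha_i = 0$ in one of the Seifert pieces, where the ambient manifold is no longer Seifert fibered globally; the same Seifert-surface construction nevertheless goes through and must be verified separately.
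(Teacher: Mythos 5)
The paper does not actually prove Proposition \ref{linksplice}: it is quoted from Eisenbud--Neumann (Theorem 10.1 of \cite{EN 85}), so there is no internal proof to compare with. Your argument is essentially the standard one (and in substance the one in \cite{EN 85}): the star-shaped base case from Proposition \ref{basicihs}, plus multiplicativity of linking numbers under splicing, $lk_{\Sigma}(K_i,K_j)=lk_{\Sigma_1}(K_i,K_1)\cdot lk_{\Sigma_2}(K_2,K_j)$, proved by capping the part of a Seifert surface of $K_i$ meeting $T_1$ in $lk_{\Sigma_1}(K_i,K_1)$ meridians with parallel copies of a Seifert surface of $K_2$ (using that the splice map of Definition \ref{splicing} exchanges oriented meridians and longitudes), together with the bookkeeping that cutting an edge between two internal vertices splits the path, the adjacent weights and the vertex signs consistently. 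This is correct as a sketch; I would only insist on three points of care that you partly flag yourself: (i) when you arrange $S_i\cap U_1$ to be exactly $lk_{\Sigma_1}(K_i,K_1)$ meridian disks you are not merely isotoping but also tubing away cancelling intersection pairs, which changes the surface but not its boundary, so the argument still computes the linking number; (ii) the orientation conventions (boundary orientations of the capping copies versus the oriented meridian--longitude exchange, and the simultaneous reorientation ambiguity of Lemma \ref{gluest}) must be checked once to see that no extraneous sign appears in the multiplicativity lemma, the vertex signs entering only through the base case $\pm\Sigma(\alpha_1,\dots,\alpha_n)$; (iii) the degenerate cases --- diagrams with at most one internal vertex on the path, a vanishing weight $\alpha_i=0$ (where the piece is Seifert fibred only in the link exterior), and the implicit fact that the link encoded by the diagram is the splice, along the chosen edge, of the links encoded by the two halves (well-definedness/commutativity of splicing along disjoint tori) --- should be stated, though each is routine.
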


We restrict now to the splice diagrams of the links of reduced germs of curves  
inside smooth germs of complex surfaces (see \cite[Appendix to Chapter I]{EN 85}): 

\begin{theorem} \label{splcurve}
    Let $C$ be a germ of reduced holomorphic curve on the germ of complex analytic 
    smooth surface $S$. Then its oriented link $\Lambda(C)$
    inside the oriented boundary $\bS^3$ of $S$ is a graph link and it has a minimal 
    splice diagram whose vertex signs are all $+$ and whose edge decorations 
    are all strictly positive. 
\end{theorem}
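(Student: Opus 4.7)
The plan is to derive the statement from two ingredients: an embedded resolution of $C$, which produces a plumbing tree for $\Lambda(C)$, and the Eisenbud--Neumann conversion from plumbing graphs to splice diagrams.

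First I would take a good embedded resolution $\psi : (\Sigma, E) \to (S, O)$ of the pair $(S, C)$, obtained as a composition of blow-ups of points, so that the total transform $\psi^{-1}(C) = E \cup \widetilde{C}$ is a simple normal crossings divisor. The dual graph $\Gamma$ of this configuration (with arrowhead ends labelling the components of the strict transform $\widetilde{C}$) is a finite tree, since each blow-up preserves the tree structure and replaces a node by a pair of vertices joined by an edge. Decorated with the self-intersection numbers of the exceptional components --- all negative integers --- the graph $\Gamma$ becomes a plumbing graph whose plumbed $3$-manifold is canonically identified with $\bS^3$ (via a regular neighbourhood of $E$ in $\Sigma$), and whose plumbed link coincides with $\Lambda(C)$.

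Next I would apply the general algorithm of \cite[Ch.~22 and Appendix to Ch.~I]{EN 85} to convert the plumbing tree $\Gamma$ into a splice diagram representing the same graph link $(\bS^3, \Lambda(C))$. Internal vertices of the splice diagram correspond to the ramification vertices of $\Gamma$; the edges correspond to maximal chains of bivalent exceptional components in $\Gamma$; and the edge weights are computed as absolute values of determinants of principal submatrices of the intersection form on $E$, or, equivalently, as intersection multiplicities of suitable curvettes with subdivisors of the exceptional divisor. The minimal splice diagram is then obtained by applying the reduction rules of \cite[Theorems 8.1 and 8.2]{EN 85}, which collapse superfluous edges without altering the underlying graph link.

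The delicate part is the positivity of the decorations. The sign at each internal vertex records the compatibility of the Seifert fibration of the corresponding JSJ piece with the ambient orientation: here every component of $E$ is a rational curve inside an oriented complex surface, so the complex orientations consistently orient all Seifert fibres compatibly with the orientation of $\bS^3$, which forces every sign to be $+$. The edge weights are strictly positive integers because the relevant principal minors of the negative-definite intersection matrix of a subtree of $E$ are nonzero integers whose signs are determined by the dimensions of the submatrices, and the conversion algorithm retains precisely their absolute values; equivalently, the same numbers appear as multiplicities of exceptional components in the pull-backs of curvettes, which are manifestly positive. The main obstacle is precisely this positivity verification, and it is carried out explicitly in the \emph{Appendix to Chapter~I} of \cite{EN 85} for plane curve singularities, so I would appeal to that source rather than redo the computation from scratch.
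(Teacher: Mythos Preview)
The paper does not actually prove this theorem: it is stated as a known result, with the citation ``see \cite[Appendix to Chapter I]{EN 85}'' given immediately before the statement, and no proof is supplied afterwards. Your proposal is therefore not competing with a proof in the paper but rather sketching the argument behind the cited reference. As such a sketch it is essentially correct: embedded resolution by point blow-ups gives a tree-shaped plumbing graph for $(\bS^3, \Lambda(C))$, and the Eisenbud--Neumann conversion from plumbing to splice diagrams, specialised to this situation in the Appendix to Chapter~I of \cite{EN 85}, produces a splice diagram with all $+$ signs and strictly positive weights; the reduction rules of \cite[Theorems 8.1, 8.2]{EN 85} then yield the minimal diagram without destroying positivity. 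Your final paragraph correctly identifies positivity as the point requiring care and correctly defers it to the same source the paper cites, so your write-up is in effect an expanded version of the paper's one-line citation rather than a different approach.
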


As explained before, such a totally positive minimal splice diagram of 
$(\bS^3, \Lambda(C))$ is unique. We will call it the {\bf minimal splice diagram  of $C$}. 
The next theorem explains how to construct it from the Eggers-Wall tree of $C$ 
relative to a smooth branch $L$ which is transverse to it. It is a more graphical reformulation 
of Wall's \cite[Theorem 9.8.2]{W 04} (note that Wall spoke about {\em Eisenbud-Neumann 
diagrams} instead of {\em splice diagrams}). An advantage of speaking about the  
splice diagram of $C + L$ in the statement below allows a simpler
comparison of $\Theta_L(C)$ and of the minimal splice diagram of $C + L$ 
than in \cite{W 04}, avoiding special cases.

\begin{figure}[h!] 
\vspace*{1mm}
\labellist \small\hair 2pt 
  \pinlabel{$s$} at 67 495
  \pinlabel{$s$} at 67 287
  \pinlabel{$s$} at 67 87

  \pinlabel{$d$} at 96 460
  \pinlabel{$d$} at 112 513
  \pinlabel{$d$} at 40 513

  \pinlabel{$d$} at 96 256
  \pinlabel{$d$} at 128 293
  \pinlabel{$d'$} at 37 320
  \pinlabel{$d'$} at 120 330

  \pinlabel{$d$} at 96 56
  \pinlabel{$d'$} at 37 118
  \pinlabel{$d'$} at 120 118

  \pinlabel{$d^2 s$} at 370 460
  \pinlabel{$1$} at 382 513
  \pinlabel{$1$} at 310 513

  \pinlabel{$dd' s$} at 380 256
  \pinlabel{$d' / d$} at 405 290
  \pinlabel{$1$} at 307 320
  \pinlabel{$1$} at 390 330

  \pinlabel{$dd' s$} at 380 50
  \pinlabel{$1$} at 307 118
  \pinlabel{$1$} at 390 118
  \pinlabel{$d' / d$} at 390 82

\endlabellist 
\centering 
\includegraphics[scale=0.45]{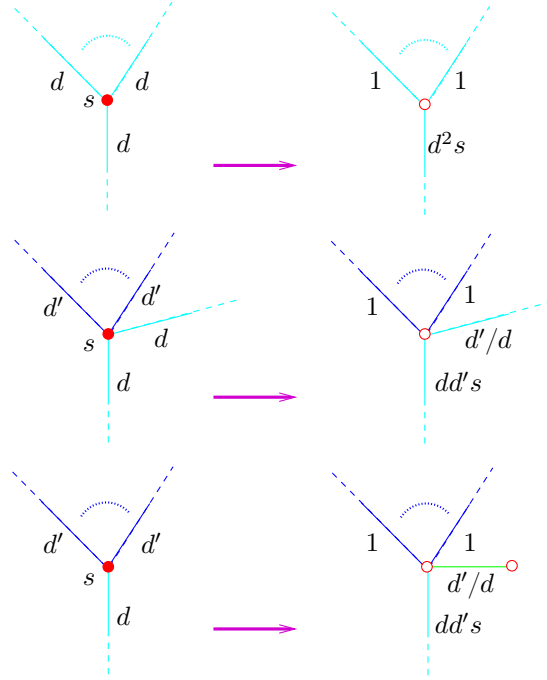} 
\vspace*{1mm} \caption{From the Eggers-Wall tree to the splice diagram}   
\label{fig:Changes} 
\end{figure}

\begin{theorem} \label{EWsplice}
   Let $C$ be a reduced germ of curve on the smooth germ of surface $S$ 
   and let $L$ be a smooth branch through $O$ such that $L$ is transversal to $C$. 
   Then the minimal splice diagram of $C + L$ may be obtained 
   from the Eggers-Wall tree $\Theta_L(C)$ decorated by the 
   contact function ${\ic}_L$ and the index functions  ${\de}_L$  
   by doing the local operations indicated in Figure \ref{fig:Changes}. 
\end{theorem}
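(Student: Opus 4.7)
The plan is to verify that the decorated graph $D$ produced from $\Theta_L(C)$ by the local operations of Figure \ref{fig:Changes} is (i) a valid splice diagram in the sense of Definition \ref{defspdiag}, (ii) encodes the oriented graph link $(\bS^3, \Lambda(C+L))$, and (iii) is minimal. By the uniqueness of minimal totally-positive splice diagrams of graph links in integral homology spheres mentioned after Definition \ref{mindiag} together with Theorem \ref{splcurve}, these three properties identify $D$ with the minimal splice diagram of $C+L$, proving the theorem.

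For (i), the only nontrivial condition is pairwise coprimality of the weights at each internal vertex. Inspecting Figure \ref{fig:Changes}, the weights around an internal vertex produced by the operations always involve $1$ together with either $d^2 s$, or $dd' s$ and $d'/d$. Coprimality with the entries $1$ is automatic; the remaining case $\gcd(dd's, d'/d) = 1$ reduces, after recalling from Definition \ref{def:EWirr} that $d, d'$ are consecutive values of the index function $\de_L$ at a characteristic-exponent point and that $s$ encodes the corresponding contact increment, to the standard coprimality between the numerator and denominator (in lowest terms) of a characteristic exponent of a Newton-Puiseux series. For (ii), since the link is a graph link in $\bS^3$, it is determined up to oriented homeomorphism by the topological tree underlying any of its splice diagrams together with the pairwise linking numbers of its components. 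For two branches $C_i, C_j$ of $C$, these linking numbers are equal to the intersection multiplicities $(C_i \cdot C_j)$, which Corollary \ref{intfromEW} (the tripod formula) expresses as $\de_L(C_i)\,\de_L(C_j)\,\ic_L(\langle L, C_i, C_j\rangle)$. It remains to check that the product of edge weights of $D$ obtained by Proposition \ref{linksplice} along the arc from $K_i$ to $K_j$ reproduces this value. Traversing the segment $[L, \langle L, C_i, C_j\rangle]$ in $\Theta_L(C)$ and applying the local operations one by one, the transverse weights $d^2 s$ and $d'/d$ telescope: the ratios $d'/d$ accumulate to $\de_L(C_i)$ on one side and to $\de_L(C_j)$ on the other, while the weights $dd's$ at the discontinuity points of $\de_L$ sum up (through the developed formula of Definition \ref{Intcoef}, equivalently (\ref{intcoefint})) to precisely $\ic_L(\langle L, C_i, C_j\rangle)$. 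Pairs involving $L$ itself reduce to $(L\cdot C_i) = \de_L(C_i)$, which follows directly from the last operation of Figure \ref{fig:Changes} applied at the leaf $L$ (made a leaf of $C+L$).

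Finally, for (iii), one applies the Eisenbud-Neumann minimality criterion from \cite[Thm.~8.1]{EN 85}, which prohibits valency-two internal vertices with both adjacent weights equal to $1$ and edges joining two internal vertices whose weights on both ends equal $1$ and which can be contracted. The three patterns of Figure \ref{fig:Changes} are designed so that an internal vertex appears only at a ramification point of $\Theta_L(C)$ or at a point of discontinuity of $\de_L$, where the corresponding weights $dd's$ and $d^2 s$ are strictly greater than $1$ (since $s > 0$ at every point strictly above $L$ and $d \geq 1$); the mutual coprimality already verified in (i) then rules out any further collapse at the junction of two adjacent patterns. The main obstacle is bookkeeping in the telescoping computation of (ii): one must check the formula for all possible positions of $\langle L, C_i, C_j\rangle$ relative to the marked points of $\Theta_L(C)$, and in particular verify that contributions from the three different local patterns in Figure \ref{fig:Changes} combine coherently. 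This is elementary but requires careful case analysis, and is the only place where the hypothesis of transversality of $L$ with $C$ (which ensures $\ic_L$ vanishes at the unit point of $\Theta_L(C+L)$ and that no superfluous weight $1$ arises near the root) intervenes nontrivially.
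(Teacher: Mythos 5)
There is a genuine gap in your step (ii). You reduce everything to the claim that a graph link in $\bS^3$ is determined up to oriented homeomorphism by the topological tree underlying a splice diagram together with the pairwise linking numbers of its components. This is false: a knot has no pairwise linking numbers at all, and already for $C=Z(y^2-x^3)$ versus $C=Z(y^2-x^5)$ with $L$ transversal, the links of $C+L$ have splice diagrams with the same underlying tree and the same linking number $lk(L,C)=(L\cdot C)=2$, yet they are different links (the branch components are a $(2,3)$ and a $(2,5)$ torus knot respectively). What is true, and what the paper's proof uses (quoting Wall, Prop.\ 4.3.9 and Section 9.8), is that the topological type of $C+L$ is determined by the characteristic exponents of its \emph{individual} branches together with the pairwise intersection numbers. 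Consequently, checking linking numbers via Proposition \ref{linksplice} is only half of the verification: you must also check that, for each branch $C_i$, the weights of the produced diagram along the path to the arrowhead $C_i$ encode the correct characteristic exponents (equivalently, the correct Puiseux/Newton pairs), which the paper does by invoking the description in the Appendix to Chapter I of \cite{EN 85} of how characteristic exponents sit in the splice diagram of a branch. Without this, your argument cannot distinguish links whose components have different knot types but identical pairwise linking data, so the identification of $D$ with the minimal splice diagram of $C+L$ does not follow.

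Two further, smaller points. First, your description of the linking-number computation is garbled: in Proposition \ref{linksplice} the linking number is a \emph{product} of the weights adjacent to, but not on, the path joining the two arrowheads (times vertex signs), not a sum; in the present situation essentially one factor $dd's$ (or $d^2s$) comes from the germ at $\langle L,C_i,C_j\rangle$ pointing towards $L$, and the remaining off-path weights $d'/d$ telescope multiplicatively to produce $\de_L(C_i)\,\de_L(C_j)\,\ic_L(\langle L,C_i,C_j\rangle)$, in agreement with Corollary \ref{intfromEW}; the formula (\ref{intcoefint}) plays no direct role there. Second, your minimality and coprimality checks (i) and (iii) are in the right spirit, but note that the paper's first proof bypasses a direct minimality verification altogether: since the Eggers-Wall tree, the collection (characteristic exponents, intersection numbers), and the minimal splice diagram are equivalent encodings of the topological type, it suffices to verify that the diagram produced by Figure \ref{fig:Changes} yields the same characteristic exponents and intersection numbers as $\Theta_L(C)$; its second proof instead matches the local operations against Wall's construction \cite[Theorem 9.8.2]{W 04} after an inversion to a transversal observer $L'$. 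Your plan can be repaired by adding the branch-by-branch verification of characteristic exponents, but as written the key reduction is unsound.
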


\begin{proof}
     The topological type of $C + L$ is encoded by either of the following objects 
     (see Wall \cite[Proposition 4.3.9, Section 9.8]{W 04}):
        \begin{itemize}
              \item the collection of characteristic exponents of its branches and of 
                 intersection numbers between pairs of branches of $C + L$; 
              \item  the Eggers-Wall tree $\Theta_L(C + L)$; 
              \item the minimal splice diagram of $C + L$. 
        \end{itemize}
     Therefore, in order to prove the theorem it is enough to show that 
        the splice diagram obtained by our construction gives the same characteristic 
        exponents of individual branches and intersection numbers as the starting Eggers-Wall 
        tree. This verification may be done using the description from 
        \cite[Appendix to Chapter 1]{EN 85} of the way characteristic 
        exponents are encoded in the splice diagram of a branch and using 
        Proposition \ref{linksplice} for the way intersection numbers may be read 
        on a splice diagram of a germ with several branches. Here we use the fact that 
         the intersection number of two distinct branches on $S$ is equal to the linking 
         number of their associated knots in $\bS^3$.
       \medskip
       
    Let us give now a second proof 
        of the theorem, which furnishes a comparison with Wall's proof of \cite[Theorem 9.8.2]{W 04}.
    The  transversality hypothesis implies that the tree $\Theta_L(C)$ 
    contains no ramification point of exponent $<1$. 
We consider another smooth branch $L'$ transversal to the irreducible 
components of $C$ and to $L$. 
The attaching point of $L'$ on the tree $\Theta_{L} (C)$ is the unit point $U$  of this tree, 
which has exponent equal to $1$. 
By the inversion theorem  \ref{muthm}, the Eggers-Wall trees  $\Theta_{L'} (C + L)$  
and $\Theta_{L} (C + L')$ have the same exponent and index functions 
on the complement of the segment $[L, L']$. 
We apply the construction of  the splice diagram in 
\cite[Theorem 9.8.2]{W 04} to $\Theta_{L'} (C + L)$.  It  starts from 
the \emph{reduced} Eggers-Wall tree $\Theta^{red}_{L'} (C + L)$, which is obtained 
from $\Theta_{L'} (C + L)$ by removing the segment  $[L', U)$ and by 
unmarking the point $U$ in this tree if this point is not
a ramification point on the tree $\Theta_{L}(C)$ (this corresponds to (i) and (iv) in 
\cite[Theorem 9.8.2]{W 04}).

In order to make the comparison, Wall  considers the \emph{Herbrand function}  associated 
to a branch $B$ of $C + L$, which is a function $H_B: [0, \infty] \to [0, \infty]$ such that 
$H_B \circ \ex_{B,L'}  = \ic_{B,L'}$.

The first local operation in Figure \ref{fig:Changes} corresponds to point (iii) in 
Theorem 9.8.2 of \cite{W 04}, when 
the index function is continuous on the marked point $V$ considered. 
Wall considers a branch $B$ of $C + L$ 
through $V$ of multiplicity $m = \de_{L'} (B)$ and such that 
$P_q \prec_{L'}V \prec_{L'}  P_{q+1}$ where 
$P_j$ are the marked points of the tree $\Theta_{L'} (B)$. Then, the incoming edge 
at $V$ 
is marked by $(m^2 / e_q^2) \cdot H ( \ex_{L'} V )$, where 
$e_q = \de_{L'} (B) / \de_{L'} (V)$. 
We get the same decoration as in Figure \ref{fig:Changes} since: 
\[
\frac{m^2}{e_q^2} \cdot H_B ( \ex_{L'} (V )) =  (\de_{L'} (V))^2 \ic_{L'} (V) = d^2 \cdot s, 
\]
where we denote $d := \de_{L'} (V)$ and $s:=\ic_{L'} (V)$.

The second and third local operations in the figure below correspond to point (ii) in 
Theorem 9.8.2 of \cite{W 04},
when the index function is not continuous on the marked point $V$ considered.  
In the second case,  
there is a unique branch $B_{i_0}$ of $C$ passing through $V$ such that the 
index function restricted to this branch 
is continuous at $V$.  If $B_j$ is any other branch of $C$,  then $V$ is a marked point, 
say  $P_q$, of 
the tree $\Theta_{L'} (B_j)$.   In terms of Wall's notations, we have
 $e_q = \de_{L'} (B_j) / \de_{L'} (P_{q+1})$ and $e_{q-1} = \de_{L'} (B_j) / \de_{L'} (P_{q})$.

By \cite{W 04}, the outgoing segment at $V$ in the direction 
of a branch  $B_{i}$ is marked by
\[
\frac{e_{q-1}}{e_q} = 
\frac{\de_{L'} (P_{q+1}) }{ \de_{L'} (P_{q}) } = \frac{d'}{d}, 
\]
if $B_i= B_{i_0}$ and by $1$ otherwise (we denoted $d' := \de_{L'} (P_{q+1})$). 
Let us consider an auxiliary branch $K$ with $(q-1)$ characteristic exponents,  
having maximal contact with $B_j$. 
By definition, one has $(L' \cdot  K) = \de_{L'} (K) = d'$ and $(B_j \cdot  L') = e_q \cdot d$. 
The incoming edge 
at $V$ is marked by  $\bar{\beta}_q / e_q$, where $\{ \bar{\beta}_s \}_{s=0}^{g_j}$ 
denotes the sequence of minimal generators of the semigroup of the branch $B_j$. 
By Theorem \ref{intcomp},  
$s = \ic_{L'} (V) =  (B_j \cdot K) (K \cdot L')^{-1} (B_j \cdot L')^{-1}$,  
and thus we get the same decoration as in Figure \ref{fig:Changes} since: 
\[
d d' s = d d'  \frac{(B_j \cdot  K)}{ (K \cdot L') (B_j \cdot L')}  =  \frac{(B_j \cdot  K)} {e_q}= 
    \frac{\bar{\beta}_q}{e_q}.
\]

In the third case,  the index function is  not continuous on the marked point $V$ considered for 
all the branches of $C$ containing it. Then,   
we have to add a side at $V$ marked $d' / d$ to an end vertex, which is not arrow-headed.
\end{proof}

\begin{remark}

If $L$ is not transversal to $C$, the splice diagram associated to 
$C + L$  is obtained from the tree $\Theta_{L} (C + L)$ by doing the local operations 
indicated in Figure \ref{fig:Changes}, with respect to the values of the index 
and contact complexity functions on $\Theta_{L'} (C + L)$, where  $L'$ 
is a smooth branch transversal to $C + L$.  
\end{remark}

\begin{figure}[h!] 
\vspace*{2mm}
\labellist \small\hair 2pt 
  \pinlabel{$1$} at 156 46
  \pinlabel{$1$} at 125 110
  \pinlabel{$1$} at 93 184
  \pinlabel{$2$} at 64 250
  \pinlabel{$4$} at 43 295
  \pinlabel{$12$} at 23 340

  \pinlabel{$1$} at 200 124
  \pinlabel{$2$} at 130 160
  \pinlabel{$2$} at 165 245
  \pinlabel{$4$} at 197 320
  \pinlabel{$6$} at 224 248
  \pinlabel{$4$} at 82 309

  \pinlabel{$0$} at 140 4
  \pinlabel{$2$} at 105 80
  \pinlabel{$5/2$} at 77 130
  \pinlabel{$7/2$} at 36 225
  \pinlabel{$31/8$} at 20 263
  \pinlabel{$191/48$} at  -10 310
  \pinlabel{$31/12$} at 105 210
  \pinlabel{$21/8$} at 144 290

  \pinlabel{$L$} at 190 6
  \pinlabel{$C_1$} at 270 195
  \pinlabel{$C_2$} at 292 320
  \pinlabel{$C_3$} at 200 370
  \pinlabel{$C_4$} at 100 370 
  \pinlabel{$C_5$} at 0 375

  \pinlabel{$2$} at 495 57
  \pinlabel{$1$} at 480 96
  \pinlabel{$1$} at 524 96

  \pinlabel{$5$} at 488 118
  \pinlabel{$2$} at 456 143
  \pinlabel{$1$} at 492 146

  \pinlabel{$7$} at 450 210
  \pinlabel{$2$} at 418 210
  \pinlabel{$1$} at 418 235

  \pinlabel{$31$} at 435 254
  \pinlabel{$2$} at 402 253
  \pinlabel{$1$} at 400 278
  \pinlabel{$1$} at 440 278

  \pinlabel{$191$} at 415 300
  \pinlabel{$3$} at 383 298
  \pinlabel{$1$} at 400 330

  \pinlabel{$31$} at 510 180
  \pinlabel{$3$} at 498 226
  \pinlabel{$1$} at 546 217

  \pinlabel{$21$} at 545 267
  \pinlabel{$2$} at 518 284
  \pinlabel{$1$} at 560 305

  \pinlabel{$L$} at 560 7
  \pinlabel{$C_1$} at 634 200
  \pinlabel{$C_2$} at 660 323
  \pinlabel{$C_3$} at 570 370
  \pinlabel{$C_4$} at 470 370 
  \pinlabel{$C_5$} at 370 375

\endlabellist 
\centering 
\includegraphics[scale=0.65]{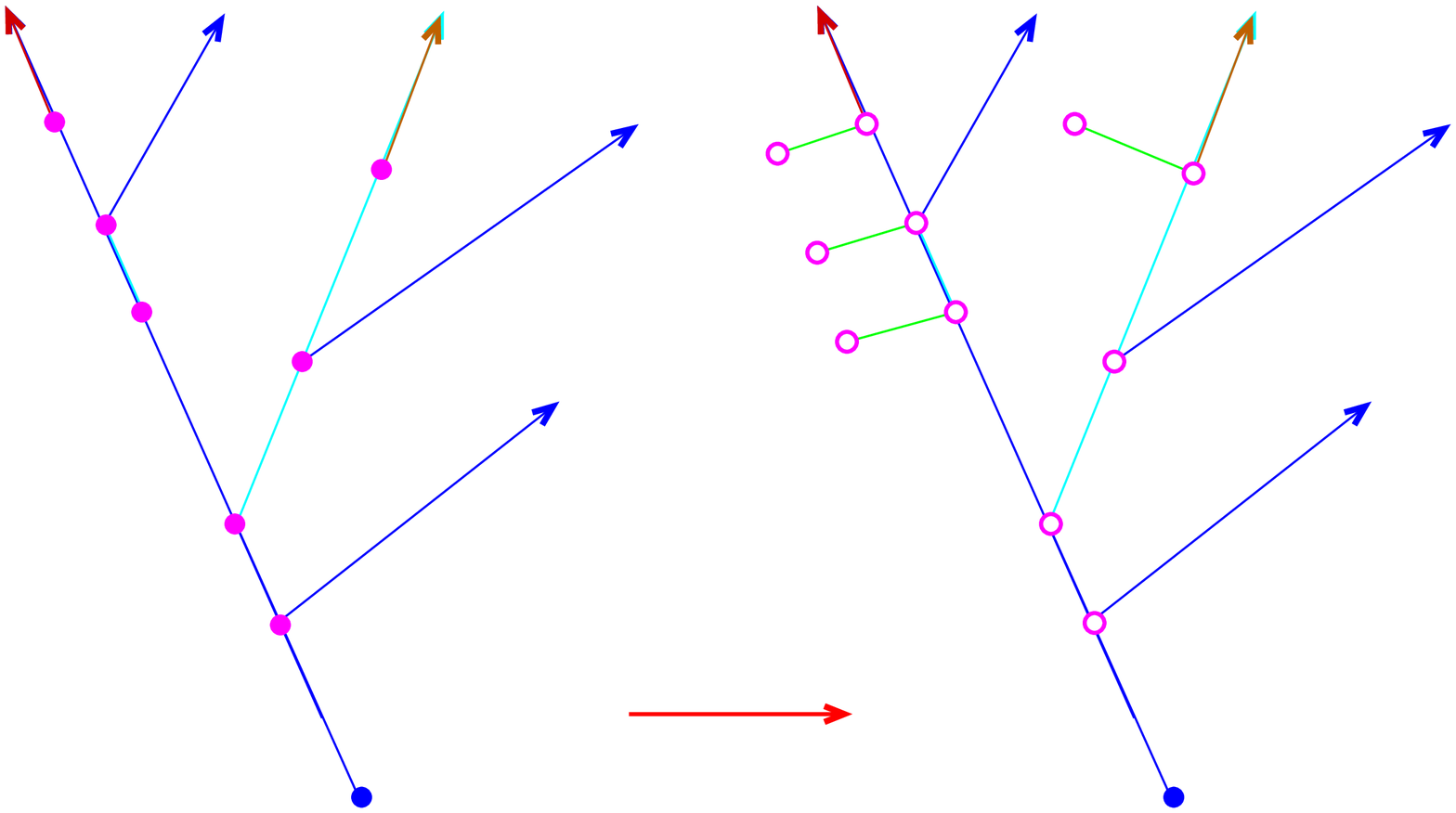} 
\vspace*{1mm} \caption{The splice diagram associated to our recurrent example}   
\label{fig:ExEWtosplice1} 
\end{figure}

\begin{example}
   Consider again our recurrent Example \ref{EWmany}. Recall that 
   the values of 
   the contact complexity function and of the  index function are 
   represented in Figure \ref{fig:Intfive}.
   The result of applying the previous theorem 
   is indicated in Figure \ref{fig:ExEWtosplice1}. One may verify that the application of 
   Proposition \ref{linksplice} gives the same values of the intersection numbers 
   $(C_i \cdot C_j)$ as those computed in Example \ref{Exintcoef}. 
\end{example}

\section{Semivaluation spaces} \label{valspaces}

In this section we define the spaces of valuations and semivaluations 
of  $\mathcal{O}$ which will be used in the sequel: the space $\cV$ of all real-valued semivaluations 
(see Definition \ref{semivalspace}), 
its projectivization $\bP(\cV)$ (see Definition \ref{projsvsp}) 
and the sets of normalized semivaluations relative either to the base point $O$ of $S$ 
or to a smooth branch $L$ on $S$ (see Definition \ref{normaliz}).
We describe also the types of semivaluations used in the next sections: the \emph{multiplicity valuations}, 
the \emph{intersection semivaluations} and the \emph{vanishing order valuations} 
(see Definition \ref{def:val}). 
\medskip

Recall that we denote by $\mathcal{O}$ the formal local ring of $S$ at $O$, by  
$\mathcal{F}$ its field of fractions and by $\mathcal{M}$ the maximal ideal of $\mathcal{O}$. 

\begin{definition} \label{valdef}
  Extend the usual total order relation of  $\R$ to $\R\cup
  \{\infty\}$ by the convention that $\infty > \lambda$, for all $\lambda \in
  \R$.   A {\bf semivaluation of $\mathcal{O}$} is a function 
   $\nu:  \mathcal{O} \rightarrow [0, \infty]$ such that:
   
 \begin{enumerate}
    \item \label{add} $\nu(fg)=\nu(f) + \nu(g)$ for all $f,g \in \mathcal{O}$; 

    \item \label{ineq} $\nu(f+g) \geq \min(\nu(x), \nu(y))$  for all $f,g \in \mathcal{O}$;

    \item \label{const} $\nu(\lambda):= \left\{ \begin{array}{ll}
                                    0 & \mbox{ if } \lambda \in \C^* , \\
                                    \infty & \mbox{ if } \lambda=0.
                                  \end{array} \right. $
 \end{enumerate} 
A semivaluation $\nu$ of $\mathcal{O}$ is {\bf centered at $O$} if and only if one has moreover:
      $\nu(\mathcal{M}) \subset \R_+^* \cup \{\infty\}$.  
   The semivaluation $\nu$ is a {\bf valuation} if it takes the value $\infty$ 
   only at $0$. 
\end{definition}

\begin{remark}
    If $\nu$ is a semivaluation, then the function 
     $|| \cdot || := e^{-\nu} : \mathcal{O} \to [0, 1]$ is a multiplicative 
     \emph{non-archimedean seminorm} 
    of the $\C$-algebra $\mathcal{O}$, that is: 
         \begin{enumerate}[label=({\alph*})]
    \item[(1)'] $|| xy ||=  || x || \cdot || y ||$ for all $x,y \in \mathcal{O}$; 

    \item[(2)']  $|| x+y  || \leq \max(|| x  ||,  ||y  ||)$  for all $x,y \in \mathcal{O}$;

    \item[(3)']  $|| \lambda || := \left\{ \begin{array}{ll}
                                    1 & \mbox{ if } \lambda \in \C^*,  \\
                                     0 & \mbox{ if } \lambda=0.
                                  \end{array} \right. $
 \end{enumerate} 
 The term {\em semivaluation} was introduced as an analog of the more standard term 
 {\em seminorm}. 
\end{remark}

If $f \in \mathcal{O}$ defines the germ of divisor $D$  and if $\nu$ 
is any semivaluation of $\mathcal{O}$,  we set: 
\[ \nu (D):= \nu ( f). \]
This definition is independent of the defining function $f \in \mathcal{O}$ of $D$. 
Indeed, any other such function is of the form $fu$, with $u$ a unit of $\mathcal{O}$. 
But then $\nu(u) + \nu(u^{-1}) = \nu(1) =0$, which implies that $\nu(u) =0$, 
as $\nu$ takes only non-negative values. Therefore one has also 
$\nu(fu) = \nu(f) + \nu(u) = \nu(f)$. 
More generally, if $\mathcal{I}$ is an arbitrary ideal of $\mathcal{O}$, we set:
   \[ \nu(\mathcal{I}) := \min \:  \{ \nu(f) \: | \: f \in \mathcal{I} \}. \]
This definition generalizes the previous one because the value $\nu(D)$ 
computed according to the first definition is equal to the value $\nu( \mathcal{O}(-D))$ 
computed according to the second one.

\begin{definition} \label{semivalspace}
    Denote by $\mathcal{V}$ the set of semivaluations of $\mathcal{O}$. We call it 
    the {\bf semivaluation space}
    of $\mathcal{O}$ or of the germ $S$. 
    We endow it with the topology of pointwise convergence, that is, with the 
    restriction of the product topology of $[0, \infty]^{\mathcal{O}}$. 
\end{definition}

The topological space $[0, \infty]^{\mathcal{O}}$ is compact as a product of 
compact spaces, by Tychonoff's theorem 
(see for instance \cite[Section 1-10]{HY 61}). 
The conditions defining semivaluations being closed, we see that: 

\begin{proposition} \label{compsv}
    The semivaluation space $\mathcal{V}$ is compact. 
\end{proposition}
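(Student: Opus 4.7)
The plan is to show that $\mathcal{V}$ is a \emph{closed} subset of the ambient compact space $[0,\infty]^{\mathcal{O}}$ (whose compactness, as noted just above the statement, follows from Tychonoff's theorem applied to the compact factor $[0,\infty]$). Since closed subsets of compact spaces are compact, this will yield the result.

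To verify closedness, I would check that each of the three defining conditions (\ref{add}), (\ref{ineq}), (\ref{const}) of Definition \ref{valdef} carves out a closed subset of $[0,\infty]^{\mathcal{O}}$; the intersection of any family of closed sets being closed, this suffices. Recall that the topology on $\mathcal{V}$ is the topology of pointwise convergence, so for each $h \in \mathcal{O}$ the evaluation map $\mathrm{ev}_h : [0,\infty]^{\mathcal{O}} \to [0,\infty]$, $\nu \mapsto \nu(h)$, is continuous by definition of the product topology.

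For condition (\ref{const}), fixing $\lambda \in \mathbb{C}$, the set $\{\nu : \nu(\lambda) = 0\}$ (when $\lambda \neq 0$) or $\{\nu : \nu(0) = \infty\}$ is the preimage under $\mathrm{ev}_\lambda$ of a singleton in the Hausdorff space $[0,\infty]$, hence closed. For condition (\ref{add}), fix $f, g \in \mathcal{O}$ and consider the continuous map $[0,\infty]^{\mathcal{O}} \to [0,\infty] \times [0,\infty]$ sending $\nu$ to $(\nu(fg),\, \nu(f) + \nu(g))$; here one uses that addition $+ : [0,\infty]^2 \to [0,\infty]$ is continuous with the convention $\infty + a = \infty$. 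The set of semivaluations satisfying (\ref{add}) for the pair $(f,g)$ is the preimage of the closed diagonal $\{(a,a) : a \in [0,\infty]\}$, hence closed. For condition (\ref{ineq}), one argues analogously, using continuity of $\min : [0,\infty]^2 \to [0,\infty]$ and the fact that $\{(a,b) \in [0,\infty]^2 : a \geq b\}$ is closed.

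Intersecting these closed sets over all $f, g \in \mathcal{O}$ and all $\lambda \in \mathbb{C}$ yields $\mathcal{V}$ as a closed subspace of $[0,\infty]^{\mathcal{O}}$, which establishes compactness. There is no real obstacle here; the only point requiring care is the behavior of $+$ and $\min$ at $\infty$, which poses no difficulty once the standard extensions to $[0,\infty]$ are adopted.
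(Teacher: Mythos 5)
Your proof is correct and follows essentially the same route as the paper: the authors also invoke Tychonoff's theorem for $[0,\infty]^{\mathcal{O}}$ and note that the defining conditions of a semivaluation are closed, so that $\mathcal{V}$ is a closed subspace of a compact space. You have merely spelled out the closedness verification (continuity of evaluation maps, of $+$ and $\min$ on $[0,\infty]$) that the paper leaves implicit.
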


\begin{remark}
  In contrast to the space $\mathcal{V}$ of semivaluations, the subspace 
   of valuations is not compact. This is the 
    main reason of the importance in our context  not only of valuations, but also 
    of semivaluations which are not valuations. 
 \end{remark}

 Let us define now the main types of semivaluations which we use in this paper: 
\medskip 

\noindent
\begin{definition} \label{def:val}
     The {\bf multiplicity valuation} at $O$, denoted by $I^O$, is defined by: 
        $$I^O(f) = \max\{n \in \N \ | \ f \in \mathcal{M}^n \}.$$
More generally, if $P$ is an infinitely near point of $O$, denoted by $I^P$, 
the associated {\bf multiplicity valuation} at $P$.
It may be defined in the following two equivalent ways, starting from a model  $(\Sigma,E)
\stackrel{\psi}{\rightarrow} (S,O)$ containing $P$:
\begin{itemize}
   \item If $f \in \mathcal{O}$, then $I^P(f)$ is the multiplicity of the 
            function $f\circ \psi$ at the point $P$ of the model $\Sigma$: 
             $$ I^P(f) : = m_P(f\circ \psi).$$

   \item If $f \in \mathcal{O}$, then $I^P(f)$ is the vanishing order 
            of  $f \circ \psi\circ \psi_P$ along $E_P$, where $\Sigma_P
           \stackrel{\psi_P}{\rightarrow} \Sigma$ is the blow up of $P$ in $\Sigma$ and 
           $E_P$ is the exceptional divisor created by it. That is, $I^P(f)$ is the coefficient 
         of $E_P$ in the divisor of $f \circ \psi\circ \psi_P$. 
\end{itemize}
Because of this second interpretation, we often denote: 
  \[ \mbox{ord}^{E_P} := I^P.\]

        \medskip
       
       \noindent
 Let $A$ be a branch at $O$. One has an associated {\bf intersection semivaluation} 
     $I^A$,  defined by: 
          $$I^A(f) := (A\cdot  Z(f)).$$
      Note that these are semivaluations which are not valuations, as $I^A(f) = \infty$ 
      precisely for the elements of the principal ideal $\mathcal{O}(-A)$ of the functions 
      vanishing identically on $A$. 
     
      \medskip
     
      \noindent
      All the previous examples of semivaluations are centered at $O$. To any branch $A$ 
         at $O$ is also associated a valuation which is not centered at $O$: 
         the {\bf vanishing order}  $\mbox{ord}^A$ along $A$:
            $$\mbox{ord}^A(f) := \mbox{ the coefficient of $A$ in the divisor of $f$}.$$

         \medskip
        
         \noindent
 If $V$ is a germ of irreducible subvariety of $S$ through $O$ (that is, 
      either the point $O$, or a branch $A$, or $S$ itself), the {\bf trivial semivaluation} 
       $\mbox{triv}^V$ associated to $V$ takes only two values:
      $$ \mathrm{triv}^V(f) := 
         \left\{ \begin{array}{cl}
                        \infty  & \mbox{ if }  f \in \mathcal{O} \mbox{ vanishes along } V, \\ 
                         0  & \mbox{ otherwise}. 
                  \end{array} \right. $$
          Among the trivial semivaluations, only $\mbox{triv}^S$ is a valuation. 
                    
\end{definition}

\begin{remark}  \label{remnot}
     We have denoted till now by $m_O(C)$ the multiplicity of a germ of curve $C$ at $O$. 
     We could have chosen to keep this notation, and to write $m_P$ instead 
     of $I^P$ when $P$ is infinitely near $O$. We have decided not to follow 
     this notational convention, because we will introduce in the next 
     section an invariant of semivaluations 
     called {\em multiplicity}, denoted by ${\mult}$, and we wanted to avoid 
     the notation ``${\mult}(m_P)$'' for the multiplicity of the valuation $m_P$. 
 \end{remark}

The multiplicative group 
$(\R^*_+, \cdot)$ acts on the semivaluation space $\mathcal{V}$ by scalar multiplication 
of the values. We denote by 
$t \nu \in \mathcal{V}$ the product of $t \in \R^*_+$ and $\nu \in \mathcal{V}$. 
One may show that this action is continuous. Its orbits allow to relate  
the three kinds of semivaluations $I^A$, $\mbox{ord}^A$ and $\mbox{triv}^A$ 
associated to a branch $A$ at $O$: 

\begin{proposition}  \label{relateval}
     Let $A$ be any branch through $O$. Then the orbit of the 
     vanishing order valuation $\mathrm{ord}^A$ 
     goes from $\mathrm{triv}^S$ to $\mathrm{triv}^A$ and the orbit of 
     the intersection semivaluation $I^A$ goes from 
     $\mathrm{triv}^A$ to $\mathrm{triv}^O$, that is (see Figure \ref{fig:Orbits}):
       \begin{itemize}
             \item $\displaystyle{\lim_{t \to 0} (t  \: \emph{ord}^A) } = 
                 \emph{triv}^S$ and   
                    $\displaystyle{\lim_{t \to \infty} (t  \: \emph{ord}^A) } = 
                 \emph{triv}^A$; 
             
             \item $\displaystyle{\lim_{t \to 0} (t  \: I^A) } = 
                 \emph{triv}^A$ and   
                    $\displaystyle{\lim_{t \to \infty} (t  \: I^A) } = 
                 \emph{triv}^O$.
       \end{itemize}
\end{proposition}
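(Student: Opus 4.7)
The plan is to exploit the fact that, by Definition \ref{semivalspace}, the topology on $\mathcal{V}$ is the pointwise convergence topology inherited from $[0,\infty]^{\mathcal{O}}$. Hence each of the four claimed limits reduces, for each fixed $f \in \mathcal{O}$, to computing $\lim_{t \to 0^+} t \cdot \nu(f)$ and $\lim_{t \to \infty} t \cdot \nu(f)$ for $\nu \in \{\mathrm{ord}^A, I^A\}$, and then checking that the resulting function of $f$ coincides with the appropriate trivial semivaluation.

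The key elementary tool is the trichotomy for scalar multiplication on the extended half-line: for any $x \in [0, \infty]$,
\[
\lim_{t \to 0^+} tx = \begin{cases} 0 & \text{if } x < \infty, \\ \infty & \text{if } x = \infty, \end{cases} \qquad
\lim_{t \to \infty} tx = \begin{cases} \infty & \text{if } x > 0, \\ 0 & \text{if } x = 0. \end{cases}
\]
Applied to a fixed $\nu \in \mathcal{V}$, this shows at once that the limits $\lim_{t \to 0^+} t\nu$ and $\lim_{t \to \infty} t\nu$ exist pointwise in $[0,\infty]^{\mathcal{O}}$ and equal, respectively, the function that is $\infty$ on $\nu^{-1}(\infty)$ and $0$ elsewhere, and the function that is $\infty$ on $\nu^{-1}\bigl((0,\infty]\bigr)$ and $0$ elsewhere. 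Both are trivial semivaluations in the sense of Definition \ref{def:val}, once one identifies the $\infty$-locus with the germ of irreducible subvariety on which it corresponds to identical vanishing.

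It then remains to identify these two loci in each of the two cases. For $\nu = \mathrm{ord}^A$: on the one hand $\mathrm{ord}^A(f) = \infty$ exactly when $f = 0$ (since the divisor of a nonzero element of $\mathcal{O}$ has a finite coefficient along every branch), which recovers $\mathrm{triv}^S$; on the other hand $\mathrm{ord}^A(f) > 0$ exactly when $A$ is a component of $Z(f)$, i.e.\ when $f$ vanishes identically on $A$, which recovers $\mathrm{triv}^A$. For $\nu = I^A$: one has $I^A(f) = \infty$ exactly when $A$ is a component of $Z(f)$, again the ideal $\mathcal{O}(-A)$, recovering $\mathrm{triv}^A$; and $I^A(f) > 0$ exactly when $f(O) = 0$, via the interpretation $I^A(f) = \nu_t(f \circ \phi_A)$ for a normal parametrization $\phi_A$ of $A$, recovering $\mathrm{triv}^O$. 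Matching these four descriptions with the order in the statement concludes the argument.

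I do not anticipate any serious obstacle: the whole proof is a direct unpacking of the definitions together with the trichotomy above. The only mild point to verify is that the pointwise limits obtained in fact lie in $\mathcal{V}$, but this is immediate because the three axioms of Definition \ref{valdef} are closed under pointwise limits in $[0,\infty]^{\mathcal{O}}$, a closure property already implicit in Proposition \ref{compsv}.
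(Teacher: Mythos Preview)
Your proposal is correct and follows essentially the same approach as the paper: the paper does not give a separate proof of Proposition \ref{relateval} but deduces it from the general Proposition \ref{limpoints}, whose proof is exactly your trichotomy on $[0,\infty]$ applied pointwise, combined with the identification of $\nu^{-1}(\infty)$ and $\nu^{-1}((0,\infty])$ as the support and center of $\nu$ (Definition \ref{centsup}). You carry out this identification directly for $\mathrm{ord}^A$ and $I^A$ rather than passing through the abstract center/support language, but the argument is the same.
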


\begin{figure}[h!] 
\labellist \small\hair 2pt 
\pinlabel{$\mathrm{triv}^S$} at 5 4
\pinlabel{$\mathrm{ord}^A$} at 37 87
\pinlabel{$\mathrm{triv}^A$} at 147 70
\pinlabel{$I^A$} at 230 90
\pinlabel{$\mathrm{triv}^O$} at 275 -10

\endlabellist 
\centering 
\includegraphics[scale=0.50]{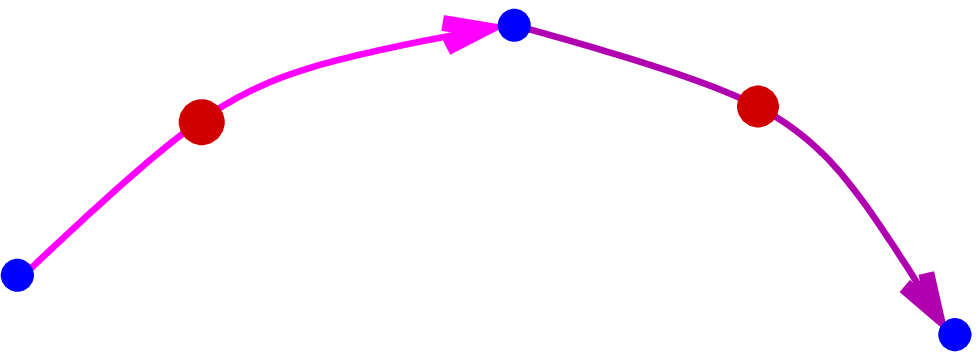} 
\caption{The orbits of $\mathrm{ord}^A$ and of $I^A$} 
\label{fig:Orbits}
\end{figure}

The previous proposition is in fact much more general, as shown by Proposition 
\ref{limpoints} below. Before stating it, let us introduce a new definition.

\begin{definition} \label{centsup}
   Assume that we work with an arbitrary 
   irreducible analytic or formal germ $\mathcal{X}$, with local ring $\mathcal{R}$. The {\bf center}  
   $C(\nu)$  of a semivaluation $\nu$ of $\mathcal{R}$ is the irreducible 
   subvariety of $\mathcal{X}$ defined by the functions $f\in \mathcal{R}$ such that $\nu(f) > 0$.   
   The {\bf support} $S(\nu)$ of $\nu$ is the irreducible subvariety 
   of $\mathcal{X}$ defined by those functions $f\in \mathcal{R}$ such that $\nu(f)  = \infty$. 
 \end{definition}
   
   Obviously, 
   $C(\nu) \subseteq S(\nu)$. The announced generalization of Proposition 
   \ref{relateval} is: 
   
   \begin{proposition} \label{limpoints}
    The orbit of $\nu$ under scalar multiplication by $t \in \R^*_+$ goes from 
   $\mathrm{triv}^{S(\nu)}$ to $\mathrm{triv}^{C(\nu)}$ 
   when $t$ goes from $0$ to $\infty$.
   \end{proposition}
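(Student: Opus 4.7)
The plan is to verify the statement by a direct pointwise computation, using that the topology on $\mathcal{V}$ is pointwise convergence. Concretely, I would note that for every $f \in \mathcal{R}$ the map $t \mapsto t\nu(f)$ from $\R_+^*$ to $[0,\infty]$ has three possible behaviors depending on the value of $\nu(f)$, so the limits $\lim_{t\to 0} t\nu$ and $\lim_{t\to\infty} t\nu$ exist coordinatewise in $[0,\infty]^{\mathcal{R}}$ and are therefore the limits of the orbit in $\mathcal{V}$.

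First, I would check that the subsets $\mathfrak{p}_S := \{f \in \mathcal{R} : \nu(f) = \infty\}$ and $\mathfrak{p}_C := \{f \in \mathcal{R} : \nu(f) > 0\}$ are prime ideals of $\mathcal{R}$: both are closed under addition by the ultrametric inequality (condition (\ref{ineq}) in Definition \ref{valdef}), closed under multiplication by arbitrary elements of $\mathcal{R}$ by the additivity condition (\ref{add}), and primality follows from the same additivity together with the fact that $[0,\infty]$ is totally ordered and has no zero divisors for addition in the relevant range. Consequently, $S(\nu)$ and $C(\nu)$ are the irreducible subvarieties cut out by $\mathfrak{p}_S$ and $\mathfrak{p}_C$, and for any $f \in \mathcal{R}$ one has $f \in \mathfrak{p}_S \iff \nu(f) = \infty$ and $f \in \mathfrak{p}_C \iff \nu(f) > 0$.

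Next, I would carry out the pointwise limits. For $f \in \mathcal{R}$:
\[
\lim_{t \to 0^+} (t\nu)(f) \;=\; \begin{cases} 0 & \text{if } \nu(f) < \infty,\\ \infty & \text{if } \nu(f) = \infty,\end{cases}
\qquad
\lim_{t \to \infty} (t\nu)(f) \;=\; \begin{cases} 0 & \text{if } \nu(f) = 0,\\ \infty & \text{if } \nu(f) > 0.\end{cases}
\]
By the previous paragraph, the first right-hand side is exactly $\mathrm{triv}^{S(\nu)}(f)$ and the second is exactly $\mathrm{triv}^{C(\nu)}(f)$. Since $\mathcal{V}$ carries the product topology inherited from $[0,\infty]^{\mathcal{R}}$, pointwise convergence is convergence in $\mathcal{V}$, so the orbit $t \mapsto t\nu$ does tend to $\mathrm{triv}^{S(\nu)}$ as $t \to 0^+$ and to $\mathrm{triv}^{C(\nu)}$ as $t \to \infty$.

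There is essentially no obstacle in this argument, which is the standard compactness/continuity picture of the scalar action. The only thing one has to be a little careful about is that $\mathcal{R}$ has been assumed to be the local ring of an irreducible germ, which is what allows us to identify the prime ideals $\mathfrak{p}_S$ and $\mathfrak{p}_C$ with ideals of irreducible subvarieties $S(\nu) \supseteq C(\nu)$ of $\mathcal{X}$ (so that $\mathrm{triv}^{S(\nu)}$ and $\mathrm{triv}^{C(\nu)}$ are defined in the sense already introduced for trivial semivaluations in Definition \ref{def:val}). Specializing to $\mathcal{R} = \mathcal{O}$, one recovers Proposition \ref{relateval} as a particular case, since $S(\mathrm{ord}^A) = S$, $C(\mathrm{ord}^A) = A$, $S(I^A) = A$, and $C(I^A) = \{O\}$.
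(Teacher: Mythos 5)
Your proof is correct and follows essentially the same route as the paper: a case-by-case pointwise computation of $\lim_{t\to 0}(t\nu)(f)$ and $\lim_{t\to\infty}(t\nu)(f)$ for each $f\in\mathcal{R}$, combined with the fact that the topology on $\mathcal{V}$ is that of pointwise convergence. The extra verification that the support and center correspond to prime ideals is harmless additional detail that the paper leaves implicit in Definition \ref{centsup}.
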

   
   \begin{proof}
          Let $f \in \mathcal{R}$ 
          be arbitrary. We have the following possibilities:
              \begin{itemize}
                   \item   If $\nu(f) =0$, then $\displaystyle{\lim_{t \to 0} (t  \: \nu)(f) } = 
                                \displaystyle{\lim_{t \to \infty} (t  \: \nu)(f) } = 0$. 
                   
                   \item   If $\nu(f) \in (0, \infty)$, then $\displaystyle{\lim_{t \to 0} (t  \: \nu)(f) } = 0$ 
                                and $\displaystyle{\lim_{t \to \infty} (t  \: \nu)(f) } = \infty$. 
                   
                   \item   If $\nu(f) = \infty$, then $\displaystyle{\lim_{t \to 0} (t  \: \nu)(f) } = 
                                \displaystyle{\lim_{t \to \infty} (t  \: \nu)(f) } = \infty$. 
              \end{itemize}
            The conclusion follows readily from this. 
   \end{proof}
   
   Let us return to our germ $S$. 
   In fact, the semivaluations $I^A, \mathrm{ord}^A$ associated to the branches 
   $A$ on $S$ may be characterized, up to scalar multiplication, as the only 
   ones whose orbits do not connect $\mathrm{triv}^S$ to $\mathrm{triv}^O$:
   
   \begin{proposition}
      Let $\nu \in \mathcal{V}$. If the orbit of $\nu$ is not constant and does not go from 
      $\mathrm{triv}^S$ to $\mathrm{triv}^O$, then $\nu$ is proportional either to 
      $I^A$ (if $\displaystyle{\lim_{\lambda \to 0} (\lambda  \: \nu) } = 
                 \emph{triv}^A$) or to $\mathrm{ord}^A$ 
                 (if $\displaystyle{\lim_{\lambda \to \infty} (\lambda  \: \nu) } = 
                 \emph{triv}^A$), where $A$ denotes a branch on $S$. 
   \end{proposition}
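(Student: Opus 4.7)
The plan is to exploit Proposition \ref{limpoints}, which identifies the two limits of the scalar orbit of $\nu$ as $\mathrm{triv}^{S(\nu)}$ and $\mathrm{triv}^{C(\nu)}$. Since the orbit is assumed non-constant we must have $C(\nu) \subsetneq S(\nu)$, and the only irreducible subvarieties of $S$ through $O$ are $O$ itself, branches $A$, and $S$ itself. Thus only three pairs $(C(\nu),S(\nu))$ are \emph{a priori} possible: $(O,S)$, $(O,A)$ and $(A,S)$. The first is excluded by hypothesis; the second corresponds to the case $\lim_{t\to 0}(t\nu)=\mathrm{triv}^A$ and the third to $\lim_{t\to\infty}(t\nu)=\mathrm{triv}^A$. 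It then remains to prove the proportionality statement in each of the two surviving cases.

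I would first dispatch the case $C(\nu)=A$, $S(\nu)=S$, which is purely elementary. Here $\nu$ is actually a valuation, and the prime ideal $\mathfrak{p}_\nu := \{f\in\mathcal{O} : \nu(f)>0\}$ coincides with $\mathcal{O}(-A)$, since $\mathcal{O}$ is factorial and the ideal of a branch is principal. Picking a defining function $g$ of $A$ and writing any nonzero $f$ as $g^{k}u$ with $k=\mathrm{ord}^A(f)$ and $u\notin\mathfrak{p}_\nu$, one finds $\nu(u)\geq 0$ (the codomain is non-negative), $\nu(u)\leq 0$ (since $u\notin\mathfrak{p}_\nu$), and $\nu(u)<\infty$ (since $u\neq 0$ and $S(\nu)=S$). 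Hence $\nu(u)=0$ and $\nu(f)=\nu(g)\cdot\mathrm{ord}^A(f)$, with $\nu(g)>0$, so $\nu = \nu(g)\,\mathrm{ord}^A$.

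For the case $C(\nu)=O$, $S(\nu)=A$, I would begin by noting that $\nu^{-1}(\infty)$ is a prime ideal of $\mathcal{O}$ whose associated subvariety is $A$, hence equals $\mathcal{O}(-A)$, so $\nu$ factors through the local ring $\mathcal{O}/\mathcal{O}(-A)$ of $A$ as a genuine valuation $\bar\nu$. A normal parametrization $\phi\colon\mathcal{O}\to\C[[t]]$ of $A$ identifies $\mathcal{O}/\mathcal{O}(-A)$ with a subring of $\C[[t]]$ sharing the same fraction field, so that $\C[[t]]$ is its integral closure. Since valuation rings in a field are integrally closed, any extension of $\bar\nu$ to the common fraction field must be non-negative on $\C[[t]]$, giving a valuation $\tilde\nu$ on $\C[[t]]$ that restricts to $\bar\nu$. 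Since $\C[[t]]$ is a DVR, necessarily $\tilde\nu = \tilde\nu(t)\cdot\nu_t$, and the assumption $C(\nu)=O$ guarantees $\tilde\nu(t)>0$. Pulling back along $\phi$ yields $\nu = \tilde\nu(t)\cdot I^A$, as required.

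The most delicate step is the extension argument in the second case: one needs both that the valuation $\bar\nu$ on $\mathcal{O}/\mathcal{O}(-A)$ extends to its integral closure $\C[[t]]$, and that any non-trivial valuation on this DVR is a positive multiple of $\nu_t$. These are classical facts from valuation theory, but they are precisely the pieces that tie the abstract semivaluation to the parametrization of the branch, and that ensure the proportionality constant $\tilde\nu(t)$ is strictly positive.
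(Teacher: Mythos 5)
Your proposal is correct and takes essentially the same route as the paper: reduce via Proposition \ref{limpoints} and the fact that the only irreducible subgerms of $S$ through $O$ are $O$, a branch $A$, or $S$ itself, and then invoke the two proportionality claims (center $A$ gives a multiple of $\mathrm{ord}^A$, support $A$ gives a multiple of $I^A$). The only difference is that the paper merely asserts these two claims, whereas you supply complete proofs of them (factoriality of $\mathcal{O}$ for the $\mathrm{ord}^A$ case; normalization, integral closure and extension of valuations to $\C[[t]]$ for the $I^A$ case), and both arguments are sound.
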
 
   
   \begin{proof}
      This comes from the fact that any irreducible subgerm of $S$ which is 
      distinct from $O$ and $S$ is necessarily a branch $A$, and that:
              
            --  a semivaluation 
                  whose center is $A$ is proportional to $\mathrm{ord}^A$;                  
             
             -- a semivaluation whose support is $A$ is proportional to $I^A$.
   \end{proof}
   
   The other types of semivaluations described in Definition \ref{semivalspace} 
   do not cover all of $\mathcal{V}$. One may find concrete descriptions of the remaining 
   possibilities in \cite[Sect.1.5]{FJ 04}.

\medskip

The previous considerations show 
that the quotient of $\mathcal{V}$ under the given action (that is, 
the space of orbits endowed with the quotient topology), is highly 
non-Hausdorff, because the closure of any point would contain either the image 
of $\mbox{triv}^S$ or of $\mbox{triv}^O$. A way to avoid this is to remove those two trivial 
semivaluations before doing the quotient. 
This does still not produce a Hausdorff quotient, because there exist 
sequences of orbits converging to the union of $\mbox{triv}^A$ and of 
the orbits of $I^A$ and of $\mbox{ord}^A$. But this is the only phenomenon which makes the space 
non-Hausdorff, and if one quotients more, by identifying those three orbits 
for each branch $A$, one gets a Hausdorff space:

  \begin{definition}   \label{projsvsp}
      The {\bf projective semivaluation space} 
      $\mathbb{P}(\mathcal{V})$  of $\mathcal{O}$ or of the germ $S$ is 
      the biggest Hausdorff quotient of $\mathcal{V}^* := 
      \mathcal{V} \:  \setminus \:  \{  \mbox{triv}^S, \mbox{triv}^O \}$ under the previous 
      action of $(\R^*_+, \cdot)$. Let:
          \begin{equation}  \label{quotmap}
                \pi : \mathcal{V}^* \to \mathbb{P}(\mathcal{V})
          \end{equation}
      be the associated continuous quotient map. 
      We say that an element of $\mathbb{P}(\mathcal{V})$ is a {\bf projective semivaluation} 
      of $\mathcal{O}$. 
  \end{definition}

The central Theorem 3.14 of \cite{FJ 04} implies that: 

\begin{theorem}
   $\mathbb{P}(\mathcal{V})$ is a compact $\R$-tree endowed with 
   its weak topology. 
\end{theorem}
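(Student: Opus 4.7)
The plan is to realize $\mathbb{P}(\mathcal{V})$ concretely as a closed subspace of $\mathcal{V}$ rather than as an abstract quotient. Fix a regular system of parameters $(x,y)$ at $O$ and set
\[
\hat{\mathcal{V}}_O := \{ \nu \in \mathcal{V} \: : \: \nu(\mathcal{M}) = 1 \} = \{ \nu \in \mathcal{V} \: : \: \min(\nu(x), \nu(y)) = 1 \}.
\]
The conditions $\nu(x) \geq 1$ and $\nu(y) \geq 1$ define closed subsets of $\mathcal{V}$, and $\min(\nu(x), \nu(y)) \leq 1$ is the union of two closed conditions, so $\hat{\mathcal{V}}_O$ is closed, and therefore compact by Proposition \ref{compsv}. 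By Proposition \ref{limpoints}, each $\R^*_+$-orbit in $\mathcal{V}^*$ centered at $O$ meets $\hat{\mathcal{V}}_O$ in the unique point $\nu / \nu(\mathcal{M})$, while for every branch $A$ on $S$ the three orbits of $I^A$, $\mathrm{ord}^A$ and $\mathrm{triv}^A$, which are all identified in the biggest Hausdorff quotient, share the single representative $I^A / m_O(A) \in \hat{\mathcal{V}}_O$. The restriction of the quotient map $\pi$ to $\hat{\mathcal{V}}_O$ is then a continuous bijection from a compact space onto the Hausdorff space $\mathbb{P}(\mathcal{V})$, hence a homeomorphism; in particular $\mathbb{P}(\mathcal{V})$ is compact.

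For the $\R$-tree structure, I would endow $\hat{\mathcal{V}}_O$ with the partial order $\nu \preceq \mu \iff \nu(f) \leq \mu(f)$ for all $f \in \mathcal{O}$, for which the multiplicity valuation $I^O$ is the minimum: if $f \in \mathcal{M}^m \setminus \mathcal{M}^{m+1}$, then $\nu(f) \geq m \cdot \nu(\mathcal{M}) = m = I^O(f)$. My strategy to verify conditions (2)--(4) of Definition \ref{nonmetree} and to identify the weak topology is to invoke Theorem \ref{R-theo}, realizing $\hat{\mathcal{V}}_O$ as a projective limit of finite subtrees indexed by models. For each model $\psi : (\Sigma, E) \to (S, O)$, let $\Gamma_\psi \subset \hat{\mathcal{V}}_O$ be the finite subtree spanned by the normalized divisorial valuations associated to the irreducible components of $E$, with interior edges consisting of the normalized monomial valuations attached to each crossing point of $E$. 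Refinement of models provides inclusions $\Gamma_\psi \subset \Gamma_{\psi'}$ together with continuous retractions $r_\psi^{\psi'}$ sending a valuation to the divisorial valuation of the exceptional component containing its center on $\Sigma$, extended affinely along edges.

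The hard part will be to verify hypothesis (\ref{sufmany}) of Theorem \ref{R-theo}: that any two distinct elements of $\hat{\mathcal{V}}_O$ are separated by some $\Gamma_\psi$. This reduces to the classical fact that for every $f \in \mathcal{O}$ there exists a model $\psi$ on which $f \circ \psi$ has normal crossings; on such a model, $\nu(f)$ can be read off the retraction $r_\psi(\nu) \in \Gamma_\psi$ via a projection formula expressing $\nu(f)$ as a combination of the $\nu(\mathrm{ord}^{E_j})$ and of intersection numbers with the strict transform of $Z(f)$. Since two distinct normalized semivaluations must differ on some $f$, they are separated by a sufficiently fine model. Combined with surjectivity of the natural map onto the projective limit (which follows from compactness of $\hat{\mathcal{V}}_O$ together with a standard extraction argument on compatible systems), Theorem \ref{R-theo} then delivers the compact $\R$-tree structure on $\mathbb{P}(\mathcal{V})$ together with the identification of its topology as the weak topology of that tree.
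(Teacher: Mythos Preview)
The paper does not prove this theorem: it simply records that it follows from \cite[Theorem 3.14]{FJ 04}, and then (Theorem \ref{equitree} below) describes how Favre and Jonsson obtain the rooted $\R$-tree structure on the sections $\mathcal{V}_O$ and $\mathcal{V}_L$ via the partial order \eqref{partord}. So there is nothing to compare on the level of argument; what matters is whether your sketch can stand on its own.

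Your compactness paragraph is essentially correct and is indeed the standard route: identify $\mathbb{P}(\mathcal{V})$ with the closed slice $\mathcal{V}_O$. One phrasing is off: the orbits of $\mathrm{ord}^A$ and $\mathrm{triv}^A$ do \emph{not} meet $\hat{\mathcal{V}}_O$, since $\mathrm{ord}^A(\mathcal{M}) = \mathrm{triv}^A(\mathcal{M}) = 0$ (there are elements of $\mathcal{M}$ not vanishing on $A$); the point is rather that these orbits are collapsed onto the class of $I^A$ in the biggest Hausdorff quotient, and $I^A$ alone contributes the representative $I^A/m_O(A)$. The bijection claim survives, but the justification should be rewritten.

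The second half has a genuine gap. You propose to \emph{establish} conditions (2)--(4) of Definition~\ref{nonmetree} by invoking Theorem~\ref{R-theo}, but Theorem~\ref{R-theo} takes as input an $\R$-tree $\cT$ and a system of finite \emph{subtrees} of $\cT$; the notions of ``finite subtree'', of the segments $[P,Q]$, and of the attaching maps $\pi_{\cT_j}$ (Definition~\ref{defatt}) all presuppose that $\cT$ already carries the $\R$-tree structure. So using Theorem~\ref{R-theo} to prove that $\hat{\mathcal{V}}_O$ is an $\R$-tree is circular. What the Favre--Jonsson proof actually does is the direct verification of the poset axioms: for instance, to check (2) one must show that $\{ \mu \in \mathcal{V}_O : \mu \preceq \nu \}$ is order-isomorphic to a compact interval, which comes down to analysing how $\nu$ is approximated by the divisorial valuations it dominates (sequences of key polynomials, or equivalently infinitely near points). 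Your model-theoretic retractions $r_\psi$ are exactly the right objects to organise this, but you have to construct them intrinsically (via centers of $\nu$ on the model $\Sigma$) and prove the interval property by hand \emph{before} you are allowed to speak of attaching maps or invoke Theorem~\ref{R-theo}. Once the rooted $\R$-tree structure is in place, Theorem~\ref{R-theo} then serves its intended purpose in the paper: identifying the weak topology with the projective-limit topology and proving compactness of the tree topology, not establishing the tree structure itself.
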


In Section \ref{fintrees} we have not defined $\R$-trees directly as topological 
spaces, but as equivalence classes of special partial orders on a set, endowed 
with a canonically defined ``weak'' topology. In fact, Favre and Jonsson recognize 
the structure of $\R$-tree of $\mathbb{P}(\mathcal{V})$ 
in the same way, by defining first special partial orders on it. Those partial orders 
are not defined directly on $\mathbb{P}(\mathcal{V})$, but on sections of the 
projection $\pi$. Those sections are introduced using {\em normalization} rules 
relative either to $O$ or to a smooth branch $L$ through $O$:

\begin{definition} \label{normaliz} 
    A semivaluation $\nu \in \mathcal{V}$ is {\bf normalized relative to $O$} 
    if $\nu(\mathcal{M}) = 1$. Denote by $\mathcal{V}_O \subset \mathcal{V}$ 
    the subspace of semivaluations normalized  relative to $O$. If $\nu 
    \in \mathcal{V} \:  \setminus \:  \{\mathrm{triv}^O\}$ is centered 
    at $O$, we denote by $\nu_O\in \mathcal{V}_O$ the unique 
    semivaluation normalized relative to $O$ which is proportional to $\nu$. 
    
    Analogously, if $L$ is an arbitrary smooth branch, 
    we define the subspace $\mathcal{V}_L \subset \mathcal{V}$ 
    of semivaluations {\bf normalized relative to  $L$}  
    by the condition $\nu(L) = 1$, and if $\nu\in \mathcal{V}$ is not 
    supported by $L$, we denote by 
    $\nu_L$ the unique semivaluation in $\mathcal{V}_L$ which is 
    proportional to $\nu$.
\end{definition}

Notice that we have the following concrete descriptions of the normalizations of a given 
semivaluation $\nu$:
  \begin{equation} \label{normaliznot}
           \nu_O = \dfrac{\nu}{\nu(\mathcal{M})}, 
           \quad 
           \nu_L  = \dfrac{\nu}{\nu(L)}. 
  \end{equation}

Both subspaces $\mathcal{V}_O$ and $\mathcal{V}_L$ are closed inside $\mathcal{V}$, 
therefore compact, as $\mathcal{V}$ is compact. 
On each one of them,  one restricts the following partial order on $\mathcal{V}$:
   \begin{equation} \label{partord} 
         \nu_1 \preceq \nu_2 \:  \Leftrightarrow \: \nu_1(f) \leq \nu_2(f) 
       \mbox{ for any } f \in \mathcal{O}.
    \end{equation}
  Consider also the restrictions to them of the projection $\pi$: 
    \begin{equation} \label{restrOL}
            \pi_O : \mathcal{V}_O \to  \mathbb{P}(\mathcal{V}),  \  
               \pi_L : \mathcal{V}_L \to  \mathbb{P}(\mathcal{V})  .
     \end{equation}

What Favre and Jonsson prove in fact is:

\begin{theorem} \label{equitree}
   Endowed with the restrictions of the previous partial orders, 
   both $\mathcal{V}_O$ and $\mathcal{V}_L$ 
   are compact rooted $\R$-trees, their roots being 
   $I^O$ and $\mathrm{ord}^L$ respectively. The maps 
   $\pi_O$ and $\pi_L$ are both homeomorphisms, which induce 
   the same structure of (non-rooted) $\R$-tree on $\mathbb{P}(\mathcal{V})$. 
   The composed homeomorphism $\pi_L^{-1} \circ \pi_O : \mathcal{V}_O \to 
   \mathcal{V}_L$ sends $I^L$ to $\mathrm{ord}^L$.
\end{theorem}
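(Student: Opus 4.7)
The plan is to reduce the theorem to the existence of a compact $\R$-tree structure on $\mathbb{P}(\mathcal{V})$, already stated above as a consequence of Favre and Jonsson's central Theorem 3.14 in \cite{FJ 04} (corrected per Remark \ref{notimpl}). Specifically, I would show that $\pi_O$ and $\pi_L$ are continuous bijections between compact and Hausdorff spaces, hence homeomorphisms, then transport the tree structure through these identifications and verify the claimed roots and the final equality.

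First I would observe that $\mathcal{V}_O$ and $\mathcal{V}_L$ are closed subsets of $\mathcal{V}$: each evaluation map $\nu \mapsto \nu(f)$ is continuous by the very definition of the topology of pointwise convergence, so the conditions $\nu(\mathcal{M}) = 1$ and $\nu(L) = 1$ define closed subsets, which are compact by Proposition \ref{compsv}. Next I would prove bijectivity of $\pi_O$ and $\pi_L$ by a fiber-by-fiber analysis. A fiber of $\pi$ above a non-branch point of $\mathbb{P}(\mathcal{V})$ consists of a single $\R^*_+$-orbit of a semivaluation $\nu$ centered at $O$ with $\nu(\mathcal{M}) \in (0, \infty)$, which meets $\mathcal{V}_O$ exactly at $\nu/\nu(\mathcal{M})$ and meets $\mathcal{V}_L$ exactly at $\nu/\nu(L)$ whenever $\nu(L) < \infty$. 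Fibers above a branch point $A$ consist of the three identified orbits of $I^A$, $\mathrm{ord}^A$, and $\mathrm{triv}^A$ (as in Proposition \ref{relateval}); for $\mathcal{V}_O$, only the orbit of $I^A$ contains a normalizable element, namely $I^A/m_O(A)$; for $\mathcal{V}_L$, the fiber above $A \ne L$ is represented by $I^A/(L \cdot A)$, while the fiber above $L$ itself is represented by $\mathrm{ord}^L$ (since $\mathrm{ord}^L(L) = 1$ whereas $I^L(L) = \infty$ rules out the other orbit). Continuity of $\pi_O$ and $\pi_L$ is inherited from $\pi$, and a continuous bijection from a compact space to a Hausdorff space is a homeomorphism. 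Consequently both $\pi_O$ and $\pi_L$ identify $\mathcal{V}_O$ and $\mathcal{V}_L$ with the \emph{same} underlying set $\mathbb{P}(\mathcal{V})$, whose $\R$-tree structure is thus transported to both.

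With the homeomorphisms in hand, I would identify the roots explicitly. For any $\nu \in \mathcal{V}_O$ and any $f \in \mathcal{M}^n$, the semivaluation axioms together with $\nu(\mathcal{M}) = 1$ force $\nu(f) \geq n$; since $I^O(f)$ is the largest such $n$, we obtain $I^O \preceq \nu$, and $I^O \in \mathcal{V}_O$ is clear. For $\nu \in \mathcal{V}_L$, writing $L = Z(x)$ and decomposing any nonzero $f \in \mathcal{O}$ as $f = x^k g$ with $k = \mathrm{ord}^L(f)$ and $x \nmid g$, one gets $\nu(f) = k\nu(x) + \nu(g) = k + \nu(g) \geq k = \mathrm{ord}^L(f)$, whence $\mathrm{ord}^L \preceq \nu$. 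Because Favre and Jonsson's construction of the tree structure on $\mathbb{P}(\mathcal{V})$ proceeds precisely by equipping $\mathcal{V}_O$ with the pointwise partial order (\ref{partord}), the pulled-back tree structures recover the restrictions of $\preceq$.

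Finally, the identity $\pi_L^{-1} \circ \pi_O(I^L) = \mathrm{ord}^L$ follows at once: since $L$ is smooth, $m_O(L) = 1$ and so $I^L(\mathcal{M}) = 1$, placing $I^L$ in $\mathcal{V}_O$; moreover $I^L$ and $\mathrm{ord}^L$ lie in the same fiber of $\pi$ (both project to the point of $\mathbb{P}(\mathcal{V})$ associated to the branch $L$ by Proposition \ref{relateval}), so $\pi_O(I^L) = \pi_L(\mathrm{ord}^L)$ and applying $\pi_L^{-1}$ yields the claim. The main obstacle I anticipate is the careful fiber-by-fiber analysis for the bijectivity of $\pi_O$ and $\pi_L$, in particular the observation that orbits of $\mathrm{ord}^A$ (which vanish on $\mathcal{M}$ and are therefore not normalizable to $\mathcal{V}_O$) are nonetheless represented in $\mathcal{V}_O$ by the paired orbit of $I^A$ under the Hausdorff identification; once this bookkeeping is in place, the remainder is routine transport of structure.
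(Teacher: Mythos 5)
Your route is essentially the paper's: the paper offers no independent proof of this statement but attributes it to Favre and Jonsson \cite{FJ 04}, and the bookkeeping you add on top of that citation is correct and in the spirit of the surrounding discussion (closedness and hence compactness of $\mathcal{V}_O$ and $\mathcal{V}_L$, the fiber analysis of $\pi$ based on Definition \ref{projsvsp} and Proposition \ref{relateval}, the verification that $I^O$ and $\mathrm{ord}^L$ are the smallest elements for the order (\ref{partord}), and the final identity $\pi_L^{-1}\circ\pi_O(I^L)=\mathrm{ord}^L$ via the fact that $I^L$ and $\mathrm{ord}^L$ lie in the same identified fiber over the branch point $L$). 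One minor repair: in the fiber analysis for $\mathcal{V}_L$ you need $\nu(L)\in(0,\infty)$, not merely $\nu(L)<\infty$. The case $\nu(L)=0$ is excluded for non-branch points because then the prime ideal $\{f:\nu(f)>0\}$ cannot define $O$, so the center of $\nu$ is a branch $B\neq L$ and $\nu$ is proportional to $\mathrm{ord}^B$ or equal to $\mathrm{triv}^B$, i.e.\ it lies in a branch-point fiber you have already treated.

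The genuine gap is the clause that $\pi_O$ and $\pi_L$ ``induce the same structure of (non-rooted) $\R$-tree on $\mathbb{P}(\mathcal{V})$''. Your sentence that both maps identify $\mathcal{V}_O$ and $\mathcal{V}_L$ with the same underlying set, ``whose $\R$-tree structure is thus transported to both'', is circular: what you have are two homeomorphisms onto the same topological space, hence two a priori distinct transported tree structures (one from $(\mathcal{V}_O,\preceq)$ rooted at $I^O$, one from $(\mathcal{V}_L,\preceq)$ rooted at $\mathrm{ord}^L$), and the content of the clause is precisely that these define the same compact segments, i.e.\ differ only by the choice of root. A homeomorphism of underlying spaces does not determine the segments of an $\R$-tree, and the correspondence $\nu\mapsto\nu/\nu(L)$ visibly does not preserve the pointwise order (\ref{partord}), so this coincidence requires an argument: either invoke the normalization-independence of the tree structure established in \cite{FJ 04} (which is what the paper implicitly does), or check directly that $\pi_L^{-1}\circ\pi_O$ carries the segment $[\mu,\nu]$ computed in $\mathcal{V}_O$ onto the segment between the images computed in $\mathcal{V}_L$, for instance by comparing both with the tripod/infimum description of segments. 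As written, this part of the theorem is asserted rather than proved; the remainder of your proposal is sound.
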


Let us denote by $\preceq_O$ the partial order on $\mathbb{P}(\mathcal{V})$ 
induced from that of $\mathcal{V}_O$ and by $\preceq_L$ the one induced 
by that of $\mathcal{V}_L$. Those notations are motivated by the fact that 
they are the orders induced by the choice of the root at $\pi(I^O)$ and 
$\pi(\mathrm{ord}^L)$ respectively.

Favre and Jonsson prove in \cite{FJ 04} that the multiplicity valuations give by 
projectivization interior points 
of $\mathbb{P}(\mathcal{V})$ and that those points are dense inside any 
finite subtree. They may be characterized 
as being precisely the ramification points of the tree $\mathbb{P}(\mathcal{V})$. 
By contrast, the intersection semivaluations 
are end points. They are not the only ends, but they cannot be characterized purely in terms 
of the poset or topological structure of the tree $\mathbb{P}(\mathcal{V})$. 
One needs a supplementary structure on it, 
a \emph{multiplicity function}. It is one member of a triple of fundamental increasing functions 
defined on $(\mathbb{P}(\mathcal{V}), \preceq_{O}) $. The next section is dedicated to them.

\section{Multiplicities, log-discrepancies and self-interactions} \label{fundcoord}

Either the point $O$ or any smooth branch $L$ may be seen as an \emph{observer} 
of the projective semivaluation space  $\mathbb{P}(\mathcal{V})$. Namely, to each 
one of them is associated a \emph{coordinate system}, which is a triple of 
functions defined on $\mathbb{P}(\mathcal{V})$,  the {\em multiplicity}, 
the {\em log-discrepancy} and the {\em self-interaction} relative to that observer. 
We introduce those functions in Definitions \ref{logselfgen} and \ref{relmult}.   
In Proposition \ref{reldiffprop} we explain how to express each one of them 
in terms of the two other ones.  
Our presentation is a variation on those of Favre and Jonsson 
\cite[Sections 3.3.1, 3.4, 3.6]{FJ 04} and of Jonsson \cite[Section 7]{J 15}).
\medskip

If $E_i$ is a prime 
divisor over $O \in S$, recall that $\mbox{ord}^{E_i}$ denotes the associated vanishing 
order valuation. For such a divisor, 
consider an arbitrary model $\psi: (\Sigma, E) \to (S, O)$ containing it. We will denote 
by $(D \cdot D')_{\Sigma}$ the intersection number of two divisors on $\Sigma$ 
without common non-compact branches. Let 
$ \check{E}_i$ be the \emph{dual divisor} 
in this model, that is, the only divisor supported by $E$ 
such that $(\check{E}_i \cdot E_j)_{\Sigma}  = \delta_{i,j}$ for all the components $E_j$ of $E$. 

\begin{definition} \label{deflogself}
    The {\bf log-discrepancy} 
    ${\ld}(\mathrm{ord}^{E_i})$ and the {\bf self-interaction}  
    ${\si}(\mathrm{ord}^{E_i})$ 
    of the valuation $\mathrm{ord}^{E_i}$ are the positive integers defined by:
    
    \begin{itemize}
        \item   ${\ld}(\mathrm{ord}^{E_i}) := 1 + \mathrm{ord}^{E_i}(\psi^* \omega)$, where 
           $\omega$ is a non-vanishing holomorphic $2$-form on $S$ in the neighborhood of $O$. 
        
        \item  ${\si}(\mathrm{ord}^{E_i}) := - (\check{E}_i \cdot \check{E}_i)_{\Sigma} \geq 1$. 
    \end{itemize}    
\end{definition}

The previous definition is independent of the chosen model. This is clear for the log-discrepancy, 
but is a theorem for $(\check{E}_i \cdot \check{E}_i)_{\Sigma}$. This is the main reason of the 
importance of the dual divisors $ \check{E}_i$ in birational geometry over $S$. 
Indeed, the self-intersections $(E_i \cdot E_i)_{\Sigma}$ are not invariant 
under blow-ups of points of $E_i$. 

\begin{remark}
   We have chosen the letter ``${\ld}$'' as the initial of ``log-discrepancy'' and 
   the letter ``${\si}$'' as initial of ``self-interaction''. We think about a self-intersection 
   number as a measure of interaction of an object with itself. 
   See also Proposition \ref{selfcurv} for another interpretation of this measure of self-interaction. 
   In \cite{FJ 04}, ${\ld}$ is called ``\emph{thinness}'' and is denoted ``A'', while 
   ${\si}$ is called ``\emph{skewness}''  and is denoted ``$\alpha$''. In \cite{J 15}, those 
   names are not used any more, but the notations  ``$A$'' and ``$\alpha$'' remain, 
   ``$\alpha$'' being used with an opposite sign convention with respect to \cite{FJ 04}. 
\end{remark}

Recall that the notation $\mathcal{V}^*$ was introduced in Definition \ref{projsvsp}:

\begin{proposition}  \label{uniqext}
   There exist unique functions ${\ld}, {\si}  : \mathcal{V}^* \to (0, \infty]$ such that: 
       \begin{enumerate}
            \item In restriction to the  valuations $\emph{ord}^{E_i}$, one gets 
               the functions introduced in Definition \ref{deflogself}. 
            
            \item They are continuous in restriction to any subset of the form $\pi^{-1}(T)$, 
               where $\pi$ is the quotient map (\ref{quotmap}) and $T$ is a finite subtree of 
                $\mathbb{P}(\mathcal{V})$. 
            
            \item ${\ld}$ is homogeneous of degree $1$ and  ${\si}$ 
                is homogeneous of degree $2$ 
                relative to the action of $(\R^*_+, \cdot)$. 
       \end{enumerate}
\end{proposition}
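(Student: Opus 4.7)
The homogeneity conditions reduce the problem to specifying $\ld$ and $\si$ on a system of orbit representatives of the $\R_+^*$-action. For orbits fixed by this action (the trivial semivaluations $\mathrm{triv}^A$), homogeneity of positive degree forces the value to belong to $\{0, \infty\}$; since the codomain is $(0, \infty]$, it must be $\infty$. The remaining orbits are represented by the normalized spaces $\mathcal{V}_O$ and $\mathcal{V}_L$ of Definition \ref{normaliz}, both of which are compact rooted $\R$-trees by Theorem \ref{equitree}. By \cite[Chapter 6]{FJ 04}, the normalized divisorial valuations $\mathrm{ord}^{E_i}/I^O(E_i)$ are dense in every finite subtree of $\mathcal{V}_O$. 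Hence the required continuity of $\ld$ and $\si$ on each $\pi^{-1}(T)$, combined with the prescribed values on the $\mathrm{ord}^{E_i}$ (transported via homogeneity to their normalizations), determines both functions on the whole of $\pi^{-1}(T)$, and therefore on all of $\mathcal{V}^*$.

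\textbf{Existence.} For a model $\psi: (\Sigma, E) \to (S, O)$ with exceptional components $E_1, \ldots, E_n$, the classes $[\mathrm{ord}^{E_i}]$ span a finite subtree $T_\psi \subset \mathbb{P}(\mathcal{V})$ whose edges correspond to intersection points $P_{ij} \in E_i \cap E_j$. In local coordinates $(u, v)$ at $P_{ij}$ with $E_i = \{u=0\}$ and $E_j = \{v=0\}$, the points of the edge are realized in $\mathcal{V}^*$ by the monomial valuations $\nu_{s,t}$ defined by $\nu_{s,t}(u) = s$, $\nu_{s,t}(v) = t$ for $s, t > 0$, which may also be written $s\,\mathrm{ord}^{E_i} + t\,\mathrm{ord}^{E_j}$ through bilinearity on monomials. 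On these families, set
\[
\ld(\nu_{s,t}) := s\,\ld(\mathrm{ord}^{E_i}) + t\,\ld(\mathrm{ord}^{E_j}),
\]
\[
\si(\nu_{s,t}) := -\bigl(s\check{E}_i + t\check{E}_j\bigr)_{\Sigma}^{\,2},
\]
and extend to all of $\pi^{-1}(T_\psi)$ by $\R_+^*$-homogeneity. The family $\{T_\psi\}$ indexed by models is cofinal among the finite subtrees of $\mathbb{P}(\mathcal{V})$, since any finite collection of divisorial valuations can be realized as components of the exceptional locus of a common model. Gluing the local definitions along this cofinal family yields $\ld$ and $\si$ on a dense subset of $\mathcal{V}^*$; an extension to all of $\mathcal{V}^*$ with values in $(0, \infty]$ is then obtained by taking suprema along increasing directions in $\mathcal{V}_O$ (and $\mathcal{V}_L$), followed by homogeneous transport to every remaining orbit.

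\textbf{Main obstacle.} The decisive point is coherence of the local formulas under refinement of models. If $\psi': \Sigma' \to \Sigma$ blows up an intersection point $P_{ij} \in E_i \cap E_j$ creating an exceptional divisor $F$, then $\mathrm{ord}^F = \mathrm{ord}^{E_i} + \mathrm{ord}^{E_j}$ on monomials at $P_{ij}$, so $[\mathrm{ord}^F]$ lies at the parameter $(s,t)=(1,1)$ on the edge of $T_\psi$ associated to $P_{ij}$. Our prescriptions therefore require
\[
\ld(\mathrm{ord}^F) = \ld(\mathrm{ord}^{E_i}) + \ld(\mathrm{ord}^{E_j}), \qquad \si(\mathrm{ord}^F) = -(\check{E}_i + \check{E}_j)_{\Sigma}^{\,2},
\]
and these must match the values given by Definition \ref{deflogself} applied directly in $\Sigma'$. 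The first identity reduces to the Jacobian computation $\psi'^*(du \wedge dv) = u'\,du' \wedge dv'$ (the ``$+1$'' contribution of a smooth-point blow-up to the canonical divisor, combined with additivity of $\mathrm{ord}^F$ on monomials). The second is the standard transformation formula for dual divisors under blowing up a smooth point of two exceptional components. Both identities are routine but indispensable: once verified for a single blow-up, induction on the number of blow-ups yields coherence on the full cofinal system of models, after which the continuity of $\ld$ and $\si$ on each $\pi^{-1}(T)$ is built directly into the explicit affine and quadratic formulas.
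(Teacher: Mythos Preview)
The paper does not actually prove this proposition: it is stated without proof as part of the background imported from \cite[Sections 3.3.1, 3.4, 3.6]{FJ 04} and \cite[Section 7]{J 15}. Your sketch follows precisely the strategy used in those references --- define $\ld$ and $\si$ on quasi-monomial valuations via the edge-parametrizations of dual graphs, verify coherence under a single blow-up, and extend to the remaining ends by monotonicity --- so in spirit it is the ``same approach as the paper'' in the sense of being the approach the paper is citing.

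A couple of points deserve tightening. First, the phrase ``$s\,\mathrm{ord}^{E_i} + t\,\mathrm{ord}^{E_j}$ through bilinearity on monomials'' is fine as shorthand, but be aware that $\nu_{s,t}$ is genuinely not the linear combination of the two valuations on general $f$; what matters is only that $\nu_{1,0} = \mathrm{ord}^{E_i}$, $\nu_{0,1} = \mathrm{ord}^{E_j}$, and $\nu_{1,1} = \mathrm{ord}^{F}$ after blow-up. Second, and more substantively, your extension step (``taking suprema along increasing directions'') is where the real work hides. You must check that the resulting function is continuous on $\pi^{-1}(T)$ for \emph{every} finite subtree $T$, including those whose ends are curve semivaluations or infinitely singular valuations not lying in any $T_\psi$. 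This amounts to showing that your affine (resp.\ quadratic) formulas on successive edges of an infinite chain of blow-ups have a well-defined limit in $(0,\infty]$, which is exactly what Favre and Jonsson verify by explicit estimates. Without that, condition (2) of the proposition is asserted rather than proved.
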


\begin{definition}  \label{logselfgen}
    If $\nu \in \mathcal{V}^*$,  then  ${\ld}(\nu)$ is called the {\bf log-discrepancy} 
    of $\nu$ and ${\si}(\nu)$ is called its {\bf self-interaction}. 
\end{definition}

The self-interaction function may be seen as the quadratic function associated to 
the $(1,1)$-bihomogeneous function described by the following proposition, 
similar to Proposition \ref{uniqext}:

\begin{proposition} 
    There exists a unique function $\bra{ \cdot, \cdot}  : \mathcal{V}^* \times \mathcal{V}^*
         \to (0, \infty]$ such that: 
       \begin{enumerate}
            \item $\bra{\emph{ord}^{E_i} , \emph{ord}^{E_j}} = - (\check{E}_i \cdot \check{E}_j)_{\Sigma}$ 
                for any model $\psi: (\Sigma, E) \to (S, O)$ containing both $E_i$ and $E_j$. 
            
            \item It is continuous in restriction to any subset of the form 
                $\pi^{-1}(T) \times \pi^{-1}(T) $, where $T$ is a finite subtree of 
                $\mathbb{P}(\mathcal{V})$. 
            
            \item It is bihomogeneous of degree $(1,1)$ relative to the action of 
                $(\R^*_+, \cdot)$ on both entries. 
       \end{enumerate}
\end{proposition}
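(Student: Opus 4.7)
The plan is to imitate the proof of Proposition \ref{uniqext}, defining the pairing first on pairs of divisorial valuations via axiom (1) and then extending it. Uniqueness should be the easy part: axioms (1) and (3) together determine $\bra{\cdot,\cdot}$ on every pair $(\lambda_1 \mathrm{ord}^{E_i}, \lambda_2 \mathrm{ord}^{E_j})$ with $\lambda_k \in \R^*_+$. By the density results of Favre--Jonsson, such pairs form a dense subset of $\pi^{-1}(T) \times \pi^{-1}(T)$ for every finite subtree $T$ of $\mathbb{P}(\mathcal V)$, so axiom (2) forces any two candidate functions to coincide.

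For existence, my plan is to reduce the pairing to the self-interaction function $\si$ by means of the tree infimum. Using Theorem \ref{equitree} to view $\mathcal V_O$ as a rooted $\R$-tree with infimum operation $\wedge_O$, I would set, for $\nu_1,\nu_2 \in \mathcal V^*$ centered at $O$,
$$\bra{\nu_1,\nu_2} := \nu_1(\mathcal M)\cdot \nu_2(\mathcal M)\cdot \si\bigl((\nu_1)_O \wedge_O (\nu_2)_O\bigr),$$
with $(\nu_k)_O$ the $O$-normalization from (\ref{normaliznot}). Bihomogeneity of bidegree $(1,1)$ is then immediate from the explicit formula, and continuity on $\pi^{-1}(T)\times \pi^{-1}(T)$ reduces to joint continuity of $\wedge_O$ on a finite subtree of $\mathcal V_O$ together with the continuity of $\si$ granted by Proposition \ref{uniqext}. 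For the remaining semivaluations of $\mathcal V^*$ that are not centered at $O$, namely those proportional to $\mathrm{ord}^A$ for a branch $A$, I would extend $\bra{\cdot,\cdot}$ along the $\R^*_+$-orbits using Proposition \ref{relateval}, checking that the continuity requirement in (2) determines a unique such extension.

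The main obstacle is to verify axiom (1), that is, the intersection-theoretic identity
$$\nu_{E_i}(\mathcal M)\cdot \nu_{E_j}(\mathcal M)\cdot \si\bigl((\nu_{E_i})_O \wedge_O (\nu_{E_j})_O\bigr) = -(\check E_i\cdot \check E_j)_{\Sigma}.$$
This reduces to two pieces. First, one must identify the infimum $(\nu_{E_i})_O \wedge_O (\nu_{E_j})_O$ with the normalized divisorial valuation attached to the nearest common ancestor $E_\ell$ of $E_i$ and $E_j$ in the dual graph of $\Sigma$; this is essentially the translation between the partial order on $\mathcal V_O$ and the blow-up partial order on divisors over $O$. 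Second, one must show
$$-(\check E_i\cdot \check E_j)_{\Sigma} = \frac{\mathrm{ord}^{E_i}(\mathcal M)\cdot \mathrm{ord}^{E_j}(\mathcal M)}{\mathrm{ord}^{E_\ell}(\mathcal M)^{2}}\cdot \bigl(-(\check E_\ell \cdot \check E_\ell)_{\Sigma}\bigr),$$
which I would prove by induction on the number of point blow-ups relating $\Sigma$ to the trivial model, tracking how the relative multiplicities $\mathrm{ord}^{E_k}(\mathcal M)$ and the dual intersection numbers $\check E_i \cdot \check E_j$ transform under each blow-up.

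Once existence and uniqueness are secured, the symmetry $\bra{\nu_1,\nu_2}=\bra{\nu_2,\nu_1}$ follows from uniqueness applied to the function $(\nu_1,\nu_2)\mapsto\bra{\nu_2,\nu_1}$, and the polarization relation $\bra{\nu,\nu}=\si(\nu)$ follows likewise by comparing the two sides on the dense set of divisorial valuations.
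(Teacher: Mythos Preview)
The paper does not give a proof of this proposition. It is stated without argument, the surrounding section being explicitly introduced as ``a variation on those of Favre and Jonsson \cite[Sections 3.3.1, 3.4, 3.6]{FJ 04} and of Jonsson \cite[Section 7]{J 15}''. So there is nothing in the paper to compare your proof against; the result is simply quoted from the literature.

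For what it is worth, your outline is reasonable. You are effectively taking the tripod formula (stated later as Proposition \ref{tripodform}) as the \emph{definition} of the bracket rather than as a consequence; this is a legitimate reversal of the logical order and is close in spirit to how the bracket is handled in \cite{FJ 04}. Your uniqueness argument is correct, and your plan for axiom (1) via the ancestor divisor $E_\ell$ together with an inductive identity for $-(\check E_i\cdot\check E_j)_\Sigma$ under point blow-ups is the standard route. One caveat on the extension step: the semivaluations of $\mathcal V^*$ not centered at $O$ comprise not only the scalar multiples of $\mathrm{ord}^A$ but also the trivial semivaluations $\mathrm{triv}^A$, and all of these satisfy $\nu(\mathcal M)=0$, so your formula literally returns $0$ there; the continuity extension you propose must therefore be argued with some care (and in fact the stated codomain $(0,\infty]$ is already in tension with cases such as $\bra{\mathrm{ord}^A, I^B}=\mathrm{ord}^A(B)=0$ for $A\neq B$, as one sees from Proposition \ref{geomint}).
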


The following terminology is taken from \cite[Definition 1.6]{GBGPPPR 18}:

\begin{definition}
    If $\nu_1, \nu_2 \in \mathcal{V}^*$, we say that $\bra{\nu_1, \nu_2} \in \R$ is the 
    \bf bracket of $\nu_1$ and $\nu_2$.
\end{definition}

The bracket is obviously symmetric, and ${\si}(\nu) = \bra{\nu, \nu}$ for any 
$\nu \in \mathcal{V}^*$. The following proposition 
gives an alternative description of it for divisorial valuations:

\begin{proposition}   \label{selfcurv}
      Let $E_i$ and $E_j$ be two prime divisors over $O$, which are 
      not necessarily distinct and let 
      $\psi: (\Sigma, E) \to (S, O)$ be a model containing both of them. 
      Consider curvette $K_i$ and $K_j$ for $E_i$ and $E_j$ respectively in this model, 
      that is, germs of smooth curves transversal to $E$ at points of the corresponding 
      irreducible components of it. 
      If $E_i = E_j$, we assume that the two curvette do not pass through the same 
      point of $E_i$. Let $C_i$ and $C_j$ be their projections on $S$ by the 
      morphism $\psi$. 
      Then we have: 
          $$\bra{\mathrm{ord}^{E_i}, \mathrm{ord}^{E_j}} = (C_i \cdot C_j). $$
\end{proposition}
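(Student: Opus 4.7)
The plan is to use the projection formula for the birational morphism $\psi$, which identifies $(C_i \cdot C_j)_S$ with $(\psi^* C_i \cdot \psi^* C_j)_\Sigma$, and then to decompose each total transform on $\Sigma$ into its strict transform (which is precisely the curvette $K_i$, since $\psi$ is an isomorphism off $E$) plus an exceptional component. Since $\psi$ is a proper birational morphism between smooth surfaces, both the projection formula and intersection calculus against the $E_k$ are available.

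The first substantive step is to identify the exceptional part $\tilde{D}^{(i)} := \psi^* C_i - K_i$ with $-\check{E}_i$. For this, note that $(\psi^* C_i \cdot E_k)_\Sigma = 0$ for every exceptional component $E_k$, since $\psi_* E_k = 0$ and the projection formula reduces this intersection to one on $S$ against a point. Together with the fact that $(K_i \cdot E_k)_\Sigma = \delta_{ik}$ (the curvette meets $E$ transversally at a single smooth point of $E_i$, and nowhere else), this yields $(\tilde{D}^{(i)} \cdot E_k)_\Sigma = -\delta_{ik}$ for every $k$. By the negative-definiteness of the intersection form on the exceptional divisors, the system $(X \cdot E_k)_\Sigma = -\delta_{ik}$ has a unique solution in the $\Q$-vector space spanned by the $E_k$, namely $\tilde{D}^{(i)} = -\check{E}_i$.

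It then remains to expand
\[
(C_i \cdot C_j)_S = (K_i - \check{E}_i) \cdot (K_j - \check{E}_j) = K_i \cdot K_j - K_i \cdot \check{E}_j - \check{E}_i \cdot K_j + \check{E}_i \cdot \check{E}_j.
\]
The hypothesis that the curvettes lie at distinct smooth points of $E$ (automatic when $E_i \neq E_j$ and explicitly imposed when $E_i = E_j$) gives $(K_i \cdot K_j)_\Sigma = 0$. For the cross terms I would rewrite $K_i = \psi^* C_i + \check{E}_i$, obtaining
\[
(K_i \cdot \check{E}_j)_\Sigma = (\psi^* C_i \cdot \check{E}_j)_\Sigma + (\check{E}_i \cdot \check{E}_j)_\Sigma = (\check{E}_i \cdot \check{E}_j)_\Sigma,
\]
where the first summand vanishes because $\check{E}_j$ is supported on $E$ and $(\psi^* C_i \cdot E_k)_\Sigma = 0$ for each $k$. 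Symmetrically $(\check{E}_i \cdot K_j)_\Sigma = (\check{E}_i \cdot \check{E}_j)_\Sigma$, so the four terms collapse to $-(\check{E}_i \cdot \check{E}_j)_\Sigma$, which is the definition of $\bra{\mathrm{ord}^{E_i}, \mathrm{ord}^{E_j}}$. There is no conceptual obstacle beyond careful sign bookkeeping and the verification that the disjoint-point hypothesis is precisely what is needed to kill $(K_i \cdot K_j)_\Sigma$ in the diagonal case $E_i = E_j$.
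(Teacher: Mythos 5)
Your proof is correct and follows essentially the same route as the paper: both identify the exceptional part of $\psi^* C_i$ with $-\check{E}_i$ by intersecting with the components $E_k$ (using $(\psi^* C_i \cdot E_k)_{\Sigma}=0$ and $(K_i\cdot E_k)_{\Sigma}=\delta_{ik}$), use $(K_i\cdot K_j)_{\Sigma}=0$ from the disjointness of the curvettes, and reduce $(C_i\cdot C_j)=(\psi^*C_i\cdot\psi^*C_j)_{\Sigma}$ to $-(\check{E}_i\cdot\check{E}_j)_{\Sigma}$. The only difference is bookkeeping: you expand the product $(K_i-\check{E}_i)\cdot(K_j-\check{E}_j)$ into four terms, while the paper substitutes term by term in a single chain of equalities.
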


\begin{proof}
   As the intersection number of a compact divisor on a smooth surface with a principal one 
  is $0$, we have: $$(E_k \cdot \psi^* D)_{\Sigma} =0$$ for any component $E_k$ of $E$ and for any 
   effective divisor $D$ on $S$. Let us apply this fact to $D = C_i$. Denote by 
   $\tilde{E}_i$ the exceptional part of the divisor $\psi^* C_i$. We get:
  $$ 0 = (E_k \cdot \psi^* C_i)_{\Sigma} = (E_k \cdot (\tilde{E}_i + K_i))_{\Sigma} = 
       (E_k\cdot  \tilde{E}_i)_{\Sigma} + \delta_{k,i}.$$
   This equality being valid for all the components $E_k$ of $E$, we see that 
    $\tilde{E}_i= - \check{E}_i$. In particular:
    $$(K_i \cdot F)_{\Sigma} =  (\check{E}_i \cdot F)_{\Sigma}$$
    for any divisor $F$ on $\Sigma$ supported by $E$. Therefore:
     $$\begin{array}{ll}
      (C_i \cdot C_j) &  =  (\psi^* C_i \cdot \psi^* C_j)_{\Sigma} = (K_i \cdot \psi^* C_j)_{\Sigma} = 
              (K_i \cdot (-  \check{E}_j + K_j))_{\Sigma} = \\
           &    = -  (K_i \cdot    \check{E}_j )_{\Sigma} = - (\check{E}_i \cdot \check{E}_j )_{\Sigma} = 
                     (\mathrm{ord}^{E_i} \cdot \mathrm{ord}^{E_j}).
        \end{array}      $$

\end{proof}

There is also an alternative description in the case when one of the semivaluations is the 
intersection semivaluation of a branch or the multiplicity valuation at $O$:

\begin{proposition}  \label{geomint}
     Let $A$ be a branch on $S$ and $\nu \in \mathcal{V}^*$. Then:  
       $$ \bra{\nu, I^A} = \nu(A).$$
    In particular, if $A, B$ are distinct branches at $O$, one gets 
      $ \bra{I^{A}, I^{B }} = (A \cdot B)$. Analogously: 
         $$\bra{\nu, I^O} = \nu(\mathcal{M}).$$
\end{proposition}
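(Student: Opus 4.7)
The plan is to use the characterization of the bracket as the unique $(1,1)$-bihomogeneous function on $\mathcal{V}^* \times \mathcal{V}^*$ which is continuous on products of preimages of finite subtrees and takes the value $-(\check{E}_i \cdot \check{E}_j)_{\Sigma}$ on pairs of divisorial valuations, combined with Proposition \ref{selfcurv}.

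First I would reduce the identity $\bra{\nu, I^A} = \nu(A)$ to the case of a divisorial $\nu = \mathrm{ord}^E$. Both sides are continuous in $\nu$ on $\pi^{-1}(T)$ for any finite subtree $T \subset \mathbb{P}(\mathcal{V})$ containing $\pi(I^A)$ (the right-hand side $\nu \mapsto \nu(f_A)$ because the topology of $\mathcal{V}$ is pointwise convergence on $\mathcal{O}$; the left-hand side by the bracket's defining continuity property), and both are positively $1$-homogeneous in $\nu$. Density of divisorial valuations in any finite subtree of $\mathbb{P}(\mathcal{V})$, recalled at the end of Section \ref{valspaces}, completes the reduction.

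The heart of the argument is then a reuse of the proof of Proposition \ref{selfcurv}. The identity $\psi^* C = K - \check{E}$ established there, together with the vanishing of $(\check{E} \cdot \psi^* A)_{\Sigma}$ (since $\psi^* A$ is principal while $\check{E}$ is exceptional), yields for a generic curvette $K$ at $E$ whose strict transform misses any given branch $A$ the identity $(C \cdot A) = \mathrm{ord}^E(A)$. Applying this with $A = C_n$ a curvette projection of another divisor $E_n$ (choosing $K_n$ disjoint from $K$) gives $\bra{\mathrm{ord}^E, \mathrm{ord}^{E_n}} = (C \cdot C_n) = \mathrm{ord}^E(f_{C_n})$. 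I would then approximate $I^A$ by the sequence $\mathrm{ord}^{E_n}$ obtained by iteratively blowing up the infinitely near points of $O$ lying on the successive strict transforms of $A$; after normalization in $\mathcal{V}_O$ this sequence tends to $(I^A)_O = I^A/m_O(A)$, and the convergence takes place inside the finite subtree of $\mathbb{P}(\mathcal{V})$ spanned by $\pi(I^O)$, $\pi(I^A)$ and $\pi(\mathrm{ord}^E)$. Stabilization of $\mathrm{ord}^E(f_{C_n})$ to $\mathrm{ord}^E(f_A)$, once $C_n$ shares with $A$ every infinitely near point contributing to the coefficient of $E$, combined with bihomogeneous renormalization, yields $\bra{\mathrm{ord}^E, I^A} = \mathrm{ord}^E(A)$ in the limit.

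The formula $\bra{\nu, I^O} = \nu(\mathcal{M})$ is easier, since $I^O = \mathrm{ord}^{E_O}$ is itself divisorial; reducing to divisorial $\nu = \mathrm{ord}^{E'}$, Proposition \ref{selfcurv} directly gives $\bra{I^O, \mathrm{ord}^{E'}} = (L \cdot C') = m_O(C') = \mathrm{ord}^{E'}(\mathcal{M})$ for a generic smooth branch $L$ through $O$ (a curvette of $E_O$) and a generic curvette projection $C'$ of $E'$, using transversality of $L$ with $C'$. The specialization $\nu = I^A$ of the first identity then produces the intermediate statement $\bra{I^A, I^B} = I^A(f_B) = (A \cdot B)$. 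The part I expect to require the most care is the two-fold convergence argument, namely verifying both that $\mathrm{ord}^{E_n}/m_O(C_n) \to (I^A)_O$ pointwise on $\mathcal{O}$ and that $\mathrm{ord}^E(f_{C_n})$ stabilizes to $\mathrm{ord}^E(f_A)$ for $n$ large; these are standard consequences of the combinatorics of embedded resolutions but demand careful bookkeeping of multiplicities at infinitely near points.
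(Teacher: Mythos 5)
The paper does not prove this proposition at all: it is part of the material recalled from Favre--Jonsson \cite{FJ 04} and Jonsson \cite{J 15} (where the corresponding statement is obtained from the tree structure and the definition of skewness as an extremum over curve valuations), so your argument should be judged as an independent proof, and as such it is correct in outline and genuinely different in flavour: it stays entirely inside the intersection-theoretic toolkit of Proposition \ref{selfcurv}. The reduction to divisorial $\nu$ by $1$-homogeneity, continuity of both sides on $\pi^{-1}(T)$ and density of divisorial points is sound; the key identity $(C\cdot A)=\mathrm{ord}^E(A)$ for a generic curvette projection $C$ of $E$ does follow from $\psi^*C=K-\check E$, the projection formula and the decomposition $\psi^*A=\tilde A+\sum_j \mathrm{ord}^{E_j}(A)E_j$, provided $K$ avoids $\tilde A$ as you require; and the approximation of $I^A$ by the divisorial valuations $\mathrm{ord}^{E_n}$ attached to the infinitely near points of $A$ is the standard approximating sequence, so the two facts you defer (normalized pointwise convergence $\mathrm{ord}^{E_n}/m_O(C_n)\to (I^A)_O$ and stabilization $\mathrm{ord}^E(f_{C_n})=\mathrm{ord}^E(f_A)$, $m_O(C_n)=m_O(A)$ for large $n$) are true. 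Two points deserve the ``careful bookkeeping'' you announce: first, to invoke the bracket's continuity property you also need that all the $\pi(\mathrm{ord}^{E_n})$ lie in one finite subtree, i.e.\ on the segment $[\pi(I^O),\pi(I^A)]$, which amounts to $\mathrm{ord}^{E_n}/m_O(C_n)\preceq (I^A)_O$ and should be checked along with the convergence; second, when satellite points occur, $\mathrm{ord}^E(f_B)$ is \emph{not} the plain sum of the multiplicities of $B$ at the points of the cluster of $E$ but a weighted sum governed by proximity, so the stabilization must be argued through the proximity equalities (equivalently, through the fact that the cluster weights are the multiplicities of the generic curvette), which also yields your identity $\mathrm{ord}^{E'}(\mathcal M)=m_O(C')$ used for the $\bra{\nu,I^O}$ formula. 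What your route buys, compared with quoting \cite{FJ 04}, is a proof by the same curvette computation already written out in the paper for Proposition \ref{selfcurv}, at the cost of the resolution-combinatorics lemmas about approximating sequences; what the Favre--Jonsson route buys is that those lemmas are absorbed once and for all into the construction of the valuative tree.
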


The log-discrepancy ${\ld}$ and the self-interaction ${\si}$ are functions defined 
on $\mathcal{V}^*$. One may push them down to $\mathbb{P}(\mathcal{V})$ 
using images of sections of the quotient map $\pi: \mathcal{V}^* \to \mathbb{P}(\mathcal{V})$. 
As mentioned in Theorem \ref{equitree}, the maps 
$\pi_O :\mathcal{V}_O \to \mathbb{P}(\mathcal{V})$ and 
$\pi_L : \mathcal{V}_L \to \mathbb{P}(\mathcal{V})$ are homeomorphisms 
(where $L$ denotes an arbitrary smooth branch), which shows that 
$\mathcal{V}_O$ and $\mathcal{V}_L$ are such images. This motivates 
the following definition:

\begin{definition}  \label{relcoord}
    The functions ${\ld}_O, {\si}_O : \mathbb{P}(\mathcal{V}) \to [0, \infty]$ 
    and $ \bra{\cdot, \cdot}_O : \mathbb{P}(\mathcal{V}) \times \mathbb{P}(\mathcal{V})  
       \to (0, \infty]$ 
    are defined by:
       $${\ld}_O := {\ld} \circ \pi_O^{-1}, \:  
            \:  {\si}_O := {\si} \circ \pi_O^{-1}, \: 
             \:  \bra{\cdot, \cdot}_O := \bra{\pi_O^{-1}(\cdot), \pi_O^{-1}(\cdot)}.$$
      That is, they are the push-forwards of the functions ${\ld}, {\si}, \bra{\cdot, \cdot}$ 
      by the homeomorphism $\pi_O$. 
     They are called the {\bf log-discrepancy relative to $O$}, 
     the {\bf self-interaction relative to $O$} 
     and the {\bf bracket relative to $O$}. 
     One defines analogously three functions ${\ld}_L, {\si}_L, \bra{\cdot, \cdot}_L$ 
     {\bf relative to $L$}. 
\end{definition}

We will work also with a third kind of functions on 
$\mathbb{P}(\mathcal{V})$ relative to $O$ or to a smooth branch $L$, this time 
taking values in $\N^* \cup \infty$:

\begin{definition}  \label{relmult}
    Let $R$ denote either  $O$ or a smooth branch $L$. The 
       {\bf multiplicity relative to $R$} is the function denoted ${\mult}_R :  \mathbb{P}(\mathcal{V}) 
         \to \N^*\cup \{\infty\}$ and defined by:
      \begin{equation}   \label{relmultexpr}
          {\mult}_R(P) := \min \{ \bra{I^R, C} \ | \ P \preceq_R C \}.
      \end{equation}   
    Here $\preceq_R$ is the partial order relation defined on the tree $ \mathbb{P}(\mathcal{V}) $ 
    by choosing the root at $\pi(I^R)$ and $C$ denotes a branch on $S$. 
\end{definition}

We think of the irreducible subvariety $O$ or $L$ of $S$ as an  {\em observer} 
of the topological space $ \mathbb{P}(\mathcal{V})$, carrying with itself a 
coordinate system. In order to simplify notations, we will denote in the same way 
the corresponding point $\pi(I^R)$ of $ \mathbb{P}(\mathcal{V})$. That is:

\begin{definition}
 An {\bf observer}  of the projective semivaluation tree 
 $ \mathbb{P}(\mathcal{V})$ is either the point $O$ or a smooth branch $L$. 
 The set of observers is considered embedded inside $ \mathbb{P}(\mathcal{V})$ 
 through the map $R \to \pi(I^R)$, which will allow us to write simply $R$ 
 instead of $\pi(I^R)$. The triple $({\ld}_R, {\si}_R, {\mult}_R)$ is the  {\bf coordinate system} 
on the space $\mathbb{P}(\mathcal{V})$ determined by the observer $R$. 
\end{definition}

We list the essential properties of the coordinate system associated to any 
observer in the following 
three propositions (see \cite[Sections 3.3, 3.4, 3.6, 3.9]{FJ 04}):

\begin{proposition}
     Let $R$ be an observer of $\mathbb{P}(\mathcal{V})$. Consider 
     the tree $\mathbb{P}(\mathcal{V})$ as a poset with the order relation $\preceq_R$. 
     Then the following functions are increasing, surjective and continuous on finite subtrees:
          \begin{itemize}
              \item  ${\ld}_R : \mathbb{P}(\mathcal{V}) \to [\ld_R (R) , \infty]$, 
                  where $\ld_O (O) = 2$  and $\ld_R (R)=1$. 
              
              \item  ${\si}_R : \mathbb{P}(\mathcal{V}) \to [0, \infty]$.
          \end{itemize}
     The multiplicity function ${\mult}_R : \mathbb{P}(\mathcal{V}) \to \N^* \cup \{ \infty\}$ 
    is increasing, surjective and lower semi-continuous 
     when $\N^* \cup \{ \infty \}$ is 
     endowed with the divisibility order relation (in which, by definition, any positive integer 
     divides $\infty$). 
\end{proposition}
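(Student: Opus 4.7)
The plan is to transport everything to the space $\mathcal{V}_R$ via the homeomorphism $\pi_R$ of Theorem \ref{equitree}, so that each of the three functions $\ld_R$, $\si_R$, $\mult_R$ pulls back to a function on the rooted tree $(\mathcal{V}_R, \preceq)$ and monotonicity with respect to $\preceq_R$ becomes monotonicity with respect to the pointwise partial order on semivaluations. Continuity on finite subtrees is already built into Proposition \ref{uniqext}, and divisorial valuations form a dense subset of the interior of $\mathcal{V}_R$ whose projectivizations exhaust the ramification points of $\mathbb{P}(\mathcal{V})$; it is therefore enough to verify monotonicity on divisorial valuations and extend by density and continuity.

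For $\ld_R$ and $\si_R$, I would take any pair $\nu_1 \preceq \nu_2$ of normalized divisorial valuations in a common model $\psi : (\Sigma, E) \to (S, O)$ and reduce, by factoring $\psi$, to the case in which $\nu_2$ arises from $\nu_1$ by a single blow-up of an infinitely near point. A short calculation using the log-discrepancy formula of Definition \ref{deflogself}, together with the standard update formula for dual divisors $\check{E}$ under a point blow-up, shows that $\ld$ increases strictly and that $\si = -(\check{E} \cdot \check{E})_\Sigma$ increases strictly as well. Surjectivity of $\ld_R$ onto $[\ld_R(R), \infty]$ and of $\si_R$ onto $[0, \infty]$ then follows from the intermediate value theorem applied to the continuous monotone restriction of each function to a segment $[R, I^C]$ of $\mathbb{P}(\mathcal{V})$, for $C$ any branch distinct from $R$; the value $+\infty$ is attained at the endpoint $I^C$ because of Proposition \ref{geomint} combined with the fact that $I^C$ takes the value $\infty$ on a defining equation of $C$.

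For $\mult_R$, numerical monotonicity is immediate from the formula (\ref{relmultexpr}): when $P_1 \preceq_R P_2$, any branch $C$ above $P_2$ is also above $P_1$, so the minimum over the smaller set is at least as large. The divisibility refinement is the classical Zariski-type fact that, along a chain of infinitely near point blow-ups, successive generic multiplicities form a divisibility chain; it is proved on the dense divisorial set by induction on the number of blow-ups and propagated to the whole tree by lower semi-continuity. Surjectivity onto $\N^* \cup \{\infty\}$ is obtained by choosing, for each $n \in \N^*$, a branch $C$ with $\langle I^R, C \rangle = n$, and by noting that $\mult_R(I^R) = \infty$ since $I^R$ is a leaf with no branch strictly above it other than itself. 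Lower semi-continuity with respect to the divisibility order amounts to showing that $\{P : n \mid \mult_R(P)\}$ is open for every $n \in \N^*$; this set is a union of open directions at suitable divisorial points and hence open in the weak topology of $\mathbb{P}(\mathcal{V})$.

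The main obstacle will be the two divisibility-type statements, namely the precise increment formula for $\si_R$ under a point blow-up and the divisibility chain for $\mult_R$ along infinitely near points. Both are short intersection-theoretic computations, but they must be threaded carefully through the normalization $\pi_R^{-1}$ so that the conclusions translate into monotonicity on the pointwise-ordered space $\mathcal{V}_R$ rather than merely on the unnormalized integer-valued divisorial valuations.
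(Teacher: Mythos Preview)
The paper does not actually prove this proposition: it is stated as a summary of known facts and is referenced to \cite[Sections 3.3, 3.4, 3.6, 3.9]{FJ 04}. So there is no argument in the paper to compare against; your sketch is an attempt to supply what the paper takes from the literature.

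As an outline your approach is the standard one (and essentially the one in \cite{FJ 04}): check the inequalities on divisorial valuations by explicit blow-up computations and extend by density plus continuity on segments. However, one point in your sketch is simply wrong. You write that $\mult_R(I^R) = \infty$ ``since $I^R$ is a leaf with no branch strictly above it other than itself''. But $I^R$ is the \emph{root} for the order $\preceq_R$, not a leaf: every branch $C$ satisfies $R \preceq_R C$, so the minimum in (\ref{relmultexpr}) is taken over all branches and equals $1$. The value $\infty$ is attained instead at those ends of $\mathbb{P}(\mathcal{V})$ that are \emph{not} dominated by any branch (the ``infinitely singular'' ends in the terminology of \cite{FJ 04}); that is what you need for surjectivity of $\mult_R$.

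Also, be aware that the ``short intersection-theoretic computation'' you defer at the end is where most of the content lies. For $\si_R$ on a divisorial class you have $\si_R(\mathrm{ord}^{E_i}) = -(\check{E}_i \cdot \check{E}_i)_\Sigma / (\mathrm{ord}^{E_i}(\mathcal{I}_R))^2$, and both numerator and denominator change under a blow-up; the monotonicity you want is not that of $-(\check{E}_i \cdot \check{E}_i)_\Sigma$ alone. You acknowledge this normalization issue as the ``main obstacle'', but nothing in your sketch addresses it, so as written the proposal is a plan rather than a proof.
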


\begin{proposition}  \label{reldiffprop} 
One has  the following differential relation for $P \in \mathbb{P}(\mathcal{V})  \setminus \{ R \}$:
     \begin{equation}  \label{relderiv}
             {\mult}_R(P) = \lim_{P_- \to P,  \, P_- \prec_R P} \, \, \, 
                   \dfrac{{\ld}_R(P) - {\ld}_R(P_-)}{{\si}_R(P) - {\si}_R(P_-)}.    
    \end{equation}
  That is, one has in integral form:
    \begin{equation}  \label{relint1}
        {\ld}_R( P) - {\ld}_R(R) = \int_{[RP]} {\mult}_R(p) \: d \: {\si}_R(p), 
    \end{equation}
    
    \begin{equation}  \label{relint2}
         {\si}_R( P) - {\si}_R(R) = \int_{[RP]} \frac{1}{{\mult}_R(p)} \: d \: {\ld}_R(p).
    \end{equation}
\end{proposition}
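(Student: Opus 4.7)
The plan is to prove the pointwise derivative relation (\ref{relderiv}) first at divisorial points of $\mathbb{P}(\cV)$ and then extend it to the whole tree by density and continuity, finally obtaining the integral forms (\ref{relint1}) and (\ref{relint2}) by integration along the geodesic $[R,P]$. The essential input from earlier is that $\ld_R$ and $\si_R$ are continuous and strictly increasing along $[R,P]$, while $\mult_R$ is monotone with values in $\N^*\cup\{\infty\}$. In particular $\mult_R$ is locally constant on $[R,P]$ at every point where it is finite: there is a neighbourhood $(P_0,P]$ in $[R,P]$ on which $\mult_R$ equals $\mult_R(P)$. Hence once (\ref{relderiv}) is established for all divisorial pairs $P_-\prec_R P$ within such a subsegment, the general limit follows by sandwiching with divisorial endpoints, using the density of divisorial valuations in finite subtrees.

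Second, for $P_-=\pi(\ord^{E_-})\prec_R P=\pi(\ord^{E})$ divisorial, I would choose a common model $\psi\colon (\Sigma,E^{\mathrm{tot}})\to (S,O)$ containing both $E_-$ and $E$, with $E$ joined to $E_-$ through a finite sequence of intermediate divisors appearing along $[P_-,P]$. By telescoping, it suffices to treat the elementary step where $\Sigma'\to \Sigma$ is the blow-up of a single point $p$ of $E_-$, and $E$ is the exceptional divisor created by this blow-up. Two standard computations then apply: (a) the blow-up formula for dual divisors expresses $\check E$ on $\Sigma'$ as an explicit $\Z$-linear combination of $\psi'^*\check E_-$ and $E$, from which one computes $\si(\ord^{E})-\si(\ord^{E_-}) = -(\check E)^2_{\Sigma'}+(\check E_-)^2_{\Sigma}$ by the projection formula; (b) using $K_{\Sigma'}=\psi'^*K_\Sigma + E$ and the defining formula $\ld(\ord^{E})=1+\ord^{E}(\psi^*\omega)$ with $\omega$ a nonvanishing $2$-form at $O$, one gets $\ld(\ord^{E})-\ld(\ord^{E_-})$ as a similarly explicit integer. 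These are unnormalized differences on $\cV^*$; one then normalizes via ${\ld}_R=\ld\circ\pi_R^{-1}$ and ${\si}_R=\si\circ\pi_R^{-1}$, i.e. by dividing by $\nu(\mathcal{M}_R)$ and $\nu(\mathcal{M}_R)^2$ respectively, where $\mathcal{M}_O=\mathcal{M}$ and $\mathcal{M}_L=\cO(-L)$.

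The heart of the argument is the arithmetic identity matching the ratio of these normalized increments with $\mult_R(P)$. By Proposition \ref{geomint}, $\langle I^R,I^C\rangle=I^R(C)$, so
\[
\mult_R(P)=\min\{I^R(C)\,:\,P\preceq_R\pi(I^C)\},
\]
and this minimum is attained on the image $\psi(K)$ of a curvette $K$ of $E$ in the model $\Sigma$, where it equals $\ord^{E}(\mathcal{M}_R)$. This is precisely the factor that appears as the ``weight'' of $E$ when expanding $\check E$ in the blow-up formula of step (a), which is what produces the matching between (a) and (b) in the elementary case. Alternatively, one can recognise $\mult_R(P)$ as the generic multiplicity of the valuation in the sense of Favre and Jonsson and apply their formula for its change under blow-up directly.

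The main obstacle I anticipate is the careful bookkeeping of normalizations when the observer $R$ is a smooth branch $L$ rather than the point $O$: both the dual-divisor computation and the log-discrepancy computation must be done in the model $\Sigma$, but what changes from the $O$-case to the $L$-case is the normalizing denominator $\nu(\mathcal{M}_R)$, and one must verify that the same minimum-over-curvettes argument identifies this denominator with $\mult_R(P)$ in both situations. Once the differential relation is secured at divisorial points, the integral forms follow at once: between consecutive divisorial points of $[R,P]$ the function $\mult_R$ is constant and $\ld_R$, $\si_R$ are affine with slope $\mult_R$, so (\ref{relint1}) and (\ref{relint2}) follow by summing over a finite divisorial subdivision and passing to the limit by continuity of $\ld_R$, $\si_R$ and lower semicontinuity of $\mult_R$.
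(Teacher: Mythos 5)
First, a point of comparison: the paper does not prove Proposition \ref{reldiffprop} at all --- it is quoted from Favre and Jonsson (\cite[Sections 3.3--3.6]{FJ 04}), so there is no argument in the paper to match. Your overall architecture (establish the relation at divisorial points, use left-local constancy of $\mult_R$ together with density of divisorial valuations and continuity of $\ld_R,\si_R$ on finite subtrees, then integrate) is the natural one and is broadly the route taken in \cite{FJ 04}; the normalization bookkeeping you describe ($\ld_R=\ld\circ\pi_R^{-1}$, division by $\nu(\mathcal{M}_R)$ and $\nu(\mathcal{M}_R)^2$) and the identification of $\mult_R(\pi(\mathrm{ord}^E))$ with $\mathrm{ord}^E(\mathcal{M}_R)$ via curvettes are correct ingredients.

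The genuine gap is in your reduction step. It is not true that two comparable divisorial points $P_-\prec_R P$ can be joined ``through a finite sequence of intermediate divisors appearing along $[P_-,P]$'' by single point blow-ups. Take $R=O$, $P_-=\pi(\mathrm{ord}^{E_1})$ the first exceptional divisor, and $P$ the divisorial point of the monomial valuation with weights $(2,3)$ in coordinates $(x,y)$; these satisfy $P_-\prec_O P$ and $\mult_O\equiv 1$ on $[P_-,P]$, yet every model containing the $(2,3)$-divisor $E_3$ must also contain the $(1,2)$-divisor $E_2$, whose normalized valuation $(1,2)$ lies strictly beyond $P=(1,3/2)$ and hence outside $[P_-,P]$. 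Worse, the elementary move itself does not respect the order after normalization: blowing up the satellite point $E_1\cap E_2$ produces $E_3$ with $\pi(\mathrm{ord}^{E_3})\prec_O\pi(\mathrm{ord}^{E_2})$, so ``$E$ obtained by blowing up a point of $E_-$'' does not give $P_-\prec_R P$, and the increments along such a chain are governed by the multiplicities of \emph{different} subsegments, not by the constant $\mult_R(P)$ you need. Consequently the key identity $\Delta\ld_R=\mult_R(P)\,\Delta\si_R$ for \emph{all} divisorial pairs in a subsegment of constant multiplicity --- which your sandwiching argument requires --- is not established; the matching of the blow-up ``weight'' with the minimum in (\ref{relmultexpr}) is asserted rather than proved, and this matching (essentially Favre--Jonsson's identification of the multiplicity with the coefficient $b_E=\mathrm{ord}^E(\mathcal{M})$, together with the Farey/continued-fraction bookkeeping on dual graphs, or equivalently a computation with approximating key polynomials) is precisely the nontrivial content behind (\ref{relderiv}). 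Note also that within this paper one cannot instead shortcut through the Eggers--Wall index function, since Theorem \ref{embcurve} itself invokes Proposition \ref{reldiffprop}. Once that divisorial statement is genuinely secured, your final passage to the integral forms (\ref{relint1})--(\ref{relint2}) is fine.
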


\begin{remark} \label{concise}
  We could have written the relation (\ref{relderiv}) more concisely as:
      \begin{equation}  \label{reldiff}
            d\: {\ld}_R = {\mult}_R \:  d \: {\si}_R.
      \end{equation}
   We will write it sometimes in this way, even if this has, strictly speaking, no meaning 
   in the usual interpretation of differential geometry, as there is no differentiable 
   structure on $\mathbb{P}(\mathcal{V})$ for which ${\ld}_R$ and ${\si}_R$ are both 
    differentiable. 
\end{remark}

\begin{proposition} \label{tripodform} {\bf (Generalized tripod formulae)}
   Let $R$ be an observer for $\mathbb{P}(\mathcal{V})$ and $P, Q \in \mathbb{P}(\mathcal{V})$ 
   be arbitrary.  Recall that $\langle R, P, Q \rangle$ denotes the center of the tripod determined 
   by $R, P, Q$ in the tree $\mathbb{P}(\mathcal{V})$ (see Definition \ref{trip}).  Then:
         \begin{equation}  \label{trip1}
             {\si}_R(\langle R, P, Q \rangle) = \bra{P,Q}_R, 
         \end{equation}
     Equivalently:
          \begin{equation}  \label{trip2}
              {\si}_R(\langle R, P, Q \rangle) = \frac{\bra{\nu^P, \nu^Q} }{\bra{I^R, \nu^P} \bra{I^R, \nu^Q}},
          \end{equation}
        where $\nu^P, \nu^Q \in \mathcal{V}^*$ are arbitrary semivaluations representing 
        $P$ and $Q$ respectively. 
\end{proposition}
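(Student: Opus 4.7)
The two formulas \eqref{trip1} and \eqref{trip2} are equivalent by a direct calculation. Indeed, for any representatives $\nu^P, \nu^Q \in \mathcal{V}^*$ of $P, Q$, Definition \ref{relcoord} combined with the normalization formulae \eqref{normaliznot} gives
\[
\bra{P, Q}_R = \bra{\pi_R^{-1}(P),\, \pi_R^{-1}(Q)} = \bra{\nu^P / \nu^P(R),\, \nu^Q / \nu^Q(R)}.
\]
Applying the bihomogeneity of the bracket together with Proposition \ref{geomint}, which identifies $\nu(R) = \bra{\nu, I^R}$, the right-hand side rewrites as the right-hand side of \eqref{trip2}. So it suffices to prove \eqref{trip1}.

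The plan is to reduce \eqref{trip1} to the divisorial case. By Proposition \ref{uniqext} and its analog for the bracket, both $\si_R$ and $\bra{\cdot, \cdot}_R$ are continuous in restriction to any finite subtree (respectively product of finite subtrees) of $\mathbb{P}(\mathcal{V})$, and the center operation $(P, Q) \mapsto \langle R, P, Q \rangle$ is continuous. Combined with the density in $\mathbb{P}(\mathcal{V})$ of divisorial points established in \cite{FJ 04}, it is enough to check \eqref{trip1} when $P = \pi(\mathrm{ord}^{E_i})$ and $Q = \pi(\mathrm{ord}^{E_j})$ for prime divisors $E_i, E_j$ on a common model $\psi : (\Sigma, E) \to (S, O)$. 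Blowing up $\Sigma$ further if needed, we may also arrange that the center satisfies $\langle R, P, Q \rangle = \pi(\mathrm{ord}^{E_0})$ for some component $E_0$ of $E$.

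In this divisorial setting both sides of \eqref{trip1} become explicit. Setting $b_k := \mathrm{ord}^{E_k}(R) = \bra{\mathrm{ord}^{E_k}, I^R}$ for $k \in \{0, i, j\}$, Definition \ref{deflogself}, the analogous formula for the bracket on divisorial valuations, the (bi)homogeneity, and Definition \ref{relcoord} yield
\[
\si_R(\langle R, P, Q \rangle) = -\frac{(\check{E}_0 \cdot \check{E}_0)_\Sigma}{b_0^2}, \qquad
\bra{P, Q}_R = -\frac{(\check{E}_i \cdot \check{E}_j)_\Sigma}{b_i b_j}.
\]
Hence \eqref{trip1} reduces to the algebraic identity
\[
b_i\, b_j\, (\check{E}_0 \cdot \check{E}_0)_\Sigma = b_0^2\, (\check{E}_i \cdot \check{E}_j)_\Sigma,
\]
which expresses the fact that $E_0$ is the vertex of the tree dual to $E$ where the paths from the root $R$ to $E_i$ and to $E_j$ first diverge.

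The main obstacle will be establishing this last identity. It is a linear-algebraic statement about the intersection matrix of a tree of smooth rational curves on the smooth surface $\Sigma$: using the tree structure, one decomposes $\check{E}_i$ (resp.\ $\check{E}_j$) according to the path in the dual graph from $E_i$ (resp.\ $E_j$) back to $E_0$, and the ratios $b_i / b_0$ and $b_j / b_0$ turn out to govern these decompositions. A shortcut is to invoke directly \cite[Sections 3.3, 3.4, 3.6, 3.9]{FJ 04}, where Favre and Jonsson essentially characterize the bracket on $\mathcal{V}_R$ as the composition of the meet operation $\wedge_R$ with $\si_R$; in that approach, the work is concentrated in translating this abstract characterization through the normalization relative to the observer $R$.
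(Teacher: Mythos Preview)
The paper does not actually give its own proof of this proposition: it is one of three propositions introduced with the sentence ``We list the essential properties of the coordinate system associated to any observer in the following three propositions (see \cite[Sections 3.3, 3.4, 3.6, 3.9]{FJ 04})'', and no argument follows. So there is no paper proof to compare against; the result is simply imported from Favre--Jonsson.

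Your sketch goes further than the paper in that you spell out the equivalence of \eqref{trip1} and \eqref{trip2} (which is correct and immediate from bihomogeneity and Proposition~\ref{geomint}) and outline a reduction to the divisorial case via density and continuity. That reduction is a reasonable strategy, and the step ``the tripod center of three divisorial points is again divisorial and appears on a common model'' is true, though you pass over it quickly. However, you yourself flag the displayed identity $b_i b_j (\check E_0\cdot\check E_0)_\Sigma = b_0^2(\check E_i\cdot\check E_j)_\Sigma$ as ``the main obstacle'' and do not prove it; your fallback is precisely the same citation of \cite{FJ 04} that the paper uses. So as written, your proposal is a partial proof that ultimately rests on the same external reference. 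If you want a self-contained argument, you would need to actually carry out the tree/intersection-form computation you allude to (or, equivalently, reprove the Favre--Jonsson characterization $\bra{\nu,\mu}_R=\si_R(\nu\wedge_R\mu)$).
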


Proposition \ref{tripodform} generalizes the tripod formula of Proposition \ref{intfromEW}. This is 
not obvious, as that proposition dealt with contact complexities and the previous one deals 
with self-interactions. In fact, both functions ${\ic}_L$ and $ {\si}_L$ 
coincide if one embeds naturally the Eggers-Wall 
tree $\Theta_L$ in the space $\mathcal{V}_L$ of semivaluations normalized relative to $L$. 
This embedding is the subject of next section, the coincidence of the two functions 
being part of the content of its Theorem \ref{embcurve}.

\section{The valuative embedding of the Eggers-Wall tree} \label{valemb}

In this section we explain the construction 
and some properties of 
a canonical embedding of the Eggers-Wall tree $\Theta_L(C)$ 
into the projective semivaluation tree $\mathbb{P}(\mathcal{V})$ 
(see Definition \ref{defemb} and Theorem \ref{embcurve}). 
Then we prove the result announced in the title of the paper (see Theorem 
\ref{proj_lim}).
\medskip

As usual, $(x,y)$ is a coordinate system such that $Z(x) =L$. 
 If $\xi \in  \C[[ x^{1/ \N} ]]$ 
and $\alpha \in (0, \infty]$, consider the set of Newton-Puiseux series which coincide 
with $\xi$ up to the exponent $\alpha$ (but not including $\alpha$):
\begin{equation}
 \mathcal{NP}_x(\xi, \alpha) := \{ \eta \in \mathcal\C[[ x^{1/ \N} ]] \ \mid  \ 
            \nu_x(\eta - \xi)  \geq \alpha \}.
\end{equation}           

Let $\xi \in \mathcal\C[[ x^{1/ \N} ]]$ and $\alpha \in (0, \infty]$ be fixed. Define the map
$$ \nu^{\xi, \alpha} :  \mathcal{O}  \to  [0, \infty]$$ by:  
\begin{equation} \label{defsvser}
\nu^{\xi, \alpha}  (f) :=  \inf \{\nu_x(f(x, \eta)) \ \mid
               \ \eta \in \mathcal{NP}_x(\xi, \alpha) \}.
\end{equation}
Define also the map $ \nu^{\xi, 0} :  \mathcal{O}  \to  [0, \infty]$ by:
   \begin{equation} \label{defsvser0}
              \nu^{\xi, 0}  :=  \mathrm{ord}^L .
    \end{equation}

 \begin{remark}
    The infimum in the definition (\ref{defsvser}) is not always a minimum. For instance, if 
    $\xi = x$, $\alpha \in (0, 1)$ is irrational and $f(x,y) = y$, then 
    $\nu_x(f(x, \eta)) = \nu_x(\eta)$ may take any \emph{rational} value in the interval 
    $[\alpha, \infty]$ when $\eta$ varies in 
       $ \mathcal{NP}_x(\xi, \alpha) = \{ \eta \in \mathcal\C[[ x^{1/ \N} ]] \  \mid \  \nu_x(\eta) > \alpha \}$.
    In fact, as an immediate consequence  of Proposition 
    \ref{geomval} below, one may prove that the infimum is a minimum  precisely when 
    $\alpha$ \emph{is rational}. 
\end{remark}

\begin{remark}  \label{Berkrem}
      If one sets $|| \eta || := e^{- \nu_x(\eta)}$, one gets a multiplicative 
      non-archimedean norm on the $\C$-algebra $\C[[ x^{1/ \N} ]]$. 
      Then $ \mathcal{NP}_x(\xi, \alpha)$  is simply the closed ball of center 
      $\xi$ and radius  $e^{- \alpha}$ in this normed complex vector space.  
      The definition of the  function $ \mathcal{NP}_x(\xi, \alpha)$ parallels 
      Berkovich's construction of semi-norms on the $K$-algebra $K[X]$, 
      where $K$ is any non-archimedean 
      field, associating to each element of $K[X]$ its supremum on a given 
      closed ball of $K$  (see Berkovich \cite[Section 1.4.4]{B 90} and 
      Baker and Rumely \cite[Page xvi]{BR 10}). 
\end{remark}

We will see in Proposition \ref{nu-semi} 
that the map $\nu^{\xi, \alpha}$ is a semivaluation for any  choice of  $\xi$ and $\alpha$. 
Let us understand first in terms of Eggers-Wall trees what is the value $\nu^{\xi, \alpha}(f)$ 
and for which series $\eta \in  \mathcal{NP}_x(\xi, \alpha)$, the number 
$\nu_x(f(x, \eta)) \in [0, \infty]$ achieves it.  

\begin{notation}
         If $\eta \in \C [[ x^{1/ \N} ]]$ is a Newton-Puiseux series,  we denote by $C_\eta$ 
         the branch defined by the minimal polynomial of $\eta$ in $\C[[x]][y]$. 
         Recall from Definition \ref{trip} that $\langle L, C_{\eta}, Z(f) \rangle$ 
         denotes the center of the tripod generated by the ends $L, C_{\eta}, Z(f)$ 
         of the Eggers-Wall tree $\Theta_L(C_{\eta} + Z(f)) $. 
\end{notation}

\begin{lemma}  \label{geomval}
    Let $f \in \mathcal{O}$ be irreducible and $\eta \in \mathcal\C[[ x^{1/ \N} ]]$.  
    Then:  
       \begin{equation}
     \label{geomval2}
               \nu_x(f(x, \eta))  =  \left\{ 
                \begin{array}{ccl}
                       (L \cdot Z(f)) \cdot {\ic}_L ( \langle L, C_{\eta}, Z(f) \rangle ) & 
                          \mbox{ if } & Z(f) \neq L, \\
                       1 & 
                          \mbox{ if } & Z(f) = L.
                  \end{array}  \right. 
        \end{equation}
\end{lemma}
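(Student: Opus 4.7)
My plan is to reduce both sides to intersection numbers and then conclude via the tripod formula of Theorem \ref{intcomp}. First, I would dispose of the trivial case $Z(f)=L$: since $f$ is irreducible with $Z(f)=L=Z(x)$, one has $f = u \cdot x$ for some unit $u \in \mathcal{O}$, and because $u(0,0) \ne 0$, the constant term of $u(x,\eta)$ is nonzero, so
\[
 \nu_x(f(x,\eta)) \;=\; \nu_x(x) + \nu_x(u(x,\eta)) \;=\; 1 + 0 \;=\; 1,
\]
matching the second case of (\ref{geomval2}).

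Now suppose $A := Z(f) \ne L$. I would replace $f$ by the Weierstrass polynomial $f_A \in \C[[x]][y]$ of $A$ relative to $(x,y)$; by the same unit argument as above, $\nu_x(f(x,\eta)) = \nu_x(f_A(x,\eta))$, and factoring the monic polynomial $f_A$ over $\C[[x^{1/d_A}]]$ gives
\[
 \nu_x(f_A(x,\eta)) \;=\; \sum_{\zeta \in \mathrm{Zer}(f_A)} \nu_x(\eta - \zeta).
\]
If $C_\eta = A$, i.e.\ $\eta$ is itself a Newton-Puiseux root of $f_A$, then $f_A(x,\eta)=0$ so the left-hand side is $\infty$; on the other hand $\langle L,A,A\rangle = A$ by the degenerate case of Lemma \ref{atinf}, and $\ic_L$ takes the value $\infty$ at the end $A$ of $\Theta_L(A)$ by Corollary \ref{convconc}, so the right-hand side is also $\infty$.

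It remains to handle $C_\eta \ne A$, where $C_\eta$ and $A$ are distinct branches, neither equal to $L$ (note $C_\eta \ne L$ automatically, since no Newton-Puiseux series in $x$ parametrizes $L = Z(x)$). Set $d := d_{C_\eta} = (L \cdot C_\eta)$, which is the minimal denominator of $\eta$ by the standard correspondence between minimal denominators and degrees of minimal polynomials over $\C((x))$. Writing $\eta$ in terms of $t := x^{1/d}$ gives a series in $\C[[t]]$, and $\phi(t) = (t^d, \eta(t))$ is then a normal parametrization of $C_\eta$. By the intersection multiplicity formula recalled after Definition \ref{defparam},
\[
(C_\eta \cdot A) \;=\; \nu_t\bigl(f_A \circ \phi\bigr) \;=\; \nu_t\bigl(f_A(t^d, \eta(t))\bigr),
\]
and since $\nu_t = d \cdot \nu_x$ on $\C[[x^{1/d}]]$ via $t = x^{1/d}$, this yields
\[
\nu_x(f_A(x,\eta)) \;=\; \frac{(C_\eta \cdot A)}{d_{C_\eta}}.
\]
Finally, applying the tripod formula of Theorem \ref{intcomp} to the reduced germ $C_\eta + A$ gives
\[
\ic_L\bigl(\langle L, C_\eta, A\rangle\bigr) \;=\; \frac{(C_\eta \cdot A)}{(L\cdot C_\eta)\,(L\cdot A)},
\]
and multiplying by $(L\cdot A) = d_A$ produces the first line of (\ref{geomval2}).

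No step poses a serious obstacle; the only point deserving care is the distinction between the abstract Newton-Puiseux series $\eta$ and the branch $C_\eta$ it defines, together with the identification of $d_{C_\eta}$ with the minimal denominator of $\eta$, which ensures that $\phi$ really is a normal parametrization rather than a multiply-covered one.
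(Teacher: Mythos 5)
Your proof is correct and follows essentially the same route as the paper: both reduce $\nu_x(f(x,\eta))$ to $(C_\eta\cdot Z(f))/(L\cdot C_\eta)$ via a normal parametrization $t\mapsto(t^{d},\tilde\eta(t))$ of $C_\eta$ (with $d=(L\cdot C_\eta)=\de_L(C_\eta)$ the minimal denominator of $\eta$) and then conclude with Theorem \ref{intcomp}, treating the cases $Z(f)=L$ and $Z(f)=C_\eta$ separately. The only difference is that you spell out the unit/Weierstrass-preparation reductions and state an (unused) factorization of $f_A(x,\eta)$ over its Newton--Puiseux roots, which the paper leaves implicit.
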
  

\begin{proof}  The formula is clearly true when $Z(f) =L$.  

If  $Z(f) \neq L$, notice that 
     $(L \cdot C_{\eta}) = {\de}_L(C_{\eta})$, where 
      ${\de}_L$ denotes the index function on $\Theta_L(C_{\eta})$, and 
      $C_{\eta}$ is viewed as the leaf of this Eggers-Wall tree. One has 
      $\eta = \tilde{\eta}(x^{1/ {\de}_L(C_{\eta})})$, where $\tilde{\eta}(t) \in \C[[t]]$. Therefore: 
      $$\nu_x(f(x, \eta)) = \frac{1}{{\de}_L(C_{\eta})} \cdot \nu_t(f(t^{{\de}_L(C_{\eta})}, \tilde{\eta}(t)) )= 
               \frac{(Z(f) \cdot C_{\eta})}{(L \cdot C_{\eta})} = 
                 (L \cdot Z(f)) \cdot {\ic}_L ( \langle L, C_{\eta}, Z(f) \rangle ),$$
        the last equality being a consequence of Theorem \ref{intcomp}. The proof 
        is finished in all cases. 
        
        Note that when $Z(f) = C_{\eta}$, we have $f(x, \eta) =0$ and 
          $ \langle L, C_{\eta}, Z(f) \rangle = C_{\eta}$, which shows that both sides of 
          the equality (\ref{geomval2}) are $\infty$.
\end{proof}

\begin{proposition}  \label{valtree} 
    Let $f \in \mathcal{O}$ be irreducible, $\xi \in \mathcal\C[[ x^{1/ \N} ]]$ and $\alpha \in [0, \infty]$. 
    Denote by $P(\alpha) \in \Theta_L(C_{\xi})$ the unique point 
    with exponent $\alpha$.
   Then: 
              \begin{equation} \label{valtree2} 
            \nu^{\xi, \alpha}(f) =   \left\{ 
                \begin{array}{ccl}
            (L \cdot Z(f)) \cdot {\ic}_L( \min \{P(\alpha), \langle L, C_{\xi}, Z(f) \rangle 
                    \} ) & 
                          \mbox{ if } & Z(f) \neq L, \\
                       1 & 
                          \mbox{ if } & Z(f) = L.  
                  \end{array}  \right.  
             \end{equation}
              the minimum being taken with respect to the partial order 
                    $\preceq_L$ on the tree  $\Theta_L(C_{\xi} + Z(f)) $.
\end{proposition}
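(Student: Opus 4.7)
The case $Z(f) = L$ is immediate: $f$ is a unit multiple of $x$, so $\nu_x(f(x, \eta)) = 1$ for every $\eta \in \C[[x^{1/\N}]]$, and the infimum (or the defining value $\mathrm{ord}^L(f)$ when $\alpha = 0$) equals $1$. When $\alpha = 0$ and $Z(f) \neq L$, one has $\mathrm{ord}^L(f) = 0$, matching the right-hand side of (\ref{valtree2}) since $P(0) = L$ and $\ic_L(L) = 0$. We may therefore assume $\alpha > 0$ and $Z(f) \neq L$. By Lemma \ref{geomval}, $\nu_x(f(x, \eta)) = (L \cdot Z(f)) \cdot \ic_L(\langle L, C_\eta, Z(f)\rangle)$, so, setting $Q_0 := \langle L, C_\xi, Z(f)\rangle$, it suffices to prove
\[
\inf_{\eta \in \mathcal{NP}_x(\xi, \alpha)} \ic_L\bigl(\langle L, C_\eta, Z(f)\rangle\bigr) = \ic_L\bigl(\min_{\preceq_L}\{P(\alpha), Q_0\}\bigr).
\]

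For the lower bound, I would work inside the enlarged Eggers-Wall tree $\Theta_L(C_\xi + C_\eta + Z(f))$. Both $Q_0$ and $R_\eta := \langle L, C_\xi, C_\eta\rangle$ lie on the trunk $[L, C_\xi]$, at exponents $k_L(C_\xi, Z(f))$ and $k_L(C_\xi, C_\eta)$ respectively. By Lemma \ref{atinf}, $\langle L, C_\eta, Z(f)\rangle = C_\eta \wedge_L Z(f)$, and the semilattice identity
\[
R_\eta \wedge_L Q_0 = (C_\xi \wedge_L C_\eta) \wedge_L (C_\xi \wedge_L Z(f)) = C_\xi \wedge_L C_\eta \wedge_L Z(f) \preceq_L C_\eta \wedge_L Z(f),
\]
combined with the fact (Lemma \ref{contint}) that $\ic_L$ is increasing for $\preceq_L$, gives $\ic_L(\langle L, C_\eta, Z(f)\rangle) \geq \min(\ic_L(R_\eta), \ic_L(Q_0))$. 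The constraint $\nu_x(\eta - \xi) \geq \alpha$ forces $k_L(C_\xi, C_\eta) \geq \alpha$, since the latter is by definition the maximum of $\nu_x(\eta' - \xi')$ over Galois conjugates. Hence $R_\eta \succeq_L P(\alpha)$, $\ic_L(R_\eta) \geq \ic_L(P(\alpha))$, and taking the infimum over $\eta$ yields the desired bound $\nu^{\xi, \alpha}(f) \geq (L \cdot Z(f)) \cdot \ic_L(\min\{P(\alpha), Q_0\})$.

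For the matching upper bound one exhibits optimal $\eta$. When $\alpha \geq k_L(C_\xi, Z(f))$, take $\eta = \xi$ (allowed since $\nu_x(0) = \infty$); then $C_\eta = C_\xi$ and $\langle L, C_\eta, Z(f)\rangle = Q_0 = \min\{P(\alpha), Q_0\}$. When $\alpha < k_L(C_\xi, Z(f))$ and $\alpha$ is rational, take $\eta := \xi + c\, x^\alpha$ with $c \in \C^*$ generic, so that $\nu_x(\eta - \xi) = \alpha$ and no Galois-conjugate cancellation raises $k_L(C_\xi, C_\eta)$ above $\alpha$; then $R_\eta = P(\alpha) \prec_L Q_0$, and the trunk analysis identifies $\langle L, C_\eta, Z(f)\rangle = R_\eta = P(\alpha)$. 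For irrational $\alpha$ one approximates by $\alpha_n \in \Q$ with $\alpha_n \searrow \alpha$ and invokes the continuity of $\ic_L$ along the trunk. The main obstacle, deserving the most care, is precisely this Galois-theoretic genericity claim, verified by a direct computation using the non-archimedean triangle inequality applied to differences of the Puiseux conjugates of $\xi$ and $\eta$.
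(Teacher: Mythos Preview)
Your proof is correct and follows essentially the same route as the paper's. Both arguments reduce via Lemma~\ref{geomval} to analyzing how the tripod center $\langle L, C_\eta, Z(f)\rangle$ moves as $\eta$ ranges over $\mathcal{NP}_x(\xi,\alpha)$. The paper organizes this as a two-case geometric analysis (comparing $P(\alpha)$ with $Q_0 = \langle L, C_\xi, Z(f)\rangle$ and, in the second case, asserting that $\langle L, C_\eta, Z(f)\rangle$ sweeps the rational points of $[P(\alpha), Q_0]$), whereas you split into a uniform lower bound via the semilattice inequality $R_\eta \wedge_L Q_0 \preceq_L C_\eta \wedge_L Z(f)$ and an explicit upper-bound construction. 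Your treatment is actually more explicit on the one point the paper glosses over: the genericity claim that $\eta = \xi + c\,x^\alpha$ with generic $c$ yields $k_L(C_\xi, C_\eta) = \alpha$ exactly (which, combined with $Q_0 \succ_L P(\alpha)$, forces $\langle L, C_\eta, Z(f)\rangle = P(\alpha)$ by the tree structure). The paper simply asserts the corresponding surjectivity without details.
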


\begin{proof} If $Z(f) =L$, then the equality results from the fact that $\nu^{\xi, \alpha}(x) = 1$ 
    for all $\alpha \in [0,\infty]$. 
    
    We assume from now on that $Z(f)  \neq L$.
    
  \noindent   $\bullet$    {\bf Suppose first that $\alpha = 0$.}   Then, by definition, 
    $\nu^{\xi, 0} = \mathrm{ord}^L$. As we assumed that $Z(f)  \neq L$, 
    this implies that $\nu^{\xi, 0}(f) = 0$. But the right-hand side is also $0$, because 
    $\min \{P(0), \langle L, C_{\xi}, Z(f) \rangle \} = P(0) = L$,  ${\ic}_L(L) =0$, 
    and $(L \cdot Z(f)) < + \infty$. 

  \noindent  $\bullet$  {\bf Suppose now that $\alpha >0$.}  
        The condition $\eta \in \mathcal{NP}_x(\xi, \alpha)$ implies that 
        the attaching point $\pi_{[L, C_{\xi}]}(C_{\eta}) = \langle L, C_{\xi}, C_{\eta} \rangle$ 
        of $C_{\eta}$ in $\Theta_L(C_{\xi})$ belongs to the segment $[P(\alpha),  C_{\xi}]$. 
        We will consider two cases, according to the position of $\langle L, C_{\xi}, Z(f) \rangle$ 
        relative to $P(\alpha)$. 
        \medskip

              \noindent
             -- Assume that $\langle L, C_{\xi}, Z(f) \rangle \prec_L P(\alpha)$
                (see the tree on the left of Figure \ref{fig:Both1}). 
                
                \medskip 
                
            \noindent                
        This implies the equality $\langle L, C_{\eta}, Z(f) \rangle = 
                        \langle L, C_{\xi}, Z(f) \rangle $ for all 
                  $\eta \in  \mathcal{NP}_x(\xi, \alpha)$. 
                  We deduce the assertion from Formula (\ref{geomval2}) since: 
                    $$\nu^{\xi, \alpha}(f) = (L \cdot Z(f)) \cdot {\ic}_L(\langle L, C_{\xi}, Z(f) \rangle   ).$$

              \noindent
               -- Assume that $\langle L, C_{\xi}, Z(f) \rangle \succeq_L P(\alpha)$
                (see the tree on the right in Figure \ref{fig:Both1}). 
                
                \medskip 
                
                \noindent 
                  When $\eta$ varies in $\mathcal{NP}_x(\xi, \alpha)$, the point 
                  $\langle L, C_{\eta}, Z(f) \rangle$ varies surjectively in the set of rational points 
                  of the segment  $[P(\alpha), \langle L, C_{\xi}, Z(f) \rangle ] $. Since those points 
                 are dense in this segment, we deduce from  Formula (\ref{geomval2}) that: 
                          $$   \begin{array}{ll}
                          \nu^{\xi, \alpha}(f)  & = (L \cdot  Z(f)) \cdot \inf \{ {\ic}_L ( P) \ | \ P \in 
                                 [P(\alpha), \langle L, C_{\xi}, Z(f) \rangle ] \mbox{ is rational } \} = \\
                                   & = (L \cdot Z(f)) \cdot {\ic}_L(P(\alpha)).
                             \end{array}$$
              
              By combining the results of the two cases, we get the announced 
              conclusion  for $\alpha >0$.                  
\end{proof}

\begin{figure}[h!] 
\vspace*{6mm}
\labellist \small\hair 2pt 
\pinlabel{$L$} at 105 6
\pinlabel{$C_{\xi}$} at 30 256
\pinlabel{$\langle L ,C_\xi, Z(f) \rangle$} at 110 115
\pinlabel{$Z(f)$} at 140 260
\pinlabel{$P(\alpha)$} at 0 190

\pinlabel{$L$} at 312 6
\pinlabel{$C_{\xi}$} at 245 256
\pinlabel{$\langle L ,C_\xi, Z(f) \rangle$} at 320 115
\pinlabel{$Z(f)$} at 348 261
\pinlabel{$P(\alpha)$} at 240 60
\endlabellist 
\centering 
\includegraphics[scale=0.50]{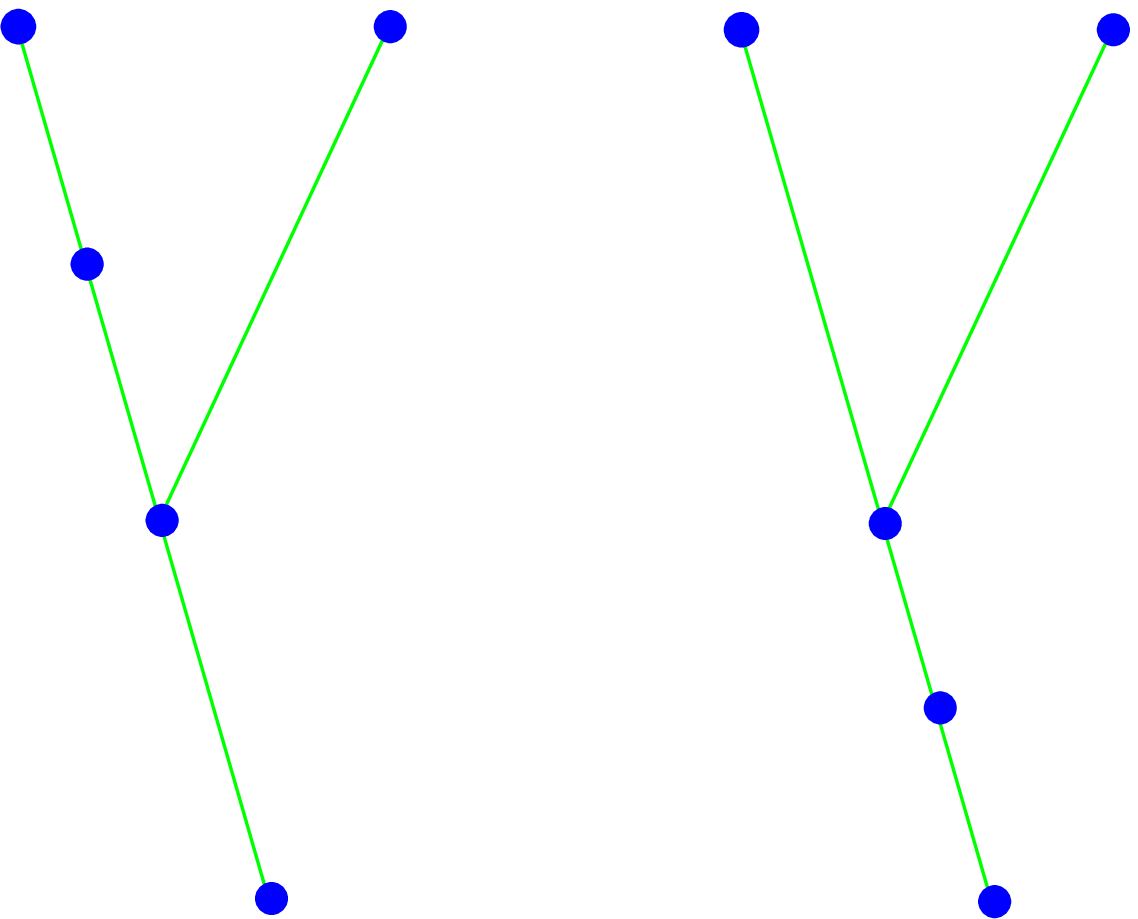} 
\caption{One has to compare $P(\alpha) $ and $\langle L ,C_\xi, Z(f) \rangle$}
\label{fig:Both1}
\end{figure} 

We need also the following lemma in order to prove in Proposition 
\ref{nu-semi} that the map $\nu^{\xi, \alpha}$ is a semi-valuation:  

\begin{lemma}  \label{corless}
      Let us fix $\xi \in \mathcal\C[[ x^{1/ \N} ]]$ and $\alpha \in (0, \infty]$. 
      If  $\eta_1, \eta_2 \in \mathcal{NP}_x(\xi, \alpha)$ and if  
      $f_1, f_2 \in \mathcal{O}$,      
      then  there exists $\eta \in \mathcal{NP}_x(\xi, \alpha)$ such that: 
    \[
   \nu_x(f_i(x, \eta))  \leq \nu_x(f_i (x, \eta_i)), \mbox{ for } i =1,2.  
    \]                                                                
\end{lemma}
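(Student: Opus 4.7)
The plan is to construct $\eta$ explicitly as a single-monomial perturbation $\xi + c\,x^{\alpha^*}$, for a rational exponent $\alpha^* \geq \alpha$ just above $\alpha$ and a generic coefficient $c \in \C^*$, and then to verify the required inequality term-by-term using the ultrametric property of $\nu_x$.

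First I would apply Weierstrass preparation to factor each $f_i$ as a unit times $x^{a_i}$ times a Weierstrass polynomial $\prod_{\omega \in R_i}(y - \omega)$, where $R_i \subset \C[[x^{1/\N}]]$ is a finite set of Newton-Puiseux roots. Substituting $y = \eta$ with $\nu_x(\eta) > 0$ (which holds for $\eta \in \mathcal{NP}_x(\xi, \alpha)$ in the relevant setting $\nu_x(\xi) > 0$) gives
\[
\nu_x(f_i(x, \eta)) \;=\; a_i \;+\; \sum_{\omega \in R_i} \nu_x(\eta - \omega),
\]
with the constant $a_i$ independent of $\eta$. It thus suffices to produce $\eta \in \mathcal{NP}_x(\xi, \alpha)$ such that $\nu_x(\eta - \omega) \leq \nu_x(\eta_i - \omega)$ for every $\omega \in R_i$ and every $i \in \{1,2\}$.

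Next I choose the exponent. Let $M := \min\{\nu_x(\eta_1 - \xi),\ \nu_x(\eta_2 - \xi)\}$, an element of $\Q \cup \{+\infty\}$ with $M \geq \alpha$. Pick a rational $\alpha^* \in [\alpha, M]$: take $\alpha^* = \alpha$ when $\alpha \in \Q$, and otherwise any rational in the nonempty interval $(\alpha, M]$ (nonempty because $M > \alpha$ strictly when $M \in \Q$ and $\alpha \notin \Q$). For each $\omega \in R_1 \cup R_2$ with $\nu_x(\omega - \xi) = \alpha^*$, write $\omega - \xi = d_\omega\, x^{\alpha^*} + (\text{higher-order terms})$ with $d_\omega \in \C^*$, and choose $c$ in the cofinite set $\C^* \setminus \{d_\omega\}$. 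Set $\eta := \xi + c\,x^{\alpha^*}$; then $\nu_x(\eta - \xi) = \alpha^* \geq \alpha$, so $\eta \in \mathcal{NP}_x(\xi, \alpha)$.

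For the verification, I note $\nu_x(\eta_i - \xi) \geq M \geq \alpha^*$ and compare via the ultrametric inequality applied to $\eta - \omega = (\eta - \xi) + (\xi - \omega)$ and $\eta_i - \omega = (\eta_i - \xi) + (\xi - \omega)$, splitting on the sign of $\nu_x(\xi - \omega) - \alpha^*$. If $\nu_x(\xi - \omega) < \alpha^*$, both valuations equal $\nu_x(\xi - \omega)$. If $\nu_x(\xi - \omega) = \alpha^*$, then the choice $c \neq d_\omega$ forces $\nu_x(\eta - \omega) = \alpha^*$, while $\nu_x(\eta_i - \omega) \geq \alpha^*$ by the ultrametric inequality. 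If $\nu_x(\xi - \omega) > \alpha^*$, then $\nu_x(\eta - \omega) = \alpha^*$ and again $\nu_x(\eta_i - \omega) \geq \alpha^*$. In every case $\nu_x(\eta - \omega) \leq \nu_x(\eta_i - \omega)$, and summing over $\omega \in R_i$ yields $\nu_x(f_i(x, \eta)) \leq \nu_x(f_i(x, \eta_i))$.

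The main obstacle is the irrational case for $\alpha$, where the naive choice $\xi + c\,x^\alpha$ is not a Newton-Puiseux series. The resolution is to shift to a rational $\alpha^* > \alpha$, chosen small enough that $\alpha^* \leq \nu_x(\eta_j - \xi)$ still holds for both $j$; this is possible precisely because every rational value $\nu_x(\eta_j - \xi) \geq \alpha$ is in fact strictly greater than the irrational $\alpha$, so $M > \alpha$ strictly and the interval $(\alpha, M]\cap\Q$ is nonempty.
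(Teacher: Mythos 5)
Your proof is correct, and it reaches the conclusion by a more elementary, self-contained route than the paper's. The paper argues inside the Eggers-Wall tree $\Theta$ of $C_\xi+C_{\eta_1}+C_{\eta_2}+Z(f_1)+Z(f_2)$: it picks a \emph{rational} point $P$ of the segment $[P(\alpha),\min\{P_1,P_2\}]$, where $P_i=\langle L, C_\xi, C_{\eta_i}\rangle$, takes for $\eta$ a Newton-Puiseux root of a branch whose attaching point is $P$, and concludes from Lemma \ref{geomval} (formula (\ref{geomval2})) together with the monotonicity of ${\ic}_L$, after reducing to irreducible $f_i$ by additivity of $\nu_x$. Your explicit series $\eta=\xi+c\,x^{\alpha^*}$, with $\alpha^*\in\Q$, $\alpha\le\alpha^*\le M$, and $c$ generic, is the concrete incarnation of the same idea: the genericity of $c$ is exactly what forces the contact of $C_\eta$ with each root $\omega$ of the $f_i$ to be no higher than level $\alpha^*$, i.e.\ what the paper achieves by choosing the attaching point $P$ itself. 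What you do differently is to verify everything by hand, via Weierstrass preparation and a three-case ultrametric comparison, so your argument does not invoke Lemma \ref{geomval}, Theorem \ref{intcomp}, or any tree vocabulary; the paper's version is shorter because that machinery is already in place and it keeps the proof in the language used throughout the section. Two small points to tidy up, neither of which is a real gap: (i) the identity $\nu_x(f_i(x,\eta))=a_i+\sum_{\omega\in R_i}\nu_x(\eta-\omega)$ requires the unit factor to have $x$-order zero after substitution, hence $\nu_x(\eta)>0$; this is automatic when $\nu_x(\xi)>0$, which is the only case arising in the paper (and the case implicitly assumed in its own proof, which speaks of the branches $C_\xi$, $C_{\eta_i}$ through $O$), and the case $\nu_x(\xi)=0$ reduces to it by replacing $f_i(x,y)$ with $f_i(x,y+\xi(0))$ and subtracting the common constant term from all the series; (ii) when $\alpha=\infty$ your recipe for $\alpha^*$ is vacuous, but then $\mathcal{NP}_x(\xi,\infty)=\{\xi\}$ forces $\eta_1=\eta_2=\xi$ and $\eta:=\xi$ works trivially. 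Your justification that $(\alpha,M]\cap\Q\neq\emptyset$ when $\alpha\notin\Q$ is sound, since each $\nu_x(\eta_j-\xi)$ lies in $\Q\cup\{\infty\}$ and is therefore strictly larger than an irrational $\alpha$, and counting the roots $\omega$ with multiplicity causes no trouble since the per-root inequalities simply add up.
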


\begin{proof} Let us denote by $\Theta$ the Eggers-Wall tree of the reduced effective 
divisor whose branches are $C_{\xi}$, $C_{\eta_1}$, $C_{\eta_2}$, $Z(f_1)$ and $Z(f_2)$. 
By definition, if $\eta_i \in \mathcal{NP}_x(\xi, \alpha)$, then the point  
$P_i = \langle L, C_\xi, C_{\eta_i} \rangle$ is 
$\succeq_L  \, P(\alpha)$ in the tree $\Theta$ for $i=1,2$.  
The segment $[P(\alpha), \min \{ P_1, P_2 \}]$ contains a rational point $P$ 
since its right hand extremity is rational. Let $C$ be a branch whose attaching point 
on the tree $\Theta$ is $P$. Since $P  \succeq_L  P(\alpha)$, 
there exists a Newton-Puiseux series $\eta$ of $C$ which belongs 
to $\mathcal{NP}_x(\xi, \alpha)$. Let us check that $\eta$ verifies the assertion.

If $Z(f_i)=L$ for some $i$ then the inequality of the statement trivially holds. 
Assume then that $f_i$ is irreducible  and $Z(f_i)\neq L$  for $i=1,2$.
Set $Q_i =  \langle L,C_{\xi}, Z(f_i) \rangle$.  We
get from the definition of the tree $\Theta$ that 
    $\langle L, C_{\eta_i}, Z(f_i) \rangle = \min \{ P_i, Q_i \}$. 
Similarly,  the attaching point 
$\pi_{[L,  Z(f_i) ]}   (C_{\eta})  = \langle L, C_{\eta}, Z(f_i) \rangle$ is equal to 
 $\min \{ P, Q_i \}$. 
By construction we obtain the inequality:
\[
\langle L, C_{\eta}, Z(f_i) \rangle  \, \preceq_L  \,  \langle L, C_{\eta_i}, Z(f_i) \rangle.
\]
In this case, the assertion follows from this and 
Formula (\ref{geomval2}), 
taking into account that the function ${\ic}_L$ is increasing. 

In the general case, the previous argument, applied to the irreducible components $f_{i,j} $ of 
$f_i = \prod_{j}  f_{i,j}$, shows that: 
\[
 \nu_x(f_{i,j} (x, \eta))  \leq \nu_x(f_{i,j} (x, \eta_i)).
\]
Since $\nu_x$ is a valuation we get: 
\[
\nu_x(f_i  (x, \eta)) = \sum_{j} \nu_x(f_{i,j} (x, \eta)) \leq \sum_{j} \nu_x(f_{i,j} (x, \eta_i)) 
                               = \nu_x(f_i  (x, \eta_i)).
\]
\end{proof}

\begin{proposition} \label{nu-semi}
     The map  $\nu^{\xi, \alpha}$ belongs to the set $\mathcal{V}_L$ of  
      semivaluations normalized relative to $L = Z(x)$. 
\end{proposition}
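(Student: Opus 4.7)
The plan is to verify separately the three defining conditions of Definition \ref{valdef} together with the normalization $\nu^{\xi,\alpha}(x) = 1$. The case $\alpha = 0$ is immediate: by (\ref{defsvser0}), $\nu^{\xi,0} = \mathrm{ord}^L$ is a valuation, and $\mathrm{ord}^L(x) = 1$ since $L = Z(x)$. So I restrict attention to $\alpha > 0$.

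The normalization and the condition on constants are clear directly from (\ref{defsvser}): for every $\eta$, $\nu_x(\lambda) = 0$ if $\lambda \in \mathbb{C}^*$ and $\nu_x(0) = \infty$, while $\nu_x(x) = 1$ independently of $\eta$, so $\nu^{\xi,\alpha}(\lambda)$ takes the required values and $\nu^{\xi,\alpha}(x) = 1$. For the ultrametric inequality (property (\ref{ineq})), I use that for each fixed $\eta \in \mathcal{NP}_x(\xi,\alpha)$ one has $\nu_x(f(x,\eta) + g(x,\eta)) \geq \min\{\nu_x(f(x,\eta)), \nu_x(g(x,\eta))\}$ since $\nu_x$ is a valuation. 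Taking the infimum over $\eta$ and using the elementary identity
\[
\inf_{\eta}\,\min\{a(\eta),b(\eta)\} \;=\; \min\bigl\{\inf_\eta a(\eta),\,\inf_\eta b(\eta)\bigr\},
\]
which follows from the pointwise bounds $a(\eta), b(\eta) \geq \min\{\inf a, \inf b\}$ and the reverse domination, yields $\nu^{\xi,\alpha}(f+g) \geq \min\{\nu^{\xi,\alpha}(f), \nu^{\xi,\alpha}(g)\}$.

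The main obstacle is the multiplicativity condition (property (\ref{add})), because a priori the infimum of a sum is only $\geq$ the sum of infima. The inequality
\[
\nu^{\xi,\alpha}(fg) \;=\; \inf_{\eta}\bigl(\nu_x(f(x,\eta)) + \nu_x(g(x,\eta))\bigr) \;\geq\; \nu^{\xi,\alpha}(f) + \nu^{\xi,\alpha}(g)
\]
is automatic. For the reverse inequality I invoke Lemma \ref{corless}: given any $\eta_1,\eta_2 \in \mathcal{NP}_x(\xi,\alpha)$, there exists a common $\eta \in \mathcal{NP}_x(\xi,\alpha)$ simultaneously realizing $\nu_x(f(x,\eta)) \leq \nu_x(f(x,\eta_1))$ and $\nu_x(g(x,\eta)) \leq \nu_x(g(x,\eta_2))$. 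Summing gives
\[
\nu^{\xi,\alpha}(fg) \;\leq\; \nu_x(f(x,\eta)) + \nu_x(g(x,\eta)) \;\leq\; \nu_x(f(x,\eta_1)) + \nu_x(g(x,\eta_2)).
\]
Taking the infimum over $\eta_1$ and then over $\eta_2$ (independently), one concludes $\nu^{\xi,\alpha}(fg) \leq \nu^{\xi,\alpha}(f) + \nu^{\xi,\alpha}(g)$, establishing equality.

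Combining the four verified properties shows $\nu^{\xi,\alpha} \in \mathcal{V}$, and the equality $\nu^{\xi,\alpha}(L) = 1$ places it in $\mathcal{V}_L$, as required. The conceptual content of the proof is thus entirely packaged in Lemma \ref{corless}, which supplies the ``joint minimizer'' needed to upgrade the trivial subadditivity of the infimum to additivity.
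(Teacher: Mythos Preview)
Your proof is correct and follows essentially the same approach as the paper's own proof: the trivial case $\alpha=0$, then for $\alpha>0$ the straightforward verifications of conditions (\ref{ineq}), (\ref{const}) and the normalization, with the substantive work on condition (\ref{add}) handled exactly as in the paper via Lemma \ref{corless} to produce a joint approximate minimizer. The only cosmetic difference is that you phrase the ultrametric step through the identity $\inf_\eta \min\{a(\eta),b(\eta)\} = \min\{\inf_\eta a(\eta),\inf_\eta b(\eta)\}$, whereas the paper uses directly the pointwise bound $\nu_x(f(x,\eta)) \geq \nu^{\xi,\alpha}(f)$; these are equivalent and only the $\geq$ direction of your identity is actually needed.
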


\begin{proof}  If $\alpha = 0$, the statement is clear, because $\nu^{\xi, 0} = 
  \mathrm{ord}^L$.
  
  Consider from now on the case $\alpha > 0$.
    Let us prove successively the three conditions (\ref{add}), (\ref{ineq}), (\ref{const})
    of Definition \ref{valdef}. 
    
    \medskip
    \noindent
       $\bullet$ {\bf Proof of condition (\ref{add})}.  Consider two functions $f, g \in \mathcal{O}$. 
           As $\nu_x$ is a valuation of $\mathcal\C[[ x^{1/ \N} ]]$, we have: 
            $$\nu_x(f(x, \eta) \cdot g(x, \eta)) =   \nu_x(f(x, \eta)) + \nu_x(g(x, \eta)) $$ 
            for all $\eta \in \mathcal{NP}_x(\xi, \alpha)$. But, by the definition of $\nu^{\xi, \alpha}$: 
                $\nu_x(f(x, \eta)) \geq \nu^{\xi, \alpha}(f)$ and 
                      $\nu_x(g(x, \eta)) \geq \nu^{\xi, \alpha}(g)$. 
             This implies that:
                $\nu_x(f(x, \eta) \cdot g(x, \eta)) \geq  \nu^{\xi, \alpha}(f) + \nu^{\xi, \alpha}(g)$.
             Passing to the infimum of the left-hand-sides over $\eta \in \mathcal{NP}_x(\xi, \alpha)$, 
             we get the inequality: 
                $$\nu^{\xi, \alpha}(f \cdot g) \geq  \nu^{\xi, \alpha}(f) + \nu^{\xi, \alpha}(g).$$
             We want now to show that in fact this is an equality. We will prove this  
             by showing that one has always also the converse inequality:
                \begin{equation} \label{converse}
                    \nu^{\xi, \alpha}(f \cdot g) \leq  \nu^{\xi, \alpha}(f) + \nu^{\xi, \alpha}(g).
                \end{equation}
              Let us consider $\eta_1,  \eta_2 \in \mathcal{NP}_x(\xi, \alpha)$. By Lemma \ref{corless}
              there exists a series  $\eta \in \mathcal{NP}_x(\xi, \alpha)$ such that 
                   \[
                                            \begin{array}{lcl}
                                        \nu_x(f(x, \eta))  & \leq  & \nu_x(f(x, \eta_1)),
                                         \\
                                          \nu_x(g(x, \eta))   & \leq &  \nu_x(g(x, \eta_2)).
                                    \end{array}
                \]
               By summing these  inequalities, we get:
                 $$\nu_x((f\cdot g)(x, \eta))  \leq \nu_x(f(x, \eta_1)) + \nu_x(g(x, \eta_2)). $$
               Therefore:
                  $$\nu^{\xi, \alpha}(f\cdot g)  \leq \nu_x(f(x, \eta_1)) + \nu_x(g(x, \eta_2)). $$
                This being true for all $\eta_1 , \eta_2 \in \mathcal{NP}_x(\xi, \alpha)$, 
                we may take the infimum over those choices, and get the desired 
                converse inequality (\ref{converse}). 
                        
     \medskip
     \noindent
       $\bullet$ {\bf Proof of condition (\ref{ineq})}. Consider again two functions 
            $f, g \in \mathcal{O}$. As $\nu_x$ is a valuation of $\mathcal\C[[ x^{1/ \N} ]]$, we have: 
            $$\nu_x(f(x, \eta) + g(x, \eta)) \geq \min \{ \nu_x(f(x, \eta)), \nu_x(g(x, \eta)) \}$$ 
            for all $\eta \in \mathcal{NP}_x(\xi, \alpha)$. 
             This implies, as in the previous reasoning, that:
                $$\nu_x(f(x, \eta) + g(x, \eta)) \geq \min 
                   \{ \nu^{\xi, \alpha}(f), \nu^{\xi, \alpha}(g) \}.$$
             Passing to the infimum of the left-hand-sides over $\eta \in \mathcal{NP}_x(\xi, \alpha)$, 
             we get the desired inequality: 
                $$\nu^{\xi, \alpha}(f + g) \geq \min \{ \nu^{\xi, \alpha}(f), \nu^{\xi, \alpha}(g) \}.$$
       
     \medskip
     \noindent
       $\bullet$ {\bf Proof of condition (\ref{const})}. This is immediate from the definition. 
       
      \medskip Finally notice that   $\nu^{\xi, \alpha}(x) = 1$, thus the semivaluation 
         $\nu^{\xi, \alpha}$ is normalized 
         relative to $L$.     
\end{proof}

\begin{remark} \label{especial}
     It is clear from the definition that if $0 < \alpha < \infty$, then the semivaluation 
     $\nu^{\xi, \alpha}$ is actually a valuation 
     centered at $O$ in the sense of Definition \ref{valdef}. 
     We know that $\nu^{\xi, 0} =\mathrm{ord}^L$, while 
     by Proposition \ref{valtree} one has 
     $\nu^{\xi, \infty }  = I_L^{C_\xi}$ (see Definition \ref{def:val} and Formula  (\ref{normaliznot})). 
      This is because $\mathcal{NP}_x (\xi, \infty) = \{ \xi \}$ and 
     for any irreducible element $f \in \mathcal{O}$, we have:
     \[
        \nu^{\xi, \infty} (f) = \nu_x (f(x, \xi))
         \stackrel{(\ref{geomval2})}{=} 
         (L \cdot  Z(f)) \, {\ic}_L( \langle L, C_{\xi}, Z(f) \rangle ) 
         \stackrel{(\ref{f-intcomp})}{=}
          \frac{(C_\xi \cdot Z(f))}{(L \cdot C_\xi) } = I_L^{C_\xi} (f). 
       \]
\end{remark}

\begin{definition}  \label{defemb}
Let $C$ be a (possibly reducible) reduced germ of curve 
    on $S$ and $L$ be a smooth branch. 
We define the map: 
  \begin{equation}   \label{embew}
             \begin{array}{cccc}
               V_L : &   \Theta_L(C) & \to & \mathcal{V}_L \\
                                      &   P              & \to &    V_L^P := \nu^{\xi, \alpha}
            \end{array}   
        \end{equation}
if $P$ is the point of exponent $\alpha$ in the segment $[L, C_\xi]$ of $\Theta_L(C)$,  
where $C_\xi$ is a component of $C$. 
\end{definition}

The map $V_L$ is well-defined, in the sense that it does not depend 
on the choice of a suitable component 
$C_\xi$.  This results from the following proposition which allows 
to compute the values taken by $V_L^P$ on any branch (hence 
on any divisor, by the additivity property (\ref{add}) in the Definition \ref{valdef} of valuations):

  \begin{proposition}  \label{reformpoint}
     Let $C$ be a reduced germ on $S$ and $A$ be any branch on $S$. Fix a smooth reference 
     branch $L$. If 
     $P \in \Theta_L(C)$, then:
        $$V_L^P(A) =   \left\{ 
                \begin{array}{ccl}
                          (L \cdot A) \cdot {\ic}_L(\min \{ P, \langle L, P, A \rangle \})  &  
                          \mbox{ if } & A \neq L, \\
                       1 &  \mbox{ if } & A = L.  
                  \end{array}  \right. 
         $$
  \end{proposition}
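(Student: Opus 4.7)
\medskip

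The plan is to reduce this proposition to Proposition \ref{valtree} by a tree-theoretic identification of the two centers of tripods involved. By Definition \ref{defemb}, there exists a branch $C_\xi$ of $C$ such that $P$ lies on the Eggers-Wall segment $[L, C_\xi]$ with exponent $\alpha = \ex_L(P)$, and $V_L^P = \nu^{\xi, \alpha}$. We treat separately the cases $A=L$ and $A \neq L$. When $A=L$, the result is immediate: by Proposition \ref{nu-semi}, $\nu^{\xi, \alpha}$ is normalized relative to $L$, so $V_L^P(L) = \nu^{\xi, \alpha}(x) = 1$ (this is also consistent with (\ref{defsvser0}) when $\alpha = 0$, since then $V_L^P = \mathrm{ord}^L$).

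For $A \neq L$, let $f_A \in \C[[x]][y]$ be a Weierstrass polynomial defining $A$ relative to the coordinate system $(x,y)$ with $L = Z(x)$. Then $V_L^P(A) = \nu^{\xi, \alpha}(f_A)$, and Proposition \ref{valtree} applied inside the Eggers-Wall tree $\Theta_L(C_\xi + A)$ yields
\[
V_L^P(A) = (L \cdot A) \cdot {\ic}_L\bigl( \min\{P, \langle L, C_\xi, A \rangle\} \bigr),
\]
where the minimum is taken with respect to $\preceq_L$. The remaining task is to identify this with the formula of the proposition.

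The key tree-theoretic observation is that $\min\{P, \langle L, C_\xi, A \rangle\} = \langle L, P, A \rangle$. Indeed, by Lemma \ref{atinf}, $\langle L, C_\xi, A \rangle = \pi_{[L, C_\xi]}(A)$ and $\langle L, P, A \rangle = \pi_{[L, P]}(A)$; since $P \in [L, C_\xi]$, the subsegment $[L, P]$ is contained in $[L, C_\xi]$, and an elementary case analysis (according to whether $\pi_{[L, C_\xi]}(A) \preceq_L P$ or $\pi_{[L, C_\xi]}(A) \succ_L P$) shows that the attaching point of $A$ on the shorter segment is exactly $\min\{P, \pi_{[L, C_\xi]}(A)\}$. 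Moreover, because $\langle L, P, A \rangle$ lies on $[L, P]$, we automatically have $\langle L, P, A \rangle \preceq_L P$, so $\min\{P, \langle L, P, A \rangle\} = \langle L, P, A \rangle$. Combining these identities gives the stated formula, and also establishes en passant that $V_L^P$ is well-defined on $\Theta_L(C)$: for any other branch $C_{\xi'}$ of $C$ containing $P$ on its segment from $L$, the Newton-Puiseux series $\xi, \xi'$ agree up to order $\geq \alpha$, so $\mathcal{NP}_x(\xi, \alpha) = \mathcal{NP}_x(\xi', \alpha)$ and hence $\nu^{\xi, \alpha} = \nu^{\xi', \alpha}$, and independently, the right-hand side of the formula involves only $P, L, A$.

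The main (mild) obstacle is the case bookkeeping in the tree identity, especially at boundary situations where $A$ equals a component of $C$, where $P$ equals the leaf $C_\xi$ (so $\alpha = \infty$), or where $\alpha = 0$ (where $V_L^P = \mathrm{ord}^L$ and ${\ic}_L(L) = 0$ match up); in each of these cases both sides of the claimed equality take a common value in $\{0, \infty\}$ or reduce to a finite computation already handled by Proposition \ref{valtree}.
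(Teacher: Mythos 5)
Your proof is correct and takes essentially the same route as the paper: the paper's own argument also reduces to Proposition \ref{valtree} applied to a branch $C_i$ of $C$ with $P \in [L, C_i]$ and then checks, by the same two-case analysis on the position of $\langle L, C_i, A\rangle$ relative to $P$, that $\min\{P, \langle L, C_i, A\rangle\} = \min\{P, \langle L, P, A\rangle\}$ ($= \langle L, P, A\rangle$, as you note). One small caveat on your en passant well-definedness remark: it is not true in general that the chosen roots $\xi, \xi'$ of two components through $P$ satisfy $\nu_x(\xi - \xi') \geq \alpha$ (only suitable conjugate roots do, cf.\ $\eta_2, \eta_3$ in Example \ref{EWmany}, where $\nu_x(\eta_2-\eta_3) < k_L(C_2,C_3)$), so $\mathcal{NP}_x(\xi,\alpha)$ and $\mathcal{NP}_x(\xi',\alpha)$ may differ as balls; the correct justification is the one you give independently, namely that the right-hand side of the formula depends only on $P$, $L$ and $A$, which is exactly how the paper deduces well-definedness.
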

  
  \begin{proof} If $A =L$, this results from Proposition \ref{valtree}. 
  
      Assume now that $A \neq L$. 
         Choose a branch $C_i$ of $C$ such that $P \in [L, C_i]$. Apply Proposition \ref{valtree}
        to  $C_{\xi} = C_i$ and $P(\alpha) = P$. We get:
            \begin{equation} \label{interm}
                V_L^P(A) = (L \cdot A) \cdot {\ic}_L(\min \{ P, \langle L, C_i, A \rangle \}).
            \end{equation}
         Analysing both possibilities $\langle L, C_i, A \rangle \prec_L P$ and 
        $\langle L, C_i, A \rangle \succeq_L P$ (compare with Figure \ref{fig:Both1}), we see that:
             $$\min \{ P, \langle L, C_i, A \rangle \} = \min \{ P, \langle L, P, A \rangle \}$$
          always holds. Formula (\ref{interm}) implies then the desired equality. 
  \end{proof}

We state now the embedding theorem of the Eggers-Wall tree 
in the $\R$-tree  of normalized semivaluations:

  \begin{theorem} \label{embcurve}
    The map  $V_L$ 
     is an increasing embedding of rooted trees, which sends the root $L$ of 
     $\Theta_L(C)$ onto the root $\mbox{ord}^L$ of $\mathcal{V}_L$ 
     and the end $C_i$ of $\Theta_L(C)$ onto the end $I_L^{C_i}$ 
     of $\mathcal{V}_L$ for each branch $C_i$ of $C$. 
     Under this embedding, the function $1 + {\ex}_L$ is identified with 
     the relative log-discrepancy ${\ld}_L$, the denominator 
     function ${\de}_L$ with the relative multiplicity ${\mult}_L$ 
     and the contact complexity ${\ic}_L$ with the self-interaction ${\si}_L$. 
\end{theorem}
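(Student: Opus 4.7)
The plan is to first establish the three function identifications; injectivity, order-preservation and the identification of ends then follow. The key computational tool is the explicit evaluation formula in Proposition \ref{reformpoint}. Remark \ref{especial} already handles the endpoint correspondence ($V_L$ sends $L \mapsto \mathrm{ord}^L$ and $C_i \mapsto I_L^{C_i}$), and monotonicity along each Eggers-Wall segment is immediate from the fact that $\mathcal{NP}_x(\xi, \alpha)$ shrinks as $\alpha$ increases, making $\nu^{\xi,\alpha}(f) = \inf_{\eta} \nu_x(f(x, \eta))$ non-decreasing in $\alpha$. Compatibility with the gluings in $\Theta_L(C)$ is an ultrametric argument: if $\alpha \leq k_L(\xi, \xi')$, the closed balls $\mathcal{NP}_x(\xi, \alpha)$ and $\mathcal{NP}_x(\xi', \alpha)$ intersect and therefore coincide, so $\nu^{\xi, \alpha} = \nu^{\xi', \alpha}$.

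The identification $\si_L \circ V_L = \ic_L$ follows at once from the tripod formula \eqref{trip2}. For $P \in [L, C_\xi]$, applying \eqref{trip2} with representatives $\nu^P = V_L^P$ and $\nu^Q = I^{C_\xi}$, and using $V_L^P(L) = 1$, $I^{C_\xi}(L) = (L \cdot C_\xi)$ together with $V_L^P(C_\xi) = (L \cdot C_\xi) \cdot \ic_L(P)$ from Proposition \ref{reformpoint}, yields $\si_L(V_L^P) = \ic_L(P)$.

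For $\mult_L \circ V_L = \de_L$, Definition \ref{relmult} gives the left-hand side as the minimum of $(L \cdot A)$ over branches $A$ whose class in $\mathbb{P}(\mathcal{V})$ lies above $V_L^P$. A short argument via Proposition \ref{reformpoint} shows that this holds iff some Newton-Puiseux root of $A$ belongs to $\mathcal{NP}_x(\xi, \ex_L(P))$. The upper bound $\mult_L(V_L^P) \leq \de_L(P)$ comes from choosing such a root to be a suitable truncation of $\xi$ (adding a generic term of exponent $\ex_L(P)$ when needed); the lower bound uses the fact that the smallest common denominator of exponents of any such root is at least $\de_L(P)$. Finally, $\ld_L \circ V_L = 1 + \ex_L$ follows by integration: with the model $\psi = \mathrm{id}_S$ one computes $\ld_L(L) = 1$, and applying \eqref{relint1} along $[L, V_L^P]$ with the substitutions $\mult_L = \de_L$ and $\si_L = \ic_L$ recovers formula \eqref{expfunctint}, giving $\ld_L(V_L^P) = 1 + \ex_L(P)$.

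Once the three identifications hold, injectivity of $V_L$ is automatic because $\si_L \circ V_L = \ic_L$ is a homeomorphism on each Eggers-Wall segment and distinct branches of $\Theta_L(C)$ are separated by evaluation on suitable components of $C$; the topological embedding property follows from the tree structure. The main obstacle I anticipate is the multiplicity identification: unlike $\si_L$ (one application of the tripod formula) and $\ld_L$ (a clean integration argument), proving $\mult_L = \de_L$ requires minimizing over infinitely many branches and invoking semigroup-theoretic bounds on intersection numbers of plane branches.
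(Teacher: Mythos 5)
Your proposal is sound and uses the same toolkit as the paper (Proposition \ref{reformpoint}, the tripod formula \eqref{trip2} with Proposition \ref{geomint}, and the integral relation \eqref{relint1}), but the architecture is genuinely different. The paper first proves that $V_L$ is increasing, injective and continuous (injectivity by attaching auxiliary branches at rational points strictly between the two given points, continuity directly from Proposition \ref{reformpoint}), and only then identifies $\ic_L$ with $\si_L$ and $\de_L$ with $\mult_L$ \emph{at rational points} -- where the image of a tripod center can be recognized as a valuative tripod center -- concluding by density and continuity; the log-discrepancy step is the same integration argument as yours. You instead establish the three identifications first, at \emph{every} point, by applying the tripod formula with $\nu^Q=I^{C_\xi}$; this works because $V_L^P=\nu^{\xi,\alpha}\preceq\nu^{\xi,\infty}=I_L^{C_\xi}$ (your nested-ball monotonicity with $\alpha'=\infty$), so the tripod center $\langle \mathrm{ord}^L, V_L^P, I_L^{C_\xi}\rangle$ is $V_L^P$ itself -- a point you should state explicitly, since otherwise \eqref{trip2} only computes $\si_L$ at the center. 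This buys you a cleaner route to $\si_L=\ic_L$ (no density argument, no prior injectivity), and injectivity then indeed falls out as you say (for incomparable $P,Q$, evaluation on a defining function of the component through $Q$ separates them, exactly because $P\wedge_L C_j\neq Q$ and $\ic_L$ is strictly increasing on $[L,C_j]$). For the multiplicity, your characterization of $V_L^P\preceq I_L^A$ by the existence of a root of $A$ in $\mathcal{NP}_x(\xi,\ex_L(P))$, plus the truncation/denominator bounds, is correct and in fact close in spirit to the paper's much shorter argument, which simply exhibits a branch realizing $\de_L(P)=\bra{I^L,A}$ and notes its minimality; no semigroup-theoretic input is needed, so the obstacle you anticipate is not there.

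Two places are compressed enough to need words. First, continuity of $V_L$ is never addressed, yet it is part of ``embedding'' (an increasing injection of rooted trees need not be continuous); it is immediate either from Proposition \ref{reformpoint}, as in the paper, or in your setup by writing $V_L$ on each segment $[L,C_i]$ as the inverse of the strictly increasing continuous function $\si_L$ on $[\mathrm{ord}^L,I_L^{C_i}]$ composed with $\ic_L$, and then invoking compactness of $\Theta_L(C)$ and Hausdorffness of $\mathcal{V}_L$ to get a homeomorphism onto the image. Second, your ultrametric gluing argument is slightly too quick: $k_L(A,B)$ is a maximum over \emph{pairs} of Newton--Puiseux roots and need not be attained by the particular roots $\xi,\xi'$ chosen (cf.\ Example \ref{EWmany}, where $\nu_x(\eta_2-\eta_3)<k(C_2,C_3)=\nu_x(\eta_2-\tilde{\eta}_3)$), so you must also use that $\nu^{\xi,\alpha}$ is unchanged when $\xi$ is replaced by a conjugate root (Galois invariance of the construction), or simply deduce well-definedness from the choice-free formula of Proposition \ref{reformpoint}, as the paper does before stating the theorem.
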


\begin{proof} We will prove successively the various statements of the theorem. 
        
       \medskip
       \noindent
      $\bullet$ {\bf The map $V_L$ is increasing}. Consider two points 
           $P, Q \in \Theta_L(C)$, with $P \prec_L Q$. Therefore, there exists 
           a branch $C_i$ of $C$ such that $P, Q \in \Theta_L(C_i)$. In order 
           to simplify the notations, let us denote it simply by $C$. 
            
            Consider an arbitrary function $f \in \mathcal{O}$. By the definition of the 
            order relation $\preceq_L$ on $\mathcal{V}_L$, we want to show that 
            $V_L^P(f) \leq V_L^Q(f)$. It is enough to prove this inequality when 
            $f$ is irreducible, because it extends then to arbitrary $f$ by the 
            additivity property (\ref{add}) in the Definition \ref{valdef} of semivaluations. 
            
            Assume therefore that $f$ is irreducible. Let $A$ be the branch defined by it. 
            By Proposition \ref{reformpoint}, the inequality is equivalent to 
            ${\ic}_L(\min \{ P, \langle L, P, A \rangle \}) \leq {\ic}_L(\min \{ Q, \langle L, Q, A \rangle \})$. 
            But this is obvious, as $P \preceq_L Q$ implies $\min \{ P, \langle L, P, A \rangle \} 
            \preceq_L \min \{ Q, \langle L, Q, A \rangle \}$, and the function ${\ic}_L$ is increasing. 
      
      \medskip
      \noindent
       $\bullet$ {\bf The map $V_L$ is injective}. Let us consider two \emph{distinct} 
          points $P, Q \in 
          \Theta_L(C)$. We want to show that there exists a branch $A$ such that 
          $V_L^P(A) \neq V_L^Q(A)$. We will consider two cases, according to the 
          comparability or incomparability of $P$ and $Q$ for the partial order relation 
          $\preceq_L$.

              -- Assume that $P$ and $Q$ are comparable for $\preceq_L$, say $P \prec_L Q$.
              
                 \noindent
                 By restricting $C$ to a suitable branch of it,  
                 we can suppose that $C$ is irreducible and that $P, Q \in \Theta_L(C)$.   
                  Let $T$ be a \emph{rational} point of the open segment 
                  $(P,Q)$ of $\Theta_L(C)$, and let $A$ be a branch on $S$ whose attaching 
                  point $\langle L, C, A \rangle$ in $\Theta_L(C)$ is $T$ (see Figure \ref{fig:Comp}).

\begin{figure}[h!] 
\vspace*{6mm}
\labellist \small\hair 2pt 
\pinlabel{$L$} at 50 4
\pinlabel{$C$} at -15 256
\pinlabel{$T = \langle L, C, A \rangle$} at 115 115
\pinlabel{$A$} at 87 256
\pinlabel{$P$} at 35 60
\pinlabel{$Q$} at 3 190
\endlabellist 
\centering 
\includegraphics[scale=0.5]{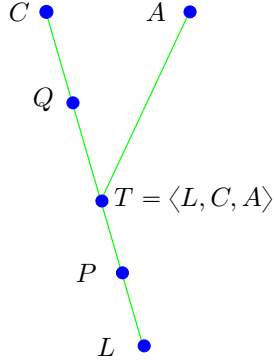} 
\caption{The case when $P$ and $Q$ are comparable}
\label{fig:Comp}
\end{figure}

\noindent
                     We have then: 
                     \[ 
                                    \min\{P, \langle L, P, A \rangle \} = P, \quad  
                                     \min\{Q, \langle L, Q, A \rangle \} = T.
                      \]
                    As the function ${\ic}_L$ is strictly increasing on $\Theta_L(C)$ and 
                    $P \prec_L  T$, we deduce that ${\ic}_L(P) < {\ic}_L(T)$. By Proposition 
                    \ref{reformpoint}, we conclude that $V_L^P(A) < V_L^Q(A)$.
               
              \medskip 
              
               -- Assume that $P$ and $Q$ are incomparable for $\preceq_L$.  
                     
                      \noindent
                   Denote $I := P \wedge_L Q = \langle L, P, Q \rangle$. We have the strict 
                   inequalities $I \prec_L P$, $I \prec_L Q$.  Choose a \emph{rational} point 
                   $T \in (I , Q)$.   
                   Therefore there exists a branch $A$ on $S$ such that its attaching point in 
                   $\Theta_L(C)$ is the point $T$ (see Figure \ref{fig:Incomp}). 

\begin{figure}[h!] 
\vspace*{6mm}
\labellist \small\hair 2pt 
\pinlabel{$L$} at 50 5
\pinlabel{$T$} at 105 195
\pinlabel{$I$} at 20 113
\pinlabel{$A$} at 63 275
\pinlabel{$P$} at 5 275
\pinlabel{$Q$} at 110 275
\endlabellist 
\centering 
\includegraphics[scale=0.50]{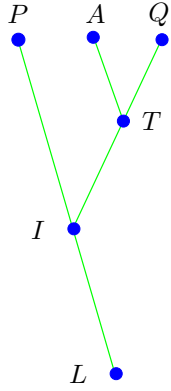} 
\caption{The case when $P$ and $Q$ are incomparable}
\label{fig:Incomp}
\end{figure}

We deduce that:
                     \[ 
                                    \min\{P, \langle L, P, A \rangle \} = \min\{P,  I \} = I, 
                                     \mbox{ and } 
                                     \min\{Q, \langle L, Q, A \rangle \} = \min \{Q, T \} = T.
                      \]
                     As the function ${\ic}_L$ is strictly increasing on $\Theta_L(C)$ and 
                    $I \prec_L  T$, we deduce that ${\ic}_L(I) < {\ic}_L(T)$. By Proposition 
                    \ref{reformpoint}, we conclude that $V_L^P(A) < V_L^Q(A)$.

         \medskip
         \noindent
       $\bullet$ {\bf The map $V_L$ is continuous}. It is enough to prove that 
           $V_L$ is continuous when $C$ is a branch. By the definition of the 
           weak topology on the semivaluation space $\mathcal{V}$, this amounts 
           to proving the continuity of the following map: 
                 $$\left\{ \begin{array}{ccc}
                                       \Theta_L(C)  & \to & [0, \infty] \\
                                            P  & \to & V_L^P(A)
                               \end{array} \right. $$
            for any fixed branch $A$. But this is an immediate consequence of 
            Proposition \ref{reformpoint}.

             \medskip
             \noindent
       $\bullet$ {\bf The map $V_L$ sends $L$ to $\mbox{ord}^L$ and $C_i$ to $I_L^{C_i}$}. 
       This follows from Remark \ref{especial}.

           \medskip
           \noindent
       $\bullet$ {\bf The map $V_L$ identifies  ${\ic}_L$ with ${\si}_L$}. We will prove 
         this in restriction to the rational points of $\Theta_L(C)$. Such a point 
         is the center $\langle L, C_i, A \rangle$ of a tripod, where 
         $C_i$ is a branch of $C$ and $A$ is a certain branch on $S$. 
         We may assume as before that $C$ is irreducible (therefore $C_i = C$), 
         and that we look at the point 
         $\langle L, C, A \rangle$. Then the fact that $V_L$ is continuous, injective and 
         increasing implies that $V_L^{\langle L, C, A \rangle} = 
         \langle \mbox{ord}^L, I^{C}_L, I^{A}_L  \rangle$. 
         
         By Theorem \ref{intcomp}, we have:
           $$  {\ic}_L (\langle L, C, A \rangle) = \frac{(C \cdot A)}{(L \cdot C) (L \cdot A)}.$$
           
         By Theorem \ref{tripodform}, we also have:
           $$ {\si}_L( \langle \mbox{ord}^L, I^{C}_L, I^{A}_L)  \rangle) = 
                   \frac{\bra{I^C , I^{A}}}{ \bra{\mbox{ord}^L, I^C} \bra{\mbox{ord}^L, I^{A}}}. $$
           Proposition \ref{geomint} shows then that the right-hand sides of the 
           two previous equalities coincide. 
           
           As the statement is true for the rational points, which are dense in 
           $\Theta_L(C)$, and both ${\ic}_L$ 
           and ${\si}_L$  are continuous, we deduce that the statement is true 
           for all points.

        \medskip
        \noindent
       $\bullet$ {\bf The map $V_L$ identifies ${\de}_L$ with ${\mult}_L$}. We reason analogously, 
         by first proving the statement for rational points of $\Theta_L(C)$. 
         Let $P$ be such a point. We may choose a branch $A$ such that 
         $P \in \Theta_L(A)$ and ${\de}_L(P) = {\de}_L(A) =  \bra{I^L, A}$. Moreover, in this case 
         $\bra{I^L, A} = \min \{ \bra{I^L, A'} \  |  \   P \preceq_L A' \}.$ By definition, this last 
         minimum is ${\mult}_L(V_L^P)$. The conclusion follows. 
            
          \medskip
          \noindent
       $\bullet$ {\bf The map $V_L$ identifies $1 + {\ex}_L$ with ${\ld}_L$}. 
           As a direct consequence of the differential relations 
           $d {\ld}_L = {\mult}_L \: d {\si}_L$ and  $d {\ex}_L = {\de}_L \: d {\ic}_L$, we see 
           that there exists a constant $a \in \R$ such that ${\ex}_L + a$ is sent to 
           ${\ld}_L$ by the map $V_L$. As ${\ex}_L(L) =0$ and ${\ld}_L(\mathrm{ord}^L) = 1$, 
           we deduce that $a =1$. 
\end{proof}

 \begin{remark} 
     As it was the case with the Eggers-Wall tree 
       $\Theta_L(C)$ itself, the map $V_L$ depends only on $C$ and on the smooth branch $L$ 
       defined by $x$. Namely, $V_L^P$ is the unique semivaluation of the segment 
       $[\mathrm{ord}^L, I^C_{L}] 
      \subset \mathcal{V}_L$ whose self-interaction is equal to ${\ic}_L(P)$. 
  \end{remark}

  \begin{remark}
      A variant of the map $V_L$ was already defined by Favre and Jonsson in 
      \cite[Prop. D1, page 223]{FJ 04}. They started from a generic Eggers-Wall tree 
      and a generic version of the exponent function. They associated to any 
      point of it of exponent $e$, situated on the segment $[O, C_i]$,  
      the unique point of the segment $[I^O, I^{C_i}_{O}]$ with log-discrepancy 
      $1 + e$ relative to $O$. They did not give another interpretation of that map, 
      for instance analogous to our definition (\ref{defsvser}). 
  \end{remark}

The following lemma proves that if 
$\nu$ and $\nu'$ are two different semivaluations in $\mathcal{V}_L$,  
then there exists a branch $A$ such that 
the attaching points of $\nu$ and $\nu'$ on the segment 
$[\mathrm{ord}^L, I^{A}_L]$ are different.

\begin{lemma} \label{separation} $\:$ 

     \begin{enumerate} 
          \item Let $\nu$ and $\nu'$ be two different semivaluations in $\mathcal{V}_L$. 
     Then, there exists a branch $A$ such that $\nu(A) \ne \nu'(A)$. 
            \item If $A$ is such a branch, 
     denote by $P =\langle \mathrm{ord}^L , I^A_L, \nu \rangle $ the center of the tripod 
     determined by the normalized semivaluations $ \mathrm{ord}^L , I^A_L $ and $ \nu$
     on the tree $\mathcal{V}_L$, and denote similarly $P' = \langle \mathrm{ord}^L , I^A_L, \nu' \rangle$. 
     Then, we have that $P \ne P'$. 
     \end{enumerate}
\end{lemma}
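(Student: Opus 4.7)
For part (1), my plan is to exploit multiplicativity of semivaluations together with unique factorization in $\mathcal{O}$. Pick some $f \in \mathcal{O}$ witnessing $\nu(f) \ne \nu'(f)$; this $f$ is nonzero, since both semivaluations assign $\infty$ to $0$ by condition (\ref{const}) of Definition \ref{valdef}. Factor $f = u \prod_i f_i$ in the unique factorization domain $\mathcal{O}$, with $u$ a unit and the $f_i$ irreducible. Units vanish under any semivaluation, because $\nu(u) + \nu(u^{-1}) = \nu(1) = 0$ and both values are non-negative. Additivity (condition (\ref{add}) in Definition \ref{valdef}) then gives $\nu(f) = \sum_i \nu(f_i)$ and $\nu'(f) = \sum_i \nu'(f_i)$, so at least one factor $f_i$ satisfies $\nu(f_i) \ne \nu'(f_i)$, and the branch $A := Z(f_i)$ works.

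For part (2), the plan is to translate the inequality $\nu(A) \ne \nu'(A)$ into an inequality of self-interactions via the tripod formula. By Lemma \ref{atinf}, the attaching point $P = \pi_{[\mathrm{ord}^L, I^A_L]}(\nu)$ coincides with the tripod center $\langle \mathrm{ord}^L, I^A_L, \nu \rangle$, and analogously for $P'$. Since $\nu$ and $I^A_L$ both lie in $\mathcal{V}_L$, i.e.\ are normalized relative to $L$, the tripod formula (Proposition \ref{tripodform}) combined with Proposition \ref{geomint} and bihomogeneity of the bracket yields
\[
\si_L(P) \;=\; \bra{\nu, I^A_L}_L \;=\; \bra{\nu, I^A_L} \;=\; \frac{\bra{\nu, I^A}}{(L \cdot A)} \;=\; \frac{\nu(A)}{(L \cdot A)},
\]
and symmetrically $\si_L(P') = \nu'(A)/(L\cdot A)$. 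The hypothesis $\nu(A) \ne \nu'(A)$ then forces $\si_L(P) \ne \si_L(P')$, whence $P \ne P'$.

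No serious obstacle is expected; the delicate point is simply to remember that the bracket $\bra{\cdot,\cdot}_L$ is computed on $L$-normalized representatives, so that the normalizations $\bra{I^L,\nu} = 1 = \bra{I^L, I^A_L}$ collapse the general tripod formula (\ref{trip2}) into the simple form used above. Strictly speaking one could also finish by noting that $\si_L$ is a strictly increasing homeomorphism from the segment $[\mathrm{ord}^L, I^A_L]$ onto $[0,\infty]$ (this follows from Corollary \ref{convconc} under the identification supplied by Theorem \ref{embcurve}), so distinct $\si_L$-values automatically correspond to distinct points of the segment.
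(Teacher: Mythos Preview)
Your proof is correct and follows essentially the same approach as the paper's. Part (1) is the paper's contrapositive argument written directly, and part (2) is the same tripod-formula computation, only you fold the normalizations $\bra{I^L,\nu}=\bra{I^L,I^A_L}=1$ in earlier. One tiny point you leave implicit but the paper states: $A\ne L$ (since $\nu(L)=1=\nu'(L)$), so $(L\cdot A)\in\N^*$ and the division is legitimate.
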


\begin{proof}
Assume that $\nu(A) = \nu'(A)$ for any branch $A$. 
Then, if  $h \in \mathcal{O}$, we have that $h = \prod_{j} h_j$ with $h_j$ irreducible. 
By hypothesis the semivaluations $\nu$ and $\nu'$ have the same value on the branch $A_j = Z(h_j)$. 
It follows that $\nu (h) = \sum_j \nu (A_j)  = \nu' (h)$. This proves the first statement.

By the tripod formula (\ref{trip2}) we get the relations:
\begin{equation} \label{trip3}
{\si}_L ( P ) = \frac{ \bra{\nu, I^A} }{ \bra{I^L, \nu} \: \bra{I^L, I^A} }
 \mbox{ and } {\si}_L ( P' ) = \frac{\bra{ \nu', I^A} }{\bra{I^L, \nu} \: \bra{I^L, I^A} }. 
\end{equation}
By Proposition \ref{geomint}, we have that:
\[
\begin{array}{lclclcl}
\bra{\nu, I^L} & =  & \nu(L),  &  \quad & \bra{\nu', I^L} & = &  \nu'(L),
\\
\bra{\nu, I^A} & =  & \nu (A), &  \quad &  \bra{\nu', I^A} & = & \nu' (A), 
\end{array}
\]
and $\bra{I^L, I^A} = (L \cdot A)$. 
In addition, $\nu(L) = \nu' (L) = 1$ since $\nu$ and $\nu'$ belong to $\mathcal{V}_L$. 
It follows that $L \ne A$ hence  $(L\cdot A)  \in \N^*$.
Since 
$\nu(A) \ne \nu' (A)$ it follows from (\ref{trip3}) that ${\si}_L ( P ) \ne {\si}_L ( P' ) $. 
Since the restriction to ${\si}_L$ to the segment $[\mathrm{ord}^L, I^A_L]$ is 
strictly increasing and $P, P'$ belongs to this segment it follows that $P \ne P'$, 
which proves the second statement.
\end{proof}

We prove now that the semivaluation space $\mathcal{V}_L$ is 
the projective limit of the Eggers-Wall trees 
$\Theta_L (C)$ of reduced plane curves, embedded 
by the map $V_L$. 
 
  \begin{theorem} \label{proj_lim}
Let us denote by $\mathcal{B}$ the set of branches at $O$ on the smooth surface $S$ and by 
$J$ the set consisting of finite subsets of $\mathcal{B}$. 
For any $j \in J$, we denote by $C_j$ the reduced plane curve singularity whose branches are
the elements of the set $j$. Denote by $\cV_{L,j}$ the subtree 
$\mathcal{V}_L  ( \Theta_L (C_j) )$ of $\mathcal{V}_L$. 
The collection  $(\cV_{L,j})_{j \in J}$ forms a projective system for the inclusion partial order.
If $\cV_{L, j} \subset \cV_{L,l}$, we denote by $\pi^l_{L,j} : \cV_{L,l} \to \cV_{L,j}$ 
the corresponding attaching map. Then: 
\begin{enumerate}
    \item The maps $\pi_{L,j}^l$ form a projective system of continuous maps.
      \item The attaching maps $\pi_{L,j}: \cV_L \to \cV_{L,j}$ glue into a homeomorphism 
               $\pi_L : \cV_L  \to \displaystyle{\lim_{\longleftarrow}} \: \cV_{L,j}$.            
\end{enumerate}              
\end{theorem}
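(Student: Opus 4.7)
The plan is to apply Theorem \ref{R-theo} to the collection $(\cV_{L,j})_{j \in J}$ of finite subtrees of the $\R$-tree $\cV_L$, where $\cV_L$ is viewed as a compact rooted $\R$-tree via Theorem \ref{equitree}. The theorem's hypotheses naturally split into three verifications: projectivity of the inclusion system, continuity and injectivity of the glued attaching map, and compactness of $\cV_L$.

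First I verify the projective system property. Given $j, k \in J$, set $l := j \cup k \in J$. Since $C_j, C_k \subseteq C_l$, Remark \ref{biggergerm} supplies canonical inclusions $\Theta_L(C_j), \Theta_L(C_k) \hookrightarrow \Theta_L(C_l)$ compatible with the exponent and index functions. Pushing these inclusions through the embedding $V_L$ of Theorem \ref{embcurve} gives the required inclusions $\cV_{L,j}, \cV_{L,k} \subset \cV_{L,l}$; and $\cV_{L,j}$ is indeed a \emph{finite} subtree of $\cV_L$, since $V_L$ is a homeomorphism from the finite tree $\Theta_L(C_j)$ onto it. This establishes the hypotheses of Theorem \ref{R-theo}, yielding parts (1)--(3) of that theorem: the $\pi_{L,j}^l$ form a projective system of continuous maps, their limit is compact, and the $\pi_{L,j}$ glue into a continuous map $\pi_L : \cV_L \to \displaystyle{\lim_{\longleftarrow}}\, \cV_{L,j}$.

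The key remaining step is to verify condition (4) of Theorem \ref{R-theo}. Given two distinct $\nu, \nu' \in \cV_L$, Lemma \ref{separation}(1) provides a branch $A$ such that $\nu(A) \neq \nu'(A)$. Taking $j_0 := \{A\} \in J$, the tree $\cV_{L, j_0} = V_L(\Theta_L(A))$ is exactly the segment $[\mathrm{ord}^L, I^A_L]$, and Lemma \ref{separation}(2) shows that the attaching points of $\nu$ and $\nu'$ on this segment are distinct, i.e.\ $\pi_{L, j_0}(\nu) \neq \pi_{L, j_0}(\nu')$. By part (4) of Theorem \ref{R-theo}, $\pi_L$ is a homeomorphism onto its image.

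To upgrade ``onto its image'' to ``onto the full projective limit'', I appeal to compactness. The space $\cV_L$ is closed in the compact semivaluation space $\cV$ (Proposition \ref{compsv}, Theorem \ref{equitree}), hence compact. Part (5) of Theorem \ref{R-theo} then yields that $\pi_L$ is a homeomorphism onto $\displaystyle{\lim_{\longleftarrow}}\, \cV_{L,j}$, completing the proof of both (1) and (2). The ``hard part'' of the argument---the separation of points of $\cV_L$ by their projections to finite Eggers-Wall trees---has in fact already been absorbed into Lemma \ref{separation}, and the Berkovich-style description (\ref{defsvser}) of the valuative embedding plays no direct role here; the present theorem is a relatively formal consequence of Theorem \ref{embcurve} and Lemma \ref{separation} once packaged through Theorem \ref{R-theo}.
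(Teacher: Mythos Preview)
Your proof is correct and follows essentially the same approach as the paper: verify the projective system via $l = j \cup k$, invoke Theorem~\ref{R-theo}, check its separation condition~(4) using Lemma~\ref{separation} on the segment $[\mathrm{ord}^L, I^A_L]$, and conclude via the compactness of $\cV_L$ and part~(5). Your write-up is somewhat more explicit in a few places (e.g.\ citing Remark~\ref{biggergerm} and naming $j_0 = \{A\}$), but the argument is the same.
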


\begin{proof}
The collection  $(\cV_{L,j})_{j \in J}$ form a projective
system for the inclusion partial order, since 
for any $j, k \in J$ there exists $l = j\cup k \in J$ such that 
$\cV_{L,j}\subset \cV_{L,l}$ and $\cV_{L,k} \subset \cV_{L,l}$. 

Notice that if $\cV_{L, j} \subset \cV_{L,l}$, we can understand
 the attaching map  $\pi^l_{L,j} : \cV_{L,l} \to \cV_{L,j}$ 
by using the embedding $V_L$, since for any $P \in \Theta_L (C_l)$ we have 
\[
\pi^l_{L, j} (V_L (P)) = V_L (\pi_{\Theta_L (C_j)} (P)),
\]
where $\pi_{\Theta_L (C_j)} : \Theta_L (C_l) \to \Theta_L (C_{j})$, is the surjective attaching map 
of Definition \ref{defatt} (whose image is  $\Theta_L (C_j)  \subset  \Theta_L (C_l)$). 
This implies that the maps $\pi_{L,j}^l$ form a projective system of continuous maps.
\medskip 

Now we apply Theorem \ref{R-theo} in this setting:
 
-- The only hypothesis we need to check is point (\ref{sufmany}) in Theorem \ref{R-theo}. 
    This hypothesis hold by Lemma \ref{separation}.

-- Recall that the semivaluation space $\cV_L$ is compact. Therefore, Theorem \ref{R-theo},  
    applied to the projective system $\pi_{L,j}^l$,  implies that the map
     $\pi_L : \cV_L  \to \displaystyle{\lim_{\longleftarrow}} \: \cV_{L,j}$ is a homeomorphism.
\end{proof}

\medskip 

  In order to be able to compare the points of Eggers-Wall trees of various curves relative to 
various smooth branches considered as their roots, we embed them also in the 
fixed projective semivaluation tree $\mathbb{P}(\mathcal{V})$, instead of doing it in the varying 
trees $\mathcal{V}_L$:

\begin{definition} \label{embranch} 
    The {\bf valuative embedding} 
     of the Eggers-Wall  tree $\Theta_L(C)$ is the map \linebreak 
      $ \Psi_L := \pi_L  \circ V_L : \Theta_L(C) \to \mathbb{P}(\mathcal{V})$.  
\end{definition}

\begin{section}{Change of observer on the semivaluation space} \label{change-obs}

It is important to know how to change coordinates when one changes the observer. 
The aim of this section is to prove formulae expressing the functions 
$({\ld}_{R'},  \mult_{R'} ,  {\si}_{R'} )$ 
in terms of the functions $({\ld}_R,  \mult_R ,  {\si}_R )$, whenever $R$ and $R'$ are two 
distinct observers of the valuative tree $\bP(\cV)$. Combined with the embedding 
theorem \ref{embcurve}, these formulae of changes of coordinates are the main 
ingredients of the proof of the generalized inversion theorem \ref{muthm}. 

\medskip

The following proposition is an immediate consequence of Definition \ref{relcoord}:

\begin{proposition}  \label{changels}
   Let $R, R'$ be two observers of $\mathbb{P}(\mathcal{V})$. Then one has 
   the following formulae of change of coordinates from $R$ to $R'$:
    \begin{equation}  \label{changelambda}
           {\ld}_{R'} = \gamma^R_{R'} \cdot {\ld}_R,
    \end{equation}
        \begin{equation}  \label{changesigma}
           {\si}_{R'} = (\gamma^R_{R'})^2 \cdot {\si}_R,
    \end{equation}
   where: 
     \begin{equation} 
            \gamma^R_{R'}(P) = \frac{\bra{\nu^P, I^R}}{ \bra{\nu^P, I^{R'}}}
     \end{equation} 
    for any projective semivaluation $P \in \mathbb{P}(\mathcal{V})$. Here  
   $\nu^P \in \mathcal{V}$ is an arbitrary semivaluation representing $P$.
\end{proposition}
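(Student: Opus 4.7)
The plan is to exploit the homogeneity of the functions $\ld$ and $\si$ on $\cV^*$ (Proposition \ref{uniqext}), together with a uniform description of the two normalizations $\mathcal{V}_R$ and $\mathcal{V}_{R'}$ in terms of brackets with the valuations $I^R$ and $I^{R'}$.

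First, I would observe that the normalization condition defining $\mathcal{V}_R$ can be rewritten uniformly using Proposition \ref{geomint}: if $R=O$, then $\nu\in\mathcal{V}_O$ means $\nu(\mathcal{M})=1$, which by the proposition equals $\bra{\nu,I^O}=1$; if $R=L$ is a smooth branch, then $\nu(L)=1$ is precisely $\bra{\nu,I^L}=1$. Thus, for any observer $R$,
\[
\mathcal{V}_R = \{\nu \in \mathcal{V}^* \: : \: \bra{\nu, I^R}=1\}.
\]
Given a point $P \in \bP(\cV)$ and any representative $\nu^P \in \cV^*$, the unique lifts in $\mathcal{V}_R$ and $\mathcal{V}_{R'}$ are therefore
\[
\pi_R^{-1}(P) = \frac{\nu^P}{\bra{\nu^P,I^R}}, \qquad \pi_{R'}^{-1}(P) = \frac{\nu^P}{\bra{\nu^P,I^{R'}}},
\]
so that
\[
\pi_{R'}^{-1}(P) = \gamma^R_{R'}(P)\cdot \pi_R^{-1}(P).
\]
Note that this ratio depends only on $P$ and not on the chosen representative $\nu^P$, since $\gamma^R_{R'}$ is a ratio of two expressions each homogeneous of degree one in $\nu^P$.

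Next, I would apply the homogeneity properties from Proposition \ref{uniqext}: $\ld$ is homogeneous of degree $1$ and $\si$ is homogeneous of degree $2$ under the action of $(\R^*_+,\cdot)$ on $\cV^*$. Applying these to the scalar relation above yields
\[
\ld\bigl(\pi_{R'}^{-1}(P)\bigr)=\gamma^R_{R'}(P)\cdot \ld\bigl(\pi_R^{-1}(P)\bigr),\qquad \si\bigl(\pi_{R'}^{-1}(P)\bigr)=\bigl(\gamma^R_{R'}(P)\bigr)^2\cdot \si\bigl(\pi_R^{-1}(P)\bigr).
\]
By Definition \ref{relcoord}, the left-hand sides are precisely $\ld_{R'}(P)$ and $\si_{R'}(P)$, and the right-hand sides are $\gamma^R_{R'}(P)\,\ld_R(P)$ and $(\gamma^R_{R'}(P))^2\,\si_R(P)$, which gives the two formulae.

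Since every step is an immediate consequence of already established results, there is no serious obstacle here; the only subtlety worth highlighting is the independence of $\gamma^R_{R'}(P)$ from the chosen lift $\nu^P$, which follows from bihomogeneity of the bracket. This proposition is clearly intended as a preparatory change-of-variable formula, to be combined with the analogous (and more delicate) change-of-variable formulae for $\mult$ and for the attaching maps involved in Theorem \ref{muthm}.
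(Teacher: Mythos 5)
Your argument is correct and is exactly the reasoning the paper treats as immediate from Definition \ref{relcoord}: the normalized lifts $\pi_R^{-1}(P)$ and $\pi_{R'}^{-1}(P)$ differ by the scalar $\gamma^R_{R'}(P)$ (using Proposition \ref{geomint} to write both normalizations as $\bra{\cdot, I^R}=1$), and the homogeneity of degrees $1$ and $2$ of $\ld$ and $\si$ from Proposition \ref{uniqext} then gives the two formulae. No further comment is needed.
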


\begin{remark} \label{reminv}
   Seen as functions on $\mathbb{P}(\mathcal{V})$, 
   one has $\gamma^R_{R'} \cdot \gamma_R^{R'} = 1$. 
\end{remark}

The function $\gamma^R_{R'}$  is 
  expressed in the following way in terms 
of the relative interaction and self-interaction functions:

\begin{proposition}  \label{changesm}
    Assume that $R, R'$ are distinct observers on $\mathbb{P}(\mathcal{V})$. Then:
      $$\gamma^R_{R'}(P) = ( \bra{I^R, I^{R'}} \cdot {\si}_R( \langle R, R', P \rangle) )^{-1}
        \mbox{ for any } P \in \mathbb{P}(\mathcal{V}).$$
     In particular, if $R$ and $R'$ are transversal smooth branches, we have:
         $$\gamma^R_{R'}(P) = {\si}_R( \langle R,R',  P \rangle)^{-1} = 
           ({\ld}_R( \langle R,R', P \rangle) -1)^{-1}
        \mbox{ for any } P \in \mathbb{P}(\mathcal{V}).$$        
\end{proposition}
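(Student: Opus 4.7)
The plan is to derive the main formula as a direct rearrangement of the generalized tripod formula (Proposition~\ref{tripodform}), and then to specialize to the transversal case using Proposition~\ref{geomint} together with the embedding theorem.

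First I would apply Proposition~\ref{tripodform} with observer $R$, taking the two remaining vertices of the tripod to be $P$ (lifted to an arbitrary $\nu^P \in \cV^*$) and $R'$ (represented by the semivaluation $I^{R'}$). Formula~(\ref{trip2}) then reads
\[
  {\si}_R(\langle R, R', P\rangle)
  \;=\; \frac{\bra{\nu^P,\, I^{R'}}}{\bra{\nu^P,\, I^R}\,\bra{I^R,\, I^{R'}}}.
\]
Rearranging yields $\bra{I^R, I^{R'}}\cdot {\si}_R(\langle R, R', P\rangle) = \bra{\nu^P, I^{R'}}/\bra{\nu^P, I^R}$, whose right-hand side is precisely $1/\gamma^R_{R'}(P)$ by the defining formula in Proposition~\ref{changels}. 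Taking reciprocals proves the first equality.

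Next I would specialize to two transversal smooth branches $R = L$, $R' = L'$. By Proposition~\ref{geomint}, $\bra{I^L, I^{L'}} = (L\cdot L') = 1$, so the general formula collapses to $\gamma^L_{L'}(P) = {\si}_L(\langle L, L', P\rangle)^{-1}$. For the remaining equality ${\si}_L = {\ld}_L - 1$ at the tripod center, note that $\langle L, L', P\rangle$ lies on the segment $[L, L']$ of $\bP(\cV)$. Via the valuative embedding $\Psi_L$ of Theorem~\ref{embcurve} applied to $C = L'$, this segment is identified with the Eggers-Wall segment $\Theta_L(L')$, which by the transversality condition $(L\cdot L')=1$ is simple with $\de_L \equiv 1$. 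Theorem~\ref{embcurve} translates this into $\mult_L \equiv 1$ on $[L, L']$, so the differential relation of Proposition~\ref{reldiffprop} reduces to $d\,{\ld}_L = d\,{\si}_L$ on this segment; integrating from $L$ (where $\ld_L = 1$ and $\si_L = 0$) gives ${\ld}_L = {\si}_L + 1$ throughout $[L, L']$, and in particular at $\langle L, L', P\rangle$.

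The proof is essentially an unwinding of definitions once the tripod formula is invoked; there is no substantive geometric obstacle. The only point requiring minor care is the last computation, where one must recognize that the transversality hypothesis $(L\cdot L') = 1$ enters twice --- once to kill the factor $\bra{I^L, I^{L'}}$, and once (through the embedding theorem) to force the multiplicity function to equal one on $[L, L']$, which is what allows ${\ld}_L$ and ${\si}_L$ to differ only by the additive constant coming from their values at the root.
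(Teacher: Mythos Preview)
Your proof is correct and follows essentially the same route as the paper: the first equality is obtained by rearranging the tripod formula~(\ref{trip2}), and the transversal specialization uses $\bra{I^L,I^{L'}}=(L\cdot L')=1$ together with $\mult_L\equiv 1$ on $[L,L']$ to integrate $d\,\ld_L=d\,\si_L$. The only difference is in how you justify $\mult_L\equiv 1$ on $[L,L']$: you pass through Theorem~\ref{embcurve} and the Eggers-Wall identification, whereas the paper reads it off directly from Definition~\ref{relmult} (since $P\preceq_L L'$ forces $\mult_L(P)\leq\bra{I^L,L'}=(L\cdot L')=1$), which is shorter and avoids the detour.
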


\begin{proof}
    The first equality is an immediate consequence of the tripod formula (\ref{trip2}). The 
    last equality is a consequence of formula (\ref{relint2}) and of the fact that ${\mult}_R$ is 
    identically equal to $1$ on the segment $[L , L'] \subset \mathbb{P}(\mathcal{V})$, 
    when $R$ and $R'$ are transversal smooth branches. This last fact is a consequence 
    of the Definition \ref{relmult} of the relative multiplicity function. 
\end{proof}

There is also a formula of change of coordinates for the relative multiplicity functions: 

\begin{proposition}  \label{changecrd}
   Let $R, R'$ be two distinct observers on $\mathbb{P}(\mathcal{V})$. Then 
   (see Figure \ref{fig:Chmult}):
       \begin{equation}  \label{changerm}
           {\mult}_{R'} = 
             \left\{   \begin{array}{ll}
                    1 & \mbox{ on } [ R' , \  \langle R,R',  O \rangle ] ,\\
                    \bra{I^R, I^{R'}} & \mbox{ on } (\langle R,R',  O \rangle, \  R], \\
                        \gamma_R^{R'}  \cdot {\mult}_R & \mbox{ on }   \mathbb{P}(\mathcal{V}) 
                          \:   \setminus  \:  [R', R] .
                          \end{array} \right.
    \end{equation}  
\end{proposition}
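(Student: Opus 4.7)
The plan is to separate the argument into two geometric regions of $\mathbb{P}(\mathcal{V})$: the subtrees hanging off the segment $[R, R']$, which will handle Case 3, and the segment $[R, R']$ itself, which will handle Cases 1 and 2. The guiding observation is that by Proposition \ref{changesm}, the change-of-variables factor
\[ \gamma^R_{R'}(P) = \bigl(\bra{I^R, I^{R'}} \cdot \si_R(\langle R, R', P\rangle)\bigr)^{-1} \]
depends on $P$ only through its projection $\langle R, R', P\rangle$ onto $[R, R']$; hence $\gamma^R_{R'}$ is locally constant along every path transverse to $[R, R']$, and varies non-trivially only along the segment itself.

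For Case 3, I would fix $P \notin [R, R']$ and set $Q := \langle R, R', P\rangle$. Every point $P_- \in [Q, P)$ still projects to $Q$, so $\gamma := \gamma^R_{R'}(P)$ is constant on $[Q, P]$. Combining this constancy with the formulas $\ld_{R'} = \gamma^R_{R'} \cdot \ld_R$ and $\si_{R'} = (\gamma^R_{R'})^2 \cdot \si_R$ of Proposition \ref{changels}, the differences along $[Q, P]$ carry respectively the constant factors $\gamma$ and $\gamma^2$. Letting $P_- \to P$ along $[Q, P)$ in the limit formula (\ref{relderiv}) of Proposition \ref{reldiffprop} then gives $\mult_{R'}(P) = \gamma^{-1} \mult_R(P) = \gamma_R^{R'}(P) \cdot \mult_R(P)$, using Remark \ref{reminv}.

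For Cases 1 and 2, the first step is to determine $\mult_R$ on the segment $[R, R']$. When both $R$ and $R'$ are smooth branches, the valuative embedding $\Psi_R$ of Theorem \ref{embcurve} sends the Eggers-Wall segment $\Theta_R(R')$ homeomorphically onto $[R, R']$ and identifies $\de_R$ with $\mult_R$. Setting $c := \bra{I^R, I^{R'}} = (R \cdot R')$, Definition \ref{smtr} gives $\de_R = 1$ below the unit point of $\Theta_R(R')$ and $\de_R = c$ above it. A short tripod computation using the formula (\ref{trip1}) and Proposition \ref{geomint} shows $\si_R(\langle R, R', O\rangle) = m_O(R')/(c \cdot m_O(R)) = 1/c$, matching the $\si_R$-coordinate of the unit point of $\Theta_R(R')$; hence $U := \langle R, R', O\rangle$ \emph{is} that unit point. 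Integrating $d\ld_R = \mult_R \, d\si_R$ from the root produces piecewise-affine expressions for $\ld_R$ in terms of $\si_R$; substituting these into $\si_{R'} = 1/(c^2 \si_R)$ and $\ld_{R'} = \ld_R/(c \si_R)$ and re-expressing in terms of $\si_{R'}$ yields $\ld_{R'} = 1 + \si_{R'}$ on $[R', U]$ and $\ld_{R'} = c \si_{R'} + 1/c$ on $[U, R]$. Applying (\ref{relderiv}) gives the slopes $\mult_{R'} = 1$ on $[R', U]$ and $\mult_{R'} = c$ on $(U, R]$, which are precisely Cases 1 and 2.

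The main delicate point will be handling the degenerate situations in which $R = O$ or $R' = O$: then $\Theta_R(R')$ is not literally defined, $c = 1$, and $U$ collapses to an endpoint of $[R, R']$. In these cases I would argue directly from Definition \ref{relmult} that $\mult_R \equiv 1$ on $[R, R']$, since every point of the segment is dominated by a smooth branch through $O$ realizing the minimum value $1$, and then rerun the same coordinate-change calculation to obtain $\mult_{R'} \equiv 1$ on the whole segment, which is consistent with the proposition because $\bra{I^R, I^{R'}} = 1$ in both sub-intervals.
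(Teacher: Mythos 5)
Your proposal is correct. Off the segment $[R,R']$ (your Case 3) you argue exactly as the paper does: by Proposition \ref{changesm} the factor $\gamma^R_{R'}$ is constant on $[\langle R,R',P\rangle,P]$, you factor it out of the limit (\ref{relderiv}) written for the observer $R'$, and identify the remaining limit with $\mult_R(P)$ (note, as the paper does explicitly, that this last step also uses $P_-\prec_R P$, which holds since $P_-\in(\langle R,R',P\rangle,P)$). On the segment $[R,R']$ you take a genuinely different route. The paper treats $[R',\langle R,R',O\rangle]$ by introducing an auxiliary smooth branch $M$ transversal to $R'$, so that $O\in[R',M]$ and $\mult_{R'}\equiv 1$ there directly from Definition \ref{relmult}, and treats $(\langle R,R',O\rangle,R]$ by a second change of observer, from $R'$ to $O$, using the constancy of $\gamma^O_{R'}$ on that piece together with $\mult_O\equiv 1$ on $[O,R]$. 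You instead invoke Theorem \ref{embcurve} to identify $\mult_R$ on $[R,R']$ with the index function of the smooth Eggers--Wall segment $\Theta_R(R')$, check via the tripod formula (\ref{trip2}) and Proposition \ref{geomint} that $\langle R,R',O\rangle$ is the unit point of that segment (both points lie on $[R,R']$, where $\si_R$ is strictly increasing, and both have $\si_R=1/\bra{I^R,I^{R'}}$), and then compute $\ld_{R'}$ as an explicit piecewise affine function of $\si_{R'}$ and read off the slopes. There is no circularity, since Theorem \ref{embcurve} is proved independently of Section \ref{change-obs}; your computation verifies correctly ($\ld_{R'}=1+\si_{R'}$ on $[R',\langle R,R',O\rangle]$ and $\ld_{R'}=\bra{I^R,I^{R'}}\si_{R'}+\bra{I^R,I^{R'}}^{-1}$ beyond it). What your route buys is an explicit identification of the break point with the unit point, a fact the paper uses only implicitly elsewhere (e.g.\ in Theorem \ref{muthm}); what the paper's route buys is economy, needing only Definition \ref{relmult} and the change formulas rather than the embedding theorem.

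Two small points to tidy up, neither a real gap. First, the slope argument determines $\mult_{R'}$ only on the half-open intervals $(R',\langle R,R',O\rangle]$ and $(\langle R,R',O\rangle,R]$; add the one-line remark that $\mult_{R'}(R')=1$ directly from Definition \ref{relmult}, the minimum being realized by a smooth branch transversal to $R'$. Second, in the degenerate case where one observer is $O$ (which the paper also leaves to the reader), your plan works, but if you rerun the integration with $R=O$ keep in mind that the normalization is different there: $\si_O(O)=1$ and $\ld_O(O)=2$, so the affine formulas start from these values, and the slopes then come out equal to $1$ on the whole segment, consistently with $\bra{I^O,I^{R'}}=1$.
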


\begin{figure}[h!] 
\labellist \small\hair 2pt 
\pinlabel{$R$} at 4 -15
\pinlabel{$R'$} at 310 -15
\pinlabel{$\langle L, L', O \rangle$} at 130 10
\pinlabel{$O$} at 163 110
\pinlabel{$1$} at 230 42
\pinlabel{$\bra{I^R, I^{R'}}$} at 46 44
\endlabellist 
\centering 
\includegraphics[scale=0.45]{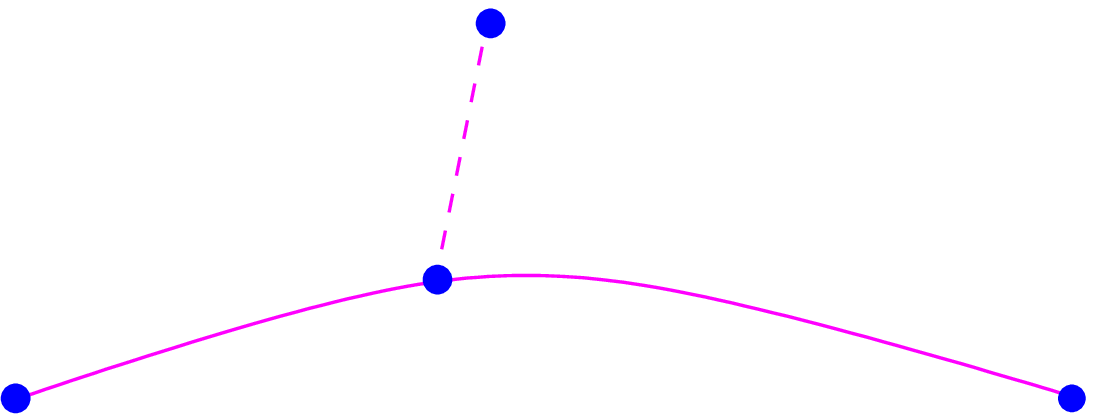} 
\caption{Some values of ${\mult}_{R'}$}
\label{fig:Chmult}
\end{figure}

\begin{proof}
    We prove the formulae when both observers are smooth branches $R = L, R'= L'$, leaving 
    to the reader the analogous reasoning in the remaining case when one of the observers 
    is the point $O$. We will consider successively the three possibilities listed 
    in the previous formula for the position of the point $P \in  \mathbb{P}(\mathcal{V})$ 
    relative to the tripod determined by $O, L, L'$. 
    
        \medskip
         \noindent
         $\bullet$ {\bf Assume that $P \in [L' ,\ \langle L, L', O \rangle ]$}. Consider a third smooth 
            branch $M$, \emph{transversal to $L'$} (see Figure \ref{fig:Segm1}). Then 
               $O \in [L', M]$ and ${\mult}_{L'}$ is 
            constantly equal to $1$ on $[L' , M]$. As $[L',  \  \langle L, L', O \rangle ] \subset [L', O]$, 
            we deduce the desired relation ${\mult}_{L'}(P) = 1$. 

                   \begin{figure}[h!] 
\vspace*{6mm}
\labellist \small\hair 2pt 
\pinlabel{$L$} at 4 67
\pinlabel{$L'$} at 310 66
\pinlabel{$\langle L, L', O \rangle$} at 172 140
\pinlabel{$O$} at 194 50
\pinlabel{$P$} at 244 84
\pinlabel{$M$} at 185 3
\endlabellist 
\centering 
\includegraphics[scale=0.45]{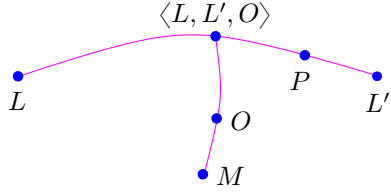} 
\caption{The case $P \in [L' , \ \langle L, L', O \rangle ]$}
\label{fig:Segm1}
\end{figure} 
    
          \medskip 
       \noindent
      $\bullet$ {\bf Assume that $P \in ( \langle L, L', O \rangle,  \ L]$}. 
  Apply then formula 
            (\ref{relderiv}) to ${\mult}_{L'}(P)$: 
               \begin{equation} \label{derex}
                  {\mult}_{L'}(P) = \lim_{P_- \to P ,  \, \, P_- \prec_{L'} P} \, \, \, 
                   \dfrac{{\ld}_{L'}(P) - {\ld}_{L'}(P_-)}{{\si}_{L'}(P) - {\si}_{L'}(P_-)} .
               \end{equation}
    In order to compute the limit (\ref{derex}) 
              we can assume that  $P_- \in ( \langle L, L', O \rangle,  \ L]$. 
Take then an auxiliary point $Q \in ( \langle L, L', O \rangle,  \ L]$ 
              (see Figure \ref{fig:Segm2}). 
              Our choice implies that  $\langle O, L', Q \rangle = \langle L, L', O  \rangle$.   
             
\begin{figure}[h!] 
\vspace*{6mm}
\labellist \small\hair 2pt 
\pinlabel{$L$} at 4 70
\pinlabel{$L'$} at 310 67
\pinlabel{$\langle L, L', O \rangle$} at 225 140
\pinlabel{$O$} at 185 7
\pinlabel{$P$} at 40 120
\pinlabel{$P_-$} at 90 130
\pinlabel{$Q$} at 140 140
\endlabellist 
\centering 
\includegraphics[scale=0.45]{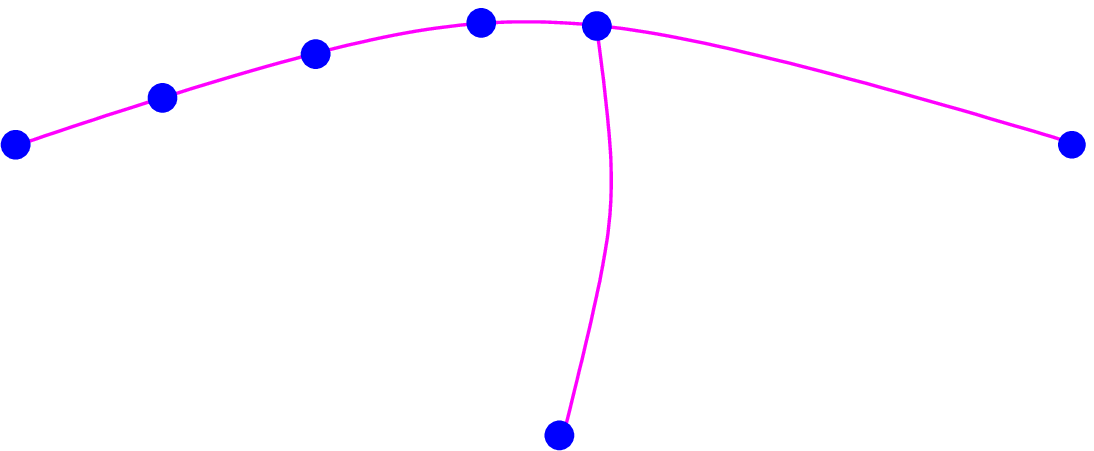} 
\caption{The case $P \in ( \langle L, L', O \rangle, \ L]$}
\label{fig:Segm2}
\end{figure}

\noindent       
By Proposition \ref{changesm} and the tripod formula 
              (\ref{trip2}) we deduce that: 
                \begin{equation} \label{derex2} 
                \gamma^O_{L'}(Q) = ( \bra{I^O, I^{L'}} \cdot {\si}_O( \langle O, L', Q \rangle) )^{-1} 
                     = {\si}_O( \langle  L, L', O \rangle) ^{-1} = (L \cdot  L')^{-1}. 
               \end{equation}
         We pass now from the observer $L'$ to $O$. That is, 
              we apply the formulae (\ref{changelambda}) and (\ref{changesigma})   
               to (\ref{derex}), with $R' = L'$ and $R= O$.              
             
               By (\ref{derex2}) the value of $\gamma^O_{L'}$ is constant on the segment  
               $( \langle L, L', O \rangle,  \ L]$ and we may factor it when computing the limit in (\ref{derex}). 
               We get: 
               \begin{equation} \label{derex3}
                 {\mult}_{L'}(P) =   (L \cdot  L')  \, \, 
                   \lim_{ P_- \to P ,   \, \, P_- \prec_{L'} P } \, \, \, \, 
                   \dfrac{{\ld}_{O}(P) - {\ld}_{O}(P_-)}{{\si}_{O}(P) - {\si}_{O}(P_-)}.                \end{equation}
               Since $P_- \in ( \langle L, L', O \rangle,  \ L]$, we have that 
            $P_- \prec_{L'} P $ is equivalent to $P_- \prec_{O} P$. 
            Therefore, the limit (\ref{derex3}) is equal to $\mult_O (P)$. 
             Notice that 
               $m_O$ is constantly equal to $1$ on $[O, L] \supset (  \langle L, L', O \rangle,  P)$, 
               thus $\mult_O (P) = 1$.  
              By Proposition \ref{geomint}, we get the desired 
                 equality ${\mult}_{L'}(P)  = (L \cdot  L') =  \bra{I^L, I^{L'}}$.

         \medskip
          \noindent
      $\bullet$
          {\bf Assume that $P \in \mathbb{P}(\mathcal{V}) \: \setminus \:  [L,  L']$}. Here the 
         reasoning is analogous to the one done in the previous case, but instead of changing 
         coordinates by replacing the observer $L'$ with $O$, one replaces it with $L$. 
         The main point is that one may compute the limit (\ref{derex}) by restricting 
         the points $P_-$ to the segment $(  \langle  L, L',  P \rangle, \  P)$. This implies that 
         $ \langle  L, L',  P_- \rangle =  \langle  L, L',  P \rangle$ (see Figure \ref{fig:Segm3}). 

\begin{figure}[h!] 
\vspace*{6mm}
\labellist \small\hair 2pt 
\pinlabel{$L$} at 4 70
\pinlabel{$L'$} at 308 70
\pinlabel{$\langle L, L', P \rangle$} at 120 140
\pinlabel{$P$} at 133 7
\pinlabel{$P_-$} at 150 60
\endlabellist 
\centering 
\includegraphics[scale=0.50]{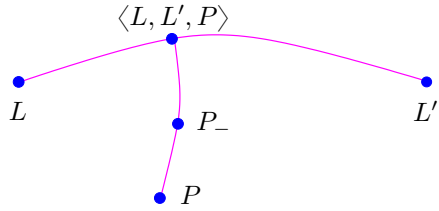} 
\caption{The case $P \in \mathbb{P}(\mathcal{V}) \setminus [L L']$ }
\label{fig:Segm3}
\end{figure} 

	\noindent
          In particular, by Proposition 
         \ref{changesm}, we get that $\gamma^L_{L'}(P_-) =  \gamma^L_{L'}(P) $. 
         Therefore one may factor $\gamma^L_{L'}(P)$ in the numerator 
         and $(\gamma^L_{L'}(P))^2$ in the denominator of the fraction in formula (\ref{derex}), 
         which implies by Remark \ref{reminv} that: 
            $$ {\mult}_{L'}(P) = \gamma_L^{L'}(P)  \cdot  \, \, 
                    \lim_{ P_- \to P , \,   P_- \prec_{L'} P} \, \, \, \, 
                    \dfrac{{\ld}_{L}(P) - {\ld}_{L}(P_-)}{{\si}_{L}(P) - {\si}_{L}(P_-)} .$$
           But one has also the inequality $P_- \prec_L P$, as $P_- \in (  \langle  L, L',  P \rangle,  P)$.  
           By (\ref{relderiv}), this implies that the last limit is equal to ${\mult}_L(P)$. We get the desired relation 
           ${\mult}_{L'}(P) = \gamma_L^{L'}(P)  \cdot  {\mult}_L(P)$. 
\end{proof}

\end{section}

\medskip

\begin{thebibliography}{00} 



\bibitem{A 67} Abhyankar, S. S.  {\em Inversion and invariance of characteristic pairs.} 
    American Journal of Maths. {\bf 89} (1967), 363--372. 
  
  \bibitem{BR 10} Baker, M., Rumely, R. {\em Potential theory and dynamics on 
       the Berkovich projective line.} Math. Surveys and Monographs {\bf 159}, AMS, 2010. 
       
   \bibitem{BHPV 04} Barth, W.P., Hulek, K., Peters, C.A.M., Van de Ven, A. 
         {\em Compact complex surfaces.}  Second edition. Ergebnisse der Mathematik 
         und ihrer Grenzgebiete. 3. Folge. A Series of Modern Surveys in Mathematics {\bf 4}. 
         Springer-Verlag, Berlin, 2004.
         
   \bibitem{B 90} Berkovich, V.G. {\em Spectral theory and analytic geometry over non-Archimedean 
       fields.} Math. Surveys and Monographs {\bf 33}, AMS, 1990. 

 \bibitem{C 03} Corral, N.
      {\em Sur la topologie des courbes polaires de certains feuilletages singuliers.}
       Ann. Inst. Fourier (Grenoble), {\bf 53}, (2003), no. 3,  787--814.

    \bibitem{E 83} Eggers, H. {\em Polarinvarianten und die Topologie von Kurvensingularitaeten.} 
     Bonner Math. Schriften {\bf 147}, 1983. 
     
     \bibitem{EN 85} Eisenbud, D., Neumann, W. {\em Three-dimensional link theory and 
   invariants of plane curve singularities.} Princeton Univ. Press, 1985. 
     
     \bibitem{FJ 04} Favre, C., Jonsson, M. {\em The valuative tree.}
           Lect. Notes in Maths. \textbf{1853}. Springer-Verlag, Berlin, 2004.    

   \bibitem{G 96} Garc\'{\i}a Barroso, E. R.  {\em Invariants des singularit\'es de courbes 
       planes et courbure des fibres de Milnor}. PhD thesis, Univ. La Laguna, Tenerife 
       (Spain), 1996. Available at \href{http://ergarcia.webs.ull.es/tesis.pdf}{http://ergarcia.webs.ull.es/tesis.pdf}. 
      
    \bibitem{GB 00}  Garc\'{\i}a Barroso, E. R., 
    {\em Sur les courbes polaires d'une courbe plane r\'eŽduite}, 
    Proc. London Math. Soc. (3) \textbf{81} (2000) 1--28.
    
     \bibitem{GB-GP 05} Garc\'{\i}a Barroso, E. R.,  Gonz{\'a}lez~P{\'e}rez, P. D.    
         {\em Decomposition in bunches of the critical locus of a quasi-ordinary map}. 
        Compositio Math. \textbf{141} (2005), 461--486.
    
   \bibitem{GBGPPP 17a}
           Garc\'{\i}a Barroso, E. R.,  Gonz{\'a}lez~P{\'e}rez, P. D., Popescu-Pampu, P.,
           {\em Variations on inversion theorems for Newton-Puiseux series}, 
           Math. Ann. \textbf{368}, (2017), no. 3-4, 1359--1397.

   \bibitem{GBGPPP 17b} Garc\'{\i}a Barroso, E. R.,  Gonz{\'a}lez~P{\'e}rez, P. D., Popescu-Pampu, P.
   {\em Ultrametric spaces of branches on arborescent singularities}. To appear in
   {\em Singularities, Algebraic Geometry, Commutative Algebra and Related Topics. 
   Festschrift for Antonio Campillo on the Occasion
   of his 65th Birthday.}  G.-M. Greuel, L. Narv\'aez and S. Xamb\'o-Descamps eds. Springer, 2018. 
   52 pages. 
  
     \bibitem{GBGPPPR 18}
    Garc\'{\i}a Barroso, E. R.,  Gonz{\'a}lez~P{\'e}rez, P. D., Popescu-Pampu, P., M. Ruggiero, 
        {\em Ultrametric properties for valuation spaces of normal surface singularities}. 
         preprint 2018, ArXiv:1802.01165.
   
   \bibitem{GLM 09} Guibert, G., Loeser, F., Merle, M. {\em Composition with a two variable function}. 
      Math. Research Letters {\bf 16} No. 3 (2009), 439--448. 
  
   \bibitem{H 76} Halphen, G. {\em Sur une s\'erie de courbes analogues aux d\'evelopp\'ees.}  
    Journal de maths. pures et appliqu\'ees (de Liouville) 3e s\'erie, tome {\bf 2} (1876), 87--144.
  
  \bibitem{HY 61} Hocking, J. G., Young, G. S., {\em Topology.} 
   Constable and Company, Ltd., 1961, reprinted by Dover Publications, 1988. 
  
  \bibitem{J 15}  Jonsson, M. {\em Dynamics on Berkovich spaces in low dimensions.}
      In {\it Berkovich spaces and applications}, 205--366. A. Ducros, C. Favre, J. Nicaise 
      eds., Lect. Notes in Maths. {\bf 2119}, 2015.  
      
   \bibitem{K 93} Kapranov, M. M. {\em Veronese curves and Grothendieck-Knudsen moduli space 
       $\overline{M}_{0,n}$.} J. Algebraic Geom. {\bf 2} (1993), no. 2, 239--262.
      
   \bibitem{K 93bis} Kapranov, M. M. {\em The permutoassociahedron, Mac Lane's coherence 
       theorem and asymptotic zones for the KZ equation.} J. Pure Appl. Algebra {\bf 85} (1993), 
       no. 2, 119--142.
      
   \bibitem{KL 77} Kuo, T.C., Lu, Y.C. {\em On analytic function germs of 
         two complex variables.} Topology {\bf 16} (1977), no. 4, 299--310.
         
   \bibitem{MN 05}  McNeal J.D., N\'emethi, A. {\em The order of contact of a holomorphic 
       ideal in $\C^2$}. Math. Zeitschrift, {\bf 250} (4) (2005), 873--883. 

 \bibitem{N 90} Noether, M. {\em Les combinaisons caract\'eristiques 
     dans la transformation d'un point singulier.} Rend. Circ. Mat. Palermo 
    \textbf{IV} (1890), 89-108, 300--301.

 \bibitem{N 14} Novacoski, J. {\em Valuations centered at a two-dimensional regular 
    local ring: infima and topologies.}, Valuation theory in interaction, 
    EMS Ser. Congr. Rep.,  389--403, 2014.       
    
  \bibitem{O 72} Orlik, P. {\em Seifert manifolds.} Lecture Notes in Mathematics {\bf 291}. 
      Springer-Verlag, Berlin-New York, 1972.
      
     \bibitem{P 85} P\l oski, A. {\em Remarque sur la multiplicit\'e d'intersection des branches 
        planes.} Bulletin of the Polish Academy of Sciences. Mathematics. 
        Volume {\bf 33} (1985), 601--605.
        
     \bibitem{PP 01} Popescu-Pampu, P. {\em Arbres de contact des singularit\'es 
      quasi-ordinaires et graphes d'adjacence pour les 3-vari\'et\'es r\'eelles}. 
      Th\`ese, Univ. Paris 7, 2001. Available at \href{https://tel.archives-ouvertes.fr/tel-00002800v1}{https://tel.archives-ouvertes.fr/tel-00002800v1}. 
       
      \bibitem{PP 04} Popescu-Pampu, P. {\em Sur le contact d'une hypersurface quasi-ordinaire 
      avec ses hypersurfaces polaires.} Journal of the Inst. of Math. Jussieu {\bf 3} (2004), 105--138. 
      
   \bibitem{S 94} Shafarevich, I. {\em Basic algebraic geometry.} Vol. 1,2. Springer-Verlag, 1994. 
   
   \bibitem{S 79} Siebenmann, L. {\em On vanishing of the Rohlin invariant and non-finitely 
       amphicheiral homology $3$-spheres.} In {\it Topology Symposium, Siegen, 1979}, 
       Koschorke and Neumann eds., Lecture Notes in Maths. {\bf 788} (1979), 172--222. 
       
   \bibitem{S 75} Smith, H.J.S.  {\em On the higher singularities of plane curves}. 
       Proc. London Math. Soc., Vol. {\bf VI} (1875), 153--182. 
   
    \bibitem{S 1879} Stolz, O. {\em Die Multiplicit\"at der Schnittpunkte zweier algebraischer 
       Curven}. Math. Annalen {\bf 15} (1879), 122--160.        

    \bibitem{W 03} Wall, C. T. C.  {\em Chains on the Eggers tree and polar curves.} 
       Proc. of the Int. Conf. on Algebraic Geometry and Singularities (Sevilla, 2001). 
       Rev. Mat. Iberoamericana {\bf 19} (2003), no. 2, 745--754.
       
    \bibitem{W 04} Wall, C. T. C. {\em Singular points of plane
    curves.} London Math. Society Student Texts \textbf{63}. 
   Cambridge Univ. Press, 2004.    
   
     \bibitem{Z 68} Zariski, O. {\em Studies in equisingularity III. Saturation of local rings and
        equisingularity.} Amer. J. Math. \textbf{90} (1968), 961--1023.
               
\end{thebibliography}
\end{document}